\numberwithin{equation}{section}
\theoremstyle{plain} 
\newtheorem{theorem}{Theorem}[section]
\newtheorem{lemma}[theorem]{Lemma}
\newtheorem{proposition}[theorem]{Proposition}
\theoremstyle{definition} 
\newtheorem{definition}[theorem]{Definition}
\theoremstyle{definition} 
\newtheorem*{ex*}{Example}
\theoremstyle{remark} 
\theoremstyle{remark} 
\newtheorem{remark}[theorem]{Remark}
\newtheorem*{remark*}{Remark}
\numberwithin{equation}{section}
\newcommand{\beqa}{\begin{eqnarray}}
\newcommand{\eeqa}{\end{eqnarray}}
\newcommand{\bseq}{\begin{subequations}}
\newcommand{\eseq}{\end{subequations}}
\newcommand{\B}{\mathbf{B}}
\renewcommand{\SS}{\mathbf{S}}
\newcommand{\X}{\mathbf{X}}
\newcommand{\Xd}{\mathbf{X}^{(d)}}
\newcommand{\Zd}{\mathbf{Z}}
\newcommand{\N}{\mathbb{N}}
\newcommand{\dd}{\partial}
\newcommand{\0}{\mathbf{0}}
\newcommand{\one}{\mathbf{1}}
\newcommand{\s}{\mathbf{s}}
\newcommand{\xx}{\mathbf{x}}
\newcommand{\uu}{\mathbf{u}}
\newcommand{\vv}{\mathbf{v}}
\newcommand{\ww}{\mathbf{w}}
\newcommand{\yy}{\mathbf{y}}
\newcommand{\zz}{\mathbf{z}}
\newcommand{\ee}{\mathbf{e}}
\newcommand{\ZZ}{\mathbf{Z}}
\renewcommand{\dd}{{\operatorname{d}}}
\newcommand{\sign}{\operatorname{sign}}
\newcommand{\are}{\operatorname{ARE}}
\newcommand{\fb}{\fbox{\rule{0pt}{5pt}\hspace*{-5pt}\rule{0pt}{5pt}}}
\newcommand{\no}[1]{\fb\,#1\,\fb\,}
\newcommand{\OO}{\mathrel{\text{\raisebox{2pt}{$\O$}}}}
\newcommand{\OOG}{\mathrel{\text{\raisebox{2pt}{$\OG$}}}}
\newcommand{\al}{\alpha}
\newcommand{\g}{\gamma}
\newcommand{\Ga}{\Gamma}
\newcommand{\si}{\sigma}
\newcommand{\Si}{\Sigma}
\newcommand{\ka}{\kappa}
\newcommand{\La}{\Lambda}
\newcommand{\la}{\lambda}
\newcommand{\tla}{\tilde\lambda}
\newcommand{\tmu}{\tilde\mu}
\newcommand{\de}{\delta}
\newcommand{\be}{\beta}
\newcommand{\De}{\Delta}
\newcommand{\vpi}{\varphi}
\renewcommand{\th}{{\boldsymbol{\theta}}}
\newcommand{\tth}{\theta}
\newcommand{\ffrown}{\text{\raisebox{3pt}[0pt][0pt]{$\frown$}}}
\renewcommand{\O}{\underset{\ffrown}{<}}
\newcommand{\OG}{\underset{\ffrown}{>}}
\newcommand{\ii}[1]{\,\mathbf{I}\{#1\}} 
\newcommand{\Bigii}[1]{\,\mathbf{I}\Big\{#1\Big\}}
\newcommand{\PP}{\operatorname{\mathsf{P}}} 
\newcommand{\E}{\operatorname{\mathsf{E}}}
\newcommand{\Var}{\operatorname{\mathsf{Var}}}
\newcommand{\Cov}{\operatorname{\mathsf{Cov}}}
\newcommand{\Z}{\mathbb{Z}}
\newcommand{\R}{\mathbb{R}}
\newcommand{\G}{\mathcal{G}}
\newcommand{\OOO}{\mathcal{O}}
\newcommand{\vp}{\varepsilon}
\newcommand{\ta}{{\tilde{a}}}
\newcommand{\tbe}{{\tilde{\beta}}}
\newcommand{\tu}{{\tilde{u}}}
\newcommand{\tr}{{\tilde{r}}}
\newcommand{\tA}{{\tilde{A}}}
\renewcommand{\le}{\leqslant}
\renewcommand{\ge}{\geqslant}
\newcommand{\No}{\operatorname{N}}
\renewcommand{\framebox}[1]{\noindent\textbf{#1}.\quad}
\begin{document}

\begin{frontmatter}

\title{Asymptotic efficiency of $p$-mean tests for means in high dimensions
}
\runtitle{$p$-mean tests for means in high dimensions
}
\date{\today}

\begin{aug}
\author{\fnms{Iosif} \snm{Pinelis}\ead[label=e1]{ipinelis@mtu.edu}}
\runauthor{Iosif Pinelis}

\affiliation{Michigan Technological University}

\address{Department of Mathematical Sciences\\
Michigan Technological University\\
Houghton, Michigan 49931, USA\\
E-mail: \printead[ipinelis@mtu.edu]{e1}
}
\end{aug}

\begin{abstract}
The asymptotic efficiency, $\are_{p,2}$, of the tests for multivariate means $\th\in\R^d$ based on the 
$p$-means 
$
	\no\xx_p
	:=\big(\frac1d\,\sum_{j=1}^d|x_j|^p\big)^{1/p}  
$
($\xx\in\R^d$) relative to the standard $2$-mean, (approximate) likelihood ratio test (LRT), is considered for large dimensions $d$. 
It turns out that these $p$-mean tests for $p>2$ may greatly outperform the LRT while never being significantly worse than the LRT. For instance, $\are_{p,2}$ for $p=3$ varies from $\approx0.96$ to $\infty$, depending on the direction of the alternative mean vector $\th_1$, for the null hypothesis $H_0\colon\th=\0$. 
These results are based on a complete characterization, under certain general and natural conditions, of the varying pairs $(n,\th_1)$ for which the values of the power of the $p$-mean test for $\th=\0$ and $\th=\th_1$ tend, respectively, to prescribed values $\al$ and $\be$ such that $0<\al<\be<1$. 
The proofs use such classic results as the Berry-Esseen bound in the central limit theorem and the conditions of convergence to a given infinitely divisible distribution, as well as a recent result by the author on the Schur${}^2$-concavity properties of Gaussian measures. 
\end{abstract}

\begin{keyword}[class=AMS]
\kwd[Primary ]{62H15}
\kwd{62F05}
\kwd{62G20}
\kwd{62G35}
\kwd[; secondary ]{60E15}
\kwd{62E20}
\end{keyword}
\begin{keyword}
\kwd{hypothesis testing}
\kwd{asymptotic properties of tests}
\kwd{asymptotic relative efficiency}
\kwd{$p$-mean tests}
\kwd{multivariate means}
\kwd{majorization}
\kwd{stochastic ordering}
\kwd{Schur convexity}
\end{keyword}

%
%
%
%

\end{frontmatter}

\settocdepth{chapter}

\tableofcontents 

\settocdepth{subsubsection}

\theoremstyle{plain} 
\numberwithin{equation}{section}

\section{Introduction}\label{intro} 
Testing for an unknown mean, say $\th$, of a multivariate distribution is a classic statistical problem. It is common to assume that the underlying population distribution is normal. In this introductory section, let us focus on such a ``normal'' case; at that, let us also assume that the population correlation matrix is the identity matrix $I_d$.   

Suppose that one is to test a simple null hypothesis, $H_0\colon\th=\th_0$, versus the complementary alternative $H_1\colon\th\ne\th_0$, where $\th_0$ is a given vector in $\R^d$. Without loss of generality, $\th_0=\0$, the zero vector. Even when the dimension $d$ is $1$, it is well known and easy to see that there is no uniformly most powerful test. 

A common way to deal with this problem is to consider the likelihood ratio test (LRT) as a surrogate of the most powerful Neyman-Pearson test for two simple hypotheses; the LRT will reject $H_0$ if the Euclidean norm $\|\overline\X\|$ of the sample mean $\overline\X$ is greater than a critical value $c$. It is easy to see that the LRT will be uniformly most powerful among all spherically invariant tests (that is, among all the tests invariant with respect to the group $\OOO_d$ of all orthogonal linear transformations of $\R^d$); indeed, using the well-known representation of the non-central $\chi^2$ distribution as a mixture of central ones, with greater numbers of degrees of freedom, one can immediately verify the monotonicity of the ratio of the density of $\|\overline\X\|$ for any given $\th\ne\0$ to that density for $\th=\0$. 

Clearly, spherical invariance is a very strong restriction on the test, as it takes all the directions in $\R^d$ as equally important or as equally unknown. In many situations, typically when the dimension $d$ is large, the unknown mean vector $\th$ may have only comparatively few large, dominant (in absolute value) coordinates $\tth_j$. For example, in a Fourier basis only a few coordinates may be of significance; in other words, only a few harmonics in a Fourier decomposition of the unknown ``signal'' $\th$ may have significantly large amplitudes. 
So, one may want to consider tests which perform especially well for ``un-equalized'' directions of the vector $\th$, with comparatively few dominant $|\tth|_j$'s -- as opposed to ``equalized'' directions, with all  $|\tth|_j$'s more or less of the same order of magnitude. 

When it is not known which few of the many coordinates $\tth_j$ are dominant, one may try to approximate the multivariate testing problem by one of a much smaller dimension. Otherwise, when it is known which ones of the $\tth_j$'s may be dominant, 
one may want to consider tests that are invariant with respect to groups much smaller than the orthogonal group $\OOO_d$; cf.\ e.g.\ \cite{eaton-perl} and references therein. One such natural group is the one generated by all permutations of the coordinates $\tth_j$ and the $d$ reflections $\tth_j\mapsto-\tth_j$, changing just the sign of one of the coordinates of $\th$ while leaving the other coordinates intact; let us denote this group of transformations by $\G_d$; clearly, it is a subgroup of $\OOO_d$. 

A natural family of $\G_d$-invariant tests, depending on the parameter $p$, consists of the tests that reject $H_0$ when the (absolute) $p$-mean $\no{\overline\X}_p$ of $\overline\X$ exceeds a critical value $c$, where the $p$-mean of a vector $\th$ is defined as $\big(\frac1d\,\sum_{j=1}^d|\tth_j|^p\big)^{1/p}$. For $p=2$, the $p$-mean test is equivalent to the mentioned LRT. This definition of the $p$-mean of a vector can be extended by continuity to all $p\in[-\infty,\infty]$, so that $\no\th_{-\infty}=\min_j|\tth_j|$, $\no\th_0=\prod_j|\tth_j|^{1/d}$ (the geometric mean of the $|\tth_j|$'s), and $\no\th_\infty=\max_j|\tth_j|$. 

In this paper, we shall consider asymptotic efficiency of the $p$-mean tests and, in particular, 
the Pitman asymptotic relative efficiency, $\are_{p,2}$, of the $p$-mean test relative to the LRT, that is, relative to the $2$-mean test; the value of $\are_{p,2}$ depends on the prescribed (possibly asymptotic) values, $\al$ and $\be$ ($0<\al<\be<1$), of the power function at the null value $\th=\0$ and at the alternative value of $\th$, respectively. 
Clearly, $\are_{p,2}$ may also depend on the direction of the alternative vector $\th$; so, on most occasions we shall write $\are_{p,2,\uu}$ in place of $\are_{p,2}$, where $\uu$ is the $\no\cdot_2$-unit vector in the direction of $\th$.  
It is also clear that $\are_{p,2,\uu}$ is $\G_p$-invariant, that is, $\are_{p,2,\uu}$ is invariant with respect to all permutations of the coordinates $u_j$ of $\uu$ as well as to all sign changes of the $u_j$'s. 

A further and much less trivial property of $\are_{p,2,\uu}$ is what we shall refer to as the Schur${}^2$-convexity/concavity; namely, $\are_{p,2,\uu}$ is Schur-convex in $(u_1^2,\dots,u_d^2)$ 
for each $p\in[2,\infty]$ and Schur-concave in $(u_1^2,\dots,u_d^2)$ 
for each $p\in[-\infty,2]$; see \cite
{schur2} and Theorem~\ref{th:are schur2} in the present paper. 
Informally, the Schur${}^2$-convexity/concavity means that, for each $p\in[2,\infty]$, the more ``unequalized'' vector $\uu$ one takes the greater $\are_{p,2,\uu}$ is; and for each $p\in[-\infty,2]$ this relation is reversed. 
Therefore, for each $p\in[-\infty,\infty]$, the value of $\are_{p,2,\uu}$ for any $\no\cdot_2$-unit vector $\uu$ lies between the values of $\are_{p,2,\uu}$ for the ``completely equalized'' $\no\cdot_2$-unit vector $\one:=(1,\dots,1)\in\R^d$ and that for the ``maximally unequalized'' $\no\cdot_2$-unit vector $\sqrt{d}\,\ee_1$, where 
$\ee_1:=(1,0,\dots,0)\in\R^d$. 

Of course, for $d=1$ the $p$-mean is the same for all values of $p$. So, the least nontrivial dimension is $d=2$, in which case the possible values of $\are_{p,2,\uu}$ for $\al=0.05$ and $\be=0.95$ are shown in the left half of Figure~\ref{fig:are}. 

\begin{figure}[htbp]
	\centering
	\includegraphics[scale=.65]{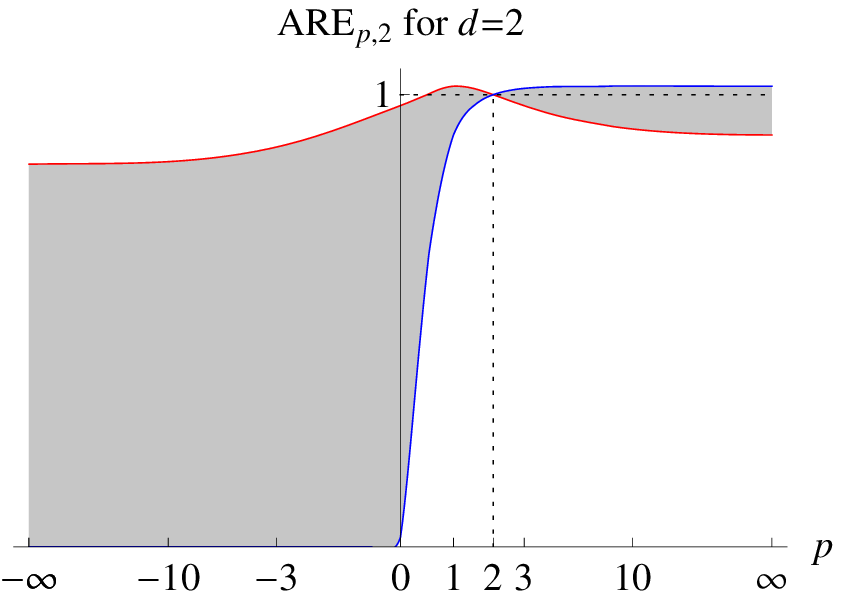}
	\hspace*{.4cm}
	\raisebox{-4pt}{
	\includegraphics[scale=.65]{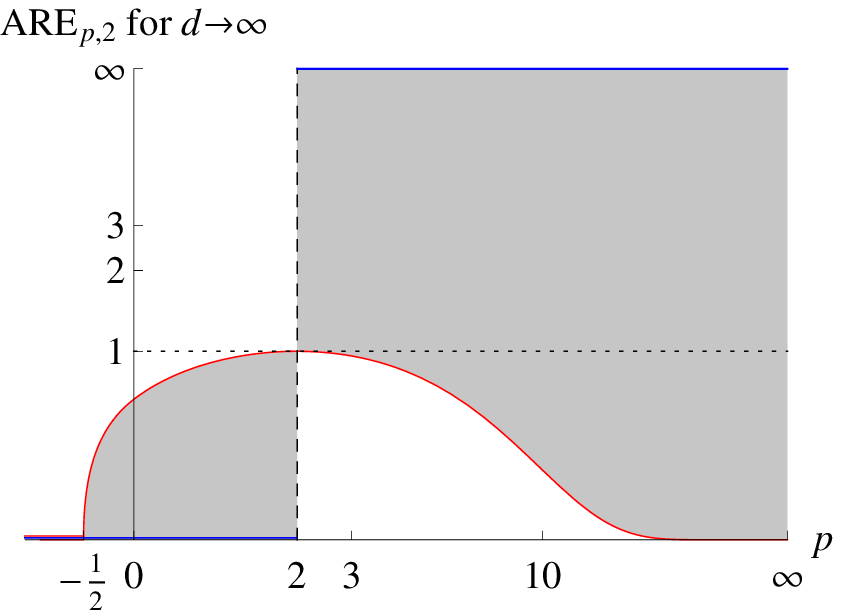}
	}
	\caption{On the left: the values of $\are_{p,2,\uu}$ for $d=2$, $\al=0.05$, $\be=0.95$; 
	on the right: the values of $\are_{p,2,\uu}$ for $d\to\infty$; 
	the horizontal and 
	vertical scales here are nonlinear, with points $(p,\are_{p,2,\uu})$ represented by points  $\big(\psi(p/4),\psi(\are_{p,2,\uu})\big)$ (shaded), where $\psi(x):=2x/(2|x|+3)$, so that $\psi(x)$ increases from $-1$ to $0$ to $1$ as $x$ increases from $-\infty$ to $0$ to $\infty$. 
The values of $\are_{p,2,\uu}$ for the ``completely equalized'' directions $\uu$ are represented by the red curve; for the ``maximally unequalized'' $\uu$, by the blue curve for $d=2$ and by the two horizontal blue lines for $d\to\infty$.}
	\label{fig:are}
\end{figure}

Note that for $d=2$ and any given nonzero vector $\uu\in\R^2$ one has $\are_{\infty,2,\uu}=\are_{1,2,R^{\pi/4}\uu}$, where $R^{\pi/4}$ is the operator of rotation through the angle $\pi/4$; this follows because  $B_1^2=\sqrt2\,R^{\pi/4}B_\infty^2$, where $B_p^2$ stands for the $\no\cdot_p$-unit ball in $\R^2$; however, this symmetry appears to be lost for any $d>2$. 
Going back to $d=2$, one has 
$\are_{1,2,(1,1)}=\are_{\infty,2,(\sqrt2,0)}=1.0317\dots$, again for $\al=0.05$ and $\be=0.95$. 
It thus appears that, for such $d$, $\al$, and $\be$, the $p$-mean test can at best outperform the LRT by about 3.2\%, which happens for $p=\infty$ and ``maximally unequalized'' directions as well as for $p=1$ and ``completely equalized'' directions. 

One may further ask in which directions $\uu$ the $p$-mean test outperforms the LRT test, in the sense that $\are_{p,2,\uu}>1$. It appears (at least for $d=2$) that for each $\uu$ there is some $p$ such that $\are_{p,2,\uu}>1$. 
Indeed, the left half of Figure~\ref{fig:d=2} suggests that (again for $d=2$, $\al=0.05$, and $\be=0.95$) in nearly all directions $\uu$ either $\are_{2.1,2,\uu}>1$ or $\are_{1.9,2,\uu}>1$. 

\begin{figure}[htbp]
	\centering
\includegraphics[scale=0.5]
{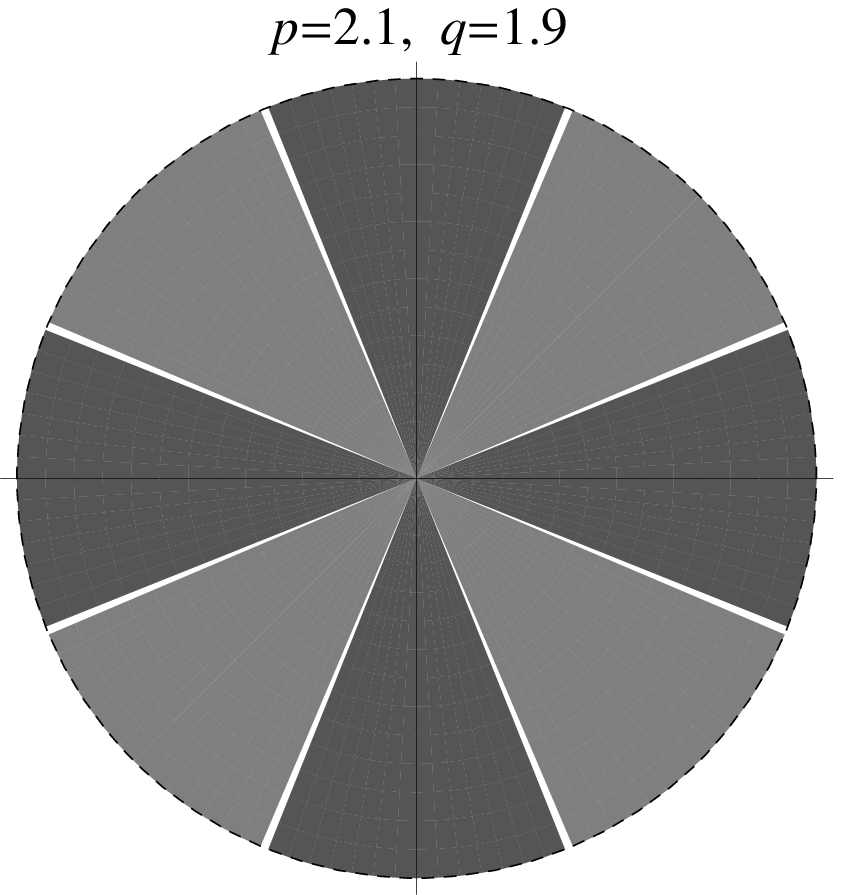}
\hspace*{1cm}
\includegraphics[scale=0.5]
{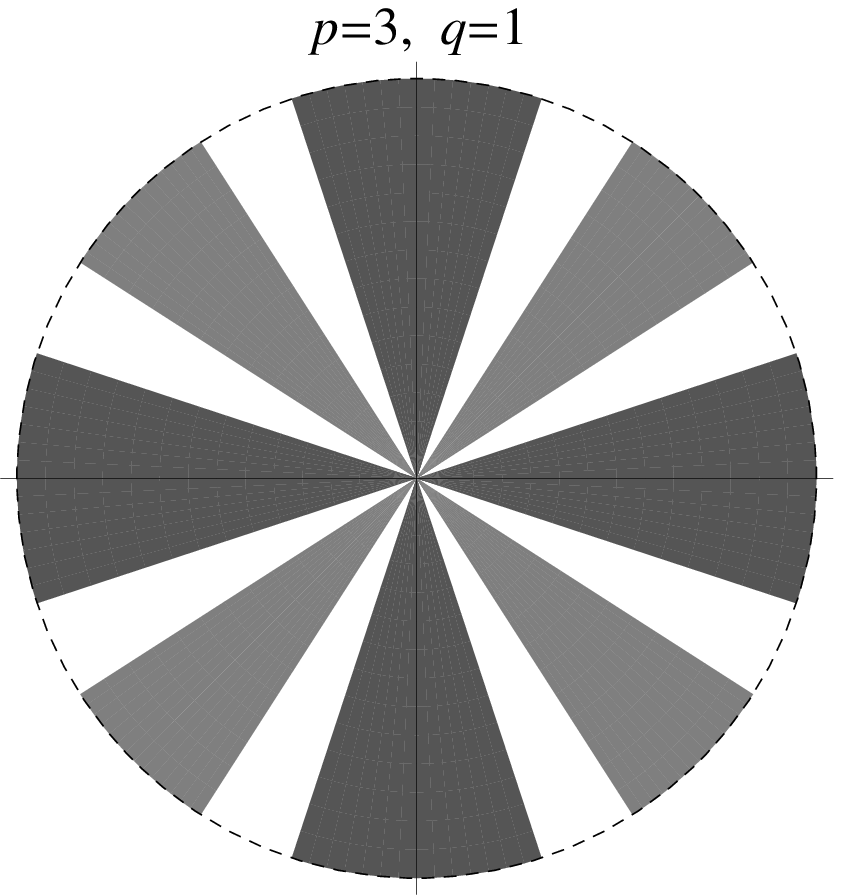}
	\caption{The sectors of directions $\uu$ for $d=2$ where $\are_{p,2,\uu}>1$ (dark\-er gray) 
	and $\are_{q,2,\uu}>1$ (lighter gray); in each of the 8 narrow white sectors, one has $\are_{p,2,\uu}\le1$ and $\are_{q,2,\uu}\le1$. }
	\label{fig:d=2}
\end{figure}

However, at that the improvement in performance is small: $\are_{2.1,2,(\sqrt2,0)}=1.00429\dots$ and  $\are_{1.9,2,(1,1)}=1.00459\dots$. 
Recall also that the maximum improvement \big(for $d=2$, $\al=0.05$, and $\be=0.95$, over all directions $\uu$ and all $p$\big) of the $p$-mean tests over the LRT test appears to be less than 3.2\%. 

The matter is quite different for large $d$, as shown in the right half of Figure~\ref{fig:are}. 
As one can see, for each $p\in[-\infty,2)$ all possible values of $\are_{p,2,\uu}$ \big(for $d\to\infty$, when the dependence of $\are_{p,2,\uu}$ on $\al$ and $\be$ disappears\big) are less than $1$, for all directions $\uu$ of the alternative vector $\th$; that is, for any $p\in[-\infty,2)$ the $2$-mean, LRT test is always asymptotically better than the $p$-mean test.  
However, for each $p\in(2,\infty]$ values of $\are_{p,2,\uu}$ can be arbitrarily large if the vector $\uu$ is sufficiently unequalized. For instance, for $p=3$ the possible values of $\are_{p,2,\uu}$ range from $\approx0.96$ to $\infty$.   
This suggests that (say) the $3$-mean test should generally be preferred to the $2$-mean test -- especially, when the direction of the alternative vector $\th$ is unknown or, even more so, when the direction of $\th$ is known to be far from equalized. One can say that the $3$-mean test is much more robust than the LRT with respect to a few large coordinates of an alternative mean vector $\th$, while the asymptotic efficiency of the $3$-mean test relative to the LRT never falls below $\approx96\%$. 
Theorem~\ref{th:are} shows that for each $\in(2,\infty)$ there is a ``phase transition'' phenomenon for $\are_{p,2,\uu}$ depending on the direction of $\uu$; namely, $\are_{p,2,\uu}$ equals either $\infty$ or a certain finite positive number $a_p$ depending on whether the $p$-mean $\no{\uu}_p$ is much greater or much less than $d^{(p-2)/(4p)}$; a similar ``phase transition'' takes placed for $p\in(-\frac12,2)$. 
As seen from the right panel of Figure~\ref{fig:are}, another ``phase transition'', in $p$, occurs at $p=2$; it would be interesting to study this latter phenomenon further, letting $p$ go to $2$ while $d$ tends to $\infty$. 

In contrast with the just discussed behavior of the $\are$ with fixed $p,\al,\be$ and $d\to\infty$, it was shown in \cite{schur2} that $\are_{p,2,\uu}$ does not exceed $1$ for any fixed $d,p,\uu$ when $\al\to0,\be\to1$ -- that is, when both types of error probabilities tend to $0$. Thus, it would also be of interest to study the $\are$ when $d\to\infty$ \emph{and} $\al\to0,\be\to1$.
 

\bigskip

In this paper, we shall assume that  
$d\to\infty$, which represents an interesting case and allows for an informative theory. 
A simple but key observation is that, say for $p\in(0,\infty)$, one has 
$
	\PP_\th(\no{\overline\X}_p>c)=\PP_\th(\sum_1^d|Z_j+\tth_j\sqrt n|^p>c^p),
$ 
where the $Z_j$'s are independent standard normal random variables (r.v.'s). 
so that one has to deal with the distribution of the sum $\sum_1^d|Z_j+\tth_j\sqrt n|^p$ of independent r.v.'s $|Z_j+\tth_j\sqrt n|^p$. If, after appropriate re-centering and/or re-scaling, these r.v.'s are uniformly asymptotically negligible and also 
satisfy certain other conditions, then, by a well-known theorem, the distribution of the sum $\sum_1^d|Z_j+\tth_j\sqrt n|^p$ can be approximated by an infinitely divisible distribution. 
Fortunately, it turns out that the just mentioned conditions for such an approximation to hold will be satisfied whenever $c$ and $\th$ vary (with $d\to\infty$) in such a way that the conditions 
$\PP_\0(\no{\overline\X}_p>c)\to\al$ and $\PP_\th(\no{\overline\X}_p>c)\to\be$ on the power function of the test are fulfilled; the proof of this relies in part on the fact that the probability $\PP_{t\uu}(\no{\overline\X}_p>c)$ is increasing in $t\ge0$ for each $\uu$.  
Moreover, for $p\in(0,\infty)$ the limit infinitely divisible distribution is simply normal. The cases $p=0$ and $p\in(-\infty,0)$ can be treated similarly. However, for each $p\in(-\infty,-\frac12)$ the limit distribution is, not normal, but a stable distribution with exponent $-\frac1p\in(0,2)$. 
As for the cases $p=\pm\infty$, they are in a sense more elementary, because then the distribution of $\no{\overline\X}_p$ can be easily expressed in terms of the standard normal distribution. 

The assumption of normality of the population distribution was made in this introductory section only to simply the discussion. In fact, in the paper we shall consider a more general case of independent identically distributed observations $\X_i$; moreover, the results will hold for non-identically distributed observations, as long as one has a convergence to normality, in a certain sense. 

The paper is organized as follows. In Section~\ref{results}, we state the necessary definitions and the main theorems, as well as a number of propositions, which complement the definitions and theorems presented there. 
In Section~\ref{proofs}, we state a number of key propositions, from which the theorems stated in Section~\ref{results} follow; these propositions may be of independent interest. 
The proofs of the propositions stated in Section~\ref{proofs} are deferred to Sections~\ref{proof of shifts} and \ref{proofs of props p}, which are preceded by Section~\ref{lemmas}, where a number of lemmas are stated, to be used in Sections~\ref{proof of shifts} and \ref{proofs of props p}. Finally, in Section~\ref{proofs of lemmas}, we prove the mentioned lemmas. 
This natural tree-like structure allows us to present the ideas, big and small, each in its own place. 
Note also that the propositions in Section~\ref{results} complement the definitions and theorems presented there, while the propositions in Section~\ref{proofs} are what these theorems immediately follow from.

\section{Statements  and discussion of results}\label{results}

\subsection{Observation data and hypotheses}\label{obsrvs,hyp} 
For each natural $d$, let $\Xd,\Xd_1,\Xd_2,\dots$ be independent identically distributed random vectors in $\R^d$, with 
a distribution indexed by 
the unknown mean vector $\th$ of $\Xd$;  
let $\E_\th$ and $\PP_\th$ denote the corresponding expectation and probability functionals, so that $\E_\th\Xd=\th$ for all $\th\in\R^d$. 
Suppose that for any given $d$ and $\th$ the covariance matrix 
\begin{equation*}
\Si_d(\th):=
\Cov_\th\Xd	
\end{equation*}
is finite, nonsingular, and continuous in $\th$ in a neighborhood $\mathcal{V}_d$ of the zero vector $\0=\0_d$ in $\R^d$. Suppose also that   
\begin{equation}\label{eq:rho}
\rho_3(d):=\sup_{\th\in\mathcal{V}_d}\E_\th\|\Si_d(\th)^{-1/2}(\Xd-\th)\|^3<\infty; 	
\end{equation}
here and in what follows, $\|\cdot\|$ denotes the usual Euclidean norm in $\R^d$, so that $\|\vv\|=\sqrt{\sum v_j^2}$ for any $\vv\in\R^d$; unless specified otherwise, the summation sign $\sum$ will stand for $\sum_{j=1}^d$; for vectors $\s,\uu,\vv,\dots$ in $\R^d$, we let $s_j,u_j,v_j,\dots$ ($j=1,\dots,d$) denote their respective coordinates. 

Consider testing the hypothesis $H_0\colon\th=\0$ versus the alternative $H_1\colon\th\ne\0$, 
based on the statistic $\Si_d(\0)^{-1/2}\overline{\Xd_n}$, where $\overline{\Xd_n}:=\frac1n\sum_{i=1}^n\Xd_i$. 
At that, $\Si_d(\0)$ is supposed to be known. 
Moreover, let us assume that 
\begin{equation}\label{eq:I}
	\Si_d(\0)=I_d,
\end{equation}
the $d\times d$ identity matrix; this assumption does not diminish generality, since one may replace $\Xd_i$ by $\Si_d(\0)^{-1/2}\Xd_i$. 

In fact, the results will hold for non-identically distributed observations, as long as one has a convergence to normality; cf.\ Subsection~\ref{NA} below. For very large $n$, one may also remove the requirement that the covariance matrix $\Si_d(\th)$ be known; cf.\ \cite{bhat,nonlinear}.  

\subsection{$p$-means tests}\label{means tests}

Consider then tests of the form   
\begin{equation}\label{eq:test}
	\de_{n,p}:=\de_{n,p,c}:=\Bigii{\sqrt{n}\,\no{\overline{\Xd_n}}_p>c}, 
\end{equation}
where $\ii\cdot$ denotes the indicator function, 
$c\in\R$, $p\in[-\infty,\infty]$, and $\no{\s}_p$ is the $p$-mean of a vector $\s\in\R^d$ defined as follows.  
For any $p\in(-\infty,\infty)\setminus\{0\}$, let 
\begin{equation}\label{eq:np_p}
	\no\s_p
	:=\Big(\frac1d\,\sum\limits_{j=1}^d|s_j|^p\Big)^{1/p}; 
\end{equation} 
at that, if $p\in(-\infty,0)$, use the continuity conventions $0^p:=\infty$ and $\infty^{1/p}:=0$, so that $\no\s_p=0$ if $p\in(-\infty,0)$ and at least one of the $s_j$'s is $0$. 
As usual, extend definition \eqref{eq:np_p} by continuity to all $p\in[-\infty,\infty]$, so that 
\begin{equation}\label{eq:means excep}
	\no\s_{-\infty}=\min_{j=1}^d|s_j|,\quad 
	\no\s_0=\prod_{j=1}^d|s_j|^{1/d},\quad 
	\no\s_\infty=\max_{j=1}^d|s_j|.  
\end{equation}
For $p\in[1,\infty]$, the function $\no\cdot_p$ is a norm, which differs from the more usual $p$-norm $\|\cdot\|_p$ by the factor $\frac1{d^{1/p}}$; this factor is needed in order that $\no\s_p\to\no\s_0$ as $p\to0$. 

For any $p\in[-\infty,\infty]$, let us say that a vector $\uu\in\R^d$ is \emph{$\no\cdot_2$-unit} if $\no\uu_p=1$. Note that the vector $\one:=\one_d:=(1,\dots,1)\in\R^d$ is $\no\cdot_2$-unit for every $p\in[-\infty,\infty]$, and so are all vectors $\uu\in\{-1,1\}^d$. 
Let us refer to any vector in the direction of a vector $\uu\in\{-1,1\}^d$ as \emph{(perfectly) equalized}. 
Informally, let us say that a vector $\vv\in\R^d$ is \emph{(well enough) equalized} if the $|v_j|$'s are mainly of the same order of magnitude; otherwise -- if there are comparatively few (as compared with $d$) dominating $|v_j|$'s, let us say that the vector $\vv$ is \emph{(too) unequalized}.   

\subsection{Normal admissibility (NA)}\label{NA}
Let $\th_1$ denote a nonzero vector in $\R^d$, so that $\th_1$ represents a possible mean vector compatible with the alternative hypothesis $H_1$. 
Note that $\th_1$ must be varying with $d$, even if for no reason other than that $\th_1$ is in $\R^d$. The dimension $d$ may be considered an attribute (that is, a function) of $\th_1$, and thus one may write
$
	d=\dim\th_1. 
$ 
The sample size $n$ and the critical value $c$ will also be allowed to vary, all together with $d$ and $\th_1$, whereby one has a triple $(n,\th_1,c)$, varying with $d$.

\begin{definition}\label{def:NA}
For any given $p\in[-\infty,\infty]$, let us say that a varying \emph{pair} $(n,\th_1)$ is \emph{$p$-normally admissible} ($p$-NA) if 
\begin{equation*}
	d=\dim\th_1\to\infty
\end{equation*}
and $\overline{\Xd_n}$ is asymptotically normal in the sense that 
\begin{equation}\label{eq:NA}
	\sup_{c\in\R}\Big|\PP_\th\Big(\no{\sqrt n\,\overline{\Xd_n}}_p\le c\Big) 
	-\PP\big(\,\no{\Zd+\sqrt n\,\th}_p\le c\big)\Big|\longrightarrow0
\end{equation}
for $\th=\0$ and for $\th=\th_1$, where  
\begin{equation}\label{eq:Zd}
	\Zd=\mathbf{Z}^{(d)}=(Z_1,\dots,Z_d)\sim\No(\0,I_d),
\end{equation}
a standard normal random vector in $\R^d$. 

Further, let us say that a varying \emph{triple} $(n_p,n_2,\th_1)$ is $(p,2)$-NA if the pair $(n_p,\th_1)$ is $p$-NA and the pair $(n_2,\th_1)$ is $2$-NA.  
\end{definition}

The following proposition implies that, for a varying pair $(n,\th_1)$ to be $p$-NA, 
usually it is sufficient that $n\to\infty$ fast enough as $d$ grows to $\infty$ while $\th_1$ stays close enough to $\0$. 

\begin{proposition}\label{prop:NA}
For each $p\in[-\infty,\infty]$, there exist some functions $\N\ni d\mapsto n_p(d)$ and $\N\ni d\mapsto\tth_p(d)$ \big(which depend on the functions $d\mapsto\mathcal{V}_d$, $(d,\th)\mapsto\Si_d(\th)$, and $d\mapsto\rho_3(d)$\big), such that 
(i) $\tth_p(d)>0$ for all natural $d$ and (ii) all varying pairs $(n,\th_1)$ with 
$d=\dim\th_1\to\infty$, $n\ge n_p(d)$, and $\|\th_1\|\le\tth_p(d)$ are $p$-NA. 
\end{proposition}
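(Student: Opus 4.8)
The plan is to verify the defining display \eqref{eq:NA} separately for $\th=\0$ and $\th=\th_1$ through a two-step comparison: first replace the scaled sample mean by a Gaussian with matched mean and covariance (a Berry--Esseen step), then replace that covariance by $I_d$ (a total-variation step). Writing $S:=\sqrt n\,\overline{\Xd_n}$, $G\sim\No(\sqrt n\,\th_1,\Si_d(\th_1))$, and $\tilde G\sim\No(\sqrt n\,\th_1,I_d)$ (so that $\tilde G$ has the law of $\Zd+\sqrt n\,\th_1$), I would use $\no{\s}_p\le c\iff\s\in A_c$ with $A_c:=\{\s\in\R^d:\no{\s}_p\le c\}$ and the triangle inequality to bound the quantity in \eqref{eq:NA} at $\th=\th_1$ by
\begin{equation*}
\underbrace{\sup_{c\in\R}\big|\PP_{\th_1}(S\in A_c)-\PP(G\in A_c)\big|}_{(\mathrm I)}+\underbrace{\sup_{c\in\R}\big|\PP(G\in A_c)-\PP(\tilde G\in A_c)\big|}_{(\mathrm{II})}.
\end{equation*}
For $\th=\0$ one has $\Si_d(\0)=I_d$ by \eqref{eq:I}, so $G=\tilde G=\Zd$, term $(\mathrm{II})$ vanishes, and only $(\mathrm I)$ (with $\th_1$ replaced by $\0$) remains.

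The first step is to bound $(\mathrm I)$ by a multivariate Berry--Esseen estimate. Standardizing, $S=\Si_d(\th_1)^{1/2}W_n+\sqrt n\,\th_1$ and $G=\Si_d(\th_1)^{1/2}\Zd+\sqrt n\,\th_1$, where $W_n:=n^{-1/2}\sum_{i=1}^n\Si_d(\th_1)^{-1/2}(\Xd_i-\th_1)$ is a normalized sum of i.i.d.\ vectors with mean $\0$, covariance $I_d$, and third absolute moment at most $\rho_3(d)$ by \eqref{eq:rho} (valid since I will keep $\th_1\in\mathcal{V}_d$). Hence $(\mathrm I)=\sup_{c}\big|\PP(W_n\in B_c)-\PP(\Zd\in B_c)\big|$ with $B_c:=\Si_d(\th_1)^{-1/2}(A_c-\sqrt n\,\th_1)$. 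For $p\in[1,\infty]$ the $\no{\cdot}_p$-ball $A_c$ and its affine image $B_c$ are convex, so the convex-set multivariate Berry--Esseen inequality gives $(\mathrm I)\le C\,d^{1/4}\rho_3(d)\,n^{-1/2}$ with an absolute constant $C$. For $p\in[-\infty,1)$ the sets $A_c$ are not convex, and here lies the crux: $\{B_c\}_{c\in\R}$ is a one-parameter family of level sets of the piecewise-smooth function $\s\mapsto\no{\Si_d(\th_1)^{1/2}\s+\sqrt n\,\th_1}_p$, and I would invoke a normal-approximation theorem for classes of sets with uniformly regular (finite Gaussian surface measure) boundaries to obtain, for each fixed $d$, a bound $(\mathrm I)\le C(d)\,\rho_3(d)\,n^{-1/2}$ with $C(d)<\infty$ uniform over $\|\th_1\|\le\tth_p(d)$ and over $n$ (the affine maps have bounded distortion because $\Si_d(\th_1)$ is close to $I_d$, and translations leave boundary curvature unchanged). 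In both regimes $(\mathrm I)$ is nonincreasing in $n$ and tends to $0$ as $n\to\infty$.

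The second step is to bound $(\mathrm{II})$ by the total-variation distance between $\No(\sqrt n\,\th_1,\Si_d(\th_1))$ and $\No(\sqrt n\,\th_1,I_d)$, which, being translation invariant, depends only on $\Si_d(\th_1)$ and not on $n$. By Pinsker's inequality and the closed form of the Gaussian Kullback--Leibler divergence, with $\mu_1,\dots,\mu_d$ the eigenvalues of $\Si_d(\th_1)$,
\begin{equation*}
(\mathrm{II})\le\Big(\tfrac14\sum_{i=1}^d(\mu_i-1-\ln\mu_i)\Big)^{1/2},
\end{equation*}
a quantity comparable to $\|\Si_d(\th_1)-I_d\|_F$ for $\Si_d(\th_1)$ near $I_d$, hence tending to $0$ as $\Si_d(\th_1)\to I_d$. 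Since $\Si_d$ is finite, nonsingular, and continuous at $\0$ with $\Si_d(\0)=I_d$, for each $d$ I can fix $\tth_p(d)>0$ small enough that $\{\th\colon\|\th\|\le\tth_p(d)\}\subseteq\mathcal{V}_d$ and $(\mathrm{II})\le1/d$ whenever $\|\th_1\|\le\tth_p(d)$; this $\tth_p(d)$ depends only on $d\mapsto\mathcal{V}_d$ and $(d,\th)\mapsto\Si_d(\th)$.

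Finally, with $\tth_p(d)$ fixed as above, I would choose $n_p(d)$ so large that the Berry--Esseen bound for $(\mathrm I)$ at $n=n_p(d)$ does not exceed $1/d$; this $n_p(d)$ depends only on $d$, $\rho_3(d)$, and the class constant, as required, and $\tth_p(d)>0$, giving (i). Then for any varying pair with $d=\dim\th_1\to\infty$, $n\ge n_p(d)$, and $\|\th_1\|\le\tth_p(d)$, monotonicity in $n$ gives $(\mathrm I)\le1/d$ while $(\mathrm{II})\le1/d$, so the left-hand side of \eqref{eq:NA} is at most $2/d\to0$ at $\th=\th_1$ and at most $1/d\to0$ at $\th=\0$; hence the pair is $p$-NA, which is (ii). The main obstacle is precisely the Berry--Esseen control of $(\mathrm I)$ for $p<1$ (and $p=-\infty$), where the convex-set theorem fails and one must instead establish a Kolmogorov-distance normal approximation over the non-convex level-set class of $\no{\cdot}_p$; the liberty to let $n_p(d)$ grow arbitrarily fast with $d$ means only the existence, not the size, of a finite rate $C(d)$ is needed.
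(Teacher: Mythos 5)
Your two-step decomposition is sound, and your step $(\mathrm{II})$ is a legitimate alternative to what the paper actually does: the paper compares $\PP\big(\Zd\in\Si_d(\th_1)^{-1/2}(A_c-\sqrt n\,\th_1)\big)$ with $\PP\big(\Zd\in A_c-\sqrt n\,\th_1\big)$ by a geometric argument showing that, within a large ball $D_R$, the symmetric difference of the two sets lies in a thin neighborhood of $\partial(A_c-\sqrt n\,\th_1)$, and then bounds the Gaussian measure of that neighborhood; your Pinsker/KL bound on the total-variation distance between $\No(\sqrt n\,\th_1,\Si_d(\th_1))$ and $\No(\sqrt n\,\th_1,I_d)$ handles all Borel sets at once and is arguably cleaner. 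Either route yields a $\tth_p(d)>0$ depending only on $d\mapsto\mathcal{V}_d$ and $(d,\th)\mapsto\Si_d(\th)$, as required.

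The genuine gap is exactly where you locate it: the control of $(\mathrm I)$ for $p\in[-\infty,1)$. You propose to ``invoke a normal-approximation theorem for classes of sets with uniformly regular (finite Gaussian surface measure) boundaries,'' but you never verify that the level sets $A_c=\{\zz\colon\no\zz_p\le c\}$ for $p<1$ actually belong to such a class uniformly in $c$ --- and that verification is the only nontrivial content of the proposition. The paper's resolution is a concrete structural observation: for $p\in[-\infty,1]$ the \emph{complement} of $A_c$ is the union of its intersections with the $2^d$ coordinate orthants, and each such intersection is convex \big(e.g.\ for $p\in(0,1)$ because $\zz\mapsto\sum z_j^p$ is concave on the positive orthant, so its superlevel sets there are convex; for $p<0$ because $\sum z_j^p$ is convex there, so the relevant sublevel sets are convex; for $p=-\infty$ the orthant piece is $\{z_j>c\ \forall j\}$\big). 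Hence $\partial A_c$ is covered by the boundaries of $2^d$ convex sets, so \cite[Corollary~3.2]{bhat} gives $\sup_{\yy}\PP\big(\Zd\in(\partial A_c)^\eta+\yy\big)\le C(d)\,\eta$ uniformly in $c$, which is precisely the hypothesis needed to apply \cite[Corollary~15.3]{bhat}; the same covering persists under the affine maps in your $B_c$. Without this (or an equivalent) observation your appeal to a general theorem is an assertion, not a proof. A minor further point: $(\mathrm I)$ itself need not be monotone in $n$ --- only your upper bound for it is --- but that suffices for the conclusion.
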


The necessary proofs are deferred to Sections~\ref{proofs}--\ref{proofs of lemmas}. 

The condition that the random vectors $\Xd_i$ be i.i.d.\ is assumed in this paper only to simplify the presentation. Indeed, this condition is only used in the proof of Proposition~\ref{prop:NA}, which is based on a Berry-Esseen type bound \cite[Corollary~15.3]{bhat}, which allows the distributions of the random vectors $\Xd_i$ to differ to a certain extent.

\subsection{Asymptotically sufficient (AS) pairs and triples}\label{AS}

We shall consider the asymptotic efficiency of the tests $\de_{n,p}$ for all values of $p\in[-\infty,\infty]$ relative to $\de_{n,2}$, which is the likelihood ratio text (LRT) in the case when the observations $\Xd_i$ are $d$-variate normal. 
We shall be working under the assumption that 
\begin{equation}\label{eq:d to infty}
d\to\infty. 	
\end{equation}
Take any $\al$ and $\be$ such that
\begin{equation}\label{eq:al,be}
	0<\al<\be<1. 
\end{equation}
Unless specified otherwise, in what follows we shall always assume conditions \eqref{eq:d to infty} and \eqref{eq:al,be} to hold and consider $\al$ and $\be$ to be fixed. 
These values, $\al$ and $\be$, will be the (approximate) target values of the power function of the tests $\de_{n,p}$ at the values of $\th$ equal $\th_0:=\0$ and $\th_1\ne\0$, respectively. 
Given $p$, $\al$, $\be$, and $\th_1$, the efficiency of the test can be measured, as usual, by the necessary sample size $n$. Thus, one comes to the notion of a $p$-asymptotically sufficient pair $(n,\th_1)$. Moreover, when comparing the tests $\de_{n,p}$ to the tests $\de_{n,2}$ for the same alternative mean vector $\th_1$, it is natural to combine a $p$-asymptotically sufficient pair $(n_p,\th_1)$ and a  $2$-asymptotically sufficient pair $(n_2,\th_1)$ into a $p$-asymptotically sufficient triple $(n_p,n_2,\th_1)$, as was done in Definition~\ref{def:NA} for $p$-NA pairs and triples.

\begin{definition}\label{def:AS}
Take any $p\in[-\infty,\infty]$. 
Say that a varying pair $(n,\th_1)$ is \emph{$p$-weakly asymptotically sufficient} ($p$-weakly-AS) if, for \emph{some} varying (with $d$, $n$, and $\th_1$) critical value $c$, one has 
\begin{equation}\label{eq:AS}
	\E_{\0}\de_{n,p,c}\to\al\quad\text{and}\quad \E_{\th_1}\de_{n,p,c}\to\be.
\end{equation} 
Next, say that $(n,\th_1)$ is $p$-strongly-AS if, for \emph{any} varying $c$ such that $\E_{\0}\de_{n,p,c}\to\al$ one has $\E_{\th_1}\de_{n,p,c}\to\be$. 
Further, say that a varying triple $(n_p,n_2,\th_1)$ is $(p,2)$-weakly-AS if the pair $(n_2,\th_1)$ is $2$-weakly-AS and the pair $(n_p,\th_1)$ is $p$-weakly-AS; 
similarly defined is a $(p,2)$-strongly-AS triple $(n_p,n_2,\th_1)$. 
\end{definition}

An important fact is given by 

\begin{proposition}\label{prop:AS}
Any $p$-NA pair $(n,\th_1)$ is $p$-strongly-AS if and only if it is $p$-weakly-AS. 
Therefore, any $(p,2)$-NA triple $(n_p,n_2,\th_1)$ is $(p,2)$-strongly-AS if and only if it is $(p,2)$-weakly-AS. 
\end{proposition}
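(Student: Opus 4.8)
The plan is to pass from the observed model to the Gaussian surrogate furnished by the $p$-NA condition, to reformulate the claim as a statement about a single monotone receiver-operating-characteristic (ROC) curve, and then to show that this curve cannot develop a jump at the prescribed level $\al$. Put $m:=\sqrt n\,\th_1$ and write $\bar H_0(c):=\PP(\no{\Zd}_p>c)$ and $\bar H_1(c):=\PP(\no{\Zd+m}_p>c)$; since the coordinates of $\Zd$ have no atoms, each $\bar H_i$ is continuous and strictly decreasing on its support, hence a bijection onto $(0,1)$ with continuous inverse. By the defining relation \eqref{eq:NA} of a $p$-NA pair (and the homogeneity $\no{\sqrt n\,\overline{\Xd_n}}_p=\sqrt n\,\no{\overline{\Xd_n}}_p$), one has $\E_\0\de_{n,p,c}=\bar H_0(c)+o(1)$ and $\E_{\th_1}\de_{n,p,c}=\bar H_1(c)+o(1)$, both uniformly in $c\in\R$; so it suffices to work with $\bar H_0,\bar H_1$ in place of $\E_\0\de_{n,p,c},\E_{\th_1}\de_{n,p,c}$. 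The implication ``strongly-AS $\Rightarrow$ weakly-AS'' is then the routine direction: since $\bar H_0$ maps onto $(0,1)$, for each $d$ one may choose $c=c(d)$ with $\bar H_0(c(d))=\al$, whence $\E_\0\de_{n,p,c}\to\al$; strong-AS now forces $\E_{\th_1}\de_{n,p,c}\to\be$, which is weak-AS.

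For the substantive implication ``weakly-AS $\Rightarrow$ strongly-AS'', define the ROC curve $R_d\colon(0,1)\to(0,1)$ by $R_d(x):=\bar H_1\big(\bar H_0^{-1}(x)\big)$. Because $\bar H_0^{-1}$ and $\bar H_1$ are both decreasing, $R_d$ is \emph{increasing}. Weak-AS supplies a critical value $c$ with $x_d:=\bar H_0(c)\to\al$ and $R_d(x_d)=\bar H_1(c)\to\be$; a competing $c'$ with $\E_\0\de_{n,p,c'}\to\al$ yields $x_d':=\bar H_0(c')\to\al$, and the goal is to prove $R_d(x_d')=\bar H_1(c')\to\be$. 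Passing to an arbitrary subsequence and applying Helly's selection theorem to the monotone $R_d$, we may assume $R_d\to R$ at every continuity point of some nondecreasing limit $R$. If $\al$ is a continuity point of $R$, then monotonicity of $R_d$ together with $x_d\to\al$ and $x_d'\to\al$ forces $R_d(x_d)\to R(\al)$ and $R_d(x_d')\to R(\al)$; combined with $R_d(x_d)\to\be$ this gives $R(\al)=\be$ and hence $R_d(x_d')\to\be$. Thus the entire proposition reduces to the single assertion that \emph{$\al$ is a continuity point of every subsequential limit $R$}, i.e.\ that the limiting ROC has no jump at level $\al$.

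To rule out such a jump I would invoke the distributional limit theorems established in the proof sections: under a $p$-NA pair the suitably centered and scaled statistic $\no{\Zd+m}_p$ converges, for the null and the alternative, to a fixed continuous law $\bar\Psi$ (normal for the bulk of the range of $p$, a one-sided stable law of index $-1/p$ for $p<-\tfrac12$, and an elementary Gaussian functional for $p=\pm\infty$), so that $\bar H_i(c)=\bar\Psi\big((\tau(c)-a_i)/b_i\big)+o(1)$ with a common strictly increasing reparametrization $\tau$ and scales $b_0,b_1>0$. Then $R_d(x)=\bar\Psi\big(b_0\,\bar\Psi^{-1}(x)/b_1+(a_0-a_1)/b_1\big)+o(1)$, which is continuous at $\al$ precisely when the scale ratio $b_0/b_1$ stays bounded. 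The \textbf{main obstacle} is exactly this scale-commensurability, that is, ruling out $b_1=o(b_0)$: were the alternative far more concentrated than the null, $R$ would degenerate to a step function whose jump, by the weak-AS normalization $(\tau(c)-a_0)/b_0\to\bar\Psi^{-1}(\al)$, sits exactly at $\al$. The resolution is that the \emph{fixed} targets $0<\al<\be<1$ of \eqref{eq:al,be} forbid this. Indeed, from $(\tau(c)-a_0)/b_0\to\bar\Psi^{-1}(\al)$ and $(\tau(c)-a_1)/b_1\to\bar\Psi^{-1}(\be)$ one gets $a_1-a_0=O(b_0+b_1)$; were $b_1=o(b_0)$, this would read $a_1-a_0\asymp b_0\asymp\sqrt d$, whereas making $b_1=o(b_0)$ requires $|m_j|\to\infty$ on a positive fraction of coordinates (the single-coordinate variance of $|Z+m|^p$ being bounded below away from $0$ unless $|m|\to\infty$), which forces the mean separation $a_1-a_0=\sum_j\big(\E|Z+m_j|^p-\E|Z|^p\big)$ to be of order $\gg\sqrt d$ — a contradiction. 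Hence $b_0/b_1$ is bounded, $R$ is continuous at $\al$, and the weak-implies-strong direction follows; in the stable case $p<-\tfrac12$ the same bookkeeping is carried out with scales in place of variances.

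Finally, the statement for triples is immediate from the one for pairs: a $(p,2)$-NA triple $(n_p,n_2,\th_1)$ is by definition the pair of NA pairs $(n_p,\th_1)$ and $(n_2,\th_1)$, and $(p,2)$-strongly-AS (resp.\ weakly-AS) is the conjunction of $p$-strongly-AS (resp.\ weakly-AS) for $(n_p,\th_1)$ with $2$-strongly-AS (resp.\ weakly-AS) for $(n_2,\th_1)$; applying the pair equivalence to each component and taking the conjunction yields the triple equivalence.
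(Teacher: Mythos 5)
Your reduction to the Gaussian surrogate via \eqref{eq:NA}, the ROC reformulation, the Helly/continuity-point device, and the ``strongly $\Rightarrow$ weakly'' direction are all sound, and the final paragraph on triples matches the paper. The genuine gap is in the step that rules out a jump of the limiting ROC at $\al$. You invoke ``the distributional limit theorems established in the proof sections'' for the alternative statistic $\no{\Zd+m}_p$, but in the paper those limit theorems are proved \emph{only under an explicit smallness hypothesis on the shift} $m=\sqrt n\,\th_1$ (e.g.\ \eqref{eq:assum,p in(-infty,-1)}, \eqref{eq:assum,p in(0,2)}), and showing that a weakly-AS pair must satisfy that hypothesis is precisely the nontrivial content of Proposition~\ref{prop:AS}. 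Your scale-commensurability computation ($a_1-a_0=O(b_0+b_1)$, etc.) already presupposes that $\bar H_1(c)=\bar\Psi\big((\tau(c)-a_1)/b_1\big)+o(1)$ for some affine normalization; for ``too large'' shifts no such representation is available, so the argument is circular exactly at the critical point. (It also does not really cover $p=\pm\infty$, nor the stable range $p<-\tfrac12$, where the relevant normalization is a scale change rather than a location/scale pair and variance bookkeeping does not apply.)

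The paper closes this gap by a different mechanism, and the missing ingredient is the monotonicity of $t\mapsto\PP(\no{\Zd+t\s}_p>c)$ (Lemma~\ref{lem:mono}). Schematically (Section~\ref{proof of shifts}): under the smallness condition one proves the limit theorem, deduces that weak-AS is equivalent to the explicit characterization \eqref{eq:as iff}, and that \eqref{eq:as iff} implies strong-AS; then, if $\s$ were weakly-AS but violated the smallness condition, one shrinks it to $t\s$ with $t\in(0,1)$ chosen so that $t\s$ satisfies \eqref{eq:as iff} with a target $\tbe\in(\be,1)$, concludes that $\PP(\no{\Zd+t\s}_p>c)\to\tbe$, and contradicts $\PP(\no{\Zd+\s}_p>c)\ge\PP(\no{\Zd+t\s}_p>c)$ together with $\PP(\no{\Zd+\s}_p>c)\to\be<\tbe$. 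Some such monotonicity (or Schur-type) input appears indispensable; your ROC argument would become correct if you first established, by this or another route, that every weakly-AS shift satisfies the smallness condition under which the affine limit law for the alternative actually holds.
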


By virtue of this proposition, which appears nontrivial, we shall be simply referring to $p$-NAAS pairs \big(that is, pairs $(n,\th_1)$ that are $p$-NA and $p$-AS\big), as well as to $(p,2)$-NAAS triples, at that omitting the adjectives ``weakly'' and ``strongly''. 

\subsection{Description of $p$-NAAS pairs and triples}\label{NAAS}

We shall provide an explicit and complete characterization of all the $p$-NA pairs $(n,\th_1)$ that are $p$-AS. Toward this end, we need to introduce the terms in which such pairs will be described. 

First here, recall that a probability distribution on $\R$ is infinitely divisible if and only if 
its characteristic function is of the form 
\begin{equation}\label{eq:levy}
u\longmapsto\exp\Big(ibu-\tfrac12 au^2+\int_{\R}\big(e^{iux}-1-iux\ii{|x|\le1}\big)\,\nu(\dd x)\Big),
\end{equation}
where $b\in\R$, $a\in[0,\infty)$, and $\nu$ is a nonnegative Borel measure on $\R$ such that $\nu(\{0\})=0$ and $\int_{\R}(1\wedge x^2)\,\nu(\dd x)<\infty$. The measure $\nu$ is called the L\'evy measure and the objects $a,b,\nu$ are called the characteristics of the infinitely divisible distribution or, equivalently, of any r.v.\ with this distribution. See e.g.\ \cite[Corollary~15.8]{kallenberg}. 

\begin{definition}\label{def:zeta}
For each $p\in(-\infty,-\frac12)$, let $\zeta_{p,b}$ denote any r.v.\ with the infinitely divisible distribution with characteristics $a=0$, $b\in\R$, and $\nu=
\sqrt{\frac2\pi}\,\nu_p$, 
where 
\begin{equation*}
	\nu_p(\dd x):=-\tfrac1p\,x^{-1+1/p}\,\ii{x>0}\,\dd x. 
\end{equation*}
\end{definition}

\begin{remark*}
For each $p\in(-\infty,-\frac12)$, such a r.v.\ $\zeta_{p,b}$ is stable with index $-\frac1p\in(0,2)$; see e.g.\ \cite[Theorem~2.2.1]{ibr-lin}. In particular, it follows that the support of the distribution of $\zeta_{p,b}$ is connected; see e.g.\ \cite[Theorem~2.3.1, and Remark~1 on page 49]{ibr-lin}. 
So, the distribution function (d.f.) -- which we shall denote by $\Phi_{p,b}$ -- of the r.v.\ $\zeta_{p,b}$, is continuous and strictly increasing from $0$ to $1$ on the connected support of the distribution of $\zeta_{p,b}$. Thus, the quantiles $\Phi_{p,b}^{-1}(\al)$ and $\Phi_{p,b}^{-1}(\be)$ are well, and uniquely, defined. 
\qed\end{remark*}

Let 
\begin{equation*}
	Z\sim\No(0,1);
\end{equation*}
as usual, let $\Phi$ and $\vpi$ denote, respectively, the d.f.\ and the density function of $\No(0,1)$. 
Further, consider the moments  
\begin{equation}\label{eq:la_p}
\begin{gathered}
\la_p(s):=\E|Z+s|^p,\quad 
\la_{p,m}(s):=\E\big||Z+s|^p-\la_p(s)\big|^m; 
\end{gathered}
\end{equation}
note that $\la_p(s)<\infty$ for all $p\in(-1,\infty)$ and $s\in\R$, and $\la_{p,m}(s)<\infty$ for all $m>0$, $p\in(-1/m,\infty)$, and $s\in\R$. 
Also let 
\begin{equation*}
\begin{gathered}
\tla(s):=\E\ln|Z+s|, \quad
\tla_m(s):=\E\big|\ln|Z+s|-\tla(s)\big|^m, 
\end{gathered}
\end{equation*}
\begin{equation}\label{eq:tmu}
	\tmu_d(s):=\E\big(|Z+s|^{-1}\wedge d\big), 
\end{equation}
\begin{equation}\label{eq:la infty}
	\la_\infty(s):=\la_{\infty;d}(s):=\la_{\infty;d,\al}(s):=-\ln\PP(|Z+s|\le c_{d,\al}), 
\end{equation}
\begin{equation}\label{eq:c(d,al)}
	c_{d,\al}:=\sqrt{2\ln\Big(-\frac d{\sqrt{\pi\ln d}}\,\frac1{\ln(1-\al)}\Big)}\ . 
\end{equation}

For arbitrary expressions $a$ and $b$ which may depend on $p$, $d$, the distribution of $\X_d$, and other variables, notation $a=O(b)$ will mean that $b>0$ and $\limsup\frac{|a|}b<\infty$; alternatively, in other contexts $a=O(b)$ may also mean $|a|\le Cb$ for some positive real $C$ that is constant over the specified tuple of variables. 
Oftentimes, it will be more convenient for us to write the relation $a=O(b)$ in a parentheses-free form, as $a\OO b$ or, equivalently, $b\OOG a$.  
We shall also use the following notations: 
$a\approx b$ if $b=a+o(1)$; $a\sim b$ if $b=a(1+o(1))$; $a<<b$ or, equivalently, $b>>a$ if $|a|=o(b)$. The term ``eventually'' will, informally, mean ``when the limit transition process is close enough to the limit''; formally, the limit transition process can for instance be represented by a filter \cite{bourbaki} of subsets of the set of all the tuples; the term ``eventually'' can be then understood as ``for all the tuples in some set belonging to the filter''.  
In a somewhat restricted sense, one may think of all the variables under considerations as indexed by the dimension $d$, with $d\to\infty$; then ``eventually'' can be understood simply as ``for all large enough $d$''. 
\big(It is hoped that there will not be any confusion with the other kind of use of the symbol $\sim$, as e.g.\ in \eqref{eq:Zd}, where it means ``has the following distribution''.\big)   

Now we are ready to state the central result of this subsection and, perhaps, the entire paper. 

\begin{theorem}\label{th:AS iff}
For each $p\in[-\infty,\infty]$, a varying $p$-NA pair $(n,\th_1)$ is $p$-AS if and only if the vector $\s:=\sqrt{n}\th_1$ satisfies the relation 
\begin{equation}\label{eq:as iff}
	\sum_{j=1}^d f_p(s_j)\sim K_{\al,\be;p}\,\ka_p(d),
\end{equation}
where $f_p(s)$, $\ka_p(d)$, and $K_{\al,\be;p}$ are given, depending on $p$, by the table 
\begin{center}
  \begin{tabular*}{.999\textwidth}{@{\hspace*{4pt}} l @{\hspace*{3pt}}
  || @{\hspace*{4pt}}l @{\hspace*{3pt}}
  |@{\hspace*{4pt}}l@{\hspace*{3pt}}
  | @{\hspace*{4pt}}l @{\hspace*{3pt}} }
     $p$ & $f_p(s)$ & $\ka_p(d)$ & $K_{\al,\be;p}$ \\ \hline
        \hline
$\rule{0pt}{12pt}=-\infty$ & $e^{-s^2/2}$ & $d$ & $\ln\be\,/\,\ln\al$ \\ \hline

$\rule[-8pt]{0pt}{20pt}\in(-\infty,-1)$ & $e^{-s^2/2}$ & $d$ & $[\Phi_{p,b_p}^{-1}(\be)/\Phi_{p,b_p}^{-1}(\al)]^{1/p}$ \\ \hline

$\rule[-10pt]{0pt}{22pt}=-1$ & $\tmu_d(0)-\tmu_d(s)$ & $d$ & $\Phi_{-1,-\sqrt{2/\pi}}^{-1}(\be)-\Phi_{-1,-\sqrt{2/\pi}}^{-1}(\al)$ \\ \hline

$\rule[-6pt]{0pt}{18pt}\in(-1,-\tfrac12)$ & $\la_p(0)-\la_p(s)$ & $d^{|p|}$ & $\Phi_{p,b_p}^{-1}(\be)-\Phi_{p,b_p}^{-1}(\al)$ \\ \hline

$\rule[-6pt]{0pt}{18pt}=-\tfrac12$ & $\la_{-\frac12}(0)-\la_{-\frac12}(s)$ & $(d\,\ln d)^{1/2}$ & $[\Phi^{-1}(\be)-\Phi^{-1}(\al)]\,(2/\pi)^{1/4}$ \\ \hline

$\rule{0pt}{12pt}\in(-\tfrac12,0)$ & $\la_p(0)-\la_p(s)$ & $d^{1/2}$ & $[\Phi^{-1}(\be)-\Phi^{-1}(\al)]\,\sqrt{\la_{p,2}(0)}$ \\ \hline

$\rule[-6pt]{0pt}{22pt}=0$ & $\tla(s)-\tla(0)$ & $d^{1/2}$ & $[\Phi^{-1}(\be)-\Phi^{-1}(\al)]\,\sqrt{\tla_2(0)}$ \\ \hline

$\rule{0pt}{12pt}\in(0,\infty)$ & $\la_p(s)-\la_p(0)$ & $d^{1/2}$ & $[\Phi^{-1}(\be)-\Phi^{-1}(\al)]\,\sqrt{\la_{p,2}(0)}$ \\ \hline

$\rule{0pt}{12pt}=\infty$ & $\la_\infty(s)-\la_\infty(0)$ & $1$ & $\ln(1-\al)-\ln(1-\be)$ \\ \hline

  \end{tabular*}
\end{center}
with 
\begin{equation}\label{eq:b}
b_p:=\left\{
\begin{alignedat}{2}
&-\frac{\sqrt{2/\pi}}{p+1} &&\text{ if $p\in(-\infty,-\tfrac12)\setminus\{-1\}$}, \\
&-\sqrt{2/\pi} &&\text{ if $p=-1$}. 
\end{alignedat}
\right.
\end{equation}
\end{theorem}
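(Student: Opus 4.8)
The plan is to use the $p$-NA hypothesis as the first reduction: by Definition~\ref{def:NA} the power functions $c\mapsto\E_\0\de_{n,p,c}$ and $c\mapsto\E_{\th_1}\de_{n,p,c}$ agree, up to an $o(1)$ error that is uniform in $c$, with the corresponding probabilities computed for the Gaussian surrogate $\Zd+\sqrt n\,\th$. So it suffices to analyze the exact law of $\no{\Zd+\s}_p$ with $\s:=\sqrt n\,\th_1$. For $p\in(-\infty,\infty)\setminus\{0\}$ the rejection event is $\{\sum_j|Z_j+s_j|^p>d\,c^p\}$ (with the inequality reversed when $p<0$); for $p=0$ it is $\{\sum_j\ln|Z_j+s_j|>d\ln c\}$; and for $p=\pm\infty$ the probability factors as $\prod_j\PP(|Z_j+s_j|>c)$ (resp.\ $\le c$). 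Thus in every case the problem becomes the limiting behaviour of a sum (or a product) of the \emph{independent} summands $|Z_j+s_j|^p$, which are i.i.d.\ under $\0$ and non-identically distributed under $\th_1$.

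The engine of the proof is then a convergence theorem for triangular arrays of independent, uniformly asymptotically negligible summands to a prescribed infinitely divisible law of the form \eqref{eq:levy}, whose three classical conditions (on truncated means, truncated variances, and the L\'evy measure) I would verify regime by regime. For $p\in(-\tfrac12,\infty)$ the variance $\la_{p,2}(0)=\Var|Z|^p$ (resp.\ $\tla_2(0)$ at $p=0$) is finite, the L\'evy measure is trivial, and after centering by $\sum_j\la_p(s_j)$ and scaling by $\sqrt{\sum_j\la_{p,2}(s_j)}$ one gets a normal limit. For $p<-\tfrac12$ the tail asymptotics $\PP(|Z+s|^p>t)\sim\sqrt{2/\pi}\,e^{-s^2/2}\,t^{1/p}$ identifies precisely the L\'evy measure $\sqrt{2/\pi}\,\nu_p$ of Definition~\ref{def:zeta}, giving a stable limit $\zeta_{p,b_p}$ of index $-\tfrac1p$, with normalization $\ka_p(d)=d^{|p|}$; this index lies in $(1,2)$ for $p\in(-1,-\tfrac12)$ (finite mean) and in $(0,1)$ for $p\in(-\infty,-1)$ (infinite mean, whence the coordinate weight is the scale $e^{-s^2/2}$ rather than a mean shift). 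The boundary values $p=-\tfrac12$ and $p=-1$ are handled by the same machinery with, respectively, the slowly varying correction $\ka_{-1/2}(d)=(d\ln d)^{1/2}$ forced by the divergent truncated variance, and the stable-index-$1$ centering by the truncated mean $\tmu_d$. The cases $p=\pm\infty$ are genuinely elementary, reducing to the product identity $\sum_j\ln\PP(|Z_j+s_j|\le c)\to\text{const}$ with $\la_\infty$ as in \eqref{eq:la infty}.

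The decisive step — and the one I expect to be the main obstacle — is to show that under the combined $p$-NA and $p$-AS constraints the signal $\s$ is confined to a regime in which the \emph{scale} of the limit under $\th_1$ equals that under $\0$: namely $\sum_j\la_{p,2}(s_j)\sim d\,\la_{p,2}(0)$ in the normal regimes, and the analogous statement for the L\'evy tail in the stable regimes, so that passing from $\0$ to $\th_1$ shifts only the \emph{location}, by exactly $\sum_j f_p(s_j)$. Here I would use the monotonicity of $t\mapsto\PP_{t\uu}(\no{\overline{\Xd_n}}_p>c)$ (which also underlies Proposition~\ref{prop:AS}) together with the Schur${}^2$-input from \cite{schur2} to rule out coordinates so large that they would either inflate the scale or destroy asymptotic negligibility. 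Granting this reduction, the theorem follows by quantile matching: the condition $\E_\0\de_{n,p,c}\to\al$ pins down $d\,c^p$ (or $\ln c$, or $c$) through $\Phi^{-1}(\al)$, or $\Phi_{p,b_p}^{-1}(\al)$, or the product identity; imposing in addition $\E_{\th_1}\de_{n,p,c}\to\be$ and subtracting the two quantile equations eliminates $c$ and leaves precisely
\[
\sum_{j=1}^d f_p(s_j)\sim K_{\al,\be;p}\,\ka_p(d),
\]
with each table entry for $K_{\al,\be;p}$ read off as the relevant quantile difference.

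Because these limit theorems deliver the \emph{exact} asymptotic power for every admissible critical value, the argument is reversible: the displayed relation is at once necessary and sufficient for $p$-AS, and the cleanness of the equivalence (no dependence on the particular choice of $c$) is exactly what Proposition~\ref{prop:AS} records as the coincidence of the weak and strong notions. The delicate points to watch will be running the infinitely-divisible convergence criteria with enough uniformity to splice them onto the $p$-NA Gaussian approximation, and the careful truncation analysis needed in the stable and boundary cases $p\in(-\infty,-\tfrac12]$ and $p=\pm\infty$, especially the slowly varying $\ln d$ correction at $p=-\tfrac12$.
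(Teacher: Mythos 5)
Your plan coincides with the paper's: reduce via the $p$-NA condition to the purely Gaussian shift problem (this is Proposition~\ref{prop:n,th->s}, after which everything rests on Proposition~\ref{prop:AS shift iff}), apply the classical null-array criteria of Lemma~\ref{lem:nu} (or a Berry--Esseen bound in the normal regimes $p\ge-\tfrac12$) case by case over exactly the same partition of $p$ into normal, stable, and boundary regimes, confine $\s$ to the small-signal regime where only the location and not the scale of the limit law moves, and finish by quantile matching to read off $K_{\al,\be;p}$. The only cosmetic difference is in the confinement step: the paper needs no Schur${}^2$ input there, using instead only the monotonicity in $t$ of $\PP(\no{\Zd+t\s}_p>c)$ (Lemma~\ref{lem:mono}) via a shrink-to-$t\s$ argument that produces a contradiction with a target level $\tbe>\be$ whenever a weakly-AS shift violates the smallness condition.
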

Note that $f_p(s)$ depends on $d$ only if $p=-1$ or $p=\infty$ (in which latter case, it depends also on $\al$). 

The next proposition addresses the question of existence of $p$-NAAS triples. 

\begin{proposition}\label{prop:AS exist}
\ 
\begin{enumerate}[(I)] 
	\item 
	For any $p\in[-\infty,\infty]$, there exists a varying pair $(n,c)$ such that the relation $\E_{\0}\de_{n,p,c}\to\al$ in \eqref{eq:AS} holds and at that \eqref{eq:NA}
holds for $\th=\0$. 
	\item 
For any $p\in[0,\infty]$ and any varying $\no\cdot_2$-unit vector $\uu$, there exists a $(p,2)$-NAAS varying triple  $(n_p,n_2,\th_1)$ with $\th_1$ in the direction of $\uu$. 
	\item
Take any $p\in[-\infty,0)$. Take any varying $\no\cdot_2$-unit vector $\uu$ and let 
\begin{equation}\label{eq:d_0}
	d_0(\uu):=\sum\ii{u_j=0}. 
\end{equation}
Then the following five statements are equivalent to each other: 
\begin{enumerate}
	\item	there is a $(p,2)$-NAAS varying triple $(n_p,n_2,\th_1)$ with $\th_1$ in the direction of $\uu$; 
	\item  for \emph{some} varying $c$ and some $p$-NA varying pair $(n,\th_1)$ with $\th_1$ in the direction of $\uu$ such that $\E_{\0}\de_{n,p,c}\to\al$, one has $\lim\E_{\th_1}\de_{n,p,c}\ge\be$; 
	\item  for \emph{any} varying $c$ and some $p$-NA varying pair $(n,\th_1)$ with $\th_1$ in the direction of $\uu$ such that $\E_{\0}\de_{n,p,c}\to\al$, one has $\lim\E_{\th_1}\de_{n,p,c}\ge\be$; 
	\item 
\begin{equation*}
\begin{alignedat}{2}
	\inf_{t\in[0,\infty)}\sum f_p(tu_j)&\le(K_p+o(1))\,\ka_p(d) &&\text{ if }p\in[-\infty,1), \\
	\sup_{t\in[0,\infty)}\sum f_p(tu_j)&\ge(K_p-o(1))\,\ka_p(d) &&\text{ if }p\in[-1,0);
\end{alignedat}
\end{equation*}
	\item 
\begin{equation*}
	d_0(\uu)\le
	\left\{
	\begin{alignedat}{2}
&(K_p+o(1))d &&\text{ if }p\in[-\infty,-1), \\
&\textstyle{d-\big(\sqrt{\tfrac\pi2}\,K_{-1}-o(1)\big)\tfrac d{\ln d} } &&\text{ if }p=-1, \\
&d-\big(\tfrac{K_p}{\la_p(0)}-o(1)\big)d^{|p|\vee\frac12} &&\text{ if }p\in(-1,0)\setminus\{-\tfrac12\}, \\
&d-\big(\tfrac{K_{-1/2}}{\la_{-1/2}(0)}-o(1)\big)(d\ln d)^{\frac12} &&\text{ if }p=-\tfrac12.
	\end{alignedat}
	\right.
\end{equation*}
\end{enumerate}
\end{enumerate}
\end{proposition}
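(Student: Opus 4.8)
\emph{Part~(I).} The plan is to first secure asymptotic normality under the null and then read off the critical value as a quantile. For $\th=\0$, relation \eqref{eq:NA} concerns only the sampling distribution of the test statistic and holds, by (the proof of) Proposition~\ref{prop:NA}, as soon as $n\ge n_p(d)$; equivalently it follows from the Berry--Esseen bound underlying that proposition. Granting \eqref{eq:NA} at $\th=\0$, it remains to choose a varying $c=c(d)$ with $\PP\big(\no{\Zd}_p\le c\big)\to1-\al$. For $p\in(-\infty,\infty)$ the r.v.\ $\no{\Zd}_p$ has a continuous, strictly increasing distribution function, so one may take $c(d)$ to be its $(1-\al)$-quantile; for $p=\pm\infty$ one uses the explicit laws of $\max_j|Z_j|$ and $\min_j|Z_j|$, the former giving $c\approx c_{d,\al}$ of \eqref{eq:c(d,al)}. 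Then \eqref{eq:NA} gives $\E_\0\de_{n,p,c}=\PP_\0\big(\no{\sqrt n\,\overline{\Xd_n}}_p>c\big)\to\al$.

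\emph{Part~(II).} The plan is to solve the $p$-AS identity \eqref{eq:as iff} exactly along $\uu$ and then enlarge the sample size to force admissibility. Writing $\th_1=t'\uu$ and $\s=\sqrt n\,\th_1=t\uu$ with $t=\sqrt n\,t'\ge0$, condition \eqref{eq:as iff} reads $\sum_j f_p(tu_j)\sim K_{\al,\be;p}\,\ka_p(d)$. For every $p\in[0,\infty]$ the map $t\mapsto\sum_j f_p(tu_j)$ is continuous, vanishes at $t=0$, and tends to $+\infty$ as $t\to\infty$ (since $f_p(s)\to\infty$ as $|s|\to\infty$ and $\uu\ne\0$); so, by the intermediate value theorem, for each $d$ there is a $t=t(d)>0$ at which $\sum_j f_p(tu_j)$ \emph{equals} $K_{\al,\be;p}\,\ka_p(d)$. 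Given such $t$, I would choose $n_p$ so large that $n_p\ge n_p(d)$ and $t\sqrt d/\sqrt{n_p}\le\tth_p(d)$, and set $\th_1:=(t/\sqrt{n_p})\uu$; since $\uu$ is $\no\cdot_2$-unit one has $\|\th_1\|=t\sqrt d/\sqrt{n_p}\le\tth_p(d)$, so $(n_p,\th_1)$ is $p$-NA by Proposition~\ref{prop:NA} and $p$-AS by Theorem~\ref{th:AS iff}. Finally, $f_2(s)=\la_2(s)-\la_2(0)=s^2$ and $\sum_j u_j^2=d$, so the $2$-AS identity reads $t_2^2\,d\sim K_{\al,\be;2}\,d^{1/2}$, which is solvable for every $\uu$; taking $n_2=(t_2/t')^2$ (and enlarging $n_p$ further, if needed, so that $t'$ is small enough for NA to hold for the $2$-test as well) produces a $2$-NAAS pair $(n_2,\th_1)$ with the \emph{same} $\th_1$. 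Thus $(n_p,n_2,\th_1)$ is the required $(p,2)$-NAAS triple.

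\emph{Part~(III).} Here the plan is to route the five statements through the solvability of \eqref{eq:as iff} along $\uu$, i.e.\ through the range of $F_p(t):=\sum_j f_p(tu_j)$, $t\in[0,\infty)$. The first step is a monotonicity-and-range analysis of $F_p$. For $p\in[-\infty,-1)$ one has $f_p(s)=e^{-s^2/2}$, so $F_p$ \emph{decreases} from $F_p(0)=d$ to $\inf_t F_p=d_0(\uu)$ (the terms with $u_j=0$ surviving). For $p\in(-1,0)$ one has $f_p(s)=\la_p(0)-\la_p(s)$ with $\la_p(s)\se0$ as $|s|\to\infty$, and for $p=-1$ one has $f_{-1}(s)=\tmu_d(0)-\tmu_d(s)$ with $\tmu_d(s)\se0$; in either case $F_p$ \emph{increases} from $0$ to $\sup_t F_p=(d-d_0(\uu))\,\la_p(0)$, resp.\ $(d-d_0(\uu))\,\tmu_d(0)$ with $\tmu_d(0)\sim\sqrt{2/\pi}\,\ln d$. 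The boundedness (saturation) of $f_p$ here is exactly what caps $F_p$ and thereby turns solvability of \eqref{eq:as iff} into a ceiling on $d_0(\uu)$: writing out $\inf_tF_p\le(K_p+o(1))\ka_p(d)$ for $p\in[-\infty,-1)$ and $\sup_tF_p\ge(K_p-o(1))\ka_p(d)$ for $p\in[-1,0)$ and solving for $d_0(\uu)$ gives the algebraic equivalence (d)$\Leftrightarrow$(e); note that the $\inf$-inequality is vacuous for $p\in[-1,0)$ and that for $p\in[-\infty,-1)$ only the $\inf$-inequality is present, so in each case (d) reduces to a single binding bound. To link these with (a),(b),(c), I would use the monotonicity of $t\mapsto\E_{t\uu}\de_{n,p,c}$ stated in the Introduction together with Theorem~\ref{th:AS iff}: calibrating the null to $\al$ pins $c$ down to its asymptotic quantile, the attainable limiting powers then fill the interval from $\al$ up to a supremum, and $\E_{\th_1}\de\to\be$ is attainable exactly when $F_p$ meets the level $K_p\ka_p(d)$. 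Since power is increasing in $t$ while $F_p$ is increasing (for $p\in[-1,0)$) or decreasing (for $p\in[-\infty,-1)$) in $t$, ``limiting power $\ge\be$ attainable'' is precisely the one-sided condition in (d), giving (b)$\Leftrightarrow$(d). Then (a)$\Rightarrow$(b) is immediate; (b)$\Rightarrow$(a) follows by the intermediate value theorem in $t$, tuning the attainable power down to exactly $\be$ (NA being preserved, as $\|\th_1\|$ only decreases); and (b)$\Leftrightarrow$(c) follows, as in Proposition~\ref{prop:AS}, from the fact that for a given $p$-NA pair all $c$ calibrating the null to $\al$ are asymptotically equal and hence yield the same limiting power. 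Throughout, the $2$-component imposes no restriction, by the computation $f_2(s)=s^2$ from Part~(II).

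\emph{Main obstacle.} The crux is the range analysis of $F_p$ for $p<0$: establishing the monotonicity of $\la_p(s)$ and $\tmu_d(s)$ in $|s|$, the asymptotics $\tmu_d(0)\sim\sqrt{2/\pi}\,\ln d$, and the exact values of $\inf_tF_p$ and $\sup_tF_p$, since it is the saturation of $f_p$ that produces the nontrivial ceiling on $d_0(\uu)$ in (e). The second delicate point is the $o(1)$-bookkeeping that converts the ``$\sim$'' of \eqref{eq:as iff} and the monotone power relation into the one-sided ``$o(1)$'' inequalities of (d) and (e) uniformly in $d$ --- including the diagonal argument that passes from ``for each $d$ there is a suitable $t$'' to an actual varying pair --- while the quantifier passage (b)$\Leftrightarrow$(c) rests on the asymptotic uniqueness of the null-calibrated $c$ already isolated in Proposition~\ref{prop:AS}.
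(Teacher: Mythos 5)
Your architecture is essentially the paper's: both arguments reduce the statements about pairs and triples to the Gaussian ``shift'' problem (this is the content of the paper's Propositions~\ref{prop:n,th->s} and \ref{prop:AS shift exist}), and your range analysis of $F_p(t):=\sum_j f_p(tu_j)$ --- decreasing from $d$ to $d_0(\uu)$ for $p\in[-\infty,-1)$, increasing from $0$ to $(d-d_0(\uu))\,\la_p(0)$, resp.\ $(d-d_0(\uu))\,\tmu_d(0)\sim(d-d_0(\uu))\sqrt{2/\pi}\,\ln d$, for $p\in(-1,0)$, resp.\ $p=-1$ --- is exactly how the paper converts (III)(d) into (III)(e). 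The one substantive difference of route is in part (II): you solve the asymptotic identity \eqref{eq:as iff} exactly by the intermediate value theorem applied to $F_p$, whereas the paper first uses the exact monotonicity and continuity of $t\mapsto\PP(\no{\Zd+t\uu}_p>c)$ (Lemma~\ref{lem:mono}) together with $\PP(\no{\Zd+t\uu}_p>c)\to1$ to hit the level $\be$ exactly, and only then invokes the characterization of AS shifts. Both work; your variant leans a little harder on Theorem~\ref{th:AS iff} and a little less on the exact distributional monotonicity.

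The step you gloss over is the passage from shifts to \emph{integer} sample sizes. Setting $\th_1:=(t/\sqrt{n_p})\uu$ makes the $p$-shift exact, but the triple requires the \emph{same} $\th_1$ to serve the $2$-test, and your $n_2=(t_2/t')^2$ is not an integer in general; rounding it up multiplies the intended $2$-AS shift by a factor $\sqrt{\lceil x\rceil/x}$, and one must check that the AS property survives this perturbation. The same issue recurs in your step (III)(b)$\Rightarrow$(a), now for the $p$-component with $p\in[-\infty,0)$. The paper isolates precisely this point in Propositions~\ref{prop:below}, \ref{prop:t->1} and \ref{prop:s->n,th}: the lower bound $\|\s_p\|\ge\si_p(d)$ forces the rounding factor to tend to $1$, and Proposition~\ref{prop:t->1} shows that $t\s$ remains a $p$-AS shift when $t\to1$ (fast enough when $p=\infty$). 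For the $2$-component the needed stability is immediate from $f_2(s)=s^2$, so part (II) of your proof is repairable with one added sentence; but for general $p<0$, as needed in part (III), the stability under $t\to1$ is not free (it rests on bounds of the form $|s f_p'(s)|=O(f_p(s))$) and should be proved or cited explicitly.
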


\begin{remark}\label{rem:power}
In view of Proposition~\ref{prop:NA}, statement (III) of Proposition~\ref{prop:AS exist} implies that the tests $\de_{n,p,c}$ -- for $p\in[-\infty,0)$ and directions $\uu$ with too many zero coordinates $u_j$ -- lack the necessary power even for arbitrarily large sample sizes $n$. 
\qed\end{remark}

\begin{remark}\label{rem:test,power}
From the proof of Theorem~\ref{th:AS iff} -- see e.g.\ \eqref{eq:converg}, one can can easily obtain critical values $c$ for the size of $\E_{\0}\de_{n,p,c}$ of the test $\de_{n,p,c}$ defined in \eqref{eq:test} to approximately equal the prescribed value $\al$. 
For instance, if $p\in(0,\infty)$, then an approximate-size-$\al$ test is the indicator of the inequality 
\begin{equation*}
	\no{\overline{\Xd_n}}_p^p>\E|Z|^p+\tfrac1{\sqrt d}\,\Phi^{-1}(1-\al)\sqrt{\Var(|Z|^p)}. 
\end{equation*}
Also, from the proof or from the statement of Theorem~\ref{th:AS iff}, it is easy to obtain an approximate expression for the power of the test; for instance, solving \eqref{eq:as iff} for $\be$ when $p\in(0,\infty)$, one has   
\begin{equation*}
	\be\approx\Phi\Big(\Phi^{-1}(\al)+\frac1{\sqrt{d\,\Var(|Z|^p)}}\,\sum_{j=1}^d\big(\E|Z+\tth_{1,j}\sqrt n\,|^p-\E|Z|^p\big)\Big),
\end{equation*}
where the $\tth_{1,j}$'s are the coordinates of the alternative vector $\th_1$. 
\qed\end{remark}

\subsection{Asymptotic relative efficiency ($\are$) of the $p$-mean tests}\label{are}
In this subsection we shall describe the asymptotic relative efficiency ($\are$) of the $p$-mean tests, relative to the corresponding $2$-mean tests. 

\begin{definition}\label{def:are}
For any given $p\in[-\infty,\infty]$ and any given varying $\no\cdot_2$-unit vector $\uu$,    
let  
\begin{equation*}
\are_{p,2}:=\are_{p,2,\uu}
:=\are_{p,2,\uu}(\al,\be):=\lim\frac{n_2}{n_p} 	
\end{equation*}
provided that 
	this limit 
	exists, and is the same, for all $(p,2)$-NAAS varying triples $(n_p,n_2,\th_1)$ with $\th_1$ in the direction of $\uu$; at that, let us allow the value $\infty$ for this limit, and hence for $\are_{p,2}$. 
As Proposition~\ref{prop:AS exist} shows, for $p\in[-\infty,0)$ and varying $\no\cdot_2$-unit vectors $\uu$ with too many zero coordinates $u_j$, there are no $(p,2)$-NAAS varying triples $(n_p,n_2,\th_1)$ with $\th_1$ in the direction of $\uu$; for such $\uu$, in view of Remark~\ref{rem:power}, we may and shall set $\are_{p,2}:=0$. 
 \qed
\end{definition}

To simplify writing, we allow the same symbol $\uu$ to denote sometimes a particular, ``fixed'' vector in $\R^d$ and, in other contexts, a ``varying vector'', that is, a function (of $d$ and/or other, possibly ``hidden'' parameters), yet hoping to avoid confusion. Let us emphasize that, in the notation $\are_{p,2,\uu}$, the symbol $\uu$ denotes such a function, so that the value of $\are_{p,2,\uu}$ depends on this entire vector function $\uu$ rather than on a particular vector value of the function; thus, $\are_{p,2,\uu}$ can be thought of as a constant, while thinking of the given function $\uu$ as a ``varying vector''. 

Note that Definitions~\ref{def:AS} and \ref{def:are} are generally in accordance with 
\cite{aredef}.

To describe $\are_{p,2,\uu}$, we need more definitions. 

Let 
\begin{equation}\label{eq:a(p)}
a_p:=\left\{
\begin{aligned}
\frac{|p| \Ga (\frac{p+1}{2})}{\sqrt{2} \sqrt{\Ga(\frac12)\,
   \Ga(p+\frac12)-\Ga(\frac{p+1}{2})^2}}\, &\text{ if } p\in(-\tfrac12,0)\cup(0,\infty),\\
\tfrac2\pi&\text{ if } p=0. 
\end{aligned}
\right.
\end{equation}
Note that $a_p$ is well defined -- because the Gamma function is strictly log-convex and hence $\Ga(\frac12)\,
   \Ga(p+\frac12)-\Ga(\frac{p+1}{2})^2>0$ for all $p\in(-\tfrac12,\infty)\setminus\{0\}$. 
The following proposition establishes the main properties of $a_p$ and, in particular, allows one to extend the above definition of $a_p$ to all $p\in[-\infty,\infty]$.    

\begin{proposition}\label{prop:a_p}
One has 
\begin{equation}\label{eq:0<a<1}
	a_p\in(0,1)
\end{equation}
for all $p\in(-\tfrac12,\infty)\setminus\{2\}$. 
Moreover, $a_2=1$ and $a_p$ is continuous at $p=0$ \big(and hence real-analytic in $p\in(-\tfrac12,\infty)$\big). 
Also, $a_{-\frac12+}=a_{\infty-}=0$; accordingly, let us \emph{define} the values $a_{-\frac12}$ and $a_\infty$ as $0$. Moreover, let $a_p:=0$ for $p\in[-\infty,-\frac12)$. 
\end{proposition}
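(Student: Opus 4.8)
The plan is to recognize the explicit expression \eqref{eq:a(p)} as (the absolute value of) a correlation coefficient, after which all three assertions follow almost at once. First I would pass from Gamma functions to moments. Using $\la_p(0)=\E|Z|^p=2^{p/2}\Ga(\tfrac{p+1}2)/\sqrt\pi$ and $\E|Z|^{2p}=2^p\Ga(p+\tfrac12)/\sqrt\pi$, so that $\Ga(\tfrac{p+1}2)^2=\tfrac\pi{2^p}\la_p(0)^2$ and $\sqrt\pi\,\Ga(p+\tfrac12)-\Ga(\tfrac{p+1}2)^2=\tfrac\pi{2^p}\la_{p,2}(0)$ (recall $\la_{p,2}(0)=\Var(|Z|^p)=\E|Z|^{2p}-\la_p(0)^2$ from \eqref{eq:la_p}), one multiplies numerator and denominator of $a_p^2$ by $2^p/\pi$ to obtain
\[
a_p^2=\frac{p^2\,\la_p(0)^2}{2\,\la_{p,2}(0)}\qquad(p\in(-\tfrac12,\infty)\setminus\{0\}).
\]
Next, the Gamma recursion $\E|Z|^{p+2}=(p+1)\,\E|Z|^p$ gives $\Cov(|Z|^p,Z^2)=\E|Z|^{p+2}-\E|Z|^p=p\,\la_p(0)$, while $\Var(Z^2)=\E Z^4-1=2$. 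Hence
\[
a_p^2=\frac{\Cov(|Z|^p,Z^2)^2}{\Var(|Z|^p)\,\Var(Z^2)},
\]
so that $a_p$ is the absolute correlation coefficient between $|Z|^p$ and $Z^2$.

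Given this, \eqref{eq:0<a<1} and $a_2=1$ are immediate from the Cauchy--Schwarz inequality: $a_p\le1$, with $a_p>0$ because $|Z|^p$ is nondegenerate and strictly monotone in $|Z|$ (whence $\Cov(|Z|^p,Z^2)\ne0$) for every $p\ne0$. Equality $a_p=1$ forces $|Z|^p$ and $Z^2$ to be a.s.\ affinely related; as $Z$ has a density, this means $|z|^p=\alpha z^2+\beta$ for all $z$, i.e.\ $p=2$. Thus $a_p\in(0,1)$ for $p\in(-\tfrac12,\infty)\setminus\{2\}$, and $a_2=1$.

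For the behavior at $p=0$ I would argue analytically. Both $M(p):=\E|Z|^p=\E e^{p\ln|Z|}$ and $V(p):=\la_{p,2}(0)=\E|Z|^{2p}-M(p)^2$ are real-analytic in $p\in(-\tfrac12,\infty)$. A direct differentiation gives $V(0)=V'(0)=0$ and $V''(0)=2\Var(\ln|Z|)=2\,\tla_2(0)$, so $V$ has a double zero at $0$ and $h(p):=V(p)/p^2$ extends real-analytically with $h(0)=\tla_2(0)>0$. Since $\sqrt{V(p)}=|p|\sqrt{h(p)}$, the factor $|p|$ cancels the one in the numerator of $a_p$, leaving
\[
a_p=\frac{|p|\,M(p)}{\sqrt2\,\sqrt{V(p)}}=\frac{M(p)}{\sqrt{2\,h(p)}},
\]
which is real-analytic across $p=0$. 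Evaluating and using the trigamma value $\tla_2(0)=\Var(\ln|Z|)=\tfrac14\,\psi_1(\tfrac12)=\pi^2/8$ yields $a_0=1/\sqrt{2\,\tla_2(0)}=2/\pi$, matching \eqref{eq:a(p)}. Since $a_p$ is plainly real-analytic away from $0$ (positive denominator, analytic $|p|$ on each of $(-\tfrac12,0)$ and $(0,\infty)$), it is real-analytic on all of $(-\tfrac12,\infty)$, establishing the parenthetical claim.

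Finally, for the endpoints I would use the moment form directly. As $p\to-\tfrac12^+$ one has $2p\to-1^+$, so $\E|Z|^{2p}\to\infty$ and hence $\la_{p,2}(0)\to\infty$, while $|p|\la_p(0)\to\tfrac12\E|Z|^{-1/2}<\infty$; thus $a_p\to0$. As $p\to\infty$ I would write $a_p^2=p^2/\big(2(R(p)-1)\big)$ with $R(p):=\E|Z|^{2p}/\la_p(0)^2=\sqrt\pi\,\Ga(p+\tfrac12)/\Ga(\tfrac{p+1}2)^2$; Stirling's formula (equivalently, the Legendre duplication formula) shows that $R(p)\to\infty$ exponentially fast, so that $p^2=o(R(p))$ and $a_p^2\to0$. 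This proves $a_{-\frac12+}=a_{\infty-}=0$ and renders the definitions $a_{-1/2}:=a_\infty:=0$, and $a_p:=0$ for $p\in[-\infty,-\tfrac12)$, consistent with one-sided continuity. The main obstacle is this last limit: unlike the other parts it is genuinely asymptotic rather than algebraic, requiring the exponential growth of the variance-to-squared-mean ratio $R(p)$; the cancellation of $|p|$ against the double zero of $V$ at $0$ is the second, more delicate, point.
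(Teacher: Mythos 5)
Your proof is correct, and for the central claim $a_p\in(0,1)$ it takes a genuinely different and substantially shorter route than the paper. The identity $a_p^2=\Cov(|Z|^p,Z^2)^2/\big(\Var(|Z|^p)\Var(Z^2)\big)$ checks out: with $\la_p(0)=2^{p/2}\Ga(\tfrac{p+1}2)/\sqrt\pi$ one gets $a_p^2=p^2\la_p(0)^2/(2\la_{p,2}(0))$, and the recursion $\E|Z|^{p+2}=(p+1)\E|Z|^p$ together with $\Var(Z^2)=2$ turns this into the squared correlation of $|Z|^p$ with $Z^2$. The paper instead reformulates $a_p<1$ as the Gamma-function inequality $r(p)=\Ga(\tfrac12)\Ga(p+\tfrac12)/\Ga(\tfrac{p+1}2)^2>1+\tfrac{p^2}2$, notes that plain log-convexity only yields $r(p)>1$, and then manufactures three factorizations of $r$ via the ``plus~1'' identity and the Legendre duplication formula, finishing with a semialgebraic verification by Mathematica's \verb9Reduce9. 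Your Cauchy--Schwarz argument replaces all of that with one line, and the equality analysis (affine dependence of $|Z|^p$ on $Z^2$ forces $p=2$, using that $Z$ has full support and comparing growth as $|z|\to\infty$) cleanly isolates $p=2$ as the unique equality case; it also explains conceptually why the $2$-mean test is the distinguished one. The remaining parts of your argument essentially coincide with the paper's: the removable singularity at $p=0$ is the same double-zero cancellation (you phrase it through $V(p)=\Var(|Z|^p)$ and $V''(0)=2\Var\ln|Z|=\pi^2/4$, the paper through $F(p)/p^2$ and l'Hospital), and the endpoint limits use $\Ga(0+)=\infty$ and the exponential growth of $\Ga(p+\tfrac12)/\Ga(\tfrac{p+1}2)^2$ exactly as in the paper --- where you are in fact slightly more careful, since one needs that ratio to outgrow $p^2$, not merely to tend to infinity. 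The only housekeeping worth adding is a sentence noting that $p\mapsto\E|Z|^p$ and $p\mapsto\E|Z|^{2p}$ are real-analytic on $(-\tfrac12,\infty)$ (clear from the closed Gamma forms), so that the factorization $V(p)=p^2h(p)$ with $h$ analytic and $h(0)>0$ is legitimate.
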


Next, recall that for any $E\subseteq\R^d$ a function $f\colon E\rightarrow[0,\infty]$ is referred to as Schur-concave if it reverses the
Schur majorization $\succeq$: for any $\uu$ and $\vv$ in $E$ such that $\uu\succeq\vv$, one has $f(\uu)\le f(\vv)$; replacing here the inequality $f(\uu)\le f(\vv)$ by $f(\uu)\ge f(\vv)$, one obtains the definition of a Schur-convex function. 

Recall also the definition of the
Schur majorizarion: $\uu\succeq\vv$ (or, equivalently, 
$\vv\preceq\uu$)
means that $u_1+\cdots+u_d=v_1+\dots+v_d$ and $u_{[1]}+\cdots+u_{[j]}\ge v_{[1]}+\dots+v_{[j]}$ for all $j\in\left\{ 1,\ldots,d\right\} $,
where $u_{[1]},\dots,u_{[d]}$ are the
ordered numbers $u_1,\dots,u_d$, from the largest to the smallest. 

Let us say that a Lebesgue-measurable function $f\colon\R^d\rightarrow[0,\infty]$ is \emph{Schur${}^2$-concave} if $f(\vv)=f(v_1,\dots,v_d)$ is Schur-concave in $(v_1^2,\dots,v_d^2)$, that is, if there exists a Schur-concave function $g\colon[0,\infty)^d\to[0,\infty]$ such that $f(v_1,\dots,v_d)=g(v_1^2,\dots,v_d^2)$ for all $(v_1,\dots,v_d)\in\R^d$; replacing here all entries of ``concave'' by ``convex'', one obtains the definition of a \emph{Schur${}^2$-convex function}.  

For brevity, let $\vv^2:=(v_1^2,\dots,v_d^2)$ for any $\vv=(v_1,\dots,v_d)\in\R^d$. 

Let us then say that a set $A\subseteq\R^d$ is \emph{Schur${}^2$-convex} if its indicator function is  Schur${}^2$-concave; that is, if the conditions $\vv\in A$, $\ww\in\R^d$, and $\ww^2\preceq\vv^2$ imply $\ww\in A$. 
Similarly, one defines Schur${}^2$-concave sets; thus, a set $A\subseteq\R^d$ is \emph{Schur${}^2$-concave} if and only if its complement to $\R^d$ is Schur${}^2$-convex. 

Note that 
\begin{equation*}
	\one^2\preceq\uu^2\preceq\big(\sqrt{d}\,\ee_1\big)^2
\end{equation*}
for any $\no\cdot_2$-unit vector $\uu\in\R^d$, where 
\begin{equation*}
	\text{$\one:=\one_d:=(1,\dots,1)\in\R^d$\quad and\quad $\ee_1:=\ee_{d,1}:=(1,0,\dots,0)\in\R^d$.}
\end{equation*} 


\begin{theorem}\label{th:are schur2} \ 
In a certain sense, 
$\are_{p,2,\uu}$ is Schur${}^2$-concave or Schur${}^2$-convex in $\uu$ depending on whether $p\in[-\infty,2]$ or $p\in[2,\infty]$. Namely, take any $p\in[-\infty,\infty]$ and 
let $\uu$ and $\vv$ be any two varying $\no\cdot_2$-unit vectors such that $\uu^2\preceq\vv^2$ and at that $\are_{p,2,\uu}$ and $\are_{p,2,\vv}$ exist; then $\are_{p,2,\uu}\ge\are_{p,2,\vv}$ if $p\in[-\infty,2]$ and $\are_{p,2,\uu}\le\are_{p,2,\vv}$ if $p\in[2,\infty]$. 
\end{theorem}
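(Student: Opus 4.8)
The plan is to reduce the comparison of the two $\are$'s to a comparison of $p$-mean sample sizes, exploiting the spherical invariance of the $2$-mean test and then invoking the Schur${}^2$-monotonicity of Gaussian measures from \cite{schur2}. Since the theorem assumes that both $\are_{p,2,\uu}$ and $\are_{p,2,\vv}$ exist, it suffices to evaluate $\lim n_2/n_p$ along one convenient family of $(p,2)$-NAAS triples in each direction. First I would note that the $2$-mean test is $\OOO_d$-invariant: by \eqref{eq:NA} its power equals $\PP(\no{\Zd+\sqrt n\,\th}_2>c)=\PP(\|\Zd+\sqrt n\,\th\|>c\sqrt d)$, which depends on $\th$ only through $\|\th\|$. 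Because $\uu$ and $\vv$ are both $\no\cdot_2$-unit, equal-length vectors $\tau\uu$ and $\tau\vv$ have the same Euclidean norm; hence a $2$-NAAS part for $\th_1$ in the direction of $\uu$ is also $2$-NAAS for the equal-length vector in the direction of $\vv$. So I may choose the $\uu$- and $\vv$-triples with a common $2$-part, i.e.\ the same length schedule $\|\th_1\|$ and the same $n_2$. With $n_2$ thus fixed and common, comparing $\are_{p,2,\uu}$ with $\are_{p,2,\vv}$ amounts to comparing the corresponding $p$-AS sample sizes $n_p^{(\vv)}$ and $n_p^{(\uu)}$, with the inequality reversed.

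For the $p$-part I would use the power function directly. Writing $\s:=\sqrt n\,\th_1$ as in Theorem~\ref{th:AS iff} and using \eqref{eq:NA}, the size and power of $\de_{n,p,c}$ are governed by the shifted Gaussian measure $G(\s):=\PP(\no{\Zd+\s}_p>c)$; the critical value $c$ realizing size $\to\al$ is the same for both directions because the null ($\th=\0$) is direction-free. Now the rejection region $\{\no\xx_p>c\}$ is Schur${}^2$-concave for $p\in[2,\infty]$ and Schur${}^2$-convex for $p\in[-\infty,2]$, since $\sum|x_j|^p=\sum(x_j^2)^{p/2}$ is Schur${}^2$-convex or Schur${}^2$-concave accordingly; consequently \cite{schur2} yields that $\s\mapsto G(\s)$ is Schur-convex in $\s^2$ for $p\in[2,\infty]$ and Schur-concave in $\s^2$ for $p\in[-\infty,2]$. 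At matched $(n,\|\th_1\|)$ one has $\s_\uu^2\preceq\s_\vv^2$ whenever $\uu^2\preceq\vv^2$ (majorization being scale-covariant), so for $p\in[2,\infty]$ the direction $\vv$ has the larger power, $G(\s_\vv)\ge G(\s_\uu)$, and for $p\in[-\infty,2]$ the smaller. Since $G(\sqrt n\,\th_1)$ is increasing in $n$ — the monotonicity in $t$ of $\PP_{t\uu}(\no{\overline{\Xd_n}}_p>c)$ already used for Proposition~\ref{prop:AS} — the more powerful direction attains the target power $\be$ at the smaller sample size. Thus $n_p^{(\vv)}\le(1+o(1))\,n_p^{(\uu)}$ for $p\in[2,\infty]$ and the reverse for $p\in[-\infty,2]$, which by the previous paragraph gives $\are_{p,2,\vv}\ge\are_{p,2,\uu}$ for $p\in[2,\infty]$ and $\are_{p,2,\vv}\le\are_{p,2,\uu}$ for $p\in[-\infty,2]$, as claimed.

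An equivalent and, for bookkeeping, more transparent route runs through the explicit criterion \eqref{eq:as iff}: with $\s=t\uu$ the $p$-AS relation reads $\sum_j f_p(tu_j)=\sum_j g_t(u_j^2)\sim K_{\al,\be;p}\,\ka_p(d)$, where $g_t(x):=f_p(t\sqrt x)$. By the Schur--Ostrowski criterion the separable sum $\sum_j g_t(u_j^2)$ is Schur-convex (resp.\ concave) in $\uu^2$ precisely when $g_t$ is convex (resp.\ concave), and since the $t$-dependence enters only through the rescaling $x\mapsto t^2x$ this is the $t$-free convexity/concavity of $x\mapsto f_p(\sqrt x)$. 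Monotonicity of $\sum_j g_t(u_j^2)$ in $t$ then lets one invert the common target into the required ordering of the solutions $t_p(\uu)$, and tracking the regime-dependent dependence of $\are_{p,2,\uu}$ on $t_p(\uu)$ — e.g.\ $\are_{p,2,\uu}=\lim(K_{\al,\be;2}/\sqrt d)/t_p(\uu)^2$ for $p\in(0,\infty)$ — reproduces the ordering.

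I expect the analytic input from \cite{schur2} — equivalently, the convexity of $x\mapsto\E|Z+\sqrt x|^p$ for $p\ge2$ and its concavity for $p\le2$, together with the matching statements for $\tla$, $\tmu_d$, and the negative-$p$ entries of the table (where $f_p=\la_p(0)-\la_p(s)$ reverses the sign) — to be the main obstacle; it is already delicate for small $p$: for instance, for $p=1$ one computes $\frac{d}{dx}\E|Z+\sqrt x|=\frac{2\Phi(\sqrt x)-1}{2\sqrt x}$, whose monotonicity must be checked by a separate argument. The remaining work is routine but must be done case by case: the cases $p=\pm\infty$ are handled through the elementary expressions \eqref{eq:means excep} for $\no\cdot_{\pm\infty}$, and for $p\in[-\infty,0)$ the directions $\uu$ with $d_0(\uu)>0$ — where by Definition~\ref{def:are} and Proposition~\ref{prop:AS exist} one may have $\are_{p,2,\uu}=0$ — require a direct argument that a more majorized $\uu^2$ (more vanishing coordinates) cannot raise the efficiency, consistently with $p<2$.
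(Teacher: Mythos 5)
Your main argument is essentially the paper's own: the paper also reduces to shift level (Propositions~\ref{prop:are=a_p2} and \ref{prop:a_p2 schur2}), uses the direction-independence of the $2$-AS shift length (its relation \eqref{eq:sim K p=2}, your $\OOO_d$-invariance observation), and then combines the Schur${}^2$-monotonicity of $\vv\mapsto\PP(\no{\Zd+\vv}_p>c)$ from \cite{schur2} (Lemma~\ref{lem:schur2}) with the monotonicity in $t$ (Lemma~\ref{lem:mono}) — the paper merely packages this as a proof by contradiction with a sandwich bound, which also disposes cleanly of your flagged subcase where no $p$-AS shift exists in the direction of $\uu$. Your alternative route through the table of Theorem~\ref{th:AS iff} and the convexity of $x\mapsto f_p(\sqrt x)$ is not what the paper does here (it uses that device only in Proposition~\ref{prop:below} and related places), but it is consistent with the paper's lemmas and not needed for the theorem.
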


\begin{definition}\label{def:orl}
For any vector $\vv\in\R^d$ and $p\in(-\frac12,2)$, let 
\begin{equation}\label{eq:orl}
	\no\vv_{p,2}:=\inf\big\{v\in(0,\infty)\colon\sum\nolimits_j g_p(\tfrac{v_j}v)\le K_p\,d^{1/2}\big\},
\end{equation}
where 
\begin{equation}\label{eq:g_p}
g_p(s):=\left\{
	\begin{aligned}
		f_p(s)&\text{ if }p\in(-\tfrac12,0), \\
		\tfrac{s^2}{e^2}\ii{|s|\le e}+\ln|s|\ii{|s|>e}
		&\text{ if }p=0, \\
		s^2\wedge|s|^p&\text{ if }p\in(0,2),  
	\end{aligned}
	\right.
\end{equation}
with $f_p(s)$ and 
$$K_p:=K_{\al,\be;p}$$
as in the table in Theorem~\ref{th:AS iff}. 
\end{definition}

Note that the set $\big\{v\in(0,\infty)\colon\sum g_p(\tfrac{v_j}v)\le K_p\,d^{1/2}\big\}$ is non-empty for any choices of $\vv\in\R^d$, $p\in(-\frac12,2)$, and $d\in\N$, because $g_p$ is continuous and $g_p(0)=0$. 
If the function $g_p$ were convex \big(which it is not, for any $p\in(-\frac12,2)$\big), then $\no\cdot_{p,2}$ would be a so-called Orlicz norm. 
However, the function $\no\cdot_{p,2}$ is positive-homogeneous: $\no{t\vv}_{p,2}=|t|\,\no\vv_{p,2}$ for all $\vv\in\R^d$, $t\in\R$, and $p\in(-\frac12,2)$.

\begin{theorem}\label{th:are}
For any $p\in[-\infty,\infty]$ and any varying $\no{\cdot}_2$-unit vector $\uu$,  
\begin{equation}\label{eq:are,d->infty}
\are_{p,2,\uu}\left\{
\begin{alignedat}{2}
&=0 &&\text{ if $p\in[-\infty,-\tfrac12]$}; \\
&=0 &&\text{ if  $p\in(-\tfrac12,2)$ and $\no{\uu}_{p,2}<<d^{1/4}$}; \\
&\in(0,a_p] &&\text{ if $p\in(-\tfrac12,2)$ and $\no{\uu}_{p,2}\OOG d^{1/4}$}; \\
&=a_p &&\text{ if $p\in(-\tfrac12,2)$ and $\uu=\one$};  
\\
&=1 &&\text{ if } p=2; \\
&=a_p &&\text{ if $p\in(2,\infty)$ and $\no{\uu}_p<<d^{(p-2)/(4p)}$}; \\
&=\infty &&\text{ if $p\in(2,\infty)$ and $\no{\uu}_p>>d^{(p-2)/(4p)}$}; \\
&=0 &&\text{ if $p=\infty$ and $\no{\uu}_\infty<<d^{1/4}\,\sqrt{\ln d}$}; \\
&=\infty &&\text{ if $p=\infty$ and $\no{\uu}_\infty>>d^{1/4}\,\sqrt{\ln d}$}.
\end{alignedat}
\right.
\end{equation}
At that, line 3 of \eqref{eq:are,d->infty} should be understood as follows: if for a given varying $\no\cdot_2$-unit vector $\uu$ the value $\are_{p,2,\uu}$ exists in $[0,\infty]$, it is in fact in $(0,a_p]$; the other lines of \eqref{eq:are,d->infty} should be understood in the sense that $\are_{p,2,\uu}$ exists and equals the specified value -- for any  $\no\cdot_2$-unit vector $\uu$ specified in the given line \big(or, as in lines 1 and 5 of \eqref{eq:are,d->infty}, for any varying  $\no\cdot_2$-unit vector $\uu$ whatsoever\big). 
\end{theorem}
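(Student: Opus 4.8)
The plan is to reduce the computation of $\are_{p,2,\uu}=\lim n_2/n_p$ to solving, for each of $p$ and $2$, the single scalar asymptotic relation supplied by Theorem~\ref{th:AS iff}, and then to compare the resulting scaling factors. Write $\th_1=\tau\uu$ for a scalar magnitude $\tau=\tau(d)>0$ and put $v_p:=\sqrt{n_p}\,\tau$, $v_2:=\sqrt{n_2}\,\tau$, so that the coordinates of $\s=\sqrt{n}\,\th_1$ are $v_p u_j$ and $v_2 u_j$ respectively. By Theorem~\ref{th:AS iff}, the pair $(n_p,\th_1)$ is $p$-AS iff $\sum_j f_p(v_p u_j)\sim K_p\,\ka_p(d)$, and similarly for $p=2$. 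Since the right-hand sides and the functions $f_p,f_2$ depend only on $d$ (and on $\uu$ through the arguments), these relations determine $v_p$ and $v_2$ as functions of $d$ and the \emph{direction} $\uu$ alone, with no reference to the magnitude $\tau$; the monotonicity of $t\mapsto\sum_j f_p(tu_j)$ noted in the Introduction guarantees that the solution is asymptotically well defined. Consequently $n_2/n_p=(v_2/\tau)^2/(v_p/\tau)^2=v_2^2/v_p^2$ is independent of the particular triple, which simultaneously shows that $\are_{p,2,\uu}$ is well defined, and it remains to compute $\lim v_2^2/v_p^2$ in every regime.

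The $2$-equation is immediate: $f_2(s)=\la_2(s)-\la_2(0)=s^2$ and $\ka_2(d)=d^{1/2}$, so $v_2^2\,d\sim K_2\,d^{1/2}$, i.e.\ $v_2^2\sim K_2\,d^{-1/2}$. For the $p$-equation I would first record the two-sided interpolation estimates $f_p(s)\asymp s^2$ as $s\to0$ and $f_p(s)\asymp|s|^p$ as $|s|\to\infty$ (with the saturating behaviour $f_p(s)\to\la_p(0)$ for $p\in(-\tfrac12,0)$), so that $f_p$ is faithfully modelled by $g_p$ from \eqref{eq:g_p}. The whole theorem then turns on \emph{which} regime dominates the sum $\sum_j f_p(v_p u_j)$. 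When the quadratic part dominates, $f_p(v_p u_j)\approx c_p(v_p u_j)^2$ with $c_p=\tfrac12\la_p''(0)$, whence $c_p v_p^2\,d\sim K_p d^{1/2}$ and $v_2^2/v_p^2\to c_p K_2/K_p$; a direct computation with $\la_p''(0)=p\,2^{p/2}\Ga(\tfrac{p+1}2)/\sqrt\pi$, $K_2/K_p=\sqrt{2/\la_{p,2}(0)}$, and $\la_{p,2}(0)=2^p[\Ga(\tfrac12)\Ga(p+\tfrac12)-\Ga(\tfrac{p+1}2)^2]/\pi$ collapses this ratio to exactly $a_p$ of \eqref{eq:a(p)}. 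This yields lines 4 and 6, and the value $a_p$ at $\uu=\one$. When instead the $|s|^p$ part dominates, $\sum_j f_p(v_p u_j)\approx v_p^p\sum_j|u_j|^p=v_p^p\,d\,\no\uu_p^{\,p}$, giving $v_p^2\sim(K_p/(d^{1/2}\no\uu_p^{\,p}))^{2/p}$ and $v_2^2/v_p^2\asymp d^{1/p-1/2}\no\uu_p^{\,2}$; the exponents are arranged so that this crosses unity exactly at $\no\uu_p\asymp d^{(p-2)/(4p)}$, diverging to $\infty$ above the threshold (line 7) and tending to $0$ below it. For $p\in(-\tfrac12,2)$ the same dichotomy, phrased through the capped/saturating behaviour, is encoded by the homogeneous functional $\no\uu_{p,2}$ of Definition~\ref{def:orl}: solving $\sum_j g_p(v_p u_j)\sim K_p d^{1/2}$ gives $v_p\sim1/\no\uu_{p,2}$, hence $v_2^2/v_p^2\sim K_2\,(\no\uu_{p,2}/d^{1/4})^2$, which is $0$ when $\no\uu_{p,2}<<d^{1/4}$ and lies in $(0,a_p]$ otherwise (lines 2 and 3), the upper bound $a_p$ following from the Schur${}^2$-concavity of Theorem~\ref{th:are schur2} together with $\one^2\preceq\uu^2$.

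The boundary and degenerate cases are handled separately. For $p\in[-\infty,-\tfrac12]$, and for $p\in[-\infty,0)$ with a direction $\uu$ having too many zero coordinates, Proposition~\ref{prop:AS exist}(III) shows that no $(p,2)$-NAAS triple exists, so by Remark~\ref{rem:power} and the convention in Definition~\ref{def:are} we set $\are_{p,2,\uu}=0$ (line 1 and the relevant parts of the others). For $p=\infty$ the argument is more elementary but of the same shape: using the $\infty$-row of the table, with $f_\infty(s)=\la_\infty(s)-\la_\infty(0)$ and $\ka_\infty(d)=1$, one solves the two scaling relations through the Gaussian tail asymptotics underlying $c_{d,\al}$ in \eqref{eq:c(d,al)} and finds the crossover at $\no\uu_\infty\asymp d^{1/4}\sqrt{\ln d}$, giving $0$ below and $\infty$ above. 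Existence of triples in the non-degenerate cases is furnished by Proposition~\ref{prop:AS exist}(II),(III).

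I expect the main obstacle to be making the phase-transition dichotomy rigorous: one must show that, uniformly over admissible scalings, the sum $\sum_j f_p(v_p u_j)$ is genuinely controlled by a single regime, and that the model function $g_p$ tracks $f_p$ closely enough for the threshold $d^{1/4}$ (resp.\ $d^{(p-2)/(4p)}$) to be sharp. This is delicate for unequalized $\uu$, where a few large coordinates may sit in the $|s|^p$ regime while the bulk remains quadratic, so the estimate requires careful bookkeeping of how the masses $\sum_j u_j^2=d$ and $\sum_j|u_j|^p$ are distributed across coordinates, rather than a single global bound.
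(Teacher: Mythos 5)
Your overall route is the paper's route: reduce via Theorem~\ref{th:AS iff} (equivalently, Proposition~\ref{prop:AS shift iff}) to solving the scalar relation $\sum_j f_p(v_pu_j)\sim K_p\ka_p(d)$ for the shift magnitude, compare with the $2$-mean baseline $\|\s_2\|^2\sim K_2d^{1/2}$, and identify the constant $a_p$ through the Gamma-function computation $\tfrac{K_2}{K_p}\tfrac{|p|\la_p(0)}2=a_p$. The paper merely packages the cancellation of $\tau$ as Proposition~\ref{prop:are=a_p2} (passing to $a_{p,2,\uu}$ and AS shifts) and then proves Proposition~\ref{prop:sARE}. However, there are two genuine gaps. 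First, your claim that the asymptotic relation ``determines $v_p$\,'' so that $n_2/n_p$ ``is independent of the particular triple'' and $\are_{p,2,\uu}$ is always well defined is false: the relation fixes $v_p$ only up to an equivalence, and for a general $\uu$ with $\no{\uu}_{p,2}\OOG d^{1/4}$ two admissible choices of $v_p$ need not satisfy $v_p\sim v_p'$, so the limit $v_2^2/v_p^2$ may fail to exist. This is precisely why line 3 of the theorem is stated conditionally (``if the value exists, it lies in $(0,a_p]$''), and your argument as written would prove a stronger, false statement. Relatedly, for $p\in(-\tfrac12,0)$ the sum $\sum_jf_p(tu_j)$ saturates at $[d-d_0(\uu)]\la_p(0)$, so for sufficiently unequalized $\uu$ the relation has no solution at all (Proposition~\ref{prop:AS exist}(III)); your degenerate-direction discussion covers $p\le-\tfrac12$ but not this case.

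Second, the step you correctly identify as ``the main obstacle'' is in fact the bulk of the proof, and you do not supply it. Concretely: for $p\in(-\tfrac12,2)$ the passage from ``$\sum_jf_p(s_j)\sim K_pd^{1/2}$'' to ``$\|\s\|^2>>d^{1/2}$ iff $\no{\uu}_{p,2}<<d^{1/4}$'' requires comparing two parallel vectors constrained by $\sum g_p$, which is the content of Lemmas~\ref{lem:s,v} and \ref{lem:s,v,ge0}; for $p\in(2,\infty)$ the sharpness of the threshold rests on the global inequality $\la_p(s)-\la_p(0)\ge\tfrac p2\la_p(0)s^2$ of Lemma~\ref{lem:lla}(iv), which sandwiches the quadratic contribution and rules out a few large coordinates contaminating the limit; for $p=\infty$ the subcase $\no{\uu}_\infty<<d^{1/4}\sqrt{\ln d}$ needs the convexity of $v\mapsto f_\infty(\sqrt v)$ to bound $\sum g(v_j)$ by its chord. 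Without these the dichotomies in lines 2--3, 6--7 and 8--9 are asserted, not proved. (Minor slip: in the $p>2$ discussion, ``tending to $0$ below the threshold'' describes only the inconsistency of the power-regime ansatz; the actual value there is $a_p$, as you correctly assign via line 6.)
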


Note that line 5 of \eqref{eq:are,d->infty} is not quite trivial -- since for $p=2$ there exist  $(p,2)$-AS varying triples $(n_p,n_2,\th_1)$ with $n_p\ne n_2$. 

As for line 3 of \eqref{eq:are,d->infty}, it is rather unlike any of the other lines there. In particular, one may wonder: would it be true that $\are_{p,2,\uu}=a_p$ if $p\in(-\tfrac12,2)$ and $\no{\uu}_{p,2}>>d^{1/4}$? However, for any given $p\in(-\frac12,2)$, there is no $\no\cdot_2$-unit vector $\uu$ such that $\no{\uu}_{p,2}>>d^{1/4}$. 
In fact, one has 

\begin{proposition}\label{prop:u range}\ 
\begin{enumerate}[(I)]
\item For each $p\in(-\tfrac12,2)$ and each $d\in\N$, the function $\R^d\ni\uu\mapsto\no{\uu}_{p,2}$ is Schur${}^2$-concave. 
	\item 
Therefore, for each $p\in(-\tfrac12,2)$ and all large enough $d$, the values of $\no{\uu}_{p,2}$ for $\no\cdot_2$-unit vectors $\uu$ 
range as follows: 
\begin{align}
	\max\big\{\no{\uu}_{p,2}\colon\no{\uu}_2=1\big\}&=\no{\one}_{p,2}\asymp d^{1/4}; 
	\label{eq:max}\\ 
	\min\big\{\no{\uu}_{p,2}\colon\no{\uu}_2=1\big\}&=\no{\sqrt d\ee_1}_{p,2} 
	\,\left\{
	\begin{alignedat}{2}
&=0 & & \text{\quad if }p\in(-\tfrac12,0), \\
&=e^{-C\sqrt d} & & \text{\quad if }p=0, \\
&\asymp d^{(p-1)/(2p)} & & \text{\quad if }p\in(0,2),
	\end{alignedat}	
	\right.
	\label{eq:min}
\end{align}
for some varying $C=C(d)\asymp1$. 
\item For each $p\in[2,\infty]$ and each $d\in\N$, the function $\R^d\ni\uu\mapsto\no{\uu}_p$ is Schur${}^2$-convex. 
\item 
Therefore, for each $p\in[2,\infty]$ and each $d\in\N$, the values of $\no{\uu}_p$ for $\no\cdot_2$-unit vectors $\uu$ 
range from $\no{\one}_p=1$ to $\no{\sqrt d\ee_1}_p=d^{(p-2)/(2p)}$, where $(p-2)/(2p)$ is interpreted as $1/2$ for $p=\infty$. 
\end{enumerate}
\end{proposition}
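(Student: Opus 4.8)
The plan is to reduce all four parts to a single monotone concavity (resp.\ convexity) fact about one real variable, and then read off the extremal values from the majorization order $\one^2\preceq\uu^2\preceq(\sqrt d\,\ee_1)^2$ recorded just before Theorem~\ref{th:are schur2}. For part~(I), observe first that $g_p$ is even (immediate for $p\in[0,2)$, and for $p\in(-\tfrac12,0)$ because $\la_p(s)=\E|Z+s|^p$ is even by symmetry of the standard normal law), so that, writing $w_j:=u_j^2$ and
\begin{equation*}
h_p(w):=g_p(\sqrt w),\qquad H(\ww,t):=\sum_j h_p(w_j/t),\qquad F(\ww):=\inf\{t>0\colon H(\ww,t)\le K_p\,d^{1/2}\},
\end{equation*}
the definition \eqref{eq:orl} reads $\no{\uu}_{p,2}^2=F(\uu^2)$, a function of $\uu^2$ alone. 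The one-variable claim I will need is that, for each $p\in(-\tfrac12,2)$, the function $h_p$ is nondecreasing and concave on $[0,\infty)$, with $h_p(0)=0$.

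Granting this claim, I obtain Schur${}^2$-concavity of $\no{\cdot}_{p,2}$ by a soft argument that needs no differentiation. For each fixed $t>0$ the map $\ww\mapsto H(\ww,t)$ is of the form $\sum_j\psi(w_j)$ with $\psi=h_p(\cdot/t)$ concave, hence Schur-concave (a standard fact); and, since $h_p$ is nondecreasing, $t\mapsto H(\ww,t)$ is nonincreasing, so the feasible set $\{t>0\colon H(\ww,t)\le K_p d^{1/2}\}$ is a closed half-line $[t^*,\infty)$ with $t^*\ge0$. Thus if $\ww\succeq\ww'$ and $t':=F(\ww')$, then $H(\ww,t')\le H(\ww',t')\le K_p d^{1/2}$, so $t'$ is feasible for $\ww$ and $F(\ww)\le t'=F(\ww')$; applying the increasing map $\sqrt{\cdot}$ shows that $\no{\uu}_{p,2}=\sqrt{F(\uu^2)}$ is Schur-concave in $\uu^2$, for every $d$. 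This is part~(I).

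It remains to prove the one-variable claim, and this is where essentially all the work lies. For $p\in(0,2)$, $h_p(w)=w\wedge w^{p/2}$ is the minimum of two nondecreasing concave functions (as $p/2\in(0,1)$), hence nondecreasing and concave; for $p=0$, $h_0(w)=\tfrac{w}{e^2}\,\ii{w\le e^2}+\tfrac12\ln w\,\ii{w>e^2}$ is concave by inspection, its slope dropping from $1/e^2$ to $1/(2e^2)$ across $w=e^2$, where the two pieces agree at the value $1$. The case $p\in(-\tfrac12,0)$ is the crux: there $h_p(w)=\la_p(0)-\la_p(\sqrt w)$, so I must show that $w\mapsto\la_p(\sqrt w)$ is convex, which with $s=\sqrt w$ is equivalent to $s\,\la_p''(s)\ge\la_p'(s)$ for $s>0$. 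I would combine this with the Gaussian moment identity
\begin{equation*}
\la_p''(s)=p\,\la_p(s)-s\,\la_p'(s),
\end{equation*}
obtained by integrating $\la_p'(s)=\E[Z\,|Z+s|^p]$ by parts after writing $Z=(Z+s)-s$; substitution then reduces the goal to
\begin{equation*}
p\,s\,\la_p(s)-(s^2+1)\,\la_p'(s)\ge0,\qquad s>0,
\end{equation*}
that is, to $-\la_p'(s)/\la_p(s)\ge|p|\,s/(s^2+1)$. Since $\la_p'(s)<0$ for $s>0$ when $p<0$ (again by symmetry of $Z$), this pits a negative term against a positive one and is the main obstacle; it is asymptotically sharp, both sides being $\sim|p|/s$ as $s\to\infty$, and I would settle it by a quantitative lower bound on the logarithmic derivative of $\la_p$---or, more conceptually, via the layer-cake identity $\la_p(s)=|p|\int_0^\infty r^{p-1}\,\PP(|Z+s|<r)\,\dd r$, which reduces convexity of $\la_p(\sqrt{\cdot})$ to convexity of each $w\mapsto\PP(|Z+\sqrt w|<r)$, a one-dimensional instance of the Schur${}^2$ behavior of Gaussian measures in \cite{schur2}. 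Monotonicity of $h_p$ for $p\in(-\tfrac12,0)$ is immediate from $\la_p'<0$.

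Part~(III) runs on the same template with convexity replacing concavity: writing $w_j=u_j^2$ one has $\no{\uu}_p=\big(\tfrac1d\sum_j w_j^{p/2}\big)^{1/p}$ for $p\in[2,\infty)$, where $w\mapsto w^{p/2}$ is convex (as $p/2\ge1$), so $\ww\mapsto\sum_j w_j^{p/2}$ is Schur-convex and, after the increasing map $x\mapsto(x/d)^{1/p}$, $\no{\cdot}_p$ is Schur${}^2$-convex; for $p=\infty$ use $\no{\uu}_\infty=\sqrt{\max_j w_j}$ with $\max_j w_j$ symmetric and convex. Finally, parts~(II) and~(IV) follow by feeding the order $\one^2\preceq\uu^2\preceq(\sqrt d\,\ee_1)^2$ into the monotonicity just proved. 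For~(II), Schur${}^2$-concavity forces the maximum of $\no{\cdot}_{p,2}$ at $\uu=\one$ and the minimum at $\uu=\sqrt d\,\ee_1$; evaluating $F$ there and solving the resulting scalar relation gives the stated orders---$\no{\one}_{p,2}\asymp d^{1/4}$ from the behavior $h_p(w)\asymp w$ near $0$, and the three values of $\no{\sqrt d\,\ee_1}_{p,2}$ from the large-argument behavior of $h_p$ (bounded for $p\in(-\tfrac12,0)$, giving $0$; $\sim\tfrac12\ln w$ for $p=0$, giving $e^{-C\sqrt d}$; $\asymp w^{p/2}$ for $p\in(0,2)$, giving $d^{(p-1)/(2p)}$). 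For~(IV), Schur${}^2$-convexity puts the minimum at $\one$ and the maximum at $\sqrt d\,\ee_1$, and the direct evaluations $\no{\one}_p=1$ and $\no{\sqrt d\,\ee_1}_p=\big(\tfrac1d(\sqrt d)^p\big)^{1/p}=d^{(p-2)/(2p)}$ (read as $\sqrt d$ for $p=\infty$) complete the proof.
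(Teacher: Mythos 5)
Your overall architecture is exactly the paper's: reduce (I) and (III) to concavity, resp.\ convexity, of a single scalar function of $w=s^2$, invoke the Hardy--Littlewood--Polya criterion (Lemma~\ref{lem:hardy}) coordinatewise, deduce monotonicity of the infimum defining $\no\cdot_{p,2}$ along the majorization order, and then read off (II) and (IV) from $\one^2\preceq\uu^2\preceq(\sqrt d\,\ee_1)^2$ together with the small- and large-argument behavior of $g_p$. The soft argument that $F(\ww)\le F(\ww')$ when $\ww\succeq\ww'$, and all of the evaluations in (II) and (IV), are correct and match the paper's.

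The gap is the one place you yourself flag as the crux: concavity of $w\mapsto g_p(\sqrt w)$ for $p\in(-\tfrac12,0)$, equivalently convexity of $t\mapsto\la_p(\sqrt t)$. Your reduction via $\la_p''(s)=p\,\la_p(s)-s\,\la_p'(s)$ to the inequality $p\,s\,\la_p(s)-(s^2+1)\la_p'(s)\ge0$ is correct, but you then leave it unproved. Of your two proposed routes, the first is only a promissory note, and the second is broken: the layer-cake identity $\la_p(s)=|p|\int_0^\infty r^{p-1}\PP(|Z+s|<r)\,\dd r$ is fine, but the integrands $w\mapsto\PP(|Z+\sqrt w|<r)$ are \emph{not} all convex. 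Writing $G_r(s):=\PP(|Z+s|<r)$, convexity of $G_r(\sqrt w)$ is equivalent to $G_r'(s)/s=[\vpi(s+r)-\vpi(s-r)]/s$ being nondecreasing on $(0,\infty)$; its limit at $0+$ is $-2r\vpi(r)$, and already for $r=3$ one has $-2\cdot3\,\vpi(3)\approx-0.027$ while at $s=1$ the ratio is $\vpi(4)-\vpi(2)\approx-0.054$, so the ratio decreases and convexity fails. (Nor can this be a ``one-dimensional instance'' of Lemma~\ref{lem:schur2}: Schur majorization on $\R^1$ is trivial, and the two-dimensional instances with $p=\pm\infty$ yield log-concavity statements about $G_r(\sqrt w)$, not convexity.) The convexity of $\la_p(\sqrt t)$ is true, but only because the weight $r^{p-1}$ concentrates where convexity does hold; the paper establishes it as Lemma~\ref{lem:lla}(iii), via the power-series identity \eqref{eq:Dla'(s)/s} showing that $\frac{\dd}{\dd s}\frac{\la_p'(s)}{s}$ has the sign of $p(p-2)$. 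You need that lemma (or an equivalent proof of your displayed inequality) to close part~(I).
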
 

It follows that for each $p\in(2,\infty)$, the threshold value $d^{(p-2)/(4p)}$ is, on the logarithmic scale, exactly in the middle of the range of values of $\no{\uu}_p$ for $\no\cdot_2$-unit vectors $\uu$, while for $p=\infty$ the threshold value $d^{1/4}\sqrt{\ln d}$ is, logarithmically, slightly above the middle of the range. As for $p\in(-\tfrac12,2)$, for each such $p$ the ``phase transition'' occurs at the threshold $d^{1/4}$, which is near the upper end of the range of values of $\no{\uu}_{p,2}$ for $\no\cdot_2$-unit vectors $\uu$. 


\begin{theorem}\label{th:are range}\ 

\begin{enumerate}[(I)]
\item For each of the two (varying with $d$) $\no\cdot_2$-unit vectors, $\uu=\one$ and $\uu=\sqrt{d}\,\ee_1$ in $\R^d$, the values $\are_{p,2,\uu}$ exist and are extremal: if $\uu$ is any other varying $\no\cdot_2$-unit vector in $\R^d$ for which $\are_{p,2,\uu}$ exists, then 
\begin{equation*}
\begin{alignedat}{2}
0=&\are_{p,2,\sqrt{d}\,\ee_1}\le\are_{p,2,\uu}\le\are_{p,2,\one}=a_p	
\quad &&\text{for each }p\in[-\infty,2); \\
&\are_{p,2,\sqrt{d}\,\ee_1}=\are_{p,2,\uu}=\are_{p,2,\one}=1	
\quad &&\text{for }p=2; \\
\infty=&\are_{p,2,\sqrt{d}\,\ee_1}\ge\are_{p,2,\uu}\ge\are_{p,2,\one}=a_p	
\quad &&\text{for each }p\in(2,\infty]. 
\end{alignedat}
\end{equation*}
\item
For each $p\in[-\infty,\infty]$, the range of values of $\are_{p,2,\uu}$ over all varying \break 
$\no\cdot_2$-unit vectors $\uu$ is the entire interval between $\are_{p,2,\sqrt{d}\,\ee_1}$ and $\are_{p,2,\one}$; that is, for each value $a$ in this interval, there is a varying $\no\cdot_2$-unit vector $\uu$ such that $\are_{p,2,\uu}$ exists and equals $a$.  
\end{enumerate}
\end{theorem}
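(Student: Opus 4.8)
\emph{Part (I).} The plan is to read off the two extremal values directly from Theorem~\ref{th:are}, locating $\one$ and $\sqrt d\,\ee_1$ in the appropriate phase by means of Proposition~\ref{prop:u range}, and then to close the gap by the Schur${}^2$-monotonicity of Theorem~\ref{th:are schur2}. For $\uu=\one$ one has $\no{\one}_p=1\ll d^{(p-2)/(4p)}$ and $\no{\one}_{p,2}\asymp d^{1/4}$ by \eqref{eq:max}, so the relevant lines of \eqref{eq:are,d->infty} give $\are_{p,2,\one}=a_p$ (with $a_\infty=0$ from Proposition~\ref{prop:a_p}); for $\uu=\sqrt d\,\ee_1$ the computations $\no{\sqrt d\,\ee_1}_p=d^{(p-2)/(2p)}\gg d^{(p-2)/(4p)}$ and, via \eqref{eq:min}, $\no{\sqrt d\,\ee_1}_{p,2}\ll d^{1/4}$ put it in the $\are=\infty$ (for $p>2$) or $\are=0$ (for $p<2$) phase, while Proposition~\ref{prop:a_p} supplies $a_2=1$ and $a_p=0$ for $p\in[-\infty,-\tfrac12]$. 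This yields all the asserted endpoint values. The sandwich is then purely order-theoretic: every $\no\cdot_2$-unit $\uu$ satisfies $\one^2\preceq\uu^2\preceq(\sqrt d\,\ee_1)^2$, and Theorem~\ref{th:are schur2} makes $\are_{p,2,\uu}$ Schur${}^2$-concave for $p\le2$ and Schur${}^2$-convex for $p\ge2$; applying it to these two majorizations gives exactly the claimed monotone chains.

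\emph{Part (II), setup and the case $p>2$.} The cases $p\in[-\infty,-\tfrac12]$ and $p=2$ are vacuous (the interval collapses to $\{0\}$ and $\{1\}$), so the content is to realize every intermediate value for $p\in(-\tfrac12,2)$ and $p\in(2,\infty]$. Since $\are_{p,2,\uu}$ depends only on $\uu,p,\al,\be$ once \eqref{eq:NA} holds, I would work in the Gaussian model $\Xd\sim\No(\th,I_d)$, in which \eqref{eq:NA} is an exact identity and every pair is $p$-NA; Part~(II) thus reduces to exhibiting a varying $\uu$ and solving the AS characterization \eqref{eq:as iff} of Theorem~\ref{th:AS iff}. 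Writing $\sqrt{n_p}\,\th_1=t_p\uu$ and $\sqrt{n_2}\,\th_1=t_2\uu$ gives $\are_{p,2,\uu}=\lim t_2^2/t_p^2$, and since $f_2(s)=s^2$ and $\sum u_j^2=d$, the $2$-mean relation always forces $t_2^2\sim K_2\,d^{-1/2}$. A single-large-coordinate family does the rest: take $\uu$ with one coordinate of size $\alpha$ and the remaining $d-1$ equal, normalized so $\sum u_j^2=d$. For $p\in(2,\infty)$ I pick $\alpha\asymp d^{1/4+1/(2p)}$, so that $\alpha^2=o(d)$ while that one coordinate contributes $\sim|t_p\alpha|^p$, a tunable fraction $\theta\in[0,1)$ of the budget $K_p\,d^{1/2}$, the bulk supplying the quadratic remainder; this yields $t_p^2\sim\tfrac{2(1-\theta)K_p}{c_p}d^{-1/2}$ and hence $\are_{p,2,\uu}=a_p/(1-\theta)$, sweeping out $[a_p,\infty)$, with $\infty$ attained at $\sqrt d\,\ee_1$.

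\emph{Part (II), the remaining cases.} For $p\in(-\tfrac12,2)$ (including $p=0$, with the logarithmic-moment function $\tla$ in place of $\la_p$) I instead take $\alpha=\sqrt{\gamma d}$ with $\gamma\in[0,1)$. Now the large coordinate contributes only $O(d^{p/4})=o(d^{1/2})$ to $\sum f_p$ (bounded by $\la_p(0)$ when $p<0$, by $O(\ln d)$ when $p=0$), so it is negligible for the power yet consumes a fraction $\gamma$ of the $\no\cdot_2$-budget, leaving the bulk to give $t_p^2\sim\tfrac{2K_p}{c_p(1-\gamma)}d^{-1/2}$ and thus $\are_{p,2,\uu}=a_p(1-\gamma)$, sweeping out $(0,a_p]$, with $0$ attained at $\sqrt d\,\ee_1$. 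The case $p=\infty$ I would handle separately through the explicit law of $\max_j|Z_j+s_j|$ and a Poisson/extreme-value count of the coordinates near the critical scale $d^{1/4}\sqrt{\ln d}$, tuning that count to interpolate across $[0,\infty]$.

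\emph{Key computation and main obstacle.} The arithmetic above rests on the split $f_p(s)\sim\tfrac{c_p}{2}\,s^2$ as $s\to0$, with $c_p:=|\la_p''(0)|$ where $\la_p''(0)=\E|Z|^{p+2}-\E|Z|^p$, and $f_p(s)\sim|s|^p$ as $s\to\infty$, together with the identity $a_p=K_2\,c_p/(2K_p)$, which I would verify by evaluating $\E|Z|^q=2^{q/2}\Ga(\tfrac{q+1}{2})/\sqrt\pi$ and matching \eqref{eq:a(p)}; this is precisely what makes the bulk contribution reproduce $a_p$. The main obstacle is to make these two asymptotics rigorous and \emph{uniform} across all coordinate sizes occurring in the family, so as to prove that the constructed pairs genuinely satisfy \eqref{eq:as iff} with the stated constants — most delicately in the critical $p>2$ regime, where the single large coordinate contributes an $O(1)$ fraction of the budget and therefore cannot be discarded: one must control the sub-leading corrections at both ends of the split precisely enough to pin down $\lim t_2^2/t_p^2$. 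Once \eqref{eq:as iff} is established for the chosen $\uu$, the equivalence in Theorem~\ref{th:AS iff} determines $n_p$, and hence $\are_{p,2,\uu}$, up to a factor $1+o(1)$, so the limit exists and equals the prescribed value, completing the argument.
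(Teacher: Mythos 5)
Your Part~(I) is correct and follows the paper's own route: the endpoint values come from Theorem~\ref{th:are} together with Proposition~\ref{prop:u range} (and the extended definition of $a_p$ in Proposition~\ref{prop:a_p}), and the sandwich comes from $\one^2\preceq\uu^2\preceq(\sqrt d\,\ee_1)^2$ plus Theorem~\ref{th:are schur2}; this is exactly how the paper proves part (I) of Proposition~\ref{prop:a_p2 range}. For Part~(II) with $p$ finite, your construction is genuinely different from the paper's but works. The paper uses directions with $k$ \emph{equal} nonzero coordinates and $d-k$ zeros, with $k\sim K_p d^{1/2}/f_p(s)$, and sweeps the interval by moving $s$ so that the ratio $f_p(s)/s^2$ interpolates between $\tfrac{|p|}2\la_p(0)$ and its limit at infinity; you instead use one large coordinate ($\alpha\asymp d^{1/4+1/(2p)}$ for $p>2$, $\alpha=\sqrt{\gamma d}$ for $p\in(-\tfrac12,2)$) riding on a uniform bulk, and sweep by tuning the fraction of the $\sum f_p$ budget (resp.\ of the $\no\cdot_2$ budget) consumed by that coordinate. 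Both hinge on the same two asymptotics of $f_p$ and on the identity $a_p=\tfrac{K_2}{K_p}\tfrac{|p|\la_p(0)}2$, which is the paper's \eqref{eq:=a_p}. Your two-scale family is arguably closer to the phase-transition picture; the paper's single-scale family makes the ``all AS shifts in this direction have the same asymptotic norm'' verification (which you correctly flag as the delicate point, and which is needed for $\are_{p,2,\uu}$ to \emph{exist} per Definition~\ref{def:are}) essentially automatic, whereas in your family it needs the extra remark that $t\mapsto\sum f_p(tu_j)$ is strictly increasing with both the bulk and the spike terms scaling by definite powers of $t$. Note also that reducing to shifts is not free even in the Gaussian model: one still needs Propositions~\ref{prop:n,th->s}, \ref{prop:s->n,th} and \ref{prop:are=a_p2} to identify $\lim n_2/n_p$ with $\lim\|\s_2\|^2/\|\s_p\|^2$ over \emph{all} NAAS triples.

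The one genuine gap is the case $p=\infty$ of Part~(II). ``A Poisson/extreme-value count of the coordinates near the critical scale, tuning that count to interpolate'' is not a construction: at $p=\infty$ the function $f_{\infty;d}(s)=\la_{\infty;d}(s)-\la_{\infty;d}(0)$ behaves like $d^{-(\sqrt2-\la)^2/(2+o(1))}$ for $s=\la\sqrt{\ln d}$, so the normalized contribution $d^{1/2}f_{\infty;d}(s)/s^2$ jumps from $0$ to $\infty$ as $\la$ crosses $\sqrt2-1$, and one must produce a \emph{varying} $\la\to\sqrt2-1$ (by continuity of $f_{\infty;d}$ in $s$) together with $k\asymp d^{1/2}/\ln d$ coordinates at that height so that $kf_{\infty;d}(s)\sim K_\infty$ while $ks^2\sim Ld^{1/2}$ for an arbitrary prescribed $L\in(0,\infty)$. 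This is precisely what the paper does in the $p=\infty$ block of the proof of Proposition~\ref{prop:a_p2 range}, and it is the most delicate step of the whole theorem; your sketch does not supply it.
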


Theorems~\ref{th:are} and \ref{th:are range} are illustrated in Figure~\ref{fig:are}, given in the Introduction. 
The red boundary curve is the graph of the function $p\mapsto a_p$; recall Proposition~\ref{prop:a_p}.  

The following proposition together with Theorem~\ref{th:are} and Proposition~\ref{prop:a_p} imply, for any given $p\in(2,\infty)$, that if the $\no{\cdot}_2$-unit vector $\uu$ is completely random for each $d$, then with high probability $\are_{p,2,\uu}=a_p<1$.   

\begin{proposition}\label{prop:random u}
If a varying (with $d\to\infty$) vector $\uu$ is uniformly distributed on the sphere in $\R^d$ of $\|\cdot\|_2$-radius $\sqrt d$ centered at $\0$, then for each $p\in(2,\infty)$ one has  $\no{\uu}_p<<d^{(p-2)/(4p)}$ with probability close to $1$ (as $d\to\infty$). 
\end{proposition}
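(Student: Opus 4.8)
The plan is to exploit the standard representation of a uniform random vector on a sphere through i.i.d.\ Gaussians and then reduce the whole statement to the weak law of large numbers. First I would write $\uu$, uniformly distributed on the $\|\cdot\|$-sphere of radius $\sqrt d$ centered at $\0$, in the form $\uu\stackrel{d}{=}\sqrt d\,\Zd/\|\Zd\|$, where $\Zd=(Z_1,\dots,Z_d)\sim\No(\0,I_d)$ as in \eqref{eq:Zd}; this is the familiar fact that a standard Gaussian vector, normalized to unit length and rescaled by $\sqrt d$, is uniform on that sphere, by rotational invariance of $\No(\0,I_d)$.

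Second, I would compute $\no{\uu}_p^p$ explicitly in terms of the $Z_j$'s. Substituting $u_j=\sqrt d\,Z_j/\|\Zd\|$ into \eqref{eq:np_p} and cancelling the powers of $d$, one obtains the clean identity
\begin{equation*}
	\no{\uu}_p^p=\frac1d\sum_{j=1}^d|u_j|^p
	=\frac{\frac1d\sum_{j=1}^d|Z_j|^p}{\big(\frac1d\sum_{j=1}^d Z_j^2\big)^{p/2}},
\end{equation*}
in which all factors $d^{p/2}$ have disappeared, leaving a ratio of two empirical moments.

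Third, I would apply the weak law of large numbers to the numerator and the denominator separately. Since $p\in(2,\infty)$ gives $\E|Z|^p<\infty$, one has $\frac1d\sum|Z_j|^p\to\E|Z|^p$ and $\frac1d\sum Z_j^2\to\E Z^2=1$ in probability; hence $\no{\uu}_p^p\to\E|Z|^p$ and so $\no{\uu}_p\to(\E|Z|^p)^{1/p}=:L_p$ in probability, a finite positive constant not growing with $d$.

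Finally, since $p>2$ the exponent $(p-2)/(4p)$ is strictly positive, so the threshold $d^{(p-2)/(4p)}\to\infty$. Because $\no{\uu}_p$ concentrates at the constant $L_p$, for every $\ep>0$ the probability $\PP\big(\no{\uu}_p>\ep\,d^{(p-2)/(4p)}\big)\to0$ as $d\to\infty$, which is exactly the assertion $\no{\uu}_p<<d^{(p-2)/(4p)}$ with probability close to $1$. I do not expect any serious obstacle here: the only point requiring a little care is reading ``$<<$ with probability close to $1$'' as convergence in probability of the ratio $\no{\uu}_p/d^{(p-2)/(4p)}$ to $0$, which follows at once from the constant-limit concentration. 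The finiteness $\E|Z|^p<\infty$ (guaranteed by $p<\infty$) is what makes the plain weak law applicable, so no uniform-integrability subtleties arise.
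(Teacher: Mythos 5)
Your proposal is correct and follows essentially the same route as the paper: represent $\uu\stackrel{d}{=}\sqrt d\,\Zd/\|\Zd\|$ and apply the weak law of large numbers to $\frac1d\sum|Z_j|^p$ and $\frac1d\sum Z_j^2$. Your observation that the powers of $d$ cancel exactly, so that $\no{\uu}_p$ converges in probability to the constant $(\E|Z|^p)^{1/p}$, is a slightly tidier way of organizing the same computation the paper performs with the unnormalized ratio $\sum|Z_j|^p/(\sum Z_j^2)^{p/2}$.
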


However, no reason is seen in general for $\uu$ to be completely random. Rather, as discussion in the Introduction suggests, it appears rather typical for large dimensions $d$ that only comparatively few coordinates of $\uu$ are large in absolute value.   

\begin{remark}\label{rem:n,th}
The condition in Proposition~\ref{prop:NA} that the sample size $n$ be large enough is clearly needed in general for normal approximation, so that the varying pair $(n,\th_1)$ be $p$-NA. As for the condition in Proposition~\ref{prop:NA} that $\|\th_1\|$ be small, it is needed there only to ensure that the covariance matrix $\Si_d(\th_1)$ be close enough to $\Si_d(\0)=I_d$ and hence well-conditioned, in sense that the matrix norm $\|\Si_d(\th_1)^{-1}\|$ be not too large -- see the proof of Proposition~\ref{prop:NA}. 
Obviously, as far as the $p$-NA condition is concerned, the mentioned restrictions on $n$ and $\|\th_1\|$ in Proposition~\ref{prop:NA} will be not needed if the observations $\X_i$ can themselves be assumed to be normally distributed. 

The further concern, about the existence of $p$-NAAS pairs, will addressed in the next section in Proposition~\ref{prop:s->n,th}, where it is required as well that $\|\th_1\|$ be small; the reason for this requirement is the discrete nature of the sample size $n$, which must be an integer.  

However, none of these conditions imposed in Propositions~\ref{prop:NA} and \ref{prop:s->n,th} will be needed if the observations are normally distributed \emph{and} the ``sample size'' $n$ is ``continuously valued'' (that is, $n$ can take any positive real value) -- as in the following setting. Suppose one observes an unknown vector $\th$ in $\R^d$ in the standard Gaussian white noise $\dd\B_\cdot$ over the time period $[0,n]$, for some real $n>0$, which can be regarded as the ``sample size''. That is, the observations are given by the simplest stochastic differential equation 
$\dd\SS_t=\th\,\dd t+\dd\B_t$ for all real $t\in[0,n]$ with $\SS_0=\0$, where $(\B_t)$ is the standard Brownian motion in $\R^d$. The obvious solution to this equation (with the initial condition $\SS_0=\0$) is $\SS_t=t\th+\B_t$ for $t\in[0,n]$. The test here still will be $\de_{n,p,c}$ as given in \eqref{eq:test}, with $\overline{\Xd_n}:=\frac1n\,\SS_n$. 
Then all the results stated in this section will remain valid, without any prior restrictions such as $n$ being large or $\|\th_1\|$ being small. 
\qed\end{remark}

\section{Statements of key propositions, and proofs of the results stated in Section~\ref{results}
}\label{proofs}
In this section, we shall state a number of propositions (possibly of independent interest), whence the theorems of Section~\ref{results} will immediately follow; the proofs of these propositions will be deferred to Sections~\ref{proof of shifts} and \ref{proofs of props p}. Here we shall also prove the propositions stated in Section~\ref{results}, which may or may not depend on the propositions stated in the current section, Section~\ref{proofs}. 

\subsection{Statements of key propositions}\label{key props}

The key notion is provided by 

\begin{definition}\label{def:AS shift} [Cf.\ Definition~\ref{def:AS}.] 
Take any $p\in[-\infty,\infty]$. 
Say that a varying vector $\s$ in $\R^d$ is a $p$-weakly-AS \emph{shift} if for \emph{some} choice of a varying real number $c$ \big(necessarily with values in $(0,\infty)$\big) one has 
\begin{equation*}
	\PP(\no{\Zd}_p>c)\to\al\quad\text{and}\quad\PP(\no{\Zd+\s}_p>c)\to\be;    
\end{equation*}
Say that $\s$ is a $p$-strongly-AS \emph{shift} if for \emph{any} choice of a varying $c$ such that $\PP(\no{\Zd}_p>c)\to\al$ one has 
$\PP(\no{\Zd+\s}_p>c)\to\be$.    
\end{definition}

\begin{proposition}\label{prop:n,th->s}
A $p$-NA varying pair $(n,\th_1)$ is 
$p$-weakly-AS if and only if the vector $\s:=\sqrt{n}\,\th_1$ is a $p$-weakly-AS shift. 
The same holds with ``strongly'' instead of ``weakly''. 
\end{proposition}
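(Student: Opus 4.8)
The plan is to reduce the entire statement to the uniform (in $c$) approximation that is built into the $p$-NA hypothesis, so that the distribution of the test statistic may be replaced throughout by the distribution of its Gaussian-shift surrogate. First I would record that $\no\cdot_p$ is positively homogeneous, whence the test statistic satisfies $\sqrt n\,\no{\overline{\Xd_n}}_p=\no{\sqrt n\,\overline{\Xd_n}}_p$; writing $\s:=\sqrt n\,\th_1$ and passing to complementary events in \eqref{eq:NA} (used once with $\th=\0$ and once with $\th=\th_1$), the $p$-NA property yields
\[
\sup_{c\in\R}\big|\E_{\0}\de_{n,p,c}-\PP(\no{\Zd}_p>c)\big|\to0,
\qquad
\sup_{c\in\R}\big|\E_{\th_1}\de_{n,p,c}-\PP(\no{\Zd+\s}_p>c)\big|\to0,
\]
since $\E_{\0}\de_{n,p,c}=\PP_{\0}(\no{\sqrt n\,\overline{\Xd_n}}_p>c)$ and $\E_{\th_1}\de_{n,p,c}=\PP_{\th_1}(\no{\sqrt n\,\overline{\Xd_n}}_p>c)$.

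The crux is that, because these bounds are \emph{uniform} over all real $c$, they persist along any varying $c=c(d)$. From this I would extract, for every varying $c$, the two equivalences
\[
\E_{\0}\de_{n,p,c}\to\al\iff\PP(\no{\Zd}_p>c)\to\al,
\qquad
\E_{\th_1}\de_{n,p,c}\to\be\iff\PP(\no{\Zd+\s}_p>c)\to\be,
\]
each side differing from the other by a quantity tending to $0$.

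For the weak version I would then argue directly from the quantifier ``for some $c$'': a varying $c$ witnessing \eqref{eq:AS} for the pair $(n,\th_1)$ witnesses, by the two equivalences, that $\s$ is a $p$-weakly-AS shift, and conversely. For the strong version I would observe that the defining conditions of ``$p$-strongly-AS pair'' and ``$p$-strongly-AS shift'' are, respectively, the implications $\forall c\colon[\,\E_{\0}\de_{n,p,c}\to\al\Rightarrow\E_{\th_1}\de_{n,p,c}\to\be\,]$ and $\forall c\colon[\,\PP(\no{\Zd}_p>c)\to\al\Rightarrow\PP(\no{\Zd+\s}_p>c)\to\be\,]$; by the equivalences above the hypothesis and the conclusion of the first implication coincide with those of the second, so the two ``for all $c$'' statements are logically identical. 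This settles both directions in the strong case as well.

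There is no serious analytic obstacle: the argument is purely a transfer of convergence conditions through the $p$-NA approximation. The one point that must not be overlooked—and the reason the hypothesis is stated as $p$-NA rather than as mere distributional convergence—is the uniformity in $c$ of \eqref{eq:NA}. Since $c$ is allowed to vary with $d$, only this uniform control permits evaluating the displayed bounds along the particular sequence $c=c(d)$, and hence passing the target limits $\al,\be$ back and forth between the actual test $\de_{n,p,c}$ and its Gaussian-shift surrogate.
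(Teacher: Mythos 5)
Your argument is correct and is exactly the verification the paper has in mind: the paper dismisses this proposition as following "immediately" from Definitions~\ref{def:AS}, \ref{def:NA}, \ref{def:AS shift} and \eqref{eq:test}, and your write-up simply makes explicit the transfer of the limits $\al,\be$ through the uniform-in-$c$ bound \eqref{eq:NA} via homogeneity of $\no\cdot_p$ and passage to complements. You also correctly identify the one non-vacuous point, namely that the supremum over $c$ in \eqref{eq:NA} is what licenses evaluation along a varying $c=c(d)$.
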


Proposition~\ref{prop:n,th->s} follows immediately from  Definitions~\ref{def:AS}, \ref{def:NA}, \ref{def:AS shift}, and \eqref{eq:test}.

\begin{proposition}\label{prop:AS shift} [Cf.\ Proposition~\ref{prop:AS}.] 
Any $p$-weakly-AS shift $\s$ is a $p$-strongly-AS shift; obviously, vice versa is true as well.  
\end{proposition}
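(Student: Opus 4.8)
The plan is to show that any competing critical value is asymptotically interchangeable with the weakly-AS witness. Write $H(c):=\PP(\no{\Zd}_p>c)$ and $G(c):=\PP(\no{\Zd+\s}_p>c)$, both nonincreasing in $c$. By hypothesis there is a varying $c^*$ with $H(c^*)\to\al$ and $G(c^*)\to\be$, and we are handed an arbitrary varying $c$ with $H(c)\to\al$; the goal is $G(c)\to\be$. Since $G(c)\in[0,1]$, it is enough to prove that every convergent subsequence of $G(c)$ has limit $\be$; fixing such a subsequence and passing to a further one, we may assume that either $c\ge c^*$ or $c\le c^*$ holds eventually. In either case monotonicity identifies $|H(c)-H(c^*)|$ with the mass that $\no{\Zd}_p$ puts on the interval between $c$ and $c^*$, and $|G(c)-G(c^*)|$ with the corresponding mass of $\no{\Zd+\s}_p$; since $H(c)-H(c^*)\to0$, it remains to show that the analogous mass of $\no{\Zd+\s}_p$ also vanishes.

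Next I would bring in the limit laws, which are exactly the content of the key propositions established later in Sections~\ref{proof of shifts}--\ref{proofs of props p}. For a monotone power transform the two events share a \emph{common} threshold: for $p\in(0,\infty)$ one has $\{\no{\ww}_p>c\}=\{\sum|w_j|^p>dc^p\}$ with the same $\tau(c):=dc^p$ for $\ww=\Zd$ and $\ww=\Zd+\s$, and similarly in the remaining ranges. Put $S_0:=\sum|Z_j|^p$ and $S_1:=\sum|Z_j+s_j|^p$. There are centering/scaling constants $m_0,\sigma_0$ (resp.\ $m_1,\sigma_1$), depending on $p,d,\s$, such that the standardized statistics converge in distribution to a law $L_0$ (resp.\ $L_1$) whose distribution function is continuous and strictly increasing on its connected support: Gaussian for $p\in(-\tfrac12,\infty)$ by Lindeberg--L\'evy with a Berry--Esseen rate, stable of index $-\tfrac1p$ for $p<-\tfrac12$ by the convergence-to-infinitely-divisible criteria, and explicit for $p=\pm\infty$. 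Writing $x(c):=(\tau(c)-m_0)/\sigma_0$ and $y(c):=(\tau(c)-m_1)/\sigma_1$, uniform convergence to the continuous strictly increasing limit $L_0$ turns $H(c)\to\al$ into $x(c)\to q_\al:=L_0^{-1}(1-\al)$, and likewise $x(c^*)\to q_\al$; hence $x(c)-x(c^*)\to0$, that is, $\tau(c)-\tau(c^*)=o(\sigma_0)$.

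The transfer step is then immediate: $y(c)-y(c^*)=(\tau(c)-\tau(c^*))/\sigma_1=(\sigma_0/\sigma_1)\,(x(c)-x(c^*))$, so provided $\sigma_0/\sigma_1=O(1)$ we get $y(c)-y(c^*)\to0$. Since the weakly-AS witness gives $y(c^*)\to L_1^{-1}(1-\be)$, continuity of $L_1$ yields $G(c)\to\be$, as required; the case $c\le c^*$ is symmetric. The converse (strong-AS $\Rightarrow$ weak-AS) is immediate, because the null limit law $L_0$ is continuous and strictly increasing, so a varying $c$ with $H(c)\to\al$ exists, and strong-AS applied to it produces a witness.

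The \textbf{main obstacle} is the comparability of scales $\sigma_1\asymp\sigma_0$ (and its analogues in the stable and $p=\pm\infty$ regimes). This is genuinely needed: for $p\in(0,\infty)$ one has $\sigma_0^2=d\,\la_{p,2}(0)$ and $\sigma_1^2=\sum\la_{p,2}(s_j)$, and a configuration with many very large $|s_j|$ could in principle drive $\sigma_1/\sigma_0\to0$ and break the transfer. What must be shown is that the weakly-AS hypothesis \emph{already} forces the shift to be moderate, in the precise sense that $\sum(\la_p(s_j)-\la_p(0))=O(\sqrt d)$, which in turn keeps $\sigma_1$ of the same order as $\sigma_0$. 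Establishing this two-sided control region by region is exactly where the detailed moment estimates for $\la_p$ and $\la_{p,2}$ enter, together with the monotonicity of $t\mapsto\PP(\no{\Zd+t\uu}_p>c)$ in $t\ge0$ (which rests on the Schur${}^2$-concavity of Gaussian measure from \cite{schur2}); the endpoint and boundary cases $p\in(-1,-\tfrac12]$, $p=0$, and $p=\pm\infty$ require separate but parallel treatment.
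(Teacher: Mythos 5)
Your overall architecture matches the paper's: reduce everything to the limit laws for $\sum|Z_j+s_j|^p$ (normal for $p>-\tfrac12$, stable for $p<-\tfrac12$, explicit for $p=\pm\infty$), which are valid only under a ``moderateness'' condition on $\s$, and then observe that the whole proof hinges on showing that a weakly-AS shift is automatically moderate. You have correctly located the crux. But you have not actually crossed it: you write that the moderateness ``must be shown'' using the moment estimates ``together with the monotonicity of $t\mapsto\PP(\no{\Zd+t\uu}_p>c)$,'' without saying how monotonicity is to be used. This is the genuinely nontrivial step (the paper itself flags Proposition~\ref{prop:AS} as ``nontrivial''), and there is a circularity trap lurking in your plan: the limit laws you want to invoke to control a weakly-AS shift $\s$ are only available \emph{after} you know $\s$ is moderate, so they cannot be applied directly to a possibly immoderate $\s$ to derive its moderateness.

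The paper's resolution is a specific shrinking-plus-contradiction argument that your outline omits. First one proves the implication ``condition \eqref{eq:as iff} $\implies$ $\s$ is strongly-AS'' (this is legitimate because \eqref{eq:as iff} itself implies the moderateness condition, so the limit laws apply there). Then, supposing a weakly-AS $\s$ violates moderateness, one picks $\tbe\in(\be,1)$ and uses the continuity and monotonicity in $t$ of $\sum f_p(ts_j)$ (an intermediate-value argument) to find a varying $t\in(0,1)$ such that $t\s$ satisfies \eqref{eq:as iff} with $\tbe$ in place of $\be$; by the already-proved implication, $t\s$ is strongly-AS at level $\tbe$, so $\PP(\no{\Zd+t\s}_p>c)\to\tbe$ for the witness $c$. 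Lemma~\ref{lem:mono} then gives $\PP(\no{\Zd+\s}_p>c)\ge\PP(\no{\Zd+t\s}_p>c)$, i.e.\ $\be\ge\tbe$, a contradiction. Note also that the moderateness needed is one-sided (an upper bound on the size of $\s$, applied to the \emph{shrunken} vector when necessary), not the two-sided control $\sigma_1\asymp\sigma_0$ you describe; the lower bound on $\sigma_1$ comes for free from the same estimates once \eqref{eq:as iff} holds. Without this bootstrapping step your transfer argument $y(c)-y(c^*)=(\sigma_0/\sigma_1)(x(c)-x(c^*))$ cannot be launched, so as written the proof is incomplete at exactly the point where the real work lies.
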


By virtue of this proposition, on which Proposition~\ref{prop:AS} is based, we shall be simply referring to $p$-AS shifts $\s$, at that omitting the adjectives ``weakly'' and ``strongly''. 

Next is the centerpiece of this section: 
\begin{proposition}\label{prop:AS shift iff}
(Cf.\ Theorem~\ref{th:AS iff}.)\quad 
For each $p\in[-\infty,\infty]$, a varying vector $\s$ is a $p$-AS shift if and only if condition \eqref{eq:as iff} holds. 
\end{proposition}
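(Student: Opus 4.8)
The plan is to prove Proposition~\ref{prop:AS shift iff} by analyzing, for each regime of $p$, the distribution of the sum $\sum_j h_p(Z_j+s_j)$ that governs the event $\{\no{\Zd+\s}_p>c\}$, where $h_p$ is the appropriate per-coordinate functional (e.g.\ $h_p(t)=|t|^p$ for $p\in(0,\infty)$, $h_p(t)=\ln|t|$ for $p=0$, and the indicator $\ii{|t|\le c_{d,\al}}$-type functional for $p=\infty$). The central reduction is that, since $\no{\Zd+\s}_p>c$ is equivalent to a threshold event for such a sum of independent summands, the two conditions $\PP(\no{\Zd}_p>c)\to\al$ and $\PP(\no{\Zd+\s}_p>c)\to\be$ become threshold conditions for the distribution functions of $\sum_j h_p(Z_j)$ and $\sum_j h_p(Z_j+s_j)$. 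First I would fix $p$ in the generic regime $p\in(0,\infty)$ and carry out the argument there in full, then indicate the parallel treatment of the remaining regimes.

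For $p\in(0,\infty)$ the summands $|Z_j+s_j|^p$ are independent with finite second moments (indeed with uniformly bounded $\la_{p,2}$ and $\la_{p,3}$ since we may assume $\s$ stays in a compact range — see below), and after centering by $\la_p(s_j)$ and scaling by $\ka_p(d)=d^{1/2}$ I would apply the Lindeberg--Feller central limit theorem (together with a Berry--Esseen estimate for the uniform convergence of the relevant d.f.) to conclude that
\begin{equation*}
	\frac{\sum_j\big(|Z_j+s_j|^p-\la_p(s_j)\big)}{\sqrt{d\,\la_{p,2}(0)}}\Longrightarrow\No(0,1)
\end{equation*}
under both $\s=\0$ and the given $\s$, the key point being that the fluctuation scale is set by the null summands and is insensitive to the shift. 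Matching the $\al$-quantile under $\s=\0$ pins down the threshold $c$ (equivalently $c^p$) as
\begin{equation*}
	c^p\approx d\,\la_p(0)+\sqrt{d\,\la_{p,2}(0)}\,\Phi^{-1}(1-\al),
\end{equation*}
and then demanding that the shifted sum cross this same threshold with limiting probability $\be$ forces the location shift $\sum_j\big(\la_p(s_j)-\la_p(0)\big)=\sum_j f_p(s_j)$ to equal $\big(\Phi^{-1}(\be)-\Phi^{-1}(\al)\big)\sqrt{d\,\la_{p,2}(0)}$ up to $o$ of the normalizing scale, which is exactly relation~\eqref{eq:as iff} with the table entries $\ka_p(d)=d^{1/2}$ and $K_{\al,\be;p}=[\Phi^{-1}(\be)-\Phi^{-1}(\al)]\sqrt{\la_{p,2}(0)}$. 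The ``if and only if'' comes from reading this computation in both directions, using that $\Phi^{-1}$ is a bijection.

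The main obstacle is that the normal CLT does not govern every regime: for $p\in(-\infty,-\tfrac12)$ the summands $|Z_j+s_j|^p$ have heavy tails and the correct limit is the stable law of index $-1/p$, so there I would instead invoke the classical criterion for convergence to a prescribed infinitely divisible (here, stable) distribution — verifying uniform asymptotic negligibility and the convergence of the truncated variances and of the L\'evy measure to $\nu=\sqrt{2/\pi}\,\nu_p$ in Definition~\ref{def:zeta} — and match the quantiles $\Phi_{p,b_p}^{-1}(\al),\Phi_{p,b_p}^{-1}(\be)$ of the limit in place of the Gaussian ones; the boundary cases $p=-1,-\tfrac12,0$ require the separate logarithmic or $\tmu_d$-based normalizations recorded in the table, and $p=\pm\infty$ are handled directly via the extreme-value/min computations for $\max_j$ and $\min_j$. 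A second, more subtle point is that to apply these limit theorems one must first know that the relevant shift $\s$ is confined to a range where the summand moments behave uniformly; here I would lean on the monotonicity of $t\mapsto\PP_{t\uu}(\no{\overline\X}_p>c)$ in $t\ge0$ noted in the Introduction (equivalently, on the Schur${}^2$-type control) to rule out degenerate or runaway shifts, and on Proposition~\ref{prop:AS shift} to pass freely between the ``weakly'' and ``strongly'' formulations so that a single choice of $c$ suffices. Assembling the nine regimes of the table then yields the stated equivalence for all $p\in[-\infty,\infty]$.
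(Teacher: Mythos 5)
Your overall architecture matches the paper's: reduce to threshold events for sums of independent summands, use a Berry--Esseen/CLT argument for $p\in(0,\infty)$, the classical criterion for convergence to an infinitely divisible (stable) law for $p\in(-\infty,-\tfrac12)$, separate normalizations at $p=-1,-\tfrac12,0$, direct extreme-value computations at $p=\pm\infty$, and the monotonicity of $t\mapsto\PP(\no{\Zd+t\s}_p>c)$ to control shifts that are ``too large''. But there is one step that, as stated, would fail. You justify the applicability of Lindeberg--Feller/Berry--Esseen for $p\in(0,\infty)$ by asserting that $\s$ ``stays in a compact range'' with \emph{uniformly bounded} $\la_{p,2}(s_j)$ and $\la_{p,3}(s_j)$. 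The a priori control that the monotonicity/shrinking argument actually delivers is only the aggregate bound $\sum_j f_p(s_j)\OO\ka_p(d)$ (equivalently $\sum_j g_p(s_j)\OO d^{1/2}$), which permits individual coordinates as large as $d^{1/(2p)}$ --- precisely the unequalized shifts that drive the paper's main results on $\are_{p,2,\uu}$. For such shifts the individual third moments are unbounded ($\la_{p,3}(s)\asymp|s|^{3p-1}$ for large $|s|$), and the Berry--Esseen ratio tends to $0$ only because the \emph{sums} satisfy $\sum_j\la_{p,2}(s_j)\sim d\,\la_{p,2}(0)$ and $\sum_j\la_{p,3}(s_j)<<d^{3/2}$, which must be derived from the aggregate bound via the moment estimates of Lemma~\ref{lem:lla}(v,vi) (and likewise via Lemma~\ref{lem:ttla} at $p=0$). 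Restricting to a compact range would leave the characterization unproved exactly where it matters most.

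A secondary point: you invoke Proposition~\ref{prop:AS shift} (weakly-AS $\iff$ strongly-AS) as an external input, but in the paper that equivalence is \emph{established in the course of} the proof of Proposition~\ref{prop:AS shift iff}; the logical order is (i) under the smallness condition, weakly-AS implies \eqref{eq:as iff}; (ii) \eqref{eq:as iff} implies strongly-AS; (iii) every weakly-AS shift satisfies the smallness condition, by the shrink-to-$t\s$ contradiction using Lemma~\ref{lem:mono}. Your plan should make clear that the weak/strong equivalence is an output of this loop, not a tool available beforehand; otherwise the argument is circular.
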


\begin{proposition}\label{prop:below}
For each $p\in[-\infty,\infty]$, there is a function $\N\ni d\mapsto\si_p(d)\in(0,\infty)$ such that for each $p$-AS shift $\s$ one has $\|\s\|\ge\si_p(d)$ eventually. In fact, one may choose $\si_p(d)$ so that 
\begin{equation}\label{eq:si_p(d)}
	\si_p(d)\asymp
	\left\{
	\begin{alignedat}{2}
	& d^{1/2} &&\text{ if }p\in[-\infty,1),\\
	& (\tfrac d{\ln d})^{1/2} &&\text{ if }p=-1,\\
	& d^{|p|/2} &&\text{ if }p\in(-1,-\tfrac12),\\
	& (d\ln d)^{1/4} &&\text{ if }p=-\tfrac12,\\
	&	d^{1/4} &&\text{ if }p\in(-\tfrac12,2],\\
	&	d^{1/(2p)} &&\text{ if }p\in[2,\infty),\\
	&	(\ln d)^{1/2} &&\text{ if }p=\infty.
	\end{alignedat}
	\right.
\end{equation}
\end{proposition}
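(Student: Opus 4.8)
The plan is to start from the characterization in Proposition~\ref{prop:AS shift iff}: a varying vector $\s$ is a $p$-AS shift exactly when $\sum_j f_p(s_j)\sim K_{\al,\be;p}\,\ka_p(d)$, with $f_p$, $\ka_p$ and $K_{\al,\be;p}$ as in the table of Theorem~\ref{th:AS iff}. In particular any $p$-AS shift satisfies $\sum_j f_p(s_j)\ge\tfrac12 K_{\al,\be;p}\,\ka_p(d)$ eventually, and since $K_{\al,\be;p}>0$ this is a genuine lower bound on $\sum_j f_p(s_j)$. The whole proof then reduces to converting this into a lower bound on $\|\s\|=\big(\sum_j s_j^2\big)^{1/2}$, which I would do by fixing, for each $p$, a pointwise majorant $f_p(s)\le C_p(d)\,h_p(s)$ valid for all $s\in\R$, with $h_p(s)$ equal to $s^2$, or $|s|^p$, or a Gaussian-tail increment, and then summing over $j$. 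The only real content is the correct choice of $h_p$ and of the (possibly $d$-dependent) constant $C_p(d)$ in each regime, which is exactly what produces the different entries of \eqref{eq:si_p(d)}.

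The workhorse regime is $p\in(-1,2]$, where I claim the global quadratic bound $f_p(s)\le C_p(d)\,s^2$. Near $s=0$ this follows from a Taylor expansion: differentiating under the Gaussian integral gives $\la_p''(0)=\E\big[|Z|^p(Z^2-1)\big]=p\,\E|Z|^p$, finite for $p>-1$, so that $f_p(s)\sim\tfrac12|\la_p''(0)|\,s^2$ (for $p=0$ one uses $\tla''(0)$ in place of $\la_p''(0)$). For large $|s|$ the bound persists with $C_p(d)\asymp1$, because $f_p$ is bounded when $p\le0$ and $f_p(s)\sim|s|^p=o(s^2)$ when $0<p<2$. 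Summing then gives $\|\s\|^2\ge C_p(d)^{-1}\sum_j f_p(s_j)\gtrsim\ka_p(d)$, i.e.\ $\|\s\|\gtrsim d^{1/4}$ for $p\in(-\tfrac12,2]$, $\|\s\|\gtrsim d^{|p|/2}$ for $p\in(-1,-\tfrac12)$, and $\|\s\|\gtrsim(d\ln d)^{1/4}$ for $p=-\tfrac12$. The case $p=-1$ is of the same type but now $\E|Z|^{-1}=\infty$, so the quadratic coefficient must come from the truncated moment $\tmu_d$ of \eqref{eq:tmu}; a direct estimate gives $\tmu_d(0)\asymp\ln d$ and $-\tmu_d''(0)\asymp\ln d$, hence $f_{-1}(s)=\tmu_d(0)-\tmu_d(s)\le C(\ln d)\,s^2$ and $\|\s\|^2\gtrsim d/\ln d$, matching $\si_{-1}(d)\asymp(d/\ln d)^{1/2}$.

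For $p\in[2,\infty)$ the function $f_p$ grows superquadratically, so the quadratic majorant fails and I would split the coordinates at $|s_j|=1$: on $\{|s_j|\le1\}$ one has $f_p(s_j)\le C s_j^2$, while on $\{|s_j|>1\}$ one has $f_p(s_j)\le C|s_j|^p$, whence $\sum_j f_p(s_j)\le C\big(\|\s\|^2+\sum_j|s_j|^p\big)\le C\big(\|\s\|^2+\|\s\|^p\big)$, the last inequality being the embedding $\sum_j|s_j|^p\le\big(\sum_j s_j^2\big)^{p/2}$ valid for $p\ge2$. Since $\|\s\|\to\infty$ and $p\ge2$, the right-hand side is $\le 2C\|\s\|^p$, so $\|\s\|^p\gtrsim\ka_p(d)=d^{1/2}$ and therefore $\|\s\|\gtrsim d^{1/(2p)}$. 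For $p\in[-\infty,-1)$ one has $f_p(s)=e^{-s^2/2}\in(0,1]$ and $K_{\al,\be;p}\in(0,1)$ (directly for $p=-\infty$, where $K_{\al,\be;-\infty}=\ln\be/\ln\al$; and for $p\in(-\infty,-1)$ because the pertinent quantiles of $\zeta_{p,b_p}$ are positive while $1/p<0$). Thus $\sum_j\big(1-e^{-s_j^2/2}\big)\sim(1-K_{\al,\be;p})\,d$, and the elementary inequality $1-e^{-s^2/2}\le s^2/2$ gives $\|\s\|^2\gtrsim d$, i.e.\ $\|\s\|\gtrsim d^{1/2}$.

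The delicate case is $p=\infty$, where $\ka_\infty=1$ and any majorant that fails to vanish at $s=0$ over-counts: the $\approx d$ coordinates with $s_j\approx0$ each contribute $\approx\PP(|Z|>c_{d,\al})\asymp1/d$, producing a spurious total of order $1$. Instead I would use $c_{d,\al}\sim\sqrt{2\ln d}$ together with the increment estimate $f_\infty(s)=\la_\infty(s)-\la_\infty(0)\le C\,|s|\,e^{-c_{d,\al}^2/2+c_{d,\al}|s|}$, which does vanish at $s=0$; then, assuming for contradiction that $\|\s\|=o(\sqrt{\ln d})$, the bounds $e^{c_{d,\al}|s_j|}\le e^{c_{d,\al}\|\s\|}=d^{o(1)}$ and $\sum_j|s_j|\le\sqrt d\,\|\s\|$ (Cauchy--Schwarz) yield $\sum_j f_\infty(s_j)=o(1)$, contradicting $\sum_j f_\infty(s_j)\to K_{\al,\be;\infty}>0$ and hence forcing $\|\s\|\gtrsim(\ln d)^{1/2}$. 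I expect this case, together with the truncated-moment analysis for $p=-1$, to be the main obstacle: both demand honest asymptotics of the underlying special objects (the Gaussian tail near $c_{d,\al}$ and $\tmu_d$ near $0$), whereas every other regime reduces cleanly to one pointwise inequality $f_p(s)\le C_p(d)\,h_p(s)$ followed by a single summation.
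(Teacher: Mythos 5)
Your argument is correct and recovers every entry of \eqref{eq:si_p(d)}, but it converts the normalization $\sum_j f_p(s_j)\sim K_{\al,\be;p}\,\ka_p(d)$ into a bound on $\|\s\|$ by a different mechanism than the paper. The paper argues by majorization: for $p\in[-\infty,2)$ it uses concavity of $f_p(\sqrt u)$ in $u$ (or an explicit convex $\psi$ with $s_j^2=\psi\big(e^{-s_j^2/2}\big)$ when $p\le-1$) together with the Hardy--Littlewood--P\'olya lemma (Lemma~\ref{lem:hardy}) to reduce to the equalized shift $s\one$, and for $p\in(2,\infty]$ it uses Schur${}^2$-convexity of $\sum f_p(s_j)$ to reduce to $s\ee_1$; the asymptotics of $f_p$ are then read off at a single scalar argument. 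You instead fix a global pointwise majorant $f_p(s)\le C_p(d)h_p(s)$ and sum; this is essentially the upper-bound half of the paper's Lemma~\ref{lem:h} (and of Lemma~\ref{lem:ttmu}(iii) for $p=-1$), so your route is more elementary where it applies and bypasses the majorization machinery entirely, and your $p=\infty$ treatment --- the increment bound $f_\infty(s)\le C|s|e^{-c_{d,\al}^2/2+c_{d,\al}|s|}$ combined with Cauchy--Schwarz under the contradiction hypothesis --- is a genuinely self-contained alternative to the paper's reduction to $s\ee_1$ followed by the asymptotics of $\la_{\infty;d}$. Three small points to tighten, none of which affects validity: (i) for $p=-1$, the value of $-\tmu_d''(0)$ only controls $f_{-1}$ near $s=0$, so to make the majorant global you also need $f_{-1}(s)\le\tmu_d(0)\OO\ln d\le(\ln d)s^2$ for $|s|\ge1$ (together this is exactly the content of Lemma~\ref{lem:ttmu}(iii)); (ii) for $p\in[2,\infty)$, the clause ``since $\|\s\|\to\infty$'' should be \emph{deduced from} the inequality $d^{1/2}\OO\|\s\|^2+\|\s\|^p$ rather than invoked before it --- the inequality does force it; (iii) for $p=\infty$, bounding $f_\infty(s)=\ln\PP(|Z|\le c)-\ln\PP(|Z+s|\le c)$ by the probability increment requires $\PP(|Z+s|\le c)$ bounded away from $0$, which holds uniformly over the coordinates only because you are already assuming $\max_j|s_j|\le\|\s\|=o(\sqrt{\ln d})=o(c_{d,\al})$.
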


\begin{proposition}\label{prop:t->1}
For each $p\in[-\infty,\infty]$, 
if $\s$ is a $p$-AS shift and a varying scalar $t$ goes to $1$ fast enough, then $t\s$ is a $p$-AS shift as well. 
More specifically, let 
\begin{equation*}
	\tau_p(d):=
	\left\{
	\begin{alignedat}{2}
	&1 &\text{ if } &p\in[-\infty,\infty),\\
	&\frac1{\ln d} &\text{ if } &p=\infty; 
	\end{alignedat}
	\right.
\end{equation*}
then 
for 
each $p\in[-\infty,\infty]$, 
any varying $t\in(0,\infty)$ such that $|t-1|<<\tau_p(d)$, and for each $p$-AS shift $\s$, the varying vector  $t\s$ is also a $p$-AS shift.   
\end{proposition}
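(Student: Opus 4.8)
The plan is to reduce the claim, via the characterization in Proposition~\ref{prop:AS shift iff}, to an estimate showing that scaling by a $t$ close to $1$ changes the relevant sum only negligibly. Since $\s$ is a $p$-AS shift, Proposition~\ref{prop:AS shift iff} and \eqref{eq:as iff} give $\sum_j f_p(s_j)\sim K_{\al,\be;p}\,\ka_p(d)$, and, by the same proposition applied to $t\s$, it suffices to prove that $\sum_j f_p(ts_j)\sim K_{\al,\be;p}\,\ka_p(d)$, i.e.\ that $\sum_j f_p(ts_j)\sim\sum_j f_p(s_j)$. Writing $\phi(r):=\sum_j f_p(rs_j)$, so that $\phi(1)\sim K_{\al,\be;p}\,\ka_p(d)>0$, the goal becomes $\phi(t)=\phi(1)\,(1+o(1))$, equivalently $\phi(t)-\phi(1)=o(\ka_p(d))$, whenever $|t-1|<<\tau_p(d)$.

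First I would write $\phi(t)-\phi(1)=\int_1^t\phi'(r)\,\dd r$ with $\phi'(r)=\sum_j s_j\,f_p'(rs_j)$, so that the whole matter reduces to a single uniform bound: for all $r$ in a fixed neighbourhood of $1$,
\[
\sum_j\big|s_j\,f_p'(rs_j)\big|\;\OO\;A_p(d)\,\ka_p(d),\qquad A_p(d):=1/\tau_p(d),
\]
where $A_p(d)\asymp1$ for $p\in[-\infty,\infty)$ and $A_p(d)\asymp\ln d$ for $p=\infty$. Granting this, $|\phi(t)-\phi(1)|\le|t-1|\,\sup_r\sum_j|s_j f_p'(rs_j)|\OO|t-1|\,A_p(d)\,\ka_p(d)=o(\ka_p(d))$, precisely because $|t-1|<<\tau_p(d)=1/A_p(d)$.

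This bound I would obtain from pointwise estimates on $s\,f_p'(s)$, in two flavours. For $p\in[-1,\infty]$ I would establish the \emph{relative} bound $|s\,f_p'(s)|\le A_p(d)\,f_p(s)$ for all $s\in\R$; then $\phi'(r)=\tfrac1r\sum_j(rs_j)\,f_p'(rs_j)$ obeys the differential inequality $|\phi'(r)|\le\tfrac{A_p(d)}r\,\phi(r)$, whence $\phi(t)=\phi(1)\exp\!\big(O(A_p(d)\,|\ln t|)\big)=\phi(1)\,(1+o(1))$ as soon as $A_p(d)\,|t-1|=o(1)$. For the remaining range $p\in[-\infty,-1)$, where $f_p(s)=e^{-s^2/2}$ and the relative bound fails because $|s\,f_p'(s)|/f_p(s)=s^2$ is unbounded, I would instead use the \emph{absolute} bound $|s\,f_p'(s)|=s^2 e^{-s^2/2}\le2/e$; since $\ka_p(d)=d$ here, this yields $\sum_j|s_j f_p'(rs_j)|=\tfrac1r\sum_j(rs_j)^2 e^{-(rs_j)^2/2}\le\tfrac{2d}{er}\OO\ka_p(d)$, i.e.\ $A_p(d)\asymp1$ and $\tau_p(d)=1$, as required.

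The technical heart, and the main obstacle, is proving these pointwise bounds uniformly in $s$ and across all $p$. For $p\in[-1,\infty]$ one has $f_p(0)=0$ with a quadratic minimum, so $s\,f_p'(s)\approx2f_p(s)$ near $0$ and the relative bound holds there with an absolute constant; the work is in the intermediate and large-$|s|$ regimes. For $p\in(-1,0)$ and $p=-1$ this requires differentiating the singular integrals defining $\la_p$ and $\tmu_d$ (after shifting the Gaussian density, so that the $|\cdot|^p$ singularity remains integrable) and checking that $s\,f_p'(s)\to0$ while $f_p(s)$ stays bounded away from $0$, with the truncation at level $d$ in $\tmu_d$ handled uniformly; for $p\in[0,\infty)$ one uses $\la_p(s)\sim|s|^p$, $\tla(s)\sim\ln|s|$ and the corresponding derivative asymptotics. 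These are exactly the estimates supplied by the lemmas of Section~\ref{lemmas}. The genuinely different case is $p=\infty$: with the threshold $c_{d,\al}\asymp\sqrt{\ln d}$ of \eqref{eq:c(d,al)}, the ratio $|s\,f_\infty'(s)|/f_\infty(s)$ stays $O(1)$ for small $s$ but peaks at order $c_{d,\al}^2\asymp\ln d$ for coordinates $s$ comparable to, yet strictly below, $c_{d,\al}$; this amplification is exactly what makes $A_\infty(d)\asymp\ln d$ and forces the faster admissible rate $\tau_\infty(d)=1/\ln d$. Throughout, I would keep $r$ in a fixed neighbourhood of $1$, so that $rs_j\asymp s_j$ and no coordinate migrates between these regimes.
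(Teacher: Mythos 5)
Your proposal follows essentially the same route as the paper's proof: reduce via Proposition~\ref{prop:AS shift iff} to showing $\sum_j f_p(ts_j)\sim\sum_j f_p(s_j)$, and obtain this from the pointwise relative bound $|s\,f_p'(s)|\OO f_p(s)$ for $p\in[-1,\infty)$ (the paper's \eqref{eq:sf'(s),f(s)}, fed into the mean-value identity \eqref{eq:f(ts)/f(s)}) and from $|s\,f_\infty'(s)|\OO(\ln d)\,f_\infty(s)$ for $p=\infty$, established by the same three-regime analysis around $c_{d,\al}$; this is exactly where the rate $\tau_\infty(d)=1/\ln d$ enters in both arguments. The one place you genuinely diverge is $p\in[-\infty,-1)$: the paper splits the sum at the threshold $|s_j|\asymp|t-1|^{-1/4}$ and estimates the tail by $d\,e^{-1/\sqrt{|t-1|}}$, whereas you exploit the uniform absolute bound $s^2e^{-s^2/2}\le 2/e$ together with $\ka_p(d)=d$ to get $|\phi(t)-\phi(1)|\OO|t-1|\,d=o(d)$ directly; your version is a little more economical and equally correct, since $K_{\al,\be;p}$ is a fixed positive constant so an additive $o(d)$ error preserves the asymptotic equivalence. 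Your Gronwall-style packaging of the $p\in[-1,\infty]$ cases at the level of the aggregated sum $\phi(r)$, rather than term by term, is only a cosmetic difference. One small imprecision: for $p=\infty$ the ratio $|s\,f_\infty'(s)|/f_\infty(s)$ is $O(1)$ only as $s\to0$ and already grows to order $\sqrt{\ln d}$ for $s\asymp1$, but since your global bound $\ln d$ is what the argument uses, nothing is affected.
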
 

\begin{proposition}\label{prop:s->n,th}
For each $p\in[-\infty,\infty]$, there exists a function $\N\ni d\mapsto\tth_p(d)\in(0,\infty)$ such that the following holds: if $\|\th_1\|\le\tth_p(d)$ and 
$\s_p$ is a $p$-AS shift in the direction of $\th_1$, then 
the pair $(n_p,\th_1)$ with 
\begin{equation}\label{eq:n}
	n_p:=\bigg\lceil\frac{\|\s_p\|^2}{\|\th_1\|^2}\bigg\rceil
\end{equation}
will be $p$-NAAS, and at that one will have $\|\s_p\|\sim\sqrt{n_p}\,\|\th_1\|$.
\end{proposition}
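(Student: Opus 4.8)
The plan is to reduce the statement to the perturbation-stability of AS shifts (Proposition~\ref{prop:t->1}) together with the lower bound on the norm of an AS shift (Proposition~\ref{prop:below}) and the sufficient condition for normal admissibility (Proposition~\ref{prop:NA}). The only genuine difficulty is the discreteness of $n_p$: rounding $\|\s_p\|^2/\|\th_1\|^2$ up to an integer means that $\sqrt{n_p}\,\th_1$ is not exactly the given AS shift $\s_p$, but only a slightly dilated copy of it; the whole argument amounts to showing that this dilation is small enough to be absorbed by Proposition~\ref{prop:t->1}.

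First I would use that $\s_p$ lies in the direction of $\th_1$ to write
\[
	\sqrt{n_p}\,\th_1 = t\,\s_p, \qquad t := \frac{\sqrt{n_p}\,\|\th_1\|}{\|\s_p\|}\ge1,
\]
where $t\ge1$ comes from $n_p\ge\|\s_p\|^2/\|\th_1\|^2$. From the definition \eqref{eq:n} of $n_p$ one also has $n_p<\|\s_p\|^2/\|\th_1\|^2+1$, whence $t^2<1+\|\th_1\|^2/\|\s_p\|^2$, and therefore, since $t+1\ge2$,
\[
	0\le t-1\le\tfrac12(t^2-1)<\frac{\|\th_1\|^2}{2\,\|\s_p\|^2}.
\]

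Next I would bound the right-hand side from above using Proposition~\ref{prop:below}, by which any $p$-AS shift satisfies $\|\s_p\|\ge\si_p(d)$ eventually; combined with the hypothesis $\|\th_1\|\le\tth_p(d)$ this gives $t-1<\tth_p(d)^2/(2\,\si_p(d)^2)$ eventually. It then remains to choose the threshold $\tth_p(d)$ small enough that three requirements hold simultaneously: (a) $t-1<<\tau_p(d)$, for which it suffices that $\tth_p(d)=o\big(\sqrt{\tau_p(d)}\,\si_p(d)\big)$; (b) $\|\th_1\|$ stays below the norm-threshold supplied by Proposition~\ref{prop:NA}; and (c) $n_p$ exceeds the sample-size threshold of Proposition~\ref{prop:NA}, which is automatic because $n_p\ge\si_p(d)^2/\tth_p(d)^2\to\infty$ once $\tth_p(d)\to0$ suitably. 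Taking $\tth_p(d)$ to be the minimum of the corresponding bounds (e.g.\ dividing $\sqrt{\tau_p(d)}\,\si_p(d)$ by $\ln d$ to secure the strict $o$ in (a)) meets all three; the tightest case is $p=\infty$, where $\tau_p(d)=1/\ln d$ and $\si_p(d)\asymp(\ln d)^{1/2}$ give $\sqrt{\tau_p(d)}\,\si_p(d)\asymp1$, so $\tth_\infty(d)$ may be taken of order $1/\ln d$.

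With such a $\tth_p(d)$ fixed, the conclusion follows by assembling the earlier results. By (b)--(c) and Proposition~\ref{prop:NA}, the pair $(n_p,\th_1)$ is $p$-NA. By (a) and Proposition~\ref{prop:t->1}, since $\s_p$ is a $p$-AS shift and $|t-1|<<\tau_p(d)$, the vector $t\,\s_p=\sqrt{n_p}\,\th_1$ is again a $p$-AS shift; hence, by Proposition~\ref{prop:n,th->s}, the $p$-NA pair $(n_p,\th_1)$ is $p$-AS, i.e.\ $p$-NAAS. Finally, $\sqrt{n_p}\,\|\th_1\|=t\,\|\s_p\|$ with $t\to1$ yields $\|\s_p\|\sim\sqrt{n_p}\,\|\th_1\|$. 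As noted, the main obstacle is purely the integer rounding in \eqref{eq:n}, and it is handled entirely by the quantitative lower bound of Proposition~\ref{prop:below} combined with the admissible perturbation range $\tau_p(d)$ of Proposition~\ref{prop:t->1}.
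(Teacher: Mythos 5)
Your proof is correct and follows essentially the same route as the paper's: write $\sqrt{n_p}\,\th_1=t\s_p$, bound $t-1$ by $\|\th_1\|^2/\|\s_p\|^2\le\tth_p(d)^2/\si_p(d)^2$ via Proposition~\ref{prop:below}, absorb the dilation with Proposition~\ref{prop:t->1}, and secure $p$-NA via Proposition~\ref{prop:NA} before invoking Proposition~\ref{prop:n,th->s}. The only point to tighten is requirement (c): what is needed is $n_p\ge n_p(d)$ for the specific threshold of Proposition~\ref{prop:NA}, not merely $n_p\to\infty$, which is achieved by the explicit choice $\tth_p(d)\le\si_p(d)/\sqrt{n_p(d)}$ -- already subsumed in your ``minimum of the corresponding bounds''.
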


\begin{proposition}\label{prop:AS shift exist}
(Cf.\ Proposition~\ref{prop:AS exist}.)  
\begin{enumerate}[(I)] 
	\item 
For each $p\in[-\infty,\infty]$ and each $d\in\N$, there exists a unique $c\in(0,\infty)$ such that 
$\PP(\no{\Zd}_p>c)=\al$. 
		\item 
For each $p\in[0,\infty]$, each $d\in\N$, and each $\no{\cdot}_2$-unit vector $\uu\in\R^d$, 
there exists a unique vector $\s$ in the direction of $\uu$ such that 
$\PP(\no{\Zd+\s}_p>c)=\be$, 
where $c$ is as in part (i); 
clearly, this vector $\s$ will be a $p$-AS shift. 
	\item
Take any $p\in[-\infty,0)$. Take any varying $\no\cdot_2$-unit vector $\uu$ and let $d_0(\uu):=\sum\ii{u_j=0}$, as in \eqref{eq:d_0}. 
Then the following five statements are equivalent to each other: 
\begin{enumerate}[(a)]
	\item	there is a $p$-AS shift $\s$ in the direction of $\uu$; 
	\item for \emph{some} varying $c$ such that $\PP(\no{\Zd}_p>c)\to\al$ 
				and some varying vector $\s$ in the direction of $\uu$, one has $\lim\PP(\no{\Zd+\s}_p>c)\ge\be$.  
	\item for \emph{any} varying $c$ such that $\PP(\no{\Zd}_p>c)\to\al$ 
				and some varying vector $\s$ in the direction of $\uu$, one has $\lim\PP(\no{\Zd+\s}_p>c)\ge\be$. 
	\item statement (III)(d) of Proposition~\ref{prop:AS exist} holds. 
	\item statement (III)(e) of Proposition~\ref{prop:AS exist} holds. 
\end{enumerate}
\end{enumerate}
\end{proposition}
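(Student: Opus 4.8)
The plan is to handle the three parts separately, the first two by direct analytic arguments and the third through a chain of equivalences resting on Proposition~\ref{prop:AS shift iff} together with a monotonicity-in-$t$ property of the power. For part (I), I would note that for every fixed $p$ and $d$ the variable $\no{\Zd}_p$ has an absolutely continuous law whose density is strictly positive on all of $(0,\infty)$: the map $\no{\cdot}_p$ is continuous off the coordinate hyperplanes (which carry no mass), and $\no{\Zd}_p$ can be made arbitrarily small or large with positive probability (for $p>0$ by shrinking or enlarging all the $|Z_j|$, and for $p<0$ by the reciprocal behaviour, since $\no{\s}_p\to0$ as some $|s_j|\to0$ and $\no{\s}_p\to\infty$ as all $|s_j|\to\infty$; the cases $p=\pm\infty$ being $\max_j|Z_j|$ and $\min_j|Z_j|$). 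Hence $c\mapsto\PP(\no{\Zd}_p>c)$ is continuous and strictly decreasing from $1$ to $0$ on $(0,\infty)$, so the equation $\PP(\no{\Zd}_p>c)=\al$ has a unique root for each $\al\in(0,1)$. The engine for the rest is the stochastic monotonicity of the shifted $p$-mean: since $|Z+a|$ is stochastically increasing in $a\ge0$ (its tail $\Phi(a-x)+\Phi(-a-x)$ has positive $a$-derivative $\vpi(a-x)-\vpi(a+x)$ for $x>0$), each $|Z_j+tu_j|$ is stochastically increasing in $t\ge0$; raising to the power $p$ and forming $\no{\cdot}_p$ (an increasing function of the coordinatewise $|Z_j+tu_j|^p$ for $p>0$, and also increasing for $p<0$ because the factor $1/p<0$ reverses the sign twice) shows $\no{\Zd+t\uu}_p$ is stochastically increasing in $t$, strictly so when some $u_j\ne0$. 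Thus $g_d(t):=\PP(\no{\Zd+t\uu}_p>c)$ is continuous and strictly increasing with $g_d(0)=\al$. For part (II), $p\in[0,\infty]$ forces $\no{\Zd+t\uu}_p\to\infty$ as $t\to\infty$ (one coordinate with $u_j\ne0$ already drives the $p$-mean to infinity), so $g_d(\infty)=1$; since $\al<\be<1$, the intermediate value theorem and strict monotonicity give a unique $t^\ast$ with $g_d(t^\ast)=\be$, and $\s:=t^\ast\uu$ is the required unique shift. As $\PP(\no{\Zd}_p>c)=\al$ and $\PP(\no{\Zd+\s}_p>c)=\be$ hold exactly, $\s$ is $p$-weakly-AS, hence $p$-AS by Proposition~\ref{prop:AS shift}.

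For part (III), I would first close the loop among (a), (b), (c) using only monotonicity and Proposition~\ref{prop:AS shift}. The implications (a)$\Rightarrow$(c)$\Rightarrow$(b) are immediate: a $p$-AS shift $\s$ in the direction of $\uu$ is $p$-strongly-AS by Proposition~\ref{prop:AS shift}, so $\PP(\no{\Zd+\s}_p>c)\to\be\ge\be$ for every $c$ with $\PP(\no{\Zd}_p>c)\to\al$, which is (c), while (c)$\Rightarrow$(b) because such a $c$ exists by part (I). For (b)$\Rightarrow$(a), suppose $c$ and $\s_0=t_0\uu$ satisfy $\lim\PP(\no{\Zd+\s_0}_p>c)\ge\be$, and recall $g_d$ is continuous, increasing, with $g_d(0)\to\al<\be$. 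If the limit exceeds $\be$, then for large $d$ one has $g_d(0)<\be<g_d(t_0)$ and the intermediate value theorem yields $t^\ast(d)$ with $g_d(t^\ast)=\be$; if the limit equals $\be$, take $\s^\ast:=\s_0$. In either case $\PP(\no{\Zd}_p>c)\to\al$ and $\PP(\no{\Zd+\s^\ast}_p>c)\to\be$, so $\s^\ast$ is $p$-weakly-AS, i.e.\ (a). This establishes (a)$\Leftrightarrow$(b)$\Leftrightarrow$(c).

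To bring in (d) and (e), I invoke Proposition~\ref{prop:AS shift iff}: (a) holds iff there is a varying $t\ge0$ with $F_d(t):=\sum_j f_p(tu_j)\sim K_p\,\ka_p(d)$. For the $p$ in part (III) the summand $f_p$ is even and monotone in $|s|$ — decreasing for $p\in[-\infty,-1)$ (there $f_p(s)=e^{-s^2/2}$) and increasing for $p\in[-1,0)$ (there $f_p(s)=\la_p(0)-\la_p(s)$, or $\tmu_d(0)-\tmu_d(s)$ at $p=-1$, increasing because $\E|Z+s|^p$, resp.\ $\tmu_d(s)$, decreases in $|s|$). Hence $F_d$ is continuous and monotone, its range is the interval with endpoints $F_d(0)$ and $F_d(\infty):=\lim_{t\to\infty}F_d(t)$, and $K_p\ka_p(d)$ is asymptotically attainable precisely when it lies, up to $o(\ka_p(d))$, between these endpoints. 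For $p\in[-\infty,-1)$ one has $F_d(0)=d$ and $F_d(\infty)=d_0(\uu)$, so attainability is $\inf_tF_d=d_0(\uu)\le(K_p+o(1))d$; for $p\in[-1,0)$ one has $F_d(0)=0$ (making the inf-line of (d) automatic) and $F_d(\infty)=(d-d_0(\uu))f_p(\infty)$, so attainability is $\sup_tF_d\ge(K_p-o(1))\ka_p(d)$. These are exactly the two lines of statement (III)(d) of Proposition~\ref{prop:AS exist}, giving (a)$\Leftrightarrow$(d). Finally, (d)$\Leftrightarrow$(e) is the substitution of the endpoint values $f_p(\infty)=\la_p(0)$ for $p\in(-1,0)$ and $f_{-1}(\infty)=\tmu_d(0)\sim\sqrt{2/\pi}\,\ln d$ for $p=-1$, together with $\ka_p(d)=d^{|p|\vee\frac12}$, resp.\ $(d\ln d)^{1/2}$ and $d$, which turns each inequality on $\sup_tF_d$ or $\inf_tF_d$ into the stated bound on $d_0(\uu)$.

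The main obstacle I expect lies entirely in part (III) and is twofold. First is the clean justification of the asymptotic-attainability step at the boundary: one must show that a varying $t$ realising $F_d(t)\sim K_p\ka_p(d)$ exists exactly when $K_p\ka_p(d)$ sits within $o(\ka_p)$ of the monotone range of $F_d$, which requires the continuity and range-filling of $F_d$ to be controlled uniformly in $d$ and careful bookkeeping of the $o(1)$ terms on both sides. Second is the endpoint asymptotics in the delicate cases $p=-1$ and $p=-\tfrac12$, where the logarithmic growth $\tmu_d(0)\sim\sqrt{2/\pi}\,\ln d$ and the $(d\ln d)^{1/2}$ scale must be pinned down precisely in order to produce the exact constants $\sqrt{\pi/2}\,K_{-1}$ and $K_{-1/2}/\la_{-1/2}(0)$ appearing in (e). Both rely on asymptotic estimates of $\la_p(0)$ and of the truncated mean $\tmu_d(0)$, which I would isolate as separate lemmas.
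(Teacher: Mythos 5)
Your proposal is correct and follows essentially the same route as the paper: parts (I)--(II) via continuity and strict monotonicity of $t\mapsto\PP(\no{\Zd+t\uu}_p>c)$ (which the paper simply cites as Lemma~\ref{lem:mono} rather than rederiving by stochastic ordering), the loop (a)$\Rightarrow$(c)$\Rightarrow$(b)$\Rightarrow$(a) via the intermediate value theorem, and (a)$\iff$(d)$\iff$(e) via Proposition~\ref{prop:AS shift iff} together with the monotone range of $t\mapsto\sum f_p(tu_j)$ and the endpoint asymptotics $f_p(\infty)=\la_p(0)$ and $\tmu_d(0)\sim\sqrt{2/\pi}\,\ln d$. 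The ``obstacles'' you flag are exactly what the paper disposes of by its Lemmas~\ref{lem:lla} and \ref{lem:ttmu}.
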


\begin{definition}\label{def:a_p2} (Cf.\ Definition~\ref{def:are}.) 
For any given $p\in[-\infty,\infty]$ and any given varying $\no\cdot_2$-unit vector $\uu$,    
let  
\begin{equation*}
a_{p,2}:=a_{p,2,\uu}
:=a_{p,2,\uu}(\al,\be):=\lim\frac{\|\s_2\|^2}{\|\s_p\|^2},	
\end{equation*}
provided that 
	this limit 
	exists, and is the same, for all $p$-AS shifts $\s_p$ in the direction of $\uu$ and all $2$-AS shifts $\s_2$ in the same direction; at that, let us allow the value $\infty$ for this limit, and hence for $a_{p,2}$. 
As Proposition~\ref{prop:AS shift exist} shows, for $p\in[-\infty,0)$ and some varying nonzero vectors $\uu$, there are no $p$-AS shifts $\s_p$ in the direction of $\uu$; in such a case, set $a_{p,2,\uu}:=0$.  \qed
\end{definition}

\begin{proposition}\label{prop:are=a_p2}
Take any $p\in[-\infty,\infty]$ and any varying $\no{\cdot}_2$-unit vector $\uu$. Then $\are_{p,2,\uu}$ exists if and only if $a_{p,2,\uu}$ exists, in which case one has $\are_{p,2,\uu}=a_{p,2,\uu}$. 
\end{proposition}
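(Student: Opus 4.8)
The plan is to exploit the correspondence between $(p,2)$-NAAS triples and pairs of AS shifts that is encoded by the relation $\s=\sqrt n\,\th_1$, reducing the proposition to bookkeeping once this correspondence is in place. Given any $(p,2)$-NAAS triple $(n_p,n_2,\th_1)$ with $\th_1$ in the direction of $\uu$, Proposition~\ref{prop:n,th->s} guarantees that $\s_p:=\sqrt{n_p}\,\th_1$ is a $p$-AS shift and $\s_2:=\sqrt{n_2}\,\th_1$ is a $2$-AS shift, both in the direction of $\uu$; since the \emph{same} $\th_1$ enters both, one has the \emph{exact} identity
\begin{equation*}
	\frac{n_2}{n_p}=\frac{n_2\,\|\th_1\|^2}{n_p\,\|\th_1\|^2}=\frac{\|\s_2\|^2}{\|\s_p\|^2}.
\end{equation*}
This already settles one implication: if $a_{p,2,\uu}$ exists, then for every triple the limit $\lim n_2/n_p$ equals $\lim\|\s_2\|^2/\|\s_p\|^2=a_{p,2,\uu}$, the same value throughout, so $\are_{p,2,\uu}$ exists and equals $a_{p,2,\uu}$.

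For the converse I would run the correspondence backwards, and this is where the one genuine difficulty sits: a shift $\s_p$ is a continuously-valued vector, whereas recovering a pair $(n_p,\th_1)$ via $n_p=\|\s_p\|^2/\|\th_1\|^2$ demands that $n_p$ be an integer. This discretization is exactly what Proposition~\ref{prop:s->n,th} is built to absorb. Given a $p$-AS shift $\s_p$ and a $2$-AS shift $\s_2$, both in the direction of $\uu$, I would fix a single alternative vector $\th_1:=t\uu$ with $t=t(d)>0$ chosen small enough that $\|\th_1\|\le\min\{\tth_p(d),\tth_2(d)\}$, and then set $n_p:=\lceil\|\s_p\|^2/\|\th_1\|^2\rceil$ and $n_2:=\lceil\|\s_2\|^2/\|\th_1\|^2\rceil$. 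Applying Proposition~\ref{prop:s->n,th} once with exponent $p$ and once with exponent $2$ yields that $(n_p,n_2,\th_1)$ is a $(p,2)$-NAAS triple and that $\|\s_p\|\sim\sqrt{n_p}\,\|\th_1\|$, $\|\s_2\|\sim\sqrt{n_2}\,\|\th_1\|$ (the relative ceiling error being negligible since $\|\s_p\|\ge\si_p(d)\to\infty$ by Proposition~\ref{prop:below}). Writing $\s_p':=\sqrt{n_p}\,\th_1$ and $\s_2':=\sqrt{n_2}\,\th_1$, one obtains
\begin{equation*}
	\frac{n_2}{n_p}=\frac{\|\s_2'\|^2}{\|\s_p'\|^2}\sim\frac{\|\s_2\|^2}{\|\s_p\|^2}.
\end{equation*}
Hence if $\are_{p,2,\uu}$ exists and equals some $A\in[0,\infty]$, then for every choice of $\s_p$ and $\s_2$ the limit $\lim\|\s_2\|^2/\|\s_p\|^2$ exists and equals $A$, which is precisely the statement that $a_{p,2,\uu}$ exists and equals $A$.

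It remains to reconcile the degenerate situations, so that ``exists'' matches on both sides. For $p\in[0,\infty]$, Propositions~\ref{prop:AS exist}(II) and \ref{prop:AS shift exist}(II) provide triples and shifts for every $\no\cdot_2$-unit vector $\uu$, and the two implications above apply verbatim. For $p\in[-\infty,0)$, the non-existence of a $(p,2)$-NAAS triple and the non-existence of a $p$-AS shift in the direction of $\uu$ are governed by the \emph{same} condition, by the equivalence of statements (a), (d), (e) in Proposition~\ref{prop:AS exist}(III) and in Proposition~\ref{prop:AS shift exist}(III); in that common degenerate case both $\are_{p,2,\uu}$ and $a_{p,2,\uu}$ are set to $0$ by the conventions in Definitions~\ref{def:are} and \ref{def:a_p2}, hence are equal, while in the non-degenerate case the correspondence again applies. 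Combining the forward and backward implications with this case analysis yields $\are_{p,2,\uu}=a_{p,2,\uu}$ in all cases. I expect no obstacle beyond the reverse construction: the delicate point—that rounding $\|\s\|^2/\|\th_1\|^2$ up to an integer $n$ costs nothing asymptotically—is entirely quarantined inside Proposition~\ref{prop:s->n,th}, so the present argument is essentially the exact identity, its asymptotic counterpart, and the matching of conventions.
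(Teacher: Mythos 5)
Your proposal is correct and follows essentially the same route as the paper's own proof: the forward correspondence via Proposition~\ref{prop:n,th->s} (with the exact identity $n_2/n_p=\|\s_2\|^2/\|\s_p\|^2$), the reverse correspondence via Proposition~\ref{prop:s->n,th} (with $\|\s_r\|\sim\sqrt{n_r}\,\|\th_1\|$), and the matching of the degenerate conventions through the equivalence of the existence conditions in Propositions~\ref{prop:AS exist}(III) and \ref{prop:AS shift exist}(III). The only cosmetic difference is that you spell out the ceiling construction and cite Proposition~\ref{prop:below} explicitly, whereas the paper leaves those details inside Proposition~\ref{prop:s->n,th}.
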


\begin{proposition}\label{prop:a_p2 schur2}
The statement of Theorem~\ref{th:are schur2} will hold with $\are_{p,2,\cdot}$ replaced everywhere by $a_{p,2,\cdot}$.  
\end{proposition}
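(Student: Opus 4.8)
The plan is to reduce the claim to a statement about the squared Euclidean length of the $p$-AS shift in a given direction, and then to derive that statement from the convexity/concavity of $f_p$ viewed as a function of the square of its argument. \emph{First I would record the reduction.} By Proposition~\ref{prop:AS shift iff} and the $p=2$ row of the table in Theorem~\ref{th:AS iff}, where $f_2(s)=\la_2(s)-\la_2(0)=s^2$ and $\ka_2(d)=d^{1/2}$, every $2$-AS shift $\s_2$ satisfies $\|\s_2\|^2=\sum_j s_{2,j}^2\sim K_2\,d^{1/2}$; in particular $\|\s_2\|^2$ is \emph{independent of the direction} of $\s_2$ up to a factor $1+o(1)$. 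Writing a $p$-AS shift in the direction of a $\no\cdot_2$-unit vector $\uu$ as $\s_p=t\,\uu$ with $t=t_p(\uu)\in(0,\infty)$, so that $\|\s_p\|^2=t_p(\uu)^2\,d$, Definition~\ref{def:a_p2} gives
\begin{equation*}
a_{p,2,\uu}=\lim\frac{\|\s_2\|^2}{\|\s_p\|^2}=\lim\frac{K_2}{t_p(\uu)^2\,d^{1/2}},
\end{equation*}
which is a decreasing function of $\lim\|\s_p(\uu)\|^2\,d^{-1/2}=\lim t_p(\uu)^2\,d^{1/2}$. Hence, granting that $a_{p,2,\uu}$ and $a_{p,2,\vv}$ exist (so both limits exist), for $\uu^2\preceq\vv^2$ the asserted inequality $a_{p,2,\uu}\le a_{p,2,\vv}$ for $p\in[2,\infty]$ is equivalent to $\lim\|\s_p(\uu)\|^2d^{-1/2}\ge\lim\|\s_p(\vv)\|^2d^{-1/2}$, and $a_{p,2,\uu}\ge a_{p,2,\vv}$ for $p\in[-\infty,2]$ to the reverse inequality between these two limits.

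\emph{Next I would establish the key monotonicity.} Since each $f_p$ in the table is even in $s$, the function $\psi_p(w):=f_p(\sqrt w)$ is well defined on $[0,\infty)$, and for any fixed $t\in(0,\infty)$ one has $\sum_j f_p(t u_j)=\sum_j\psi_p(t^2u_j^2)$. As $\uu^2\preceq\vv^2$ implies $t^2\uu^2\preceq t^2\vv^2$, the classical Hardy--Littlewood--P\'olya criterion for separable sums shows that $\uu\mapsto\sum_j f_p(t u_j)$ is Schur${}^2$-convex if $\psi_p$ is convex and Schur${}^2$-concave if $\psi_p$ is concave. The analytic heart of the proof is therefore the claim that
\begin{equation*}
\text{$\psi_p$ is convex for $p\in[2,\infty]$ and concave for $p\in[-\infty,2]$,}
\end{equation*}
with $\psi_2(w)=w$ linear. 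For $p\in(0,\infty)$ this is the convexity, resp.\ concavity, in $w$ of $w\mapsto\E|Z+\sqrt w|^p-\E|Z|^p$; the exceptional cases $p\in\{-\infty,0,\infty\}$ and $p<0$ involve instead the entries $e^{-s^2/2}$, $\tla(s)-\tla(0)$, $\la_\infty(s)-\la_\infty(0)$, $\tmu_d(0)-\tmu_d(s)$ and $\la_p(0)-\la_p(s)$. This is precisely the Schur${}^2$ property of Gaussian measures obtained in \cite{schur2}, supplemented by a direct Gaussian integration-by-parts computation in the exceptional cases.

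\emph{To finish, I would combine the two ingredients.} Fix $\uu^2\preceq\vv^2$ and take $p\in[2,\infty]$. By the previous paragraph $\sum_j f_p(t u_j)\le\sum_j f_p(t v_j)$ for every $t>0$, while $f_p$ is strictly increasing in $|s|$, so both $t\mapsto\sum_j f_p(t u_j)$ and $t\mapsto\sum_j f_p(t v_j)$ are increasing. By Proposition~\ref{prop:AS shift iff} the lengths are pinned by the common relation $\sum_j f_p(t_p(\uu)u_j)\sim K_p\ka_p(d)\sim\sum_j f_p(t_p(\vv)v_j)$; evaluating the $\vv$-sum at $t_p(\uu)$, using Schur${}^2$-convexity and then the monotonicity in $t$, and invoking the quantitative control of Propositions~\ref{prop:below} and \ref{prop:t->1} to pass from the functions to their arguments, one gets $t_p(\vv)\le(1+o(1))\,t_p(\uu)$, i.e.\ $\lim\|\s_p(\vv)\|^2d^{-1/2}\le\lim\|\s_p(\uu)\|^2d^{-1/2}$, which by the reduction is exactly $a_{p,2,\uu}\le a_{p,2,\vv}$. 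The case $p\in[-\infty,2]$ is symmetric: $\psi_p$ concave reverses every inequality and yields $a_{p,2,\uu}\ge a_{p,2,\vv}$. When $p\in[-\infty,0)$ and $\vv$ is so unequalized that no $p$-AS shift in its direction exists, Definition~\ref{def:a_p2} sets $a_{p,2,\vv}=0$ (cf.\ Proposition~\ref{prop:AS shift exist}(III)), and then $a_{p,2,\uu}\ge0=a_{p,2,\vv}$ holds trivially, consistently with the concave ($p\le2$) regime.

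\emph{The main obstacle} is the convexity/concavity claim for $\psi_p$, uniformly over $p\in[-\infty,\infty]$ and over the several distinct functional forms taken by $f_p$: even for the positive moments $w\mapsto\E|Z+\sqrt w|^p$ this seems to require the Gaussian Schur${}^2$ machinery of \cite{schur2} rather than plain differentiation, and the logarithmic ($p=0$), truncated negative moment ($p=-1$) and extreme-value ($p=\infty$) entries each demand a separate verification. A secondary but genuine difficulty is converting the asymptotic relation ``$\sim$'' of Proposition~\ref{prop:AS shift iff} into a clean comparison of the shift lengths $\|\s_p(\uu)\|$ and $\|\s_p(\vv)\|$; this is exactly where the lower bound of Proposition~\ref{prop:below} and the stability-in-$t$ statement of Proposition~\ref{prop:t->1} enter.
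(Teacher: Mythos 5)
Your overall strategy---reduce to comparing $\|\s_p(\uu)\|$ and $\|\s_p(\vv)\|$ via \eqref{eq:sim K p=2} and then exploit a Schur${}^2$ property---has the right shape, but the route you choose (through the characterization $\sum_j f_p(s_j)\sim K_p\,\ka_p(d)$ and the convexity/concavity of $\psi_p(w):=f_p(\sqrt w)$) has genuine gaps. First, the claim that $\psi_p$ is concave for all $p\in[-\infty,2]$ is false: for $p\in[-\infty,-1)$ the table gives $f_p(s)=e^{-s^2/2}$, so $\psi_p(w)=e^{-w/2}$ is \emph{convex}; moreover $f_p$ is there \emph{decreasing} in $|s|$, so your step ``$f_p$ is strictly increasing in $|s|$, so both $t\mapsto\sum_j f_p(tu_j)$ and $t\mapsto\sum_j f_p(tv_j)$ are increasing'' also fails on that range. (The two sign reversals happen to compensate in the final inequality, but the argument as written is incorrect there and would need a separate case.) Second, for $p=-1$ the required concavity of $w\mapsto\tmu_d(0)-\tmu_d(\sqrt w)$ is proved nowhere in the paper---Lemma~\ref{lem:lla}(iii) covers $p\in(-1,0)\cup(2,\infty]$ and Lemma~\ref{lem:ttla}(iii) covers $p=0$, but Lemma~\ref{lem:ttmu} contains no such statement---so this is a nontrivial missing ingredient, not a routine verification. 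Third, for $p\in[-\infty,0)$ you treat only the subcase in which no $p$-AS shift exists in the direction of $\vv$; you must also rule out the subcase in which a shift exists in the direction of $\vv$ but not of $\uu$ (which would force $a_{p,2,\uu}=0$ while possibly $a_{p,2,\vv}>0$, contradicting the claimed inequality). This can be extracted from Proposition~\ref{prop:AS shift exist}(III)(d) together with the Schur${}^2$ monotonicity of $\sum_j f_p$, but it is not automatic and is absent from your write-up.

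The paper sidesteps all three issues by arguing at the level of the power function itself: it invokes Lemma~\ref{lem:schur2} (the Schur${}^2$-concavity/convexity of $\vv\mapsto\PP(\no{\Zd+\vv}_p>c)$, cited from \cite{schur2}) together with the monotonicity Lemma~\ref{lem:mono}, assumes $a_{p,2,\uu}<a_{p,2,\vv}$ for contradiction, and shows that $t_{p,\uu}\vv$ is then itself a $p$-AS shift, contradicting $t_{2,\uu}\sim t_{2,\vv}$ from \eqref{eq:sim K p=2}; a short separate subcase handles the non-existence of a shift in the direction of $\uu$. That argument is uniform in $p$ and never touches the individual table entries $f_p$, which is exactly where your version runs into trouble.
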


\begin{proposition}\label{prop:sARE}
The statement of Theorem~\ref{th:are} will hold with $\are_{p,2,\cdot}$ replaced everywhere by $a_{p,2,\cdot}$.  
\end{proposition}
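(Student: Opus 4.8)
The plan is to reduce Proposition~\ref{prop:sARE} to the asymptotics of $\|\s_p\|^2$ for $p$-AS shifts $\s_p$ and then run a case analysis matching the lines of \eqref{eq:are,d->infty}. Fix a varying $\no{\cdot}_2$-unit direction $\uu$; any shift in this direction is $\s=t\uu$ with $t\in(0,\infty)$, so $\|\s\|^2=t^2\|\uu\|^2=t^2d$, and by Proposition~\ref{prop:AS shift iff} such an $\s$ is a $p$-AS shift exactly when $\sum_j f_p(t u_j)\sim K_{\al,\be;p}\,\ka_p(d)$. The anchor is $p=2$: from $\la_2(s)=1+s^2$ we get $f_2(s)=s^2$, so the condition becomes $\|\s_2\|^2=\sum_j s_j^2\sim K_2\,d^{1/2}$ with $K_2=\sqrt2\,[\Phi^{-1}(\be)-\Phi^{-1}(\al)]$ (using $\la_{2,2}(0)=\Var(Z^2)=2$). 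In particular all $2$-AS shifts have $\|\s_2\|^2\sim K_2 d^{1/2}$, which already gives line~5 ($a_{2,2,\uu}=1$) and reduces the whole proposition to
\[
a_{p,2,\uu}=\lim\frac{\|\s_2\|^2}{\|\s_p\|^2}=\lim\frac{K_2\,d^{1/2}}{\|\s_p\|^2}.
\]

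I would next compute the equalized value and propagate it by Schur${}^2$-monotonicity. For $\uu=\one$ one has $s_j=t$ for every $j$, so the $p$-AS condition forces $f_p(t)\sim K_p d^{-1/2}\to0$, hence $t\to0$ and the small-argument expansion $f_p(s)=\tfrac12|\la_p''(0)|s^2(1+o(1))$ applies, where $\la_p''(0)=p\,\la_p(0)$ by the Gaussian integration-by-parts identity $\la_p''(0)=p\,\E[|Z|^{p-1}\sign(Z)\,Z]=p\,\la_p(0)$ (and $\la_p'(0)=0$ by symmetry). Solving gives $\|\s_p\|^2=t^2d\sim\frac{2K_p}{|\la_p''(0)|}\,d^{1/2}$ and hence
\[
a_{p,2,\one}=\frac{K_2\,|\la_p''(0)|}{2K_p}=\frac{|\la_p''(0)|}{\sqrt2\,\sqrt{\la_{p,2}(0)}}=a_p,
\]
the last step being exactly the Gamma-function identity underlying \eqref{eq:a(p)} (valid for $p\in(-\tfrac12,2)\setminus\{0\}$ and $p\in(2,\infty)$; the analogous logarithmic computation for $p=0$ gives $a_0=2/\pi$). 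This settles line~4 and the equalized baseline for line~6. Since $\one^2\preceq\uu^2$ for every $\no{\cdot}_2$-unit $\uu$, Proposition~\ref{prop:a_p2 schur2} then yields $a_{p,2,\uu}\le a_{p,2,\one}=a_p$ for $p\in[-\infty,2]$ and $a_{p,2,\uu}\ge a_{p,2,\one}=a_p$ for $p\in[2,\infty]$, which furnishes the upper bound in line~3 and the lower bound in line~7.

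The core is the threshold analysis, which I would obtain by matching the $p$-AS condition to the homogeneous model $g_p$ of \eqref{eq:g_p}. For $p\in(-\tfrac12,2)$ one has $f_p\asymp g_p$ uniformly (quadratic for $|s|\lesssim1$, power- or log-type for $|s|\gtrsim1$), so solving $\sum_j f_p(t u_j)\sim K_p d^{1/2}$ for $t$ is, up to bounded factors, the same as hitting the defining level of $\no{\uu}_{p,2}$ in \eqref{eq:orl}; by the positive homogeneity of $\no{\cdot}_{p,2}$ this gives $t\asymp 1/\no{\uu}_{p,2}$, i.e.\ $\|\s_p\|^2=t^2 d\asymp d/\no{\uu}_{p,2}^{\,2}$, whence $a_{p,2,\uu}\asymp\no{\uu}_{p,2}^{\,2}/d^{1/2}$. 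Together with $\no{\uu}_{p,2}\le\no{\one}_{p,2}\asymp d^{1/4}$ from Proposition~\ref{prop:u range}(II), this produces line~2 ($\no{\uu}_{p,2}<<d^{1/4}\Rightarrow\|\s_p\|^2>>d^{1/2}\Rightarrow a_{p,2,\uu}=0$) and the strict positivity in line~3 ($\no{\uu}_{p,2}\OOG d^{1/4}\Rightarrow\|\s_p\|^2\OO d^{1/2}\Rightarrow a_{p,2,\uu}>0$). For $p\in(2,\infty)$ the same scheme with the large-argument law $f_p(s)\sim|s|^p$ gives, in the unequalized regime, $\sum_j f_p(t u_j)\approx t^p\sum_j|u_j|^p=t^p\,d\,\no{\uu}_p^{\,p}$, so $\|\s_p\|^2\asymp d^{\,1-1/p}/\no{\uu}_p^{\,2}$ and $a_{p,2,\uu}\asymp\big(\no{\uu}_p/d^{(p-2)/(4p)}\big)^2$; this yields lines~6 and~7 at the threshold $d^{(p-2)/(4p)}$.

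Finally I would dispose of the boundary ranges. For $p\in[-\infty,-\tfrac12]$ (line~1), Proposition~\ref{prop:below} gives $\|\s_p\|^2\ge\si_p(d)^2$ with $\si_p(d)^2\asymp d,\ d/\ln d,\ d^{|p|},\ (d\ln d)^{1/2}$ for $p\in[-\infty,-1),\ p=-1,\ p\in(-1,-\tfrac12),\ p=-\tfrac12$ respectively, each $>>d^{1/2}$, so $a_{p,2,\uu}=\lim K_2 d^{1/2}/\|\s_p\|^2=0$; where no $p$-AS shift in the direction of $\uu$ exists, $a_{p,2,\uu}:=0$ by Definition~\ref{def:a_p2}, so the value is $0$ either way. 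For $p=\infty$ (lines~8,9), I would use $\la_\infty'(0)=0$ and $\la_\infty''(0)\sim c_{d,\al}^2\,\la_\infty(0)$ with $c_{d,\al}^2\sim2\ln d$ and $d\,\la_\infty(0)\to\ln\frac1{1-\al}$; since the budget $K_\infty\,\ka_\infty(d)=K_\infty$ is $O(1)$, a single coordinate near $c_{d,\al}$ can exhaust it, and balancing $t\,\no{\uu}_\infty\approx c_{d,\al}$ gives $\|\s_\infty\|^2\asymp d\ln d/\no{\uu}_\infty^{\,2}$ and the threshold $\no{\uu}_\infty\asymp d^{1/4}\sqrt{\ln d}$. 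The main obstacle is exactly this threshold analysis: controlling $f_p$ by $g_p$ with the correct constants \emph{across} the quadratic/power crossover, so that in the equalized regime the limit is precisely $a_p$ — which requires the error bound $\sum_j\big|f_p(t u_j)-\tfrac12|\la_p''(0)|(t u_j)^2\big|=o(d^{1/2})$ under $\no{\uu}_p<<d^{(p-2)/(4p)}$ — while in the unequalized regime the power/log tail takes over and forces $a_{p,2,\uu}$ to $0$ or $\infty$. It is this uniform two-regime estimate for $f_p$, rather than any isolated step, that makes the argument delicate.
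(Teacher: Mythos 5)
Your architecture coincides with the paper's: anchor at $p=2$ via $\|\s_2\|^2\sim K_2\,d^{1/2}$, compute the equalized value $a_{p,2,\one}=a_p$ from the expansion $f_p(s)\sim\frac{|p|\la_p(0)}2 s^2$ together with \eqref{eq:la_p(0)}, propagate one-sided bounds by Proposition~\ref{prop:a_p2 schur2}, dispose of $p\in[-\infty,-\frac12]$ by Proposition~\ref{prop:below}, and run a threshold analysis in $\no\uu_{p,2}$ (resp.\ $\no\uu_p$). The Gamma-function bookkeeping giving $a_{p,2,\one}=a_p$ is correct. But the steps you yourself describe as delicate are exactly the ones the paper has to prove, and your proposal leaves them as heuristics. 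For $p\in(-\frac12,2)$, the assertion that solving $\sum_j f_p(tu_j)\sim K_p d^{1/2}$ gives $t\asymp 1/\no\uu_{p,2}$ ``up to bounded factors'' is unjustified, and it is not even meaningful in the subsubcase $\no\uu_{p,2}=0$ (which occurs for $p\in(-\tfrac12,0)$, e.g.\ $\uu=\sqrt d\,\ee_1$, because $f_p$ is bounded there); since $g_p$ is not convex and has a quadratic/power crossover, $\sum_j g_p(tu_j)$ is not comparable to a fixed power of $t$, so the inversion cannot be waved through. The paper instead proves the weaker but sufficient dichotomy via Lemmas~\ref{lem:s,v} and \ref{lem:s,v,ge0} (two proportional vectors whose $g_p$-sums sit at the same level $S$ satisfy $\sum s_j^2>>S$ simultaneously) and treats $\no\uu_{p,2}=0$ separately.

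For $p\in(2,\infty)$ you replace the needed analysis by ``in the unequalized regime $\sum_j f_p(tu_j)\approx t^p\sum_j|u_j|^p$'' without identifying when that regime holds; the paper derives $V_2+V_p\asymp\sqrt d$ (with $V_p=\sum_j|s_j|^p$) from \eqref{eq:as iff} and Lemma~\ref{lem:h}, proves the dichotomy $V_2<<V_p$ versus $V_p<<V_2$ according to the position of $\no\uu_p$ relative to $d^{(p-2)/(4p)}$, and in the latter case uses an $\vp$-truncation with the squeeze $C_1(\vp)\to\frac p2\la_p(0)$ to obtain the exact limit $a_p$ in line~6 --- precisely the error bound you flag as ``the main obstacle'' and do not supply. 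Likewise for $p=\infty$: the balancing heuristic $t\,\no\uu_\infty\approx c_{d,\al}$ does not characterize $\infty$-AS shifts (the budget $\sum_j f_\infty(s_j)\sim K_\infty$ can be met by $k$ coordinates at level $\la\sqrt{\ln d}$ with $\la$ bounded away from $\sqrt2$ and $k$ ranging over many orders of magnitude), and the paper's proof of line~8 is a contradiction argument resting on the convexity of $v\mapsto f_\infty(\sqrt v)$ from Lemma~\ref{lem:lla}(iii) together with the estimate $f_\infty(B^2)=d^{-(1+o(1))}<<d^{-1/2}$ for $B<<\sqrt{\ln d}$, none of which appears in your sketch. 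So the plan is the right one, but as written it is an outline: each of the three thresholds still requires its technical lemma.
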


\begin{proposition}\label{prop:a_p2 range}
The statement of Theorem~\ref{th:are range} will hold with $\are_{p,2,\cdot}$ replaced everywhere by $a_{p,2,\cdot}$.  
\end{proposition}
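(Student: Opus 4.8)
The plan is to deduce this from the three preceding propositions about $a_{p,2,\cdot}$: the Schur${}^2$-monotonicity (Proposition~\ref{prop:a_p2 schur2}), the explicit values (Proposition~\ref{prop:sARE}), and the ranges of the relevant $\uu$-functionals (Proposition~\ref{prop:u range}). The key simplifying observation, used throughout, is that along \emph{any} varying $\no\cdot_2$-unit direction a $2$-AS shift $\s_2$ satisfies $\|\s_2\|^2\sim K_2\,d^{1/2}$, \emph{independently of the direction}, since $f_2(s)=s^2$ and $\ka_2(d)=d^{1/2}$ make the AS equation read $\|\s_2\|^2\sim K_2 d^{1/2}$. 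Hence, writing a $p$-AS shift in the direction $\uu$ as $\s_p=t\uu$ with $t=t(d)>0$ the (asymptotically unique) solution of $\sum_j f_p(tu_j)\sim K_p\,\ka_p(d)$ supplied by Proposition~\ref{prop:AS shift exist}, and using $\|\uu\|^2=d$, one gets the working formula $a_{p,2,\uu}=\lim K_2\,d^{1/2}/\|\s_p\|^2=\lim K_2/(t(d)^2 d^{1/2})$, so that everything reduces to controlling the shift scale $t(d)$.

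For Part~(I) the extremality is immediate from monotonicity. Recall the chain $\one^2\preceq\uu^2\preceq(\sqrt d\,\ee_1)^2$, valid for every $\no\cdot_2$-unit $\uu$ and displayed just before Theorem~\ref{th:are schur2}. Thus, whenever $a_{p,2,\uu}$ exists, Proposition~\ref{prop:a_p2 schur2} yields $a_{p,2,\one}\ge a_{p,2,\uu}\ge a_{p,2,\sqrt d\,\ee_1}$ for $p\in[-\infty,2]$ and the reversed chain for $p\in[2,\infty]$. It then remains to evaluate the two extreme directions via Proposition~\ref{prop:sARE}. For $\uu=\one$ one has $\no\one_p=1$ and, by Proposition~\ref{prop:u range}(II), $\no\one_{p,2}\asymp d^{1/4}$; feeding these into the relevant lines of \eqref{eq:are,d->infty} (together with $a_\infty=0$ from Proposition~\ref{prop:a_p}) gives $a_{p,2,\one}=a_p$ for every $p$. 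For $\uu=\sqrt d\,\ee_1$ one computes $\no{\sqrt d\,\ee_1}_p=d^{(p-2)/(2p)}$, while by Proposition~\ref{prop:u range}(II) $\no{\sqrt d\,\ee_1}_{p,2}\ll d^{1/4}$ for every $p\in(-\tfrac12,2)$ (since $(p-1)/(2p)<\tfrac14$ there); comparing with the transition scales in \eqref{eq:are,d->infty} gives $a_{p,2,\sqrt d\,\ee_1}=0$ for $p\in[-\infty,2)$, $=1$ for $p=2$, and $=\infty$ for $p\in(2,\infty]$. This is exactly the three displayed chains of Part~(I).

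For Part~(II) the case $p=2$ is trivial, the interval being the single point $\{1\}$, and $p\in[-\infty,-\tfrac12]$ is equally trivial since both endpoints are $0$. In each remaining case the endpoints differ, and \emph{every} intermediate value must be attained in the \emph{threshold regime}, where $\no\uu_p$ (resp.\ $\no\uu_{p,2}$) is of the exact order of the transition scale $d^{(p-2)/(4p)}$, $d^{1/4}\sqrt{\ln d}$, or $d^{1/4}$: indeed, by the $\ll$/$\gg$ dichotomy of Proposition~\ref{prop:sARE}, directions away from the threshold return only the two endpoint values, so the limit $a_{p,2,\uu}$ can be strictly interior only at the threshold. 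I would therefore construct, for each target $a$ strictly between the endpoints, an explicit ``spike-plus-background'' family of varying $\no\cdot_2$-unit directions — with $m=m(d)$ coordinates equal to a common spike value and the remaining $d-m$ equal to a common background value, normalized by $\no\uu_2=1$ — that interpolates between the equalized $\one$ and the maximally unequalized $\sqrt d\,\ee_1$. Writing the AS equation $\sum_j f_p(tu_j)\sim K_p\ka_p(d)$ for such a direction and using the asymptotics of $f_p$ (for $p\in(2,\infty)$: $\la_p(s)-\la_p(0)\sim\tfrac12\la_p''(0)s^2$ as $s\to0$ and $\sim|s|^p$ as $|s|\to\infty$; for $p\in(-\tfrac12,2)$ the corresponding $g_p$ of \eqref{eq:g_p}, namely $s^2\wedge|s|^p$ for $p\in(0,2)$; the $\infty$- and $0$-rows of the table for $p=\infty$ and $p=0$), one solves for $t(d)$ and hence expresses the limiting ratio $a_{p,2,\uu}=\lim K_2/(t(d)^2 d^{1/2})$ as a continuous function of the interpolation parameter. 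The intermediate value theorem, with the two endpoint values already computed in Part~(I) and monotonicity supplied if needed by Proposition~\ref{prop:a_p2 schur2}, then produces a direction realizing each target $a$.

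The main obstacle is precisely this threshold analysis. Unlike the clean dichotomy of Theorem~\ref{th:are}, at the transition scale the spike and background contributions to $\sum_j f_p(tu_j)$ are of \emph{comparable} order, so both must be tracked simultaneously, and one must show that their balance can be tuned continuously across the whole interval while simultaneously preserving the AS normalization $\sum_j f_p(tu_j)\sim K_p\ka_p(d)$ and the unit constraint $\no\uu_2=1$. In particular, the delicate point is to verify that, for the constructed families, the ratio $K_2/(t(d)^2 d^{1/2})$ is a genuine single-valued limit rather than merely having $\liminf<\limsup$, and that it sweeps the interval continuously as the interpolation parameter varies; this is where the quantitative asymptotics of $\la_p$ (and of $g_p$, $\la_\infty$, $\tla$) near $0$ and near $\infty$ are genuinely needed.
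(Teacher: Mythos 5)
Your Part~(I) is correct and is essentially the paper's own argument: extremality via Proposition~\ref{prop:a_p2 schur2} and the chain $\one^2\preceq\uu^2\preceq(\sqrt d\,\ee_1)^2$, then evaluation of the two extreme directions through Proposition~\ref{prop:sARE} together with \eqref{eq:min} \big(using $(p-1)/(2p)<\tfrac14$ for $p\in(0,2)$\big).

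Part~(II), however, has a genuine gap: the step you yourself label ``the main obstacle'' --- exhibiting, for each intermediate value $a$, a direction along which $a_{p,2,\uu}$ actually exists and equals $a$ --- is the entire content of the statement, and you only describe what would need to be verified without verifying it. The paper closes this with a \emph{one}-parameter family that sidesteps the threshold analysis you worry about: take $\s=(s,\dots,s,0,\dots,0)$ with the spike height $s$ a \emph{fixed} positive constant (zero background, so there is no second contribution to balance) and the number of spikes $k\sim K_p\,d^{1/2}/f_p(s)$ dictated by Proposition~\ref{prop:AS shift iff}. Then $\|\s\|^2=ks^2\sim K_p\,d^{1/2}s^2/f_p(s)$, and comparison with \eqref{eq:sim K p=2} gives $a_{p,2}=\tfrac{K_2}{K_p}\,\tfrac{f_p(s)}{s^2}$ --- a genuine single-valued limit precisely because $s$ is constant --- which is continuous in $s$, tends to $\tfrac{|p|\la_p(0)}2$ (hence to $a_p$ by \eqref{eq:=a_p}) as $s\to0$, and tends to $0$ (resp.\ $\infty$ for $p\in(2,\infty)$) as $s\to\infty$, by Lemma~\ref{lem:lla}(ii); the endpoints are then obtained by letting $s$ tend to $0$ or $\infty$ slowly with $d$. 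Your two-parameter spike-plus-background family with a nonzero background is exactly what forces the ``comparable contributions'' difficulty you leave unresolved, so as written the construction does not go through. Note also that $p=\infty$ does not fit your template at all: there one must take $s=\la\sqrt{\ln d}$ with $\la$ tuned to converge to $\sqrt2-1$, since $d^{1/2}f_{\infty;d}(s)/s^2$ jumps from $0$ to $\infty$ across $\la=\sqrt2-1$, a separate and more delicate argument that your outline does not anticipate.
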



\subsection{Proofs of the results stated in Section~\ref{results}}\label{th proofs}

\begin{proof}[Proof of Proposition~\ref{prop:NA}]
By \cite[Corollary~15.3]{bhat},
\begin{align}
	\De_\th(A)&:=\Big|\PP_\th\Big(\big(\tfrac1n\,\Si_d(\th)\big)^{-1/2}\big(\overline{\Xd_n}-\th\big)\in A\Big)
	-\PP\big(\Zd\in A\big)\Big| \notag\\
	&\le C_1(d)\rho_3(d)\,n^{-1/2}+2\sup_{\yy\in\R^d}\PP\big(\Zd\in(\partial A)^\eta+\yy\big) \label{eq:BE}
\end{align}
where $\th$ is in the neighborhood $\mathcal{V}_d$ as in \eqref{eq:rho}, 
$A$ is any Borel subset of $\R^d$, 
$(\partial A)^\eta$ is the $\eta$-neighborhood of boundary $\partial A$ of the set $A$, $\eta:=C_2(d)\rho_3(d)\,n^{-1/2}$; here and in what follows in this proof, $C_1(d),C_2(d),\dots$ stand for finite positive real constants depending only on $d$.  

For any $c\in\R$, consider the set 
\begin{equation*}
	A_c:=\{\zz\in\R^d\colon\no{\zz}_p\le c\}.
\end{equation*}
Observe that $A_c$ is convex for $p\in[1,\infty]$. As for the case $p\in[-\infty,1]$, the complement of $A_c$ to $\R^d$ is the union of $2^d$ convex sets, namely, the union of the intersections of the complement of $A_c$ with each of the $2^d$ coordinate orthants in $\R^d$. So, by \cite[Corollary~3.2]{bhat}, 
\begin{equation*}
	\sup_{\yy\in\R^d}\PP\big(\Zd\in(\partial(A_c))^\eta+\yy\big)\le C_3(d)\,\rho_3(d)\,n^{-1/2}, 
\end{equation*}
which goes to $0$ 
for any fixed $d$ as $n\to\infty$, whence, by \eqref{eq:BE}, one has 
\begin{equation*}
	\sup_{\th\in\mathcal{V}_d}\sup_{c\in\R}\De_\th(A_c)\longrightarrow0;
\end{equation*}
in particular, recalling now \eqref{eq:I}, one has \eqref{eq:NA} for $\th=\0$. 
Observe next that the events $\big\{\no{\sqrt n\,\overline{\Xd_n}}_p\le c\big\}$ 
and $\big\{\no{\Zd+\sqrt{n}\,\th}_p\le c\big\}$ 
can be rewritten as 
$\big\{\big(\tfrac1n\,\Si_d(\th)\big)^{-1/2}\big(\overline{\Xd_n}-\th\big)\in \tA_{\th,c}\big\}$ and $\big\{\Zd\in A_{\th,c}\big\}$, where 
\begin{equation*}
	\tA_{\th,c}:=\Si_d(\th)^{-1/2}\,A_{\th,c}\quad\text{and}\quad A_{\th,c}:=A_c-\sqrt{n}\,\th.
\end{equation*} 

So, to complete the proof of Proposition~\ref{prop:NA}, it suffices to observe that for each fixed $d$ 
\begin{equation}\label{eq:A{c,th}}
\sup_{c\in\R}\big|\PP\big(\Zd\in \tA_{\th,c}\big)-\PP\big(\Zd\in A_{\th,c}\big)\big|
\longrightarrow0	
\end{equation}
as $\th\to\0$; indeed, then \eqref{eq:A{c,th}} will hold for $d\to\infty$, some positive real function $\tth_p(\cdot)$, and 
for all $\th$ with $\|\th\|\le\tth_p(d)$. 
To verify \eqref{eq:A{c,th}} for a fixed $d$, take any $R\in(0,\infty)$ and consider the ball 
\begin{equation*}
	D_R:=\{\zz\in\R^d\colon\|\zz\|\le R\}. 
\end{equation*} 
Let $\|\th\|$ be so small as $\|T^{-1}\|\le2$, where $T:=\Si_d(\th)^{-1/2}$ \big(recall that $\Si_d(\th)\to I_d$ as $\|\th\|\to0$\big). 

Then $D_R\cap(\tA_{\th,c}\oplus A_{\th,c})\subseteq(\partial A_{\th,c})^{\eta_1}$, where $\oplus$ denotes the Boolean symmetric difference and $\eta_1:=3\,\|\Si_d(\th)^{-1/2}-I_d\|\,R$. 
Indeed, w.l.o.g.\ $\eta_1\ne0$. 
Let for brevity $A:=A_{\th,c}$, so that $\tA_{\th,c}=TA$. 
If $D_R\cap(\tA_{\th,c}\oplus A_{\th,c})\not\subseteq(\partial A_{\th,c})^{\eta_1}$, then, by \cite[Corollary~2.6]{bhat}, there exists some $x\in\R^d$ such that 
\begin{equation*}
	\|x\|\le R,\quad x\in(A\setminus TA)\cup(TA\setminus A),\quad 
	\text{and } x\notin A^{\eta_1}\text{ or }x\in A^{-\eta_1}.  
\end{equation*}
Let $y:=T^{-1}x$, so that $\|y\|\le\|T^{-1}\|\,\|x\|\le2\|x\|\le2R$ and 
$\|x-y\|=\|Ty-y\|\le\|T-I_d\|\,\|y\|\le\|T-I_d\|2R<\eta_1$. 
Now, in the case when $x\notin A^{\eta_1}$, one has $x\notin A$, $x\in TA\setminus A$, $x\in TA$, $y=T^{-1}x\in A$, and $\|x-y\|<\eta_1$, which implies $x\in A^{\eta_1}$, a contradiction. 
In the remaining case, when $x\in A^{-\eta_1}$, one has $\|x-y\|<\eta_1$, $y\in A$, $x\in A$, $x\in A\setminus TA$, $x\notin TA$, $y=T^{-1}x\notin A$, again a contradiction.   

Therefore, 
\begin{multline*}
\sup_{c\in\R}\big|\PP\big(\Zd\in \tA_{\th,c}\cap D_R\big)-\PP\big(\Zd\in A_{\th,c}\cap D_R\big)\big| 
\le\sup_{c\in\R}\PP\big(\Zd\in(\partial A_{\th,c})^{\eta_1}\big)\\
\le C_4(d)\,\eta_1
\le 3C_4(d)\,\|\Si_d(\th)^{-1/2}-I_d\|\,R
\longrightarrow0	
\end{multline*}
as $\th\to\0$, for each $R\in(0,\infty)$.
It remains to note that $\PP\big(\Zd\notin D_R\big)\to0$ as $R\to\infty$. 
\end{proof}

\begin{proof}[Proof of Proposition~\ref{prop:AS}]
This follows by Propositions~\ref{prop:n,th->s} and \ref{prop:AS shift}. 
\end{proof}

\begin{proof}[Proof of Theorem~\ref{th:AS iff}]
This follows by Propositions~\ref{prop:n,th->s} and \ref{prop:AS shift iff}. 
\end{proof}

\begin{proof}[Proof of Proposition~\ref{prop:AS exist}]\ 

\framebox{(I)} Part (I) of Proposition~\ref{prop:AS exist} follows by Proposition~\ref{prop:AS shift exist}(I) and Proposition~\ref{prop:NA}. 

\framebox{(II)} Part (II) of Proposition~\ref{prop:AS exist} follows by Proposition~\ref{prop:AS shift exist}(II) and Proposition~\ref{prop:s->n,th}. 

\framebox{(III)} To prove part (III) of Proposition~\ref{prop:AS exist}, note first that, in fact for each  $p\in[-\infty,\infty]$, statement (III)(a) of Proposition~\ref{prop:AS exist} is equivalent to (III)(a) of Proposition~\ref{prop:AS shift exist}. In other words, there exists a $(p,2)$-NAAS varying triple $(n_p,n_2,\th_1)$ with $\th_1$ in the direction of $\uu$ if and only if there exists a $p$-AS shift $\s$ in the same direction; the ``only if'' part of the latter equivalence follows by Proposition~\ref{prop:n,th->s}, while the ``if'' part follows by part (II) of Proposition~\ref{prop:AS shift exist} (for $p=2$) and Proposition~\ref{prop:s->n,th}. 


Next, letting \ref{prop:AS exist} and \ref{prop:AS shift exist} stand for Proposition~\ref{prop:AS exist} and Proposition~\ref{prop:AS shift exist}, respectively, 
one has implications 
\ref{prop:AS exist}(III)(b)
$\implies$\ref{prop:AS shift exist}(III)(b)  
$\implies$\ref{prop:AS shift exist}(III)(c)
$\implies$\ref{prop:AS exist}(III)(c)
$\implies$\ref{prop:AS exist}(III)(b);
the first of them follows by Definition~\ref{def:NA}; the second one follows by Proposition~\ref{prop:AS shift exist}(III); the third one, by Proposition~\ref{prop:s->n,th} \big(used with $\tbe:=\lim\PP(\no{\Z+\s}_p>c)$ in place of $\be$\big); and the last implication follows by part (I) of Proposition~\ref{prop:AS exist}. 
Thus, part (III) and hence the entire Proposition~\ref{prop:AS exist} follows by Proposition~\ref{prop:AS shift exist}(III).  
\end{proof}

\begin{proof}[Proof of Proposition~\ref{prop:a_p}] 
Let us first prove the continuity of $a_p$ at $p=0$. By \eqref{eq:a(p)},
\begin{equation*}
	\frac{\Ga\left(\frac{p+1}{2}\right)^2}{2 a_p^2}=\frac{F(p)}{G(p)},
\end{equation*}
where 
$
	F(p):=\Ga\left(\frac{1}{2}\right)
   \Ga\left(p+\frac{1}{2}\right)-\Ga\left(\frac{p+1}{2}\right)^2$ and  
   $G(p):=p^2.$  
So, using l'Hospital's rule for limits and the identity $\Ga(\frac12)=\sqrt\pi$, one has 
\begin{equation*}
	\frac\pi{2 a_p^2}\underset{p\to0}\longrightarrow\frac{F''(0)}{G''(0)}
	=\tfrac{1}{4} \big[\Ga\left(\tfrac{1}{2}\right)
   \Ga''\left(\tfrac{1}{2}\right)-\Ga'\left(\tfrac{1}{2}\right)^2\big]
   =\tfrac{\pi^3}8,
\end{equation*}
in view of \eqref{eq:Ga,pi}. Now the continuity of $a_p$ at $p=0$ follows. 
Moreover, $F$ is real-analytic on $(-\frac12,\infty)$, with $F(0)=F'(0)=0<F''(0)$; hence, there is a function $F_1$ such that $F_1$ is positive and real-analytic on $(-\frac12,\infty)$ and  
$F(p)=p^2F_1(p)$ for all $p\in(-\frac12,\infty)$. It follows that $a_p$ is real-analytic in $p\in(-\frac12,\infty)$. 

The equality $a_2=1$ follows because $\Ga(\frac12)=\sqrt\pi$ and $\Ga(x)=(x-1)\Ga(x-1)$ for $x>1$. The relations  $a_{-\frac12+}=0$ and $a_{\infty-}=0$ follow because $\Ga(0+)=\infty$ and, by Stirling's formula, $\Ga(p+\frac12)>>\Ga(\frac{p+1}{2})^2$ as $p\to\infty$. 

It remains to check the claim that \eqref{eq:0<a<1} holds for all $p\in(-\frac12,\infty)\setminus\{2\}$. Since $a_0=\frac2\pi\in(0,1)$, this claim can be rewritten as 
\begin{equation}\label{eq:r>}
	r(p):=\frac{\Ga(\tfrac12)\Ga(p+\tfrac12)}{\Ga(\tfrac{p+1}2)^2}
	>1+\tfrac{p^2}2
\end{equation}
for all $p\in(-\frac12,\infty)\setminus\{0,2\}$. 
Observe that, by the strict log-convexity of the Gamma function, $r(p)>1$ for all $p\in(-\frac12,\infty)\setminus\{0\}$. Of course, this is not enough to prove \eqref{eq:r>}. One idea to improve on the just mentioned log-convexity argument is to use the ``plus 1'' identity 
$\Ga(x)=\tfrac1x\,\Ga(x+1)$ for $x>0$,
whereby a simple strictly log-convex factor, $\frac1x$, is extracted from $\Ga(x)$, while the remaining factor, $\Ga(x+1)$, is still strictly log-convex \big(but necessarily ``less strictly'' log-convex than $\Ga(x)$; one may note here \cite[(1.2.14)]{andrews} that $\tfrac{\dd^2}{\dd x^2}\ln\Ga(x)=\sum_{k=0}^\infty\frac1{(x+k)^2}$ decreases to $0$ as $x$ increases from $0$ to $\infty$ -- cf.\ \eqref{eq:Ga,pi}\big). One may also note that the ``plus 1'' identity and the log-convexity property are the two characteristic properties of the Gamma function; indeed, by the Bohr-Mollerup theorem, these two properties (together with the normalization $\Ga(1)=1$) completely characterize the Gamma function -- see e.g.\ \cite[Theorem~2.1]{artin} or \cite[Theorem~1.9.3]{andrews}. 
Another fundamental property of the Gamma function is  
the Legendre duplication formula, $\Ga(\frac12)\Ga(x)=2^{x-1}\Ga(\frac x2)\Ga(\frac{x+1}2)$; in fact \cite[Theorem~6.1]{artin}, the Gamma function is the only positive twice continuously differentiable function on $(0,\infty)$ that satisfies both the ``plus 1'' and Legendre duplication formulas. 
Using these properties of the Gamma function, one has 
\begin{equation}\label{eq:r=}
	r(p)=r_i(p)\,\tr_i(p) 
\end{equation}
for $i=1,2,3$, 
where
\begin{align*}
	r_1(p)&:=
	\prod _{j=0}^0
   \frac{\left(\frac{p+1}{2}+j\right)^2}{\left(\frac{1}{2}+j\right)
   \left(p+\frac{1}{2}+j\right)}; \\
\tr_1(p)&:=\frac{\Gamma \left(\frac{1}{2}+1\right) \Gamma
   \left(p+\frac{1}{2}+1\right)}{\Gamma
   \left(\frac{p+1}{2}+1\right)^2}
>1\quad\text{for }p\in(-\tfrac12,\infty)\setminus\{0\}; \\
	r_2(p)&:=
	\tfrac{(p+1)^2}{3} \prod _{j=1}^3
   \frac{\left(\frac{p+1}{2}+j\right)^2}{\left(\frac{3}{2}+j\right)
   \left(p-\frac{1}{2}+j\right)}; \\
\tr_2(p)&:=\frac{\Gamma \left(\frac{3}{2}+4\right) \Gamma
   \left(p-\frac{1}{2}+4\right)}{\Gamma
   \left(\frac{p+1}{2}+4\right)^2}
>1\quad\text{for }p\in(-\tfrac12,\infty)\setminus\{2\}; \\
	r_3(p)&:=
	2^{p-\frac{1}{2}} \prod _{j=0}^1
   \frac{\left(\frac{p+1}{2}+j\right)^2}{\left(\frac{p}{2}+\frac{1}{4}
   +j\right) \left(\frac{p}{2}+\frac{3}{4}+j\right)}; \\
\tr_3(p)&:=\frac{\Gamma \left(\frac{p}{2}+\frac{1}{4}+2\right) \Gamma
   \left(\frac{p}{2}+\frac{3}{4}+2\right)}{\Gamma
   \left(\frac{p+1}{2}+2\right)^2}
>1\quad\text{for }p\in(-\tfrac12,\infty);
\end{align*}
the Legendre duplication formula, whereby 
$\Ga(\tfrac12)\Ga(p+\tfrac12)$ in \eqref{eq:r>} can be rewritten as 
$2^{p-\frac{1}{2}} \Gamma \left(\tfrac{p}{2}+\tfrac{1}{4}\right) 
\Gamma\left(\tfrac{p}{2}+\tfrac{3}{4}\right)$, 
was used here only to obtain \eqref{eq:r=} for $i=3$. 
Thus, on $(-\tfrac12,\infty)\setminus\{0,2\}$, the functions $\tr_1,\tr_2,\tr_3$ are $>1$ and hence $r>\max[r_1,r_2,r_3]$  
\big(at that, the lower bound $r_1$ for $r$ is good enough in a neighborhood of $0$, with equality $r=r_1$ at $0$; similarly, $r_2$ works in a neighborhood of $2$; and $r_3$ works elsewhere\big). 
So, by \eqref{eq:r=}, to prove the inequality in \eqref{eq:r>} it suffices to show that 
$\max[r_1(p),r_2(p),r_3(p)]>1+\tfrac{p^2}2$ for all $p\in(-\tfrac12,\infty)\setminus\{0,2\}$. 
But this can be done completely algorithmically, since 
the functions $r_1$ and $r_2$ are rational, while $r_3$ is the product of an exponential function and a rational one. 
Such an algorithm is implemented in Mathematica 7 via the command \verb9Reduce9. The Mathematica input  
\begin{verbatim}
Simplify[
Reduce[(r1[p]>1+p^2/2||r2[p]>1+p^2/2||r3[p]>1+p^2/2) && p>-1/2]
]
\end{verbatim}

\vspace*{-1pt}

\noindent\big(with \verb9r1[p]9, \verb9r2[p]9, \verb9r3[p]9 standing for $r_1(p),r_2(p),r_3(p)$ and \verb9||9 representing ``or''\big)
results (in under 1 sec on a standard Core 2 Duo laptop) in \\ 
\verb9-(1/2) < p < 0 || 0 < p < 2 || p > 29.
\end{proof}

\begin{proof}[Proof of Theorem~\ref{th:are schur2}]
This follows by Propositions~\ref{prop:a_p2 schur2} and \ref{prop:are=a_p2}. 
\end{proof}

\begin{proof}[Proof of Theorem~\ref{th:are}]
This follows by Propositions~\ref{prop:sARE} and \ref{prop:are=a_p2}. 
\end{proof}

\begin{proof}[Proof of Proposition~\ref{prop:u range}]\ 

\framebox{(I)} By \eqref{eq:g_p} and Lemma~\ref{lem:lla}(iii), 
$g_p(\sqrt u)$ is concave in $u\in[0,\infty)$ for each $p\in(-\tfrac12,0)$. The same conclusion holds for $p\in[0,2)$, in which case it can be verified directly, using the simpler form of $g_p$. So, by Lemma~\ref{lem:hardy}, $\sum g_p(s_j)$ is Schur${}^2$-concave in $\s$. 
Now it follows from the definition \eqref{eq:orl} of $\no{\cdot}_{p,2}$ that $\no{\cdot}_{p,2}$ is Schur${}^2$-concave. Indeed, if $\vv\in\R^d$, $\ww\in\R^d$, and $\ww^2\preceq\vv^2$, then 
$\sum g_p(w_j/v)\ge\sum g_p(v_j/v)$ for all $v\in(0,\infty)$; so, letting $S_\vv$ denote the set on the right-hand side of \eqref{eq:orl}, one has $S_\ww\subseteq S_\vv$ and hence $\no{\ww}_{p,2}=\inf S_\ww\ge\inf S_\vv=\no{\vv}_{p,2}$. 

\framebox{(II)} 
By part (I) of Proposition~\ref{prop:u range}, the maximum and minimum of $\no{\uu}_{p,2}$ over all 
$\no\cdot_2$-unit vectors $\uu\in\R^d$ are attained, respectively, when $\uu=\one$ and $\uu=\sqrt d\ee_1$. 

For the vector $\uu=\one=(1,\dots,1)$, its ``norm'' $\no{\uu}_{p,2}$ in the case $p\in(-\tfrac12,0)$ is the unique solution $u\in(0,\infty)$ of the equation $d\,f_p(\frac1u)=K_pd^{1/2}$, so that, in view of Lemma~\ref{lem:lla}(i,ii), $\frac1u\to0$ and $d\,(\frac1u)^2\asymp d^{1/2}$, whence $\no{\uu}_{p,2}=u\asymp d^{1/4}$. The same conclusion holds for $p=0$ and $p\in(0,2)$, in which cases it is only easier to verify. 
Thus, \eqref{eq:max} is proved. 

Consider now the subcases $p\in(-\tfrac12,0)$, $p=0$, and $p\in(0,2)$ for the vector $\uu=\sqrt d\ee_1$. 
Let first $p\in(-\tfrac12,0)$. Then for all large enough $d$ and all  $u\in(0,\infty)$ one has 
$\sum g_p(\frac{u_j}u)=g_p(\frac{\sqrt d}u)\le K_pd^{1/2}$, since the function $g_p=f_p$ is bounded for $p\in(-\tfrac12,0)$; hence, $\no{\uu}_{p,2}=0$. 
Next, if $p\in[0,2)$, then the ``norm'' $\no{\uu}_{p,2}$ of the vector $\uu=\sqrt{d}\ee_1$ is the unique solution $u\in(0,\infty)$ of the equation $g_p(\frac{\sqrt d}u)=K_pd^{1/2}$, so that $\ln\frac{\sqrt d}u\asymp d^{1/2}$ if $p=0$ and $(\frac{d^{1/2}}u)^p\asymp d^{1/2}$ if $p\in(0,2)$. 
Now \eqref{eq:min} follows as well. 

\framebox{(III)} For each $p\in[2,\infty)$, part (III) of Proposition~\ref{prop:u range} follows because $|u|^p=(u^2)^{p/2}$, and $v^{p/2}$ is convex in $v\in[0,\infty)$. For $p=\infty$, the Schur${}^2$-convexity of $\no{\uu}_p$ in $\uu$ follows immediately by the definition.  

\framebox{(IV)} Part (IV) of Proposition~\ref{prop:u range} follows immediately from part (III). 
\end{proof}

\begin{proof}[Proof of Theorem~\ref{th:are range}]
This follows by Propositions~\ref{prop:a_p2 range} and \ref{prop:are=a_p2}. 
\end{proof}

\begin{proof}[Proof of Proposition~\ref{prop:random u}]
Suppose that indeed $p\in(2,\infty)$ and $\uu$ is a completely random $\no{\cdot}_2$-unit vector. Then $\uu$ equals in distribution the vector $\sqrt d\,\Zd/\|\Zd\|$. So, for any $\vp>0$ the probability of the event $\big\{\no{\uu}_p<\vp d^{(p-2)/(4p)}\big\}$ is the same as that of the event $\big\{\sum|Z_j|^p/(\sum Z_j^2)^{p/2}<\vp^p d^{(2-p)/4}\big\}$, which tends to $1$ as $d\to\infty$ -- since, by the law of large numbers,  $\frac1d\sum|Z_j|^p\to\E|Z|^p$ and $\frac1d\sum Z_j^2\to1$ in probability as $d\to\infty$. 
\end{proof}

\section{Statements of lemmas}\label{lemmas}
In this section, we shall state a few lemmas, which will be used in the proofs of propositions stated in Section~\ref{proofs}. The proofs of the lemmas will be deferred to Section~\ref{proofs of lemmas}. 

\begin{lemma}\label{lem:vpi}
For any two varying vectors $\s$ and $\vv$ in $\R^d$ and any varying $v>0$ one has the implication
\begin{equation*}
\big(v>\max_{j=1}^d|v_j|\ \&\ v\to0\big) 
\implies\sum\vpi(s_j+v_j)=\sum\vpi(s_j)(1+o(1))+O(d\,e^{-1/v}). 	
\end{equation*}
\end{lemma}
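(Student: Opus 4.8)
The plan is to exploit the exact multiplicative form of the standard normal density $\vpi$: for every $j$,
\begin{equation*}
	\vpi(s_j+v_j)=\vpi(s_j)\,e^{-s_jv_j-v_j^2/2},
\end{equation*}
so that the relative change $\vpi(s_j+v_j)/\vpi(s_j)$ is governed entirely by the size of $s_jv_j$. Accordingly, I would estimate the difference $\sum_j\big[\vpi(s_j+v_j)-\vpi(s_j)\big]$ by splitting the index set $\{1,\dots,d\}$ at a threshold $T:=\sqrt{2/v}$ (any $T$ of the same order serving as well) into the moderate indices, with $|s_j|\le T$, and the extreme indices, with $|s_j|>T$, treating the two groups by the two distinct mechanisms encoded on the right-hand side of the asserted identity.

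For the moderate indices, $|s_jv_j+v_j^2/2|\le Tv+v^2/2$, and since $Tv=\sqrt{2v}\to0$ and $v\to0$, this bound tends to $0$ uniformly over all such $j$; hence $e^{-s_jv_j-v_j^2/2}-1=o(1)$ uniformly, so that
\begin{equation*}
	\sum_{|s_j|\le T}\big[\vpi(s_j+v_j)-\vpi(s_j)\big]
	=o(1)\sum_{|s_j|\le T}\vpi(s_j)=o(1)\sum_j\vpi(s_j).
\end{equation*}
For the extreme indices I would bound each term crudely, using that $\vpi$ is even and decreasing on $[0,\infty)$: $\vpi(s_j)\le\vpi(T)$, and, since $|s_j+v_j|\ge|s_j|-|v_j|>T-v>0$ eventually (as $T\to\infty$), also $\vpi(s_j+v_j)\le\vpi(T-v)$. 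As $T^2/2=1/v$ and $(T-v)^2/2=1/v-\sqrt{2v}+v^2/2$, both $\vpi(T)$ and $\vpi(T-v)$ are $O(e^{-1/v})$, whence, summing over at most $d$ indices,
\begin{equation*}
	\Big|\sum_{|s_j|>T}\big[\vpi(s_j+v_j)-\vpi(s_j)\big]\Big|
	\le\sum_{|s_j|>T}\big[\vpi(s_j+v_j)+\vpi(s_j)\big]=O(d\,e^{-1/v}).
\end{equation*}
Adding the two estimates yields $\sum_j\vpi(s_j+v_j)=\sum_j\vpi(s_j)(1+o(1))+O(d\,e^{-1/v})$, which is the claim.

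The only delicate point, and the one dictating the whole argument, is the calibration of $T$: it must be large enough that the tail weight $\vpi(T)$ be $O(e^{-1/v})$, which forces $T\ge\sqrt{2/v}$, yet small enough that $Tv\to0$, which forces $T=o(1/v)$. The value $T=\sqrt{2/v}$ meets both demands at once, the compatibility of the two requirements resting precisely on $\sqrt{2/v}\cdot v=\sqrt{2v}\to0$. Beyond this I anticipate no genuine obstacle, only the routine verification that the bounded factor $e^{\sqrt{2v}-v^2/2}\to1$ arising in the extreme-index bound is harmlessly absorbed into the $O(e^{-1/v})$ term.
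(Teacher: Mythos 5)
Your proof is correct and follows essentially the same route as the paper's: split the index set at a threshold of order $\sqrt{1/v}$, use the exact identity $\vpi(s_j+v_j)=\vpi(s_j)e^{-s_jv_j-v_j^2/2}$ with $|s_jv_j|\le\sqrt{2v}\to0$ on the moderate indices, and a crude Gaussian tail bound of size $O(e^{-1/v})$ per term on the extreme indices. The only differences are cosmetic (the paper cuts at $\sqrt{3/v}$ so that $|s_j+v_j|\ge\sqrt{2/v}$ directly, whereas you cut at $\sqrt{2/v}$ and absorb the bounded factor $e^{\sqrt{2v}-v^2/2}$ into the $O$-term).
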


\begin{lemma}\label{lem:lla}
The following statements take place. 
\begin{enumerate}[(i)]
\item The expression $\la_p(s)$ is strictly and continuously increasing \big(respectively, decreasing\big) in $|s|$ for each $p\in(0,\infty)$ \big(respectively, for each $p\in(-1,0)$\big). 
\item 
For each $p\in(-1,\infty)$  
\begin{gather}
\label{eq:la_p(s)-la_p, s to0}
	\la_p(s)-\la_p(0)\sim\tfrac p2\,\la_p(0)\,s^2 \text{ as $s\to0$}; \\
\label{eq:la_p(s)-la_p, s to infty}
	\la_p(s)=|s|^p\big(1+O(|s|^{-2})\big)\text{ over $|s|>1$}.		  
\end{gather}
\item 
$\frac{\la'_p(s)}s$ increases in $s\in(0,\infty)$ for each $p\in(-1,0)\cup(2,\infty]$ and decreases in $s\in(0,\infty)$ for each $p\in(0,2)$; 
the same monotonicity patterns hold for $\frac{\la_p(s)-\la_p(0)}{s^2}$; 
moreover, $\la_p(\sqrt{t})$ is convex in $t\in[0,\infty)$ for each $p\in(-1,0)\cup(2,\infty]$ and concave in $t\in[0,\infty)$ for each $p\in(0,2)$. 
\item 
For each $p\in[2,\infty)$,  
\begin{equation}\label{eq:la_p(s)-la_p>}
	\la_p(s)-\la_p(0)\ge\tfrac p2\,\la_p(0)\,s^2 \text{ for all $s\in\R$},
\end{equation}
and this inequality is strict if $p>2$ and $s\ne0$.  
\item 
For each $p\in(-\frac12,\infty)$ and over all $s\in\R\setminus\{0\}$,  
\begin{equation}\label{eq:la_2p(s)-la_2p}
	\big|\la_{p,2}(s)-\la_{p,2}(0)\big|\OO s^2\ii{|s|\le1}+(1+|s|^{2p-2})\ii{|s|>1}. 
\end{equation}
\item 
For each $p\in[0,\infty)$ and over all $s\in\R\setminus\{0\}$, 
\begin{equation}\label{eq:la_3p(s)}
	\la_{p,3}(s)\OO1+|s|^{3p-1}\ii{|s|>1}. 
\end{equation}
\end{enumerate}
\end{lemma}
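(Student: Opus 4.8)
The plan is to prove the six parts in their natural order of dependence, (i)$\to$(ii)$\to$(iii)$\to$(iv)$\to$(v)$\to$(vi), using throughout that $\la_p$ is even (so it suffices to treat $s\ge0$) together with the representation $\la_p(s)=\int_{\R}|z+s|^p\vpi(z)\,\dd z$. For (i), I would differentiate under the integral and, after the substitution $w=z+s$ and folding the negative half-line onto the positive one, obtain
\[
	\la_p'(s)=p\int_0^\infty w^{p-1}\big(\vpi(w-s)-\vpi(w+s)\big)\,\dd w .
\]
For $s>0$, $w>0$ one has $|w-s|<|w+s|$, hence $\vpi(w-s)>\vpi(w+s)$, so the integrand is strictly positive; the singularity of $w^{p-1}$ at the origin is absorbed because the bracket is $O(w)$ there, whence the integral converges for every $p>-1$ and carries the sign of $p$, giving the claimed strict monotonicity in $|s|$. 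For the small-$s$ part of (ii), differentiating once more and evaluating at $s=0$ gives $\la_p''(0)=2p\int_0^\infty w^p\vpi(w)\,\dd w=p\,\la_p(0)$; since $\la_p'(0)=0$ by evenness, Taylor's formula yields \eqref{eq:la_p(s)-la_p, s to0}. For the large-$s$ part, I would write $\la_p(s)=s^p\,\E|1+Z/s|^p$ for $s>0$ and split the expectation at $|Z|\le s/2$, Taylor-expanding on the bulk (the odd term integrates to $0$, the quadratic term produces the $O(s^{-2})$ correction) and bounding the Gaussian-tail remainder by $O(e^{-cs^2})$, which gives \eqref{eq:la_p(s)-la_p, s to infty}.

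The core is (iii), and the decisive device is the noncentral-chi-square representation: $(Z+s)^2$ has the noncentral $\chi^2_1$ law with noncentrality $t:=s^2$, so with $\Lambda_p(t):=\la_p(\sqrt t)$ one has $\Lambda_p(t)=\E\,W_t^{p/2}$, and by the Poisson mixing of the noncentral chi-square, $\Lambda_p(t)=\E\,m_p(N)$ with $N\sim\Po(t/2)$ and $m_p(k):=\E(\chi^2_{1+2k})^{p/2}=2^{p/2}\Ga(k+\tfrac{p+1}2)/\Ga(k+\tfrac12)$. Applying the Poisson differentiation identity $\tfrac{\dd}{\dd\mu}\E h(N_\mu)=\E\big(h(N_\mu+1)-h(N_\mu)\big)$ twice gives $\Lambda_p''(t)=\tfrac14\,\E\,\Delta^2 m_p(N_{t/2})$, where $\Delta^2$ is the second forward difference in $k$. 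The one-step ratio $m_p(k+1)/m_p(k)=1+\tfrac{p}{2k+1}$ then produces the clean identity
\[
	\Delta^2 m_p(k)=m_p(k)\,\frac{p(p-2)}{(2k+1)(2k+3)},
\]
so $\Delta^2 m_p\ge0$ and hence $\Lambda_p$ is convex exactly when $p(p-2)\ge0$, i.e.\ for $p\in(-1,0)\cup(2,\infty)$, and $\Lambda_p$ is concave for $p\in(0,2)$. Convexity of $\Lambda_p(t)=\la_p(\sqrt t)$ is equivalent to $\la_p'(s)/s$ being increasing in $s$, and by the l'Hospital-type rule for monotonicity (applied with $f(s)=\la_p(s)-\la_p(0)$ and $g(s)=s^2$, both vanishing at $0$) this transfers to the same monotonicity of $(\la_p(s)-\la_p(0))/s^2$; thus all three assertions of (iii) follow at once for finite $p>-1$.

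Part (iv) is then immediate: for $p\ge2$ the function $\Lambda_p$ is convex with $\Lambda_p(0)=\la_p(0)$ and right-derivative $\Lambda_p'(0)=\tfrac p2\la_p(0)$ by \eqref{eq:la_p(s)-la_p, s to0}, so $\Lambda_p$ lies above its tangent line at $0$, which is precisely \eqref{eq:la_p(s)-la_p>}, strict for $p>2$ by strict convexity. For (v) I would use $\la_{p,2}(s)=\la_{2p}(s)-\la_p(s)^2$ (finite exactly for $p>-\tfrac12$): on $|s|\le1$ both $\la_{2p}(s)-\la_{2p}(0)$ and $\la_p(s)^2-\la_p(0)^2$ are $O(s^2)$ by the small-$s$ part of (ii), while for $|s|>1$ the large-$s$ part gives $\la_{2p}(s)=|s|^{2p}(1+O(|s|^{-2}))$ and $\la_p(s)^2=|s|^{2p}(1+O(|s|^{-2}))$, whose difference is $O(|s|^{2p-2})$; combined with $|\la_{p,2}(0)|=O(1)$ this yields \eqref{eq:la_2p(s)-la_2p}. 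For (vi), writing $Y=|Z+s|^p$ and using $\E|Y-\E Y|^3\le 2^3\,\E|Y-s^p|^3$ together with the mean-value estimate $\big||Z+s|^p-s^p\big|=O\!\big(s^{p-1}|Z|\big)$ on the bulk $|Z|\le s/2$ (the complementary region contributing only negligible Gaussian tails), I bound $\E\big||Z+s|^p-s^p\big|^3$ by $O(s^{3p-3})\le O(s^{3p-1})$ for $s>1$, while monotonicity from (i) bounds $\la_{p,3}(s)$ by a constant for $|s|\le1$; this gives \eqref{eq:la_3p(s)}.

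I expect the genuine obstacle to be the endpoint $p=\infty$ in (iii), where $\la_\infty(s)=-\ln\PP(|Z+s|\le c)$ is not covered by the chi-square moment computation. Here I would instead establish directly that $t\mapsto\PP(|Z+\sqrt t|\le c)$ is log-concave — either via the same Poisson mixture, reducing to log-concavity of $k\mapsto\PP(\chi^2_{1+2k}\le c^2)$ combined with the total positivity of the Poisson kernel, or by a direct second-derivative sign analysis of $G(s):=\PP(|Z+s|\le c)$ — since log-concavity of $G(\sqrt t)$ is exactly convexity of $\la_\infty(\sqrt t)$ and hence the monotonicity of $\la_\infty'(s)/s$, after which the remaining assertions of (iii) for $p=\infty$ follow as in the finite case.
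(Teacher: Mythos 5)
Your plan has the same architecture as the paper's proof: part (iii) is the engine, the special l'Hospital-type rule for monotonicity transfers the monotonicity of $\la_p'(s)/s$ to that of $(\la_p(s)-\la_p(0))/s^2$, and (ii), (v), (vi) are moment estimates built on the same splitting at $|Z|\le s/2$. For (i) the paper argues by stochastic monotonicity of $\PP(|Z-s|\le c)$ in $|s|$, whereas you differentiate; your derivative formula is exactly the paper's \eqref{eq:la'}, so both work. The genuine divergence is the sign computation in (iii): the paper expands $\vpi(s\pm x)=e^{-s^2/2}\vpi(x)e^{\mp sx}$ in powers of $s$ and reads the sign of $p(p-2)$ off the positive series \eqref{eq:Dla'(s)/s}, while you pass to the noncentral-$\chi^2$ Poisson mixture $\Lambda_p(t)=\E\,m_p(N_{t/2})$, $N_\mu\sim\Po(\mu)$. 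Your identity $\Delta^2 m_p(k)=m_p(k)\,p(p-2)/((2k+1)(2k+3))$ does follow from $m_p(k+1)/m_p(k)=1+p/(2k+1)$ and is correct; it is a clean alternative that yields strict convexity/concavity of $\Lambda_p$ directly, which in turn makes your (iv) (tangent line of the convex $\Lambda_p$ at $t=0$) tidier than the paper's third-derivative computation. Your (vi), centering at $s^p$ with a mean-value bound on the bulk, is also valid and even gives the sharper order $s^{3p-3}$; the paper instead uses $\la_{p,3}\le(\la_{p,2}\la_{p,4})^{1/2}$ with a cancellation in the fourth central moment. Two small repairs: on $|s|\le1$ in (vi) you want continuity of $\la_{p,3}$, not ``monotonicity from (i)''; and in (ii), for $p\in(-1,0)$ the region $s/2<|Z|\le 2s$ contains the singularity of $|1+Z/s|^p$ at $Z=-s$, so the remainder is not literally a Gaussian tail of the measure — one must integrate $|1+z/s|^p$ against $\sup_{|z|>s/2}\vpi$ there, as the paper does for its term $\La_2$, still getting $O(s^{-2})$.

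The one real gap is the endpoint $p=\infty$ in (iii), which you correctly identify but do not close; neither of your two proposed routes is off-the-shelf. For the first, the preservation statement you need — that $\mu\mapsto\E h(N_\mu)$ is log-concave whenever the sequence $h$ is log-concave — is true, but it rests on the variation-diminishing property of the totally positive Poisson kernel (reducing log-concavity of $f$ to unimodality of $f(\mu)e^{-a\mu}$ for every real $a$), not on ``total positivity'' alone; and the discrete log-concavity of $k\mapsto\PP(\chi^2_{1+2k}\le c^2)$, i.e.\ log-concavity of the regularized incomplete gamma function in its shape parameter, is itself a nontrivial claim you would have to prove. For the second route, what is required is not the sign of $G''$ but the monotonicity of $-G'(s)/(sG(s))$; the paper obtains this by applying the l'Hospital-type monotonicity rule twice, which reduces to the positivity on $[0,\infty)$ of $H(u)=e^{4u}(u-2)+4e^{2u}(u^2+1)-u-2$, checked by two further differentiations — a genuinely nontrivial explicit computation that your sketch does not perform. (The paper also notes that this case can alternatively be deduced from Lemma~\ref{lem:schur2}.) As written, your proposal establishes (iii) for all finite $p\in(-1,\infty)$ but leaves $p=\infty$ open.
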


\begin{lemma}\label{lem:ttmu}
The following statements take place. 
\begin{enumerate}[(i)]
\item 
For each $d\in\N$, the expression $\tmu_d(s)$ strictly and continuously decreases in $|s|$ from $\tmu_d(0)$ to $0$. 
\item Moreover, $\tmu_d(0)\sim2\vpi(0)\ln d$ and $\tmu_d(s)\OO\ln d$ over all $s\in\R$. 
\item Over all $s\in\R$ and $d\in\{2,3,\dots\}$ 
\begin{gather}
\label{eq:tmu(0)-tmu(s)}
	\tmu_d(0)-\tmu_d(s)\asymp(s^2\wedge1)\ln d\asymp\big(\vpi(0)-\vpi(s)\big)\ln d. 
\end{gather}
\item
Over all $s\in\R$ and $d\in\N$, 
\begin{equation*}
	\Big|\frac{sf'(s)}{f(s)}\Big|\OO1,
\end{equation*}
for $f(s):=f_{-1}(s)=\tmu_d(0)-\tmu_d(s)$. 
\end{enumerate}
\end{lemma}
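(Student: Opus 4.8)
The plan is to base the entire proof on the convolution representation obtained by the substitution $w=z+s$ in \eqref{eq:tmu}: with the even kernel $g_d(w):=|w|^{-1}\wedge d$, which is strictly decreasing in $|w|$ on $\{|w|>1/d\}$, one has $\tmu_d(s)=\int_\R g_d(w)\,\vpi(w-s)\,\dd w=\int_\R g_d(u+s)\,\vpi(u)\,\dd u$. Since $0\le g_d\le d$ and $\vpi,\vpi',\vpi''$ decay, one may differentiate under the integral sign, so $\tmu_d\in C^\infty$ and, using $\vpi'(x)=-x\vpi(x)$, $\tmu_d'(s)=\int_\R u\,g_d(u+s)\,\vpi(u)\,\dd u=\int_0^\infty u\,[g_d(u+s)-g_d(s-u)]\,\vpi(u)\,\dd u$. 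For part~(i): $\tmu_d$ is even (so a function of $|s|$); for $s>0,\ u>0$ one has $|u+s|>|s-u|$, whence $g_d(u+s)\le g_d(s-u)$, with strict inequality on a set of positive measure, giving $\tmu_d'(s)<0$; continuity is immediate, and $\tmu_d(s)\to0$ as $|s|\to\infty$ by dominated convergence (the integrand is dominated by $d\,\vpi(u)$ and tends to $0$ pointwise). For part~(ii): writing $\tmu_d(0)=2\int_0^\infty(w^{-1}\wedge d)\vpi(w)\,\dd w$ and splitting at $w=1/d$, the flat piece contributes $2\int_0^{1/d}d\,\vpi(w)\,\dd w\to2\vpi(0)=O(1)$, while the tail $2\int_{1/d}^\infty w^{-1}\vpi(w)\,\dd w\sim2\vpi(0)\ln d$ (the part over $[1/d,1]$ equals $2\vpi(0)\ln d(1+o(1))$, the part over $[1,\infty)$ is $O(1)$); this gives $\tmu_d(0)\sim2\vpi(0)\ln d$, and then $\tmu_d(s)\OO\ln d$ uniformly in $s$ is free, since by part~(i) $\tmu_d(s)\le\tmu_d(0)$.

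For part~(iii), the equivalence $\vpi(0)-\vpi(s)\asymp s^2\wedge1$ is elementary, as $\vpi(0)-\vpi(s)=\vpi(0)(1-e^{-s^2/2})$ and $1-e^{-x}\asymp x\wedge1$ for $x\ge0$. It remains to prove $f(s):=\tmu_d(0)-\tmu_d(s)\asymp(s^2\wedge1)\ln d$. Exploiting the evenness of $g_d$ to symmetrize (which kills the first-order term that would otherwise dominate), I would write
\begin{equation*}
	f(s)=-\tfrac12\int_\R g_d(w)\,[\vpi(w+s)-2\vpi(w)+\vpi(w-s)]\,\dd w,
\end{equation*}
turning the increment into a second difference. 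For the upper bound, bound the second difference by $s^2\sup_{|t|\le1}|\vpi''(w+t)|$ when $|s|\le1$ and integrate: the point is the key estimate $\int_\R g_d(w)\sup_{|t|\le1}|\vpi''(w+t)|\,\dd w\OO\ln d$, which holds because $\int_{|w|\le1}g_d=2(1+\ln d)$ while the tail is bounded by the decay of $\vpi''$; thus $f(s)\OO s^2\ln d$, and for $|s|\ge1$ one uses $f(s)\le\tmu_d(0)\OO\ln d$.

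For the lower bound in part~(iii), I would restrict to $|w|\le\tfrac14$ and $|s|\le\tfrac14$, where $\vpi''\le-c_0<0$ forces $-[\vpi(w+s)-2\vpi(w)+\vpi(w-s)]\ge c_0 s^2$; the contribution of this region is then at least $\tfrac{c_0}2 s^2\int_{|w|\le1/4}g_d\OOG s^2\ln d$ (as $\int_{|w|\le1/4}g_d=2\ln d+O(1)$), while the sign-indefinite remainder $\{|w|>\tfrac14\}$, where $g_d(w)\le4$ is bounded and the second difference is $\OO s^2\sup_{|t|\le1/4}|\vpi''(w+t)|$ with integrable decay, contributes only $\OO s^2$; subtracting gives $f(s)\OOG s^2\ln d$ for small $|s|$ once $\ln d$ exceeds a constant, the finitely many small $d\ge2$ being handled by positivity and continuity of $f(s)/s^2$ on $(0,\tfrac14]$. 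Finally, for $|s|\ge\tfrac14$, monotonicity of $f$ in $|s|$ (part~(i)) gives $f(s)\ge f(\tfrac14)\asymp\ln d$, a fixed-$s$ instance, so $f(s)\OOG\ln d\asymp(s^2\wedge1)\ln d$. For part~(iv), since $f=f_{-1}$ and $f(s)\OOG(s^2\wedge1)\ln d>0$ by part~(iii), it suffices to prove the matching upper bound $|s\,f'(s)|\OO(s^2\wedge1)\ln d$ and divide. Symmetrizing again, $f'(s)=\tfrac12\int_\R g_d(w)[\vpi'(w-s)-\vpi'(w+s)]\,\dd w$; for $|s|\le1$, $|\vpi'(w+s)-\vpi'(w-s)|\le2|s|\sup_{|t|\le1}|\vpi''(w+t)|$, so $|f'(s)|\OO|s|\ln d$ by the \emph{same} key integral as above, whence $|sf'(s)|\OO s^2\ln d$; for $|s|\ge1$ one bounds $|\vpi'(w+s)-\vpi'(w-s)|\le|\vpi'(w+s)|+|\vpi'(w-s)|$ and uses $\int_{|w|\le1}g_d\OO\ln d$ together with the boundedness of $s^2\vpi(s-c)$ to get $|sf'(s)|\OO\ln d$. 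Combining, $|sf'(s)/f(s)|\OO1$ (the ratio extending continuously to $2$ at $s=0$).

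The hard part will be the lower bound in part~(iii), together with its required uniformity over \emph{all} $s\in\R$ and \emph{all} $d\ge2$ (not merely $d\to\infty$). The second-difference integrand $-g_d(w)[\vpi(w+s)-2\vpi(w)+\vpi(w-s)]$ changes sign, since $\vpi''$ does, so one must isolate a neighbourhood of $w=0$ on which $\vpi''$ is bounded away from $0$ and which still captures the entire $\ln d$ weight of the kernel $g_d$, and then dominate the sign-indefinite remainder by a quantity of strictly smaller order; carrying the implied constants so that the two-sided bounds hold simultaneously for every $d\ge2$ is the delicate point, which I would settle by combining the large-$d$ estimates with a compactness argument over the finitely many remaining values of $d$. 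Everything else — parts~(i), (ii), the upper bound in (iii), and (iv) — then follows essentially mechanically from the convolution and symmetrized-difference representations and the single kernel estimate $\int g_d(w)\sup_{|t|\le1}|\vpi''(w+t)|\,\dd w\OO\ln d$.
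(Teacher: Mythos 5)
Your proposal is correct and follows essentially the same route as the paper's proof: both rest on the representation of $\tmu_d(s)$ as the convolution of $\vpi$ with the kernel $|w|^{-1}\wedge d$, extract the factor $\ln d$ from $\int_{1/d}^{1}\dd w/w$, control $\tmu_d(0)-\tmu_d(s)$ for small $|s|$ via the second difference $\vpi(w+s)-2\vpi(w)+\vpi(w-s)=s^2\vpi''(\tilde w)$ (isolating a neighbourhood of $w=0$ where $\vpi''$ is negative and bounded away from $0$), handle large $|s|$ by monotonicity, and prove (iv) by showing $|sf'(s)|\OO(s^2\wedge1)\ln d$ and dividing by the lower bound from (iii). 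The one place your sketch is too terse is the bound $|sf'(s)|\OO\ln d$ for $|s|\ge1$ in part (iv): the contribution of the region $|w|>1$ is not controlled by $\int_{|w|\le1}g_d\OO\ln d$ together with the boundedness of $s^2\vpi(s-c)$ alone (a naive bound there gives only $\OO s$), but it is repaired routinely by using $g_d(w)=1/|w|\le 2/|s|$ for $|w|\ge|s|/2$ and the Gaussian decay of $\vpi'(w\mp s)$ for $1<|w|<|s|/2$, much as the paper does with its split of the integral at $s/2$.
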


\begin{lemma}\label{lem:ttla}
The following statements take place. 
\begin{enumerate}[(i)]
\item The expression $\tla(s)$ is strictly and continuously increasing in $|s|$. 
\item 
One has   
\begin{gather}
\label{eq:tla(s)-tla, s to0}
	\tla(s)-\tla(0)\sim\tfrac{s^2}2\quad\text{as }s\to0; \\
\intertext{moreover, for each $m\in\N$ and over all $s\in[e,\infty)$}	
\label{eq:tla(s), s to infty}
	\E\ln^m|Z+s|=\ln^m s\,[1+O(\ln^{-m}s)]\sim\ln^m s,	  
\end{gather}
the latter relation taking place as $s\to\infty$. 
\item Expressions
$\frac{\tla'(s)}s$ and hence $\frac{\tla(s)-\tla(0)}{s^2}$ decrease in $s\in(0,\infty)$; 
moreover, $\tla(\sqrt t)$ is concave in $t\in[0,\infty)$.  
\item 
Over all $s\in\R$, 
\begin{equation*}
	\big|\tla_2(s)-\tla_2(0)\big|\OO g_0(s),  
\end{equation*}
where $g_0$ is as in Definition~\ref{def:orl} (for $p=0$). 
\item Moreover, $\tla_2(0)=\tfrac{\pi^2}8$.  
\item 
Over all $s\in\R\setminus\{0\}$, 
\begin{equation*}
	\tla_3(s)\OO1+\ln^2s\ii{|s|>e}.  
\end{equation*}
\item
Over all $s\in\R$, 
\begin{equation*}
	|s\tla'(s)|\OO1.  
\end{equation*}
\end{enumerate}
\end{lemma}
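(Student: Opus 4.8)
My plan rests on two Gaussian integration-by-parts identities that I would establish first and reuse throughout; these mirror the $p\to0$ limit of Lemma~\ref{lem:lla}. Writing $\tla(s)=\int_\R\ln|z+s|\,\vpi(z)\,\dd z$ and pushing the $s$-dependence onto the density (via the substitution $w=z+s$ and $\vpi'=-w\vpi$, with the boundary terms vanishing) I get the absolutely convergent expressions $\tla'(s)=\E[Z\ln|Z+s|]$ and, by one more such step, $\tla''(s)=\E[(Z^2-1)\ln|Z+s|]$; both sidestep the logarithmic singularity at $z=-s$. A further integration by parts gives $\E[Z(Z+s)\ln|Z+s|]=1+\tla(s)$, and combining the three displays yields the linear relation
\begin{equation*}
\tla''(s)+s\,\tla'(s)=1,\qquad \tla'(0)=0.
\end{equation*}
Since $Z\overset{d}{=}-Z$, the function $\tla$ is even, so it suffices to argue on $(0,\infty)$. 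Solving the ODE for $g=\tla'$ with integrating factor $e^{s^2/2}$ gives the closed form $\tla'(s)=e^{-s^2/2}\int_0^s e^{u^2/2}\,\dd u$, which I use for the sharp statements.

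Part~(i) follows from positivity of $\tla'$: folding $\E[Z\ln|Z+s|]$ over $\{z>0\}$ and $\{z<0\}$ gives $\tla'(s)=\int_0^\infty z\ln\frac{z+s}{|z-s|}\,\vpi(z)\,\dd z$, whose integrand is strictly positive for $s>0$; continuity is immediate by dominated convergence. For the small-$s$ half of (ii), namely \eqref{eq:tla(s)-tla, s to0}, the ODE gives $\tla'(0)=0$ and $\tla''(0)=1$ with $\tla''$ continuous at $0$, so Taylor's formula yields $\tla(s)-\tla(0)\sim s^2/2$. For \eqref{eq:tla(s), s to infty} I would write $\ln|Z+s|=\ln s+\ln|1+Z/s|$, expand $(\ln s+\ln|1+Z/s|)^m$, and bound the moments of $W:=\ln|1+Z/s|$: on the bulk $W\approx Z/s$ makes $\E W^k=O(s^{-2})$ for $k\in\{1,2\}$ and smaller for larger $k$, while the only dangerous region $Z\approx-s$ is killed by the Gaussian factor $\vpi(s)=O(e^{-s^2/2})$; hence the $k\ge1$ terms contribute $O(\ln^{m-1}s/s^2)$, which is absorbed into the relative error $O(\ln^{-m}s)$.

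Part~(iii) is where the real idea lies. The three assertions are equivalent: with $h(t):=\tla(\sqrt t)$ one has $h'(t)=\tla'(s)/(2s)$ for $s=\sqrt t$, so concavity of $h$ is exactly the decrease of $\tla'(s)/s$, and the chord-slope monotonicity of a concave $h$ based at $t=0$ gives the decrease of $(\tla(s)-\tla(0))/s^2$. Thus it remains to show $\frac{\dd}{\dd s}\frac{\tla'(s)}{s}<0$, i.e.\ $s\tla''(s)<\tla'(s)$; substituting $\tla''=1-s\tla'$ reduces this to the clean inequality
\begin{equation*}
\tla'(s)>\frac{s}{1+s^2}\qquad(s>0).
\end{equation*}
I prove it by the same ODE trick: setting $R(s):=\tla'(s)-\frac{s}{1+s^2}$ and using $\tla''=1-s\tla'$ one finds $R(0)=0$ and $R'(s)+sR(s)=\frac{2s^2}{(1+s^2)^2}\ge0$, whence $(e^{s^2/2}R)'\ge0$ and so $R(s)>0$ for $s>0$.

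The remaining parts are estimates. For (v), since $\ln|Z|=\tfrac12\ln(Z^2)$ and $Z^2\sim\chi^2_1$, I get $\tla_2(0)=\Var(\ln|Z|)=\tfrac14\Var(\ln Z^2)=\tfrac14\psi'(\tfrac12)=\tfrac14\cdot\tfrac{\pi^2}2=\tfrac{\pi^2}8$, where $\psi'$ is the trigamma function. For (iv) I would split at $|s|=e$: on $|s|\le e$ the evenness and smoothness of $\tla_2$ give $|\tla_2(s)-\tla_2(0)|\OO s^2\asymp g_0(s)$, while on $|s|>e$ I bound $\tla_2(s)=\Var(\ln|Z+s|)\le\E(\ln|Z+s|-\ln s)^2=\E W^2\OO1$ (again via the large-$s$ control above), so $|\tla_2(s)-\tla_2(0)|\OO1\OO\ln|s|=g_0(s)$. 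Part (vi) uses the same splitting: $\tla_3$ is continuous, hence bounded on $[-e,e]$, and for $|s|>e$ the centered third moment is $O(1)$ by the $\E|W|^3$ estimate, so $\tla_3(s)\OO1\OO1+\ln^2 s\,\ii{|s|>e}$ (the stated bound being far from tight). Finally (vii): from the ODE, $s\tla'(s)=1-\tla''(s)$, and since $\E(Z^2-1)=0$ the leading term drops out of $\tla''(s)=\E[(Z^2-1)\ln|Z+s|]=\E[(Z^2-1)\,W]$, leaving a uniformly bounded quantity; hence $|s\tla'(s)|\OO1$. The main obstacle throughout is the logarithmic singularity at $z=-s$: every differentiation under the integral, integration by parts, and large-$s$ moment bound must be justified by verifying that this singularity is integrable and that its contribution is exponentially small (of order $\vpi(s)$) once $|s|$ is large. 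The genuinely non-routine step is the reduction of (iii) to $\tla'(s)>s/(1+s^2)$ and its ODE proof; everything else is bookkeeping around that singularity.
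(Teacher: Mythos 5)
Your proof is correct, and for the two delicate parts it takes a genuinely different route from the paper. The paper obtains part (iii) by viewing $\tla^{(i)}(s)$ as $\frac{\partial}{\partial p}\la_p^{(i)}(s)\big|_{p=0}$ and reading off the sign of $\frac{\dd}{\dd s}\frac{\tla'(s)}s$ from the explicit power series \eqref{eq:Dla'(s)/s}; part (vii) is then done by a separate three-piece splitting of the integral $\int_0^\infty(\ln x)[\vpi'(s-x)+\vpi'(s+x)]\,\dd x$. You instead derive, via Stein-type Gaussian integration by parts, the linear ODE $\tla''(s)+s\,\tla'(s)=1$ with $\tla'(0)=0$ (equivalently $\tla'(s)=e^{-s^2/2}\int_0^s e^{u^2/2}\,\dd u$, a rescaled Dawson function), which reduces (iii) to the scalar inequality $\tla'(s)>s/(1+s^2)$, itself dispatched by the same integrating-factor computation $\bigl(e^{s^2/2}R\bigr)'=e^{s^2/2}\,2s^2/(1+s^2)^2\ge0$ — I checked this algebra and it is right, and the inequality is asymptotically tight to first order ($\tla'(s)=s^{-1}+2s^{-3}+\dots$ versus $s^{-1}-s^{-3}+\dots$), so the margin is real but thin; the same ODE gives (vii) as $s\tla'(s)=1-\tla''(s)$ with $\tla''(s)=\E[(Z^2-1)\ln|1+Z/s|]$ after the cancellation $\E(Z^2-1)=0$ (for small $|s|$ you should fall back on continuity of $\tla''$, but that is a one-line patch). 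What your approach buys is self-containedness and sharper constants: you never need the $\la_p$ machinery of Lemma~\ref{lem:lla} or the special l'Hospital-type monotonicity rule (your concave-chord argument for $h(t)=\tla(\sqrt t)$ replaces it), and your large-$s$ bounds $\tla_2(s)\OO1$, $\tla_3(s)\OO1$ are strictly stronger than the stated $\OO\ln s$ and $\OO 1+\ln^2 s$. What the paper's route buys is uniformity of method across all $p$, since the same series identity drives Lemma~\ref{lem:lla}(iii) and Lemma~\ref{lem:ttla}(iii) simultaneously. Parts (i), (ii), (iv), (v), (vi) are handled essentially as in the paper (the expansion $\ln|Z+s|=\ln s+\ln|1+Z/s|$ with the exponentially small contribution near $Z\approx-s$, and the trigamma value $\psi'(\tfrac12)=\pi^2/2$ matching the paper's computation \eqref{eq:Ga,pi}).
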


\begin{lemma}\label{lem:h}
For each $p\in\R$, 
\begin{equation*}
	f_p(s)\asymp h_p(s):=h_{p,d}(s):=
\left\{
\begin{alignedat}{2}
& \big(\vpi(0)-\vpi(s)\big)\ln d && \text{\quad if }p=-1, \\
& \vpi(0)-\vpi(s) && \text{\quad if }p\in(-1,0), \\
& g_p(s) && \text{\quad if }p\in[0,2), \\
& s^2+|s|^p && \text{\quad if }p\in[2,\infty), 
\end{alignedat}
\right.	
\end{equation*}
over all $s\in\R$ (and, for $p=-1$, over all $d\in\N$), with 
$f_p$ and $g_p$ defined in Theorem~\ref{th:AS iff} and \eqref{eq:g_p}, respectively. 
\end{lemma}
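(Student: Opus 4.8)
The plan is to prove the comparison separately on each of the ranges on which $h_p$ is defined, namely $p=-1$, $p\in(-1,0)$, $p=0$, $p\in(0,2)$, and $p\in[2,\infty)$, since on each of these $f_p$ and $h_p$ have a single fixed analytic form. I would first record the common structural facts that make the problem tractable: on each such range both $f_p$ and $h_p$ are even, continuous, and nonnegative functions of $s$ vanishing only at $s=0$, the strict positivity away from $0$ following from the strict monotonicity in $|s|$ supplied by Lemma~\ref{lem:lla}(i), Lemma~\ref{lem:ttla}(i), and Lemma~\ref{lem:ttmu}(i) for $\la_p$, $\tla$, and $\tmu_d$, respectively. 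Hence it suffices to prove $f_p(s)\asymp h_p(s)$ for $s\in(0,\infty)$. For this I would show that the ratio $f_p/h_p$, which is continuous and strictly positive on $(0,\infty)$, has finite \emph{positive} limits as $s\to0^+$ and as $s\to\infty$; extending $f_p/h_p$ continuously to the compact $[0,\infty]$ then forces it to attain a positive minimum and a finite maximum, which is exactly the two-sided bound $\asymp$.

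For the small-$s$ endpoint I would invoke the quadratic expansions already in hand: $\la_p(s)-\la_p(0)\sim\frac p2\,\la_p(0)\,s^2$ from \eqref{eq:la_p(s)-la_p, s to0}, $\tla(s)-\tla(0)\sim\frac{s^2}2$ from \eqref{eq:tla(s)-tla, s to0}, and the elementary $\vpi(0)-\vpi(s)\sim\frac{\vpi(0)}2\,s^2$. In each range both $f_p(s)$ and $h_p(s)$ are then asymptotic to a positive constant times $s^2$ as $s\to0^+$ — using that near $0$ one has $g_0(s)=s^2/e^2$, that $s^2\wedge|s|^p=s^2$ when $p\in(0,2)$, and that $s^2+|s|^p\sim s^2$ when $p\ge2$ — so $f_p/h_p$ tends to a finite positive limit.

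For the large-$s$ endpoint I would use \eqref{eq:la_p(s)-la_p, s to infty}: for $p\in(-1,0)$ it gives $\la_p(s)\to0$, so $f_p(s)=\la_p(0)-\la_p(s)\to\la_p(0)$, matching the limit $\vpi(0)$ of $h_p$; for $p>0$ it gives $\la_p(s)-\la_p(0)\sim|s|^p$, matching the piece $|s|^p$ of $s^2\wedge|s|^p$ when $p\in(0,2)$ and of $s^2+|s|^p$ when $p\ge2$; and \eqref{eq:tla(s), s to infty} with $m=1$ gives $\tla(s)-\tla(0)\sim\ln s$, matching the piece $\ln|s|$ of $g_0$. Combining the two endpoints with continuity on $(0,\infty)$ closes each of these cases. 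The case $p=-1$ is immediate and bypasses the ratio argument altogether: the claimed relation $f_{-1}(s)=\tmu_d(0)-\tmu_d(s)\asymp(\vpi(0)-\vpi(s))\ln d$ is contained in \eqref{eq:tmu(0)-tmu(s)} of Lemma~\ref{lem:ttmu}(iii).

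The only genuine obstacle is uniformity of the implicit constants. For fixed $p\neq-1$ the constants in $\asymp$ may depend on $p$, so the compactness/endpoint argument above is exactly what is needed. For $p=-1$, however, the comparison must be uniform in $d$ as well, and it is precisely Lemma~\ref{lem:ttmu}(iii) that furnishes this $d$-uniform two-sided estimate; thus the entire weight of the $d$-dependence has already been discharged there, and I would simply cite it. A secondary point to verify is the junction at $|s|=e$ in the piecewise definition \eqref{eq:g_p} of $g_0$, but both branches equal $1$ there, so $g_0$ is continuous and strictly positive on $(0,\infty)$ and the ratio argument applies without modification.
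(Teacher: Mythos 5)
Your proposal is correct and follows essentially the same route as the paper, whose proof is a one-line citation of exactly the ingredients you use: Lemma~\ref{lem:ttmu}(iii) for $p=-1$, and the small-$s$/large-$s$ asymptotics together with strict monotonicity from Lemma~\ref{lem:lla}(ii,i) and Lemma~\ref{lem:ttla}(ii,i) for the remaining cases. You merely make explicit the continuity-plus-endpoint-limits argument that the paper leaves implicit, and your handling of the $d$-uniformity at $p=-1$ matches the paper's reliance on \eqref{eq:tmu(0)-tmu(s)}.
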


\begin{lemma}\label{lem:s,v}
For any $p\in(-\frac12,0)$ and any varying vectors $\s$ and $\vv$ of the same direction such that 
$\sum g_p(s_j)\sim S\sim\sum g_p(v_j)$ for some varying $S\in(0,\infty)$, one has 
\begin{equation}\label{eq:s,v,S}
	\sum s_j^2>>S\iff\sum v_j^2>>S; 
\end{equation}
here, $g_p=f_p$, as in Definition~\ref{def:orl} \big(for $p\in(-\frac12,0)$\big). 
\end{lemma}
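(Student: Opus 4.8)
The plan is to exploit that $\s$ and $\vv$ share a direction, so that $\vv=t\,\s$ for some varying $t\in(0,\infty)$ and hence $\sum v_j^2=t^2\sum s_j^2$. Since the hypotheses and the conclusion are symmetric under the interchange $(\s,\vv)\mapsto(\vv,\s)$ (which replaces $t$ by $1/t$), I may assume $t\le1$; then $\sum v_j^2\le\sum s_j^2$, so $\sum v_j^2>>S\Rightarrow\sum s_j^2>>S$ is immediate. For the reverse I write $X:=S/\sum s_j^2$ and $Y:=S/\sum v_j^2=X/t^2\ge X$ and must show $X\to0\Rightarrow Y\to0$. Given a subsequence, I pass to a further one along which $t\to t_*\in[0,1]$: if $t_*>0$ then $Y=X/t^2\to0$; if $t_*=0$ I shall prove, \emph{unconditionally}, that $\sum v_j^2>>S$ (whence also $\sum s_j^2\ge\sum v_j^2>>S$), i.e.\ $Y\to0$ on that sub-subsequence. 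As every subsequence then has a further one with $Y\to0$, we get $Y\to0$, completing the biconditional. Throughout I use that, by the table in Theorem~\ref{th:AS iff}, $f_p=\la_p(0)-\la_p(\cdot)$; by Lemma~\ref{lem:h} and elementary bounds, $f_p(s)\asymp\vpi(0)-\vpi(s)\asymp s^2\wedge1$, so $0\le f_p<\la_p(0)$; by Lemma~\ref{lem:lla}(i) $f_p$ increases in $|s|$, and by Lemma~\ref{lem:lla}(ii) $\la_p(s)\asymp|s|^{-|p|}$ as $|s|\to\infty$.

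So assume $t\to0$. I first localize the $f_p(\vv)$-mass. For $|v_j|\le t$ one has $|s_j|=|v_j|/t\le1$, so $f_p(s_j)\asymp s_j^2=v_j^2/t^2\asymp f_p(v_j)/t^2$; summing and using $\sum_j f_p(s_j)=O(S)$ gives $\sum_{|v_j|\le t}f_p(v_j)=O(t^2S)=o(S)$. For the medium coordinates $t<|v_j|\le1$ (equivalently $1<|s_j|\le1/t$), monotonicity of $\la_p$ yields $f_p(s_j)-f_p(v_j)=\la_p(ts_j)-\la_p(s_j)\ge\delta_1$ for some constant $\delta_1>0$ and all small $t$; since $\sum_j\bigl(f_p(s_j)-f_p(v_j)\bigr)=o(S)$ (both sums being $\sim S$), the number of medium coordinates is $o(S)$, so their total $f_p(\vv)$-mass is $o(S)$ as well. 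Combining, $\sum_{|v_j|>1}f_p(v_j)\sim S$.

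Next I show the coordinates with $|v_j|>1$ are, in aggregate, genuinely large. For such $j$ one has $|s_j|>1/t$, so $f_p(s_j)\ge\la_p(0)-\la_p(1/t)$ with $\la_p(1/t)\to0$; feeding this into $\sum_j\bigl(f_p(s_j)-f_p(v_j)\bigr)=o(S)$ and using $\#\{|v_j|>1\}\le\sum(v_j^2\wedge1)=O(S)$ gives the key relation $R:=\sum_{|v_j|>1}\la_p(v_j)=o(S)$, together with $\#\{|v_j|>1\}\sim S/\la_p(0)\asymp S$. Since $\la_p(v_j)\asymp|v_j|^{-|p|}$ on $\{|v_j|>1\}$, we have $v_j^2\asymp\la_p(v_j)^{-2/|p|}$; applying Jensen's inequality to the convex function $x\mapsto x^{-2/|p|}$ over these $\asymp S$ coordinates yields
\[
\sum_{|v_j|>1}v_j^2\;\OOG\;\#\{|v_j|>1\}^{\,1+2/|p|}\,R^{-2/|p|}\;\asymp\;S\,(S/R)^{2/|p|}\;>>\;S,
\]
because $S/R\to\infty$. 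Hence $\sum v_j^2>>S$, settling the case $t\to0$ and, with it, the lemma.

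The crux is the last paragraph: converting the \emph{asymptotic} equivalence $\sum f_p(s_j)\sim\sum f_p(v_j)$ into the quantitative smallness $R=o(S)$ and then into the largeness $\sum_{|v_j|>1}v_j^2>>S$. The mechanism is the mutually opposite monotonicity of $\la_p(v)$ and $v^2$ in $|v|$ (one decaying like $|v|^{-|p|}$, the other growing like $|v|^2$), exploited through the Jensen/convexity step; this is precisely where the saturation $f_p\to\la_p(0)$ of the weight for $p\in(-\tfrac12,0)$ is essential. The remaining work is bookkeeping: making the $\asymp$-constants in $f_p(s)\asymp s^2\wedge1$ and $\la_p(s)\asymp|s|^{-|p|}$ uniform (all from Lemma~\ref{lem:lla}), verifying the uniform lower bound $\delta_1$ on the medium-coordinate gap, and carrying everything along subsequences, since $S$, $t$, and the vectors all vary.
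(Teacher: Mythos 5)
Your argument is correct, but it takes a genuinely different route from the paper's. The paper argues by contradiction: after reducing (by symmetry) to $\vv=t\s$ with $\sum v_j^2>>S\OOG\sum s_j^2$, it notes $t\to\infty$, fixes an arbitrary cutoff $\si$ on the $|s_j|$'s, splits $\sum f_p(v_j)$ into the mass $A$ on $\{|s_j|\le\si\}$ and $B$ on $\{|s_j|>\si\}$, kills the case $A\ge S/2$ with the uniform ratio gap $f_p(s)/f_p(2s)\le 1-\g(\si)$ on $(0,\si]$, and kills the case $B\ge S/2$ by counting: boundedness of $f_p$ forces $\OOG S$ coordinates with $|s_j|>\si$, hence $\sum s_j^2\OOG\si^2 S$, contradicting $\sum s_j^2\OO S$ since $\si$ is arbitrary. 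You instead normalize the other way ($t\le1$), handle the non-degenerate $t$ by a sub-subsequence compactness argument (which the paper's contradiction setup avoids, since there $t\to\infty$ is automatic), and in the hard case $t\to0$ run a three-zone decomposition of the $|v_j|$'s followed by Jensen's inequality on $x\mapsto x^{-2/|p|}$, yielding the explicit rate $\sum v_j^2\OOG S\,(S/R)^{2/|p|}$ with $R=o(S)$. Both proofs exploit the same saturation $f_p\uparrow\la_p(0)$, but the paper's is more elementary (only boundedness, monotonicity, and the quadratic behavior of $f_p$ at $0$), while yours is more quantitative at the cost of invoking the tail asymptotics $\la_p(s)\asymp|s|^{-|p|}$ from Lemma~\ref{lem:lla}(ii) and a convexity step. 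The pieces you deferred as bookkeeping do check out; in particular the uniform gap $\delta_1$ on the medium coordinates follows by splitting into $|v_j|\le\tfrac12$ (where $\la_p(v_j)-\la_p(s_j)\ge\la_p(\tfrac12)-\la_p(1)>0$) and $\tfrac12<|v_j|\le1$ (where $|s_j|>1/(2t)\to\infty$, so $\la_p(s_j)\to0$ while $\la_p(v_j)\ge\la_p(1)$).
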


\begin{lemma}\label{lem:s,v,ge0}
\emph{[Cf.\ Lemma~\ref{lem:s,v}.]}\quad 
For any $p\in[0,2)$ and any varying vectors $\s$ and $\vv$ of the same direction such that 
$\sum g_p(s_j)\asymp S\asymp\sum g_p(v_j)$ for some varying $S\in(0,\infty)$, one has 
\begin{equation}\label{eq:s,v,S,ge0}
	\sum s_j^2>>S\iff\sum v_j^2>>S; 
\end{equation}
here, $g_p$ is as in Definition~\ref{def:orl} \big(for $p\in[0,2)$\big). 
\end{lemma}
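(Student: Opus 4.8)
The plan is to reduce the assertion to the behaviour of the single scale factor relating $\s$ and $\vv$. Since $\s$ and $\vv$ have the same direction, write $\vv = t\s$ with a varying $t \in (0,\infty)$, so $v_j = t s_j$ and $\sum v_j^2 = t^2\sum s_j^2$. For a vector $\ww$ with $\sum g_p(w_j)\asymp S$ put $R(\ww) := \sum w_j^2\big/\sum g_p(w_j)$; then, because $\sum g_p(w_j)\asymp S$, one has $\sum w_j^2 >> S\iff R(\ww)\to\infty$. Combining the two hypotheses $\sum g_p(s_j)\asymp S\asymp\sum g_p(v_j)$ with $\sum v_j^2 = t^2\sum s_j^2$ yields the scaling relation $R(\vv)\asymp t^2 R(\s)$, so that \eqref{eq:s,v,S,ge0} becomes $R(\s)\to\infty\iff t^2 R(\s)\to\infty$. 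Everything thus comes down to controlling the common scale $t$ subject to $\sum g_p(ts_j)$ and $\sum g_p(s_j)$ being of the same order $S$. By the symmetry $\s\leftrightarrow\vv$, $t\leftrightarrow 1/t$ of the hypotheses, it suffices to prove the single implication $\sum s_j^2>>S\Rightarrow\sum v_j^2>>S$.

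For $p\in(0,2)$ the scale is in fact pinned: $t\asymp1$. Indeed, from $g_p(s) = s^2\wedge|s|^p$ and $2\ge p$ one has, for every $t\ge1$,
\begin{equation*}
g_p(ts) = (t^2 s^2)\wedge(t^p|s|^p)\ge t^p(s^2\wedge|s|^p) = t^p g_p(s),
\end{equation*}
whence $\sum g_p(t s_j)\ge t^p\sum g_p(s_j)$; combined with $\sum g_p(ts_j)\asymp S\asymp\sum g_p(s_j)$ this forces $t^p = O(1)$, i.e.\ $t = O(1)$. Applying the same bound to $\s = (1/t)\vv$ when $t\le1$ forces $1/t = O(1)$ as well, so $t\asymp1$. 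Then $R(\vv)\asymp t^2 R(\s)\asymp R(\s)$ and the equivalence follows at once.

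The case $p=0$ is the main obstacle, because $g_0$ grows only logarithmically and $t$ need \emph{not} be bounded (e.g.\ a single coordinate $e^M$ may be rescaled by $t = e^M$ while $\sum g_0$ stays of order $S$). I would first record the clean termwise two-sided estimate $g_0(\sqrt{\theta}\,s)\asymp\ln(1+\theta s^2)$ (with absolute constants), so that the hypotheses become $\sum\ln(1+s_j^2)\asymp S\asymp\sum\ln(1+t^2 s_j^2)$ and the goal becomes $\sum s_j^2>>S\iff t^2\sum s_j^2>>S$. Since $\ln(1+x)\ge cx$ for $0\le x\le1$, the coordinates with $|s_j|\le1$ contribute only $O(S)$ to $\sum s_j^2$, so $\sum s_j^2>>S$ is driven entirely by the large coordinates, for which $s_j^2 = e^{2\ln|s_j|}$ is exponential in the ``height'' $\ln|s_j|$ while $\sum\ln(1+s_j^2)$ counts these heights only linearly.

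The remaining, and hardest, point is to show that the two order constraints (at scales $1$ and $t^2$) are incompatible with the bad scenario in which $\sum s_j^2>>S$ but $t^2\sum s_j^2 = O(S)$, which would force $t\to0$. The elementary bound $\ln(1+t^2 x)\le t^2 x$ already gives $t^2\sum s_j^2\ge\sum\ln(1+t^2 s_j^2)\asymp S$, so $t^2\sum s_j^2\gtrsim S$; upgrading $\gtrsim$ to $>>$ is exactly where the \emph{lower} bounds $\sum\ln(1+s_j^2)\gtrsim S$ and $\sum\ln(1+t^2 s_j^2)\gtrsim S$ must be used. Splitting the large coordinates into those with $t|s_j|\le1$ and those with $t|s_j|>1$, writing heights $a_j = \ln|s_j|$, and using that each large coordinate contributes at least a constant to both logarithmic sums, one obtains a bound of the form $\#\{j:\,|s_j|\ge 1/t\}\lesssim S/|\ln t|$ on the number of ``doubly large'' coordinates. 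A Jensen/rearrangement estimate for $\sum e^{2a_j}$ subject to a fixed height-budget $\sum a_j = O(S)$ then shows that the only profiles making $\sum s_j^2>>S$ while keeping $t^2\sum s_j^2 = O(S)$ are the concentrated ones; but concentration makes one of the two logarithmic sums drop below order $S$, contradicting the hypotheses. In effect the two constraints force either $|\ln t| = O(1)$ (so $t\asymp1$, handled as before) or $\sum s_j^2\asymp S$ (so there is no domination and the equivalence holds vacuously). By the symmetry noted above this settles the $p=0$ case, and with it the lemma; the treatment parallels, and may borrow estimates from, the proof of Lemma~\ref{lem:s,v} for the analogous bounded profile $f_p\asymp s^2\wedge1$.
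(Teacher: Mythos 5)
Your treatment of $p\in(0,2)$ is correct and complete, and it takes a genuinely different route from the paper's: the termwise bound $g_p(\tau s)\ge\tau^p g_p(s)$ for $\tau\ge1$ (valid because $p\le2$) pins the common scale to $t\asymp1$ directly from the two hypotheses $\sum g_p(s_j)\asymp S\asymp\sum g_p(ts_j)$, after which the equivalence is immediate. The paper instead runs a single proof by contradiction covering all of $p\in[0,2)$ at once, using only that $g_p(s)/s^2$ is non-increasing, tends to $0$, and that $g_p(s)\to\infty$; it never needs $t$ to be bounded.

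The gap is in the case $p=0$, which you rightly identify as the hard one (since $t$ can indeed be unbounded there) but do not actually prove. The decisive step --- ``a Jensen/rearrangement estimate \dots\ shows that the only profiles making $\sum s_j^2>>S$ while keeping $t^2\sum s_j^2=O(S)$ are the concentrated ones; but concentration makes one of the two logarithmic sums drop below order $S$'' --- is asserted rather than carried out, and its stated mechanism is false: a single coordinate $s_1=e^{S/2}$ already gives $\ln(1+s_1^2)\asymp S$, so concentration by itself does not push either logarithmic sum below order $S$. What actually kills the bad scenario is different, and is exactly what the paper's proof isolates: in that scenario the \emph{smaller} vector $\vv=t\s$ (with $t\to0$) satisfies $\sum v_j^2\OO S\asymp\sum g_0(v_j)$; since $g_0(v)/v^2$ is non-increasing and tends to $0$, this forces at least half of the $g_0$-mass of $\vv$, hence an $\ell^2$-mass $\OOG S$, onto coordinates with $|v_j|\le\si$ for a suitable fixed $\si$; rescaling those coordinates by $1/t\to\infty$ then yields $\sum g_0(v_j/t)\ge \frac{g_0(\si/t)}{\si^2}\sum v_j^2\ii{|v_j|\le\si}\OOG g_0(\si/t)\,S>>S$, contradicting $\sum g_0(s_j)\asymp S$. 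Your sketch can in fact be completed by other means (for instance, the count bound $\#\{j\colon t^2s_j^2\ge\delta\}\OO S/\ln(1/t)$ combined with concavity of $x\mapsto\ln(1+x)$ forces $\sum\ln(1+t^2s_j^2)<<S$), but as written the $p=0$ case is missing its proof, and the paper's uniform argument --- which also subsumes your $p\in(0,2)$ case --- is considerably shorter.
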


\begin{lemma}\label{lem:nu}
\emph{\big(\cite[Theorem~4, Chapter~4]{gned-kolm}, \cite[Theorem~15.28]{kallenberg}\big)}\quad  
Let $(\xi_{dj}):=(\xi_{dj}\colon d\in\N,\ j=1,\dots,d)$ be a family of r.v.'s such that $\xi_{d1},\dots,\xi_{dd}$ are independent, for each $d\in\N$. Assume also that this double-indexed family is a null array; that is, for every $\vp>0$, 
\begin{equation}\label{eq:null}
	\max_j\PP(|\xi_{dj}|>\vp)\to0
\end{equation}
as $d\to\infty$. 
Let $\zeta$ be a r.v.\ with the infinitely divisible distribution with characteristics $a,b,\nu$ such that $\nu(\{-1,1\})=0$ (recall \eqref{eq:levy}). 
Then
the sum $\sum\xi_{dj}$ converges in distribution to $\zeta$ if and only if the following three conditions hold:
\begin{enumerate}[(I)]
	\item $\sum\PP(\xi_{dj}>x)\to\nu\big((x,\infty)\big)$ for each $x>0$ and \\ $\sum\PP(\xi_{dj}<x)\to\nu\big((-\infty,x)\big)$ for each $x<0$;
	\item $\sum\Var\xi_{dj}\ii{|\xi_{dj}|\le1}\to\ta:=a+\int_{\R}x^2\ii{|x|\le1}\,\nu(\dd x)$;
	\item $\sum\E\xi_{dj}\ii{|\xi_{dj}|\le1}\to b$. 
\end{enumerate} 
\end{lemma}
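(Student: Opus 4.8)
The plan is to work with characteristic functions and to reduce the convergence of $\sum_j\xi_{dj}$ to the convergence of the L\'evy--Khintchine exponents in \eqref{eq:levy}. Write $\chi_{dj}(u):=\E e^{iu\xi_{dj}}$, so that the characteristic function of $\sum_j\xi_{dj}$ is $\prod_j\chi_{dj}(u)$. The null-array condition \eqref{eq:null} yields uniform asymptotic negligibility, $\max_j|\chi_{dj}(u)-1|\to0$ uniformly for $u$ in any compact set; hence each factor is eventually bounded away from $0$, the principal-branch logarithm is available, and the elementary bound $|\log z-(z-1)|\OO|z-1|^2$ for $z$ near $1$ lets me replace $\log\prod_j\chi_{dj}(u)=\sum_j\log\chi_{dj}(u)$ by $\sum_j(\chi_{dj}(u)-1)$ up to an error $\OO\sum_j|\chi_{dj}(u)-1|^2$.

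The first substantive step is to show this error is negligible, i.e.\ $\sum_j|\chi_{dj}(u)-1|^2\to0$ uniformly on compacta. This follows by combining the negligibility $\max_j|\chi_{dj}(u)-1|\to0$ with a uniform bound on $\sum_j|\chi_{dj}(u)-1|$; the latter is exactly what conditions (I)--(III) furnish (through the convergence of truncated second moments and of tail masses), so this step is intertwined with the main estimate rather than free. The reduction identifies the limit of $\sum_j\xi_{dj}$ with that of the \emph{accompanying} infinitely divisible laws, whose exponents are $\sum_j(\chi_{dj}(u)-1)$.

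Next I would rewrite $\chi_{dj}(u)-1=\E\big(e^{iu\xi_{dj}}-1\big)$ and split it as
\[
\E\big(e^{iu\xi_{dj}}-1-iu\xi_{dj}\ii{|\xi_{dj}|\le1}\big)+iu\,\E\xi_{dj}\ii{|\xi_{dj}|\le1}.
\]
Summing over $j$, the second term contributes $iu\sum_j\E\xi_{dj}\ii{|\xi_{dj}|\le1}\to ibu$ by condition (III). For the first term, I would introduce the finite measures $\nu_d:=\sum_j\mathcal L(\xi_{dj})$ on $\R\setminus\{0\}$ and write the sum as $\int(e^{iux}-1-iux\ii{|x|\le1})\,\nu_d(\dd x)$; the pieces near $0$, where the integrand is $\OO x^2$, are controlled by the truncated second moment and deliver both the Gaussian part $-\tfrac12au^2$ and the small-jump part of the $\nu$-integral, while the pieces away from $0$ converge by the weak convergence encoded in condition (I). Conditions (I) and (II) together are precisely the statement that these integrals converge to $-\tfrac12au^2+\int(e^{iux}-1-iux\ii{|x|\le1})\,\nu(\dd x)$, giving convergence of the exponents to the form \eqref{eq:levy} and hence, by L\'evy's continuity theorem, convergence in distribution to $\zeta$.

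The main obstacle is the careful bookkeeping at the truncation boundary $|x|=1$ together with the two-sidedness of the ``iff''. The hypothesis $\nu(\{-1,1\})=0$ is what makes $\{|x|\le1\}$ a $\nu$-continuity set, so that no L\'evy mass is lost or double-counted in the passage to the limit; without it the split between ``small'' and ``large'' jumps would not be stable. For the necessity direction one cannot merely read off the conditions: one must first establish tightness/relative compactness of the triples $(a_d,b_d,\nu_d)$ (again using negligibility to rule out mass escaping to $\pm\infty$ or collapsing onto $0$), and then invoke uniqueness of the L\'evy--Khintchine representation to conclude that every subsequential limit triple equals $(a,b,\nu)$, which forces (I)--(III). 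This is the classical Gnedenko--Kolmogorov argument \cite[Chapter~4]{gned-kolm}, \cite[Theorem~15.28]{kallenberg}, which I would follow, supplying the negligibility and continuity-set details specific to the present truncation at $1$.
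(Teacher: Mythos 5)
The paper does not prove this lemma at all: it is quoted verbatim as a classical result, with the proof delegated to the cited sources (Gnedenko--Kolmogorov, Ch.~4, Thm.~4; Kallenberg, Thm.~15.28), and Section~\ref{proofs of lemmas} accordingly contains no argument for it. Your outline is the standard characteristic-function proof from exactly those references (accompanying infinitely divisible laws, negligibility to justify $\log\prod_j\chi_{dj}\approx\sum_j(\chi_{dj}-1)$, the $\nu$-continuity of $\{|x|\le1\}$ coming from $\nu(\{-1,1\})=0$, and tightness plus uniqueness of the L\'evy--Khintchine triple for necessity), so it is the right route and correct at the level of a sketch; the only place where it is glib is the claim that $\sum_j|\chi_{dj}(u)-1|^2\to0$ ``follows'' from (I)--(III) --- making $\sum_j|\chi_{dj}(u)-1|$ bounded actually requires first recentering each $\xi_{dj}$ by its truncated mean (and, in the necessity direction, extracting this bound from the assumed convergence rather than from the conditions), which is the one piece of bookkeeping you would genuinely have to supply rather than cite.
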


\begin{lemma}\label{lem:hardy}
\emph{\big(the Hardy-Littlewood-Polya theorem -- see e.g.\ \cite[4.B.1]{marsh-ol}\big)}\quad 
If a real-valued function $f$ is convex then $\sum f(s_j)$ is Schur-convex in $\s$. 
\end{lemma}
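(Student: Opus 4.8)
The plan is to reduce the Schur-convexity of $F(\s):=\sum_{j=1}^d f(s_j)$ to a single application of Jensen's inequality via the doubly-stochastic characterization of majorization. First I would note that $F$ is symmetric, since permuting the coordinates $s_j$ does not change $\sum f(s_j)$; by the definition of Schur-convexity recalled above, it then suffices to show that $\uu\succeq\vv$ implies $F(\uu)\ge F(\vv)$.

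The key ingredient I would invoke is the classical Hardy-Littlewood-Polya characterization: for $\uu,\vv\in\R^d$, one has $\uu\succeq\vv$ if and only if $\vv=D\uu$ for some doubly stochastic matrix $D=(D_{jk})_{j,k=1}^d$ (that is, $D_{jk}\ge0$ with all row and column sums equal to $1$). Granting this, I would write $v_j=\sum_{k=1}^d D_{jk}u_k$ and apply Jensen's inequality to the convex function $f$, using that each row $(D_{jk})_{k}$ is a probability vector, to obtain $f(v_j)\le\sum_{k=1}^d D_{jk}f(u_k)$ for every $j$. Summing over $j$ and interchanging the order of summation then yields
\[
\sum_{j=1}^d f(v_j)\le\sum_{k=1}^d f(u_k)\sum_{j=1}^d D_{jk}=\sum_{k=1}^d f(u_k),
\]
the last equality being the column-sum condition; this is exactly $F(\vv)\le F(\uu)$.

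The convexity of $f$ thus enters only through that one Jensen step, so there is no analytic difficulty; the entire substance of the lemma lies in the majorization characterization one chooses to cite. If I wanted a self-contained argument avoiding the doubly-stochastic theorem, I would instead use the fact that $\uu\succeq\vv$ allows $\vv$ to be reached from $\uu$ by finitely many T-transforms (Robin-Hood transfers) of the form $T=\lambda I+(1-\lambda)Q$, where $Q$ transposes two coordinates and $\lambda\in[0,1]$; applying coordinate-wise Jensen to one such transform gives $\sum_j f((T\ww)_j)\le\lambda\sum_j f(w_j)+(1-\lambda)\sum_j f((Q\ww)_j)=\sum_j f(w_j)$, and chaining these inequalities along the sequence gives the claim. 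The only place one must be slightly careful is in justifying whichever combinatorial characterization of $\succeq$ is used, but both are standard and are precisely what the cited reference \cite{marsh-ol} supplies.
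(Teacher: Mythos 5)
Your proof is correct. Note, however, that the paper does not prove this lemma at all: it is stated purely as a citation of the classical Hardy--Littlewood--P\'olya result, with a pointer to \cite[4.B.1]{marsh-ol}, and no argument is given in Section~\ref{proofs of lemmas}. So there is nothing in the paper to compare against; what you have supplied is a standard self-contained proof of the cited fact. Both of your routes work: the reduction of $\uu\succeq\vv$ to $\vv=D\uu$ with $D$ doubly stochastic, followed by one application of Jensen's inequality row by row and the column-sum identity, is the quickest; the T-transform (Robin Hood) chain avoids invoking the doubly stochastic characterization but requires the (equally classical) fact that majorization is generated by finitely many such transfers. One small remark: with the paper's definition of Schur-convexity (namely $\uu\succeq\vv\implies F(\uu)\ge F(\vv)$), the preliminary observation that $F$ is symmetric is not logically needed as a separate step -- it follows automatically since $\uu$ and any permutation of $\uu$ majorize each other -- so your argument is if anything slightly redundant there, but not wrong.
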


\begin{lemma}\label{lem:schur2}
\emph{\cite{schur2}}\quad 
For any given $p\in[-\infty,\infty]$, $c\in\R$, and $d\in\N$, the function 
$
\R^d\ni\vv\longmapsto\PP(\no{\ZZ+\vv}_p>c) 	
$ 
is Schur${}^2$-concave for $p\in[-\infty,2]$ and Schur${}^2$-convex for $p\in[2,\infty]$. 
\end{lemma}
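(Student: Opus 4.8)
The plan is to reduce the $d$-dimensional claim to a two-dimensional one and there to extract the sign of a single Gaussian moment by integration by parts. First I would record that, since each $Z_j$ is symmetric, the quantity $\PP(\no{\ZZ+\vv}_p>c)$ depends on $\vv$ only through $\vv^2=(v_1^2,\dots,v_d^2)$; write $F(\vv)=G(\vv^2)$ with $G$ symmetric on $[0,\infty)^d$. Because any majorization $\ww^2\preceq\vv^2$ is a finite composition of pairwise transfers and $G$ is permutation-symmetric, it suffices (a Schur--Ostrowski type reduction) to control the effect of moving mass between two of the $v_j^2$ while holding their sum fixed and freezing the other coordinates. Conditioning on $Z_3,\dots,Z_d$ turns the event into a two-variable event of sum-threshold type $\{|Z_1+v_1|^p+|Z_2+v_2|^p>\tau\}$ for $p\in(0,\infty)$, with the obvious analogues $\{\ln|Z_1+v_1|+\ln|Z_2+v_2|>\tau\}$ for $p=0$, a reversed inequality for $p<0$, and product forms for $p=\pm\infty$. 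So the whole lemma rests on a statement about two independent standard normals.

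Next I would compute the relevant derivative by Gaussian integration by parts: writing $\mu=(\mu_1,\mu_2)$ for the mean and $A$ for the two-variable event, one gets $\partial_{\mu_i}F=\E[Z_i;A]$, with $A$ evaluated at $\ZZ+\mu$. Parametrizing the circle $\mu_1^2+\mu_2^2=\rho^2$ by $\mu=(\rho\cos\theta,\rho\sin\theta)$ and using $\mu^\perp=(-\mu_2,\mu_1)$, this yields $\frac{d}{d\theta}F=\E[\langle\ZZ,\mu^\perp\rangle\,\mathbf 1_A(\ZZ+\mu)]$. Rotating to the orthonormal frame $(\mu/\rho,\mu^\perp/\rho)$ and writing $\ZZ=Z_\parallel\,\mu/\rho+Z_\perp\,\mu^\perp/\rho$ with $Z_\parallel,Z_\perp$ independent $\No(0,1)$, the derivative becomes $\rho\,\E_{Z_\parallel}\big[\int_{S(s)}t\,\vpi(t)\,dt\big]$, where $s=\rho+Z_\parallel$ and $S(s):=\{t\in\R\colon s\,\mu/\rho+t\,\mu^\perp/\rho\in A\}$ is the section of $A$ perpendicular to the mean. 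Thus everything reduces to the sign of the first moment $\int_{S(s)}t\,\vpi(t)\,dt$ of these sections, and the claimed dichotomy must come from the geometry of $A=\{\no{\cdot}_p>c\}$.

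I would then tie that sign to the Schur${}^2$-type of $A$. By Proposition~\ref{prop:u range}(III) and its elementary companion for $p\le2$ (both resting on the convexity of $w\mapsto w^{p/2}$ for $p\ge2$ and its concavity for $0\le p\le2$, cf.\ Lemma~\ref{lem:lla}(iii)), the set $\{\no{\cdot}_p>c\}$ clusters around the diagonal directions when $p<2$ and around the coordinate-axis directions when $p>2$. For $\theta\in(0,\pi/4)$ this makes $S(s)$ skew toward positive $t$ when $p<2$ and toward negative $t$ when $p>2$, which is exactly the sign of $\frac{d}{d\theta}F$ needed for Schur${}^2$-concavity ($p\le2$) respectively Schur${}^2$-convexity ($p\ge2$), since the balanced configuration is $\theta=\pi/4$. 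The endpoints $p=\pm\infty$ I would dispatch separately: there $F$ has the product form $1-\prod_j\PP(|Z_j+v_j|\le c)$ (resp.\ $\prod_j\PP(|Z_j+v_j|>c)$), and the claim becomes the one-dimensional log-concavity in $w$ of $w\mapsto\PP(|Z+\sqrt w|\le c)$ (resp.\ of $w\mapsto\PP(|Z+\sqrt w|>c)$).

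The main obstacle is precisely this last sign determination. The skew of $S(s)$ has the correct sign only for $|t|$ small: as $|t|$ grows, the rotated point $s\,\mu/\rho+t\,\mu^\perp/\rho$ sweeps past a diagonal and then approaches the opposite axis, so the comparison of squared coordinates on the common circle $\{x^2+y^2=s^2+t^2\}$ reverses, and with it the local skew. Consequently $\int_{S(s)}t\,\vpi(t)\,dt$ cannot be signed termwise; one must pair $t$ with $-t$ and use that the Gaussian weight $t\,\vpi(t)$ is concentrated where the skew is correct, exploiting the log-concavity (monotone-likelihood-ratio property) of $\vpi$ together with the Schur${}^2$ comparison of the squared coordinates of the two paired points on their common circle. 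Making this balancing rigorous, uniformly over $s$ (including small and negative values) and over all admissible thresholds $\tau$, and handling the degeneration of the circle parametrization at the axes $\mu_i=0$ as well as the routine justification of differentiating under the integral, is the technical heart of the argument.
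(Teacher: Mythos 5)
You are proving a statement the paper itself does not prove: Lemma~\ref{lem:schur2} is imported verbatim from \cite{schur2}, so there is no in-paper argument to measure your proposal against; it has to stand on its own. On its own it is an outline with the decisive step missing, and the missing step is the one you yourself identify in your last paragraph. The reduction to $d=2$ by pairwise transfers, the conditioning on $Z_3,\dots,Z_d$, and the rotation identity expressing $\frac{\dd}{\dd\theta}\PP(\ZZ+\mu\in A)$ as $\rho$ times the expected signed first Gaussian moment of the perpendicular sections $S(s)$ are all correct bookkeeping; but after this bookkeeping the lemma has merely been restated as the claim that $\int_{S(s)}t\,\vpi(t)\,\dd t$ has a definite sign for every $s$ and every threshold $\tau$, and that claim -- which \emph{is} the lemma -- is never established. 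As you note, the integrand is not signed: the comparison of squared coordinates of the two points $s\,\mu/\rho\pm t\,\mu^\perp/\rho$ on their common circle reverses as $|t|$ grows, so one needs a genuine pairing/mass-transport argument showing that the Gaussian weight favors the correctly-skewed part of the section, uniformly in $s\in\R$ and $\tau$, and nothing in the proposal carries this out. The underlying difficulty is that the Schur${}^2$-convexity or -concavity of the \emph{indicator} of $\{\no{\cdot}_p>c\}$ (which is what your ``clustering'' heuristic and the convexity/concavity of $w\mapsto w^{p/2}$ actually give) does not automatically transfer to the Gaussian measure of its translates as a function of the shift; establishing that transfer is precisely the content of \cite{schur2}.

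Two further points would need attention even if the central sign determination were supplied. For $p\in[-\infty,0)$ the event is not of the sum-threshold form you write for $p>0$: the inequality reverses, $\no\s_p$ vanishes whenever some coordinate vanishes, and for $p=-\infty$ the product-form endpoint requires concavity of $w\mapsto\ln\PP(|Z+\sqrt w|>c)$, which is not covered by Lemma~\ref{lem:lla}(iii) (that lemma, for $p=\infty$, gives the companion fact for $w\mapsto\ln\PP(|Z+\sqrt w|\le c)$ needed at the other endpoint) and is not proved in your sketch. And the differentiation under the integral sign plus the degeneration of the polar parametrization at $\mu_i=0$ are routine but should be stated, since the majorization extremes $\uu^2\preceq(\sqrt d\,\ee_1)^2$ are attained exactly on the axes.
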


\begin{lemma}\label{lem:mono}
\emph{\cite[Lemma~3.1]{schur2}}
For any given $p\in[-\infty,\infty]$, $\vv\in\R^d\setminus\{\0\}$, $c\in(0,\infty)$, and $d\in\N$, the function 
$
[0,\infty)\ni t\longmapsto\PP(\no{\ZZ+t\vv}_p>c) 	
$ 
is continuously and strictly increasing. 
\end{lemma}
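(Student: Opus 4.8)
The plan is to establish continuity and strict monotonicity of $\phi(t):=\PP(\no{\ZZ+t\vv}_p>c)$ by first reducing to the case $v_j\ge0$ for all $j$ and then differentiating in $t$. Flipping the sign of any coordinate of $\vv$ together with the same coordinate of $\ZZ$ leaves both $\no{\ZZ+t\vv}_p$ and the law of $\ZZ$ unchanged, so $\phi(t)=\PP(\no{\ZZ+t\vv'}_p>c)$ whenever $\vv'$ is obtained from $\vv$ by sign changes; hence without loss of generality $v_j\ge0$ for every $j$, with $v_{j_0}>0$ for some $j_0$ since $\vv\ne\0$. Writing $\phi(t)=\int_{\{\no{\zz}_p>c\}}\vpi_d(\zz-t\vv)\,\dd\zz$, where $\vpi_d$ is the $\No(\0,I_d)$ density, continuity of $\phi$ follows from the $L^1(\R^d)$-continuity of the translation $t\mapsto\vpi_d(\cdot-t\vv)$. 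Differentiating under the integral sign (justified locally in $t$ by the Gaussian-times-polynomial domination of $|\langle\zz-t\vv,\vv\rangle|\,\vpi_d(\zz-t\vv)$), using $\partial_t\vpi_d(\zz-t\vv)=\langle\zz-t\vv,\vv\rangle\,\vpi_d(\zz-t\vv)$, and changing variables back, I obtain the key identity
\begin{equation*}
\phi'(t)=\E\big[\langle\ZZ,\vv\rangle\,\ii{\no{\ZZ+t\vv}_p>c}\big]=\sum_{j=1}^d v_j\,\E\big[Z_j\,\ii{\no{\ZZ+t\vv}_p>c}\big].
\end{equation*}

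The heart of the argument is to evaluate each summand by conditioning on $\ZZ^{(-j)}:=(Z_k)_{k\ne j}$. For every $p\in[-\infty,\infty]$ the $p$-mean $\no{\zz}_p$ is nondecreasing in $|z_j|$ with the other coordinates held fixed; consequently, conditionally on $\ZZ^{(-j)}$, the event $\{\no{\ZZ+t\vv}_p>c\}$ is an up-set in $|Z_j+tv_j|$, i.e.\ it equals $\{|Z_j+tv_j|>r_j\}$ for a threshold $r_j=r_j(\ZZ^{(-j)})\in[0,\infty]$ ($r_j=0$ meaning ``always'', $r_j=\infty$ meaning ``never''). For $r_j\in(0,\infty)$ this event is the disjoint union $\{Z_j>r_j-tv_j\}\cup\{Z_j<-r_j-tv_j\}$ of an upper and a lower tail, so the elementary Gaussian identities $\int_a^\infty z\,\vpi(z)\,\dd z=\vpi(a)$ and $\int_{-\infty}^b z\,\vpi(z)\,\dd z=-\vpi(b)$ give
\begin{equation*}
\E\big[Z_j\,\ii{\no{\ZZ+t\vv}_p>c}\,\big|\,\ZZ^{(-j)}\big]=\vpi(r_j-tv_j)-\vpi(r_j+tv_j),
\end{equation*}
a formula that also holds (equalling $0$) in the degenerate cases $r_j\in\{0,\infty\}$ under the convention $\vpi(\pm\infty)=0$. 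Since $r_j\ge0$, $t\ge0$, $v_j\ge0$ force $|r_j-tv_j|\le r_j+tv_j$, and $\vpi$ is even and strictly decreasing on $[0,\infty)$, this conditional expectation is $\ge0$, and is $>0$ precisely when $r_j\in(0,\infty)$ and $tv_j>0$.

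Taking expectations, each summand $v_j\,\E\big[Z_j\,\ii{\no{\ZZ+t\vv}_p>c}\big]$ is $\ge0$, so $\phi'(t)\ge0$. For strict positivity at a fixed $t>0$ I use $j=j_0$ with $v_{j_0}>0$ and verify $\PP\big(r_{j_0}\in(0,\infty)\big)>0$, treating the values of $p$ one at a time: for $p\in(0,\infty)$ one needs $\sum_{k\ne j_0}|Z_k+tv_k|^p<d\,c^p$; for $p\in(-\infty,0)$ one needs the same small-sum event (now attained by making the $|Z_k+tv_k|$ large, since $p<0$); for $p=0$ one has $r_{j_0}=c^d\big/\prod_{k\ne j_0}|Z_k+tv_k|\in(0,\infty)$ almost surely; for $p=\infty$ one needs $\max_{k\ne j_0}|Z_k+tv_k|\le c$; and for $p=-\infty$ one needs $\min_{k\ne j_0}|Z_k+tv_k|>c$. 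In each case the relevant event has positive probability because the law of $\ZZ^{(-j_0)}$ has full support in $\R^{d-1}$; hence $v_{j_0}\,\E\big[Z_{j_0}\,\ii{\no{\ZZ+t\vv}_p>c}\big]>0$ and so $\phi'(t)>0$ for every $t>0$. Combined with the continuity from the first step, this shows $\phi$ is continuous and strictly increasing on $[0,\infty)$. I expect the main obstacle to be organizing the conditional reduction uniformly across all $p\in[-\infty,\infty]$ --- establishing monotonicity of $\no{\cdot}_p$ in $|z_j|$ and identifying $r_j$ exactly at the boundary values $p=0,\pm\infty$ and in the ``always''/``never'' regimes --- together with the routine justification of differentiation under the integral; once the signed conditional formula $\vpi(r_j-tv_j)-\vpi(r_j+tv_j)$ is in hand, positivity is immediate.
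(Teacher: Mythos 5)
The paper does not prove Lemma~\ref{lem:mono} at all: it is imported verbatim from \cite[Lemma~3.1]{schur2}, so there is no in-paper proof to compare against. Judged on its own terms, your argument is correct and complete. The reduction to $v_j\ge0$ by simultaneous sign flips is legitimate because $\no{\cdot}_p$ depends only on the absolute values of the coordinates and the law of $\ZZ$ is sign-symmetric; the differentiation under the integral is dominated on compact $t$-intervals; and the identity $\phi'(t)=\sum_j v_j\,\E\big[Z_j\,\ii{\no{\ZZ+t\vv}_p>c}\big]$ is right. The conditional step is the crux and it checks out: for every $p\in[-\infty,\infty]$ the map $|z_j|\mapsto\no{\zz}_p$ is nondecreasing (for $p<0$ because $|z_j|^p$ decreases and the outer power $1/p$ is negative), so the conditional event is a two-sided tail $\{|Z_j+tv_j|>r_j\}$, and $\vpi(r_j-tv_j)-\vpi(r_j+tv_j)\ge0$ with strict inequality exactly when $r_j\in(0,\infty)$ and $tv_j>0$; your case check that $\PP(r_{j_0}\in(0,\infty))>0$ for each regime of $p$ (including $p=0,\pm\infty$) is correct, since the conditioning variables have full support. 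Two points worth making explicit in a polished write-up: the threshold set may a priori be $[r_j,\infty)$ rather than $(r_j,\infty)$, which is harmless because $|Z_j+tv_j|$ has no atoms; and strict increase on all of $[0,\infty)$ (not just $(0,\infty)$) follows from the mean value theorem since $\phi'>0$ on every open subinterval, even though $\phi'(0)=0$ by symmetry. Neither is a gap, just bookkeeping.
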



\section{Proof of Propositions~\ref{prop:AS shift iff} and \ref{prop:AS shift}}\label{proof of shifts}
The main content of this section is the proof of Proposition~\ref{prop:AS shift iff}, in the course of which  Proposition~\ref{prop:AS shift} will also be proved. 

\subsection{Case: $p=-\infty$}
By Definitions~\ref{def:AS shift} and \eqref{eq:means excep}, 
a varying vector $\s$ will be a $(-\infty)$-weakly-AS shift if and only if 
\begin{align}
	\al&\approx\PP\Big(\min_1^d|Z_j|>c\Big)\quad\text{and} \label{eq:al,-infty}\\
	\be&\approx\PP\Big(\min_1^d|Z_j+s_j|>c\Big) \label{eq:be,-infty}
\end{align}
for some varying $c$. 

In turn, \eqref{eq:al,-infty} can be rewritten as $\al\approx\PP(|Z|>c)^d$, whence $\PP(|Z|>c)=(\al+o(1))^{1/d}
\to1$, $c\to0+$, $\PP(|Z|>c)=1-\PP(|Z|\le c)
=1-2\vpi(0)c(1+o(1))$, $-2\vpi(0)c\sim\ln\PP(|Z|>c)=\frac1d\ln(\al+o(1))\sim\frac1d\ln\al$, and  $c\sim-\frac1d\,\frac{\ln\al}{2\vpi(0)}$; vice versa, the latter relation implies \eqref{eq:al,-infty}. 
Thus, 
\begin{equation}\label{eq:-infty,c}
	\text{\eqref{eq:al,-infty}}\Longleftrightarrow c\sim-\frac1d\,\frac{\ln\al}{2\vpi(0)}.  
\end{equation}
With such a varying $c$, the condition 
\eqref{eq:be,-infty} can be rewritten as 
\begin{align}
	-\ln\be\approx-\sum\ln\PP(|Z_j+s_j|>c) 
	&=-\sum\ln\big(1-\PP(|Z_j+s_j|\le c)\big) \notag\\
	&\sim\sum\PP(|Z_j+s_j|\le c) \label{eq:-infty,sim1}\\
	&=2c\sum\vpi(s_j+v_j) \notag\\
	&\approx-\frac1d\,\frac{\ln\al}{\vpi(0)}\sum\vpi(s_j), \label{eq:-infty,sim2}
\end{align}
with some $v_j$'s such that $\max|v_j|<c$; relation \eqref{eq:-infty,sim2} follows in view of  \eqref{eq:-infty,c} and Lemma~\ref{lem:vpi}, whereas \eqref{eq:-infty,sim1} follows because $\max_j\PP(|Z_j+s_j|\le c)\OO c\to0$.   
 
Thus, a varying vector $\s=(s_1,\dots,s_d)$ will be $(-\infty)$-weakly-AS iff it is $(-\infty)$-strongly-AS iff 
$-\ln\be\sim-\frac1d\,\frac{\ln\al}{\vpi(0)}\sum\vpi(s_j)=-\frac{\ln\al}d\,\sum e^{-s_j^2/2}$. 
This completes the proof of Propositions~\ref{prop:AS shift iff} and \ref{prop:AS shift} -- in the case when $p=-\infty$. 

\medskip
\hrule
\medskip

The proof of Propositions~\ref{prop:AS shift iff} and \ref{prop:AS shift} for $p\in\R$ will depend on the case according to which subset of $\R$ the value of $p$ belongs to. However, the general scheme in all these cases will be the same. Namely, in each of these cases, first we shall take any varying vector $\s$ in $\R^d$ (with $d\to\infty$) satisfying a certain special condition \big(referred to in this paragraph as condition (*)\big), which latter depends on the set of values of $p$ under consideration. This condition (*) will in each case be implied by the necessary and sufficient condition \eqref{eq:as iff}. On the other hand, (*) will imply that the vector $\s$ is in a sense small enough, so that the distribution of $\no{\ZZ+\s}_p$ can be approximated \big(for $p\in(-\infty,-\frac12)$\big) by means of the stable distribution introduced in Definition~\ref{def:zeta} or \big(for $p\in[-\frac12,\infty)$\big) by means of the standard normal distribution. 
This will allow us to show that any $p$-weakly-AS shift $\s$ satisfying the special condition (*) will also satisfy the necessary and sufficient condition \eqref{eq:as iff}. 
On the other hand, since \eqref{eq:as iff} implies (*), it will also imply the approximation by the appropriate limit distribution, which in turn will be used to show that -- under condition \eqref{eq:as iff} -- the varying vector $\s$ must be a $p$-strongly-AS shift. In each case, it will then remain to prove that any $p$-weakly-AS shift $\s$ must be small enough so as to satisfy the condition (*). This is done by assuming the contrary, that $\s$ is too large so as to violate (*) and hence to violate \eqref{eq:as iff}; then we observe that, for any given $\tbe\in(\be,1)$, one can shrink $\s$ to $t\s$ for some varying $t\in(0,1)$ so that \eqref{eq:as iff} hold for $t\s$ and $\tbe$ in place of $\s$ and $\be$, respectively; by what has been proved, this would imply that $t\s$ is a $p$-strongly-AS shift, with $\tbe$ in place of $\be$, which in turn would contradict the fact that  $\PP(\no{\ZZ+t\s}_p>c)$ is increasing in $t>0$. 

\subsection{Case: $p\in(-\infty,-1)$}\label{p in(-infty,-1)}
Take any varying vector $\s$ (in $R^d$, with $d\to\infty$). 
To begin, assume the mentioned special condition (*) on the ``smallness'' of $\s$, which we let here take the form 
\begin{equation}\label{eq:assum,p in(-infty,-1)}
	\sum\vpi(s_j)\OOG d, 
\end{equation}
and use Lemma~\ref{lem:nu} with 
\begin{equation*}
	\xi_{dj}:=\frac{|Z_j+s_j|^p}{\si(\s)^p}, 
\end{equation*}
where
\begin{equation}\label{eq:si,p in(-infty,-1)}
	\si(\s):=\frac{\vpi(0)}{\sum\vpi(s_j)},
\end{equation}
so that, in view of \eqref{eq:assum,p in(-infty,-1)}, 
\begin{equation}\label{eq:si<1/d,p in(-infty,-1)}
	\si(\s)\OO\tfrac1d<<1. 
\end{equation}
Note that for any $\vp\in(0,\infty)$
\begin{equation*}
	\max_j\PP(|\xi_{dj}|>\vp)=\max_j\PP\big(|Z_j+s_j|<\vp^{1/p}\si(\s)\big)
	\OO\vp^{1/p}\si(\s)\to0,
\end{equation*}
so that the null-array condition \eqref{eq:null} is satisfied. 

Next, for any fixed $x\in(0,\infty)$ there are some varying $v_j$'s such that $\max_j|v_j|<x^{1/p}\si(\s)\to0$ and 
\begin{align*}
	\sum\PP(\xi_{dj}>x)&=\sum\PP\big(|Z_j+s_j|<x^{1/p}\si(\s)\big) \\
&=2x^{1/p}\si(\s)\sum\vpi(s_j+v_j)\to2x^{1/p}\vpi(0)
=\sqrt{\tfrac2\pi}\,x^{1/p},
\end{align*}
in view of Lemma~\ref{lem:vpi} and conditions~\eqref{eq:si,p in(-infty,-1)} and \eqref{eq:assum,p in(-infty,-1)}. 
It is also clear that $\sum\PP(\xi_{dj}<x)=0$ for any $x\in(0,\infty)$, since the r.v.'s $\xi_{dj}$ are nonnegative. 
Thus, condition (I) in Lemma~\ref{lem:nu} is verified for $\nu=\sqrt{\frac2\pi}\,\nu_p$, as in Definition~\ref{def:zeta}. 

Let us now verify condition (III) in Lemma~\ref{lem:nu}. One has 
\begin{align*}
	\sum\E\xi_{dj}\ii{|\xi_{dj}|\le1}&=E_1(p)\\
	&:=\si(\s)^{-p}\sum\E|Z_j+s_j|^p \ii{|Z_j+s_j|\ge\si(\s)} \\
	&=\si(\s)^{-p}\sum\int_{\si(\s)}^\infty u^p[\vpi(s_j-u)+\vpi(s_j+u)]\,\dd u \\
	&= \si(\s)^{-p}(E_{11}+E_{12}),
\end{align*}
where
\begin{align*}
		E_{12}&:=\sum\int_{d^{-1/2}}^\infty u^p[\vpi(s_j-u)+\vpi(s_j+u)]\,\dd u \\
		&\OO d\,\int_{d^{-1/2}}^\infty u^p\,\dd u
		\OO d^{(1-p)/2}<<\si(\s)^p
\end{align*}
by \eqref{eq:si<1/d,p in(-infty,-1)}, 
and 
\begin{align}
	E_{11}:=\sum\int_{\si(\s)}^{d^{-1/2}} \dots 
	&=\sum 2\vpi(s_j+v_j)\int_{\si(\s)}^{d^{-1/2}}u^p\,\dd u \notag\\
	&\sim\sum 2\vpi(s_j+v_j)\frac{\si(\s)^{p+1}}{-(p+1)} \label{eq:E11-1,p in(-infty,-1)}\\
	&\sim\frac2{-(p+1)} \si(\s)^p	\vpi(0) \label{eq:E11-p,2 in(-infty,-1)} 
\end{align}
for some $v_j$'s such that $\max_j|v_j|<d^{-1/2}\to0$; 
relation \eqref{eq:E11-1,p in(-infty,-1)} takes place by \eqref{eq:si<1/d,p in(-infty,-1)}, while relation \eqref{eq:E11-p,2 in(-infty,-1)} holds in view of Lemma~\ref{lem:vpi} and conditions~\eqref{eq:si,p in(-infty,-1)} and \eqref{eq:assum,p in(-infty,-1)}.  
Collecting these estimates for $E_{11}$ and $E_{12}$ into $E_1(p)$, one completes the verification of condition (III) in Lemma~\ref{lem:nu}, with $b=b_p$ as in \eqref{eq:b}. 

Next, let us verify condition (II) in Lemma~\ref{lem:nu}. Since $p\in(-\infty,-1)$ implies $2p\in(-\infty,-1)$, one can use the above estimate for $E_1(p)$ to immediately obtain the following:
\begin{equation*}
	E_2:=E_2(p):=\sum\E\xi_{dj}^2\ii{|\xi_{dj}|\le1}=E_1(2p)\to\frac2{-(2p+1)}\,\vpi(0). 
\end{equation*}
Also, 
\begin{equation*}
	E_3:=\sum\E^2\xi_{dj}\ii{|\xi_{dj}|\le1}
	\OO \si(\s)^{-2p}\cdot d\cdot\Big(\int_{\si(\s)}^\infty u^p\,\dd u\Big)^2
	\OO d\,\si(\s)^2\to0,
\end{equation*}
by \eqref{eq:si<1/d,p in(-infty,-1)}. 
So, 
\begin{equation*}
\sum\Var\xi_{dj}\ii{|\xi_{dj}|\le1}=E_2-E_3\to\frac{2\vpi(0)}{-(2p+1)}
=\int_{\R}x^2\ii{|x|\le1}\,\nu(\dd x)	
\end{equation*}
for $\nu=\sqrt{\frac2\pi}\,\nu_p$, as in Definition~\ref{def:zeta}. 
So, condition (II) in Lemma~\ref{lem:nu} holds with $a=0$. 
We conclude that -- under condition \eqref{eq:assum,p in(-infty,-1)} -- 
\begin{equation}\label{eq:D,p in(-infty,-1)}
		\si(\s)^{-p}\sum|Z_j+s_j|^p=\sum\xi_{dj}\overset{\mathrm{D}}{\longrightarrow}\zeta_{p,b_p},
\end{equation}
where $\overset{\mathrm{D}}{\to}$ denotes the convergence in distribution; since the d.f.\ of $\zeta_{p,b_p}$ is continuous (by the remark right after Definiton~\ref{def:zeta}), it follows that the d.f.\ of $\si(\s)^{-p}\sum|Z_j+s_j|^p$ converges to the d.f.\ of $\zeta_{p,b_p}$ uniformly on $\R$.  

 
Take now any $p$-weakly-AS shift $\s$ satisfying condition \eqref{eq:assum,p in(-infty,-1)}. Then, for some varying $c$, 
\begin{align}
	\al&\approx\PP\Big(\si(\0)^{-p}\sum|Z_j|^p<c\Big) \label{eq:al,p in(-infty,-1)}\\
	&\approx\PP(\zeta_{p,b_p}<c), \label{eq:al,p in(-infty,-1) a}\\ 
	\be&\approx\PP\Big(\si(\s)^{-p}\sum|Z_j+s_j|^p<c\frac{\si(\s)^{-p}}{\si(\0)^{-p}}\Big) 
	\label{eq:be,p in(-infty,-1)}\\
	&\approx\PP\Big(\zeta_{p,b_p}<c\frac{\si(\s)^{-p}}{\si(\0)^{-p}}\Big), \label{eq:be,p in(-infty,-1) a}
\end{align}
whence $c\to\Phi_{p,b_p}^{-1}(\al)$ and 
$\si(\s)^{-p}/\si(\0)^{-p}\to\Phi_{p,b_p}^{-1}(\be)/\Phi_{p,b_p}^{-1}(\al)$, which latter is equivalent to \eqref{eq:as iff}
\big(for $p\in(-\infty,-1)$\big). 

On the other hand, one can see that \eqref{eq:as iff} \big(for $p\in(-\infty,-1)$\big) implies that $\s$ is a $p$-strongly-AS shift. 
Indeed, 
\eqref{eq:as iff} implies \eqref{eq:assum,p in(-infty,-1)}, so that \eqref{eq:D,p in(-infty,-1)} holds; therefore, relations 
\eqref{eq:al,p in(-infty,-1) a} and \eqref{eq:be,p in(-infty,-1) a} hold; 
hence and because the d.f.\ $\Phi_{p,b_p}$ of $\zeta_{p,b_p}$ is strictly increasing in a neighborhood of $\Phi_{p,b_p}^{-1}(\al)$, for any varying $c$ satisfying relation \eqref{eq:al,p in(-infty,-1)} one will have $c\to\Phi_{p,b_p}^{-1}(\al)$; since \eqref{eq:as iff} is equivalent to $\si(\s)^{-p}/\si(\0)^{-p}\to\Phi_{p,b_p}^{-1}(\be)/\Phi_{p,b_p}^{-1}(\al)$, relation \eqref{eq:be,p in(-infty,-1)} will then follow. 
%

%

It remains to consider the case when $\s$ is a $p$-weakly-AS shift for which condition \eqref{eq:assum,p in(-infty,-1)} fails. Then without loss of generality (w.l.o.g.) $\sum\vpi(s_j)<<d$ or, equivalently, $\sum f_p(s_j)=\sum e^{-s_j^2/2}<<d$, where $f_p$ is as in \eqref{eq:as iff} \big(for $p\in(-\infty,-1)$\big). Take any $\tbe\in(\be,1)$. 
Note that $\sum f_p(ts_j)$ continuously decreases in $t\in[0,1]$ from $d$ to $o(d)$. 
Note also that $K_{\al,\tbe;p}\in(0,1)$. Hence, for some varying $t\in(0,1)$ relation \eqref{eq:as iff} holds with $t\s$ and $\tbe$ in place of $\s$ and $\be$, respectively. 
So,  
by 
the proved above implication ``\eqref{eq:as iff} implies that $\s$ is a $p$-strongly-AS shift'', one concludes that $t\s$ is a $p$-strongly-AS shift, but for $\tbe$ in place of $\be$. On the other hand, \eqref{eq:al,p in(-infty,-1)} and \eqref{eq:be,p in(-infty,-1)} take place for some varying $c$, since $\s$ is a $p$-weakly-AS shift. 
Thus, for such $c$ 
%
one has $\PP(\no{\Zd+\s}_p>c)\to\be$ and $\PP(\no{\Zd+t\s}_p>c)\to\tbe$, which is a contradiction, since $\be<\tbe$ while, by Lemma~\ref{lem:mono}, $\PP(\no{\Zd+\s}_p>c)\ge\PP(\no{\Zd+t\s}_p>c)$.   
This concludes the proof of Proposition~\ref{prop:AS shift iff} \big(in the case when $p\in(-\infty,-1)$\big). 

\subsection{Case: $p\in(-1,-\frac12)$}\label{p in(-1,-frac12)}
Take any varying vector $\s$. 
To begin, assume that 
\begin{equation}\label{eq:assum,p in(-1,-frac12)}
	\sum[\vpi(0)-\vpi(s_j)]\OO \si^p,
\end{equation}
where
\begin{equation*}
	\si:=\tfrac1d, 
\end{equation*}
and use Lemma~\ref{lem:nu} with 
\begin{equation*}
	\xi_{dj}:=\frac{|Z_j+s_j|^p-\la_p(s_j)}{\si^p}.  
\end{equation*}
Note that for each fixed $x\in(0,\infty)$ 
\begin{equation*}
	\xi_{dj}>x
	\iff|Z_j+s_j|^p>\la_p(s_j)+\si^p x
	\iff|Z_j+s_j|<\si[x+\si^{-p}\la_p(s_j)]^{1/p}; 
\end{equation*}
also, 
\begin{equation*}
	\si[x+\si^{-p}\la_p(s_j)]^{1/p}\sim\si x^{1/p}
\end{equation*}
uniformly in $j$, since $\si^{-p}=1/d^{|p|}\to0$ and $\la_p(s)\le\la_p(0)\OO1$ for all $s\in\R$, by part (i) of Lemma~\ref{lem:lla}.  
Similarly, 
for each fixed $x\in(-\infty,0)$ 
\begin{equation*}
	\xi_{dj}<x
	\iff|Z_j+s_j|^p<\la_p(s_j)+\si^p x, 
\end{equation*}
which is impossible for any $j$ if $d$ is large enough, since $\si^p=d^{|p|}\to\infty$ while $\la_p(s_j)\le\la_p(0)$.  

In particular, it follows that for any fixed $\vp\in(0,\infty)$ 
\begin{equation*}
	\max_j\PP(|\xi_{dj}|>\vp)=\max_j\PP\big(|Z_j+s_j|<\si(\vp+o(1))^{1/p}\big)
	\OO\si\to0,
\end{equation*}
so that the null-array condition \eqref{eq:null} is satisfied. 

Next, for any fixed $x\in(0,\infty)$ there are some varying $v_j$'s such that $\max_j|v_j|\OO\si\to0$ and 
\begin{equation}
\begin{aligned}\label{eq:->nu}
	\sum\PP(\xi_{dj}>x)&=\sum\PP\big(|Z_j+s_j|<\si(x+o(1))^{1/p}\big) \\
&=2\si\,(x+o(1))^{1/p}\sum\vpi(s_j+v_j) \\
&\sim2\si\, x^{1/p}\Big(\sum\vpi(s_j)+o(d)\Big) \\
&\to2x^{1/p}\vpi(0)
\end{aligned}	
\end{equation}
by Lemma~\ref{lem:vpi} and condition \eqref{eq:assum,p in(-1,-frac12)}, which latter implies 
$\sum\vpi(s_j)=d\,\vpi(0)+O(d^{|p|})\sim d\,\vpi(0)$. 
Also, $\sum\PP(\xi_{dj}<x)=0$ for any $x\in(-\infty,0)$ and all large enough $d$. 
Thus, condition (I) in Lemma~\ref{lem:nu} is verified for $\nu=\sqrt{\frac2\pi}\,\nu_p$, as in Definition~\ref{def:zeta}. 

Let us now verify condition (III) in Lemma~\ref{lem:nu}. From considerations above, it follows that 
\begin{equation}\label{eq:xi,iff}
		|\xi_{dj}|>1
	\iff|Z_j+s_j|<\si_j:=\si[1+\si^{-p}\mu_j]^{1/p}\sim\si=\tfrac1d, 
\end{equation}
where
\begin{equation*}
	\mu_j:=\la_p(s_j)\le\la_p(0). 
\end{equation*}
Hence and because $\E\xi_{dj}=0$,   
\begin{align*}
	E_1&:=\sum\E\xi_{dj}\ii{|\xi_{dj}|\le1}=-\sum\E\xi_{dj}\ii{|\xi_{dj}|>1}=\si^{-p}(E_{12}-E_{11}),
\end{align*}
where
\begin{align*}
		E_{12}&:=\sum\mu_j\PP(\xi_{dj}|>1)
		\OO\sum\PP(|Z_j+s_j|<\si_j) 
		\OO\si d=1<<\si^p, \\
		E_{11}&:=\sum\E|Z_j+s_j|^p \ii{|Z_j+s_j|<\si_j}=E_{111}-E_{112}, \\ 
		E_{111}&:=\sum\int_\R|z+s_j|^p \ii{|z+s_j|<\si_j}\vpi(0)\,\dd z
		=d\cdot2\vpi(0)\cdot\frac{\si_j^{p+1}}{p+1}
		\sim\frac{2\vpi(0)}{p+1}\,\si^p, \\ 
E_{112}&:=\sum\int_\R|x|^p \ii{|x|<\si_j}[\vpi(0)-\vpi(x+s_j)]\,\dd x \\
&\OO\sum\int_\R|x|^p \ii{|x|<\si_j}\,\dd x\,[\si_j^2+\vpi(0)-\vpi(s_j)] \\
&\OO \si^{p+1}\sum [\si^2+\vpi(0)-\vpi(s_j)] \\
&\OO \si^{p+1}[d\,\si^2+\si^p]<<\si^p;
\end{align*}
in the latter display, for $E_{112}$, the last of three $\OO$'s is by condition \eqref{eq:assum,p in(-1,-frac12)}, while the first one is obtained as follows: if $|x|<\si_j$ and $|s_j|\le1$ then $\vpi(0)-\vpi(x+s_j)\OO(x+s_j)^2\OO x^2+s_j^2
<\si_j^2+s_j^2\OO\si_j^2+\vpi(0)-\vpi(s_j)$; and if $|s_j|>1$ then $\vpi(0)-\vpi(x+s_j)\OO1\OO\vpi(0)-\vpi(s_j)\le\si_j^2+\vpi(0)-\vpi(s_j)$; thus, $|x|<\si_j$ always implies $\vpi(0)-\vpi(x+s_j)]\OO\si_j^2+\vpi(0)-\vpi(s_j)$. 

We conclude that
\begin{equation*}
	E_1\to-\frac{2\vpi(0)}{p+1}=-\frac{\sqrt{2/\pi}}{p+1}. 
\end{equation*}

Next, 
\begin{equation*}
	E_2:=\sum\E\xi_{dj}^2\ii{|\xi_{dj}|\le1}=\si^{-2p}(E_{21}-2E_{22}+E_{23}),
\end{equation*}
where 
\begin{align*}
		E_{21}&:=\sum\E|Z_j+s_j|^{2p} \ii{|Z_j+s_j|\ge\si_j} \\
		&=\sum\int_{\si_j}^\infty u^{2p}\,[\vpi(u-s_j)+\vpi(u+s_j)]\,\dd u, \\
		E_{22}&:=\sum\mu_j\E|Z_j+s_j|^p \ii{|Z_j+s_j|\ge\si_j}
		\OO d<<\si^{2p} \\
		E_{23}&:=\sum\mu_j^2
		\OO d<<\si^{2p}.
\end{align*}
To estimate $E_{21}$, note first that 
\begin{equation}\label{eq:2nd diff}
\vpi(u-s)+\vpi(u+s)-2\vpi(u)\OO\vpi(0)-\vpi(s)	
\end{equation}
over all real $u$ and $s$. Indeed, if $|s|\le1$ then $\vpi(u-s)+\vpi(u+s)-2\vpi(u)\OO s^2\OO\vpi(0)-\vpi(s)$, and if $|s|>1$ then $\vpi(u-s)+\vpi(u+s)-2\vpi(u)\OO1\OO\vpi(0)-\vpi(s)$. 
Hence, recalling also, once again, condition \eqref{eq:assum,p in(-1,-frac12)}, one has 
\begin{align*}
		E_{21}&=E_{211}+E_{212}, \\
		E_{212}&:=\sum\int_{\si_j}^\infty u^{2p}\,[\vpi(u-s_j)+\vpi(u+s_j)-2\vpi(u)]\,\dd u \\
		&\OO\sum[\vpi(0)-\vpi(s_j)]\int_{\si_j}^\infty u^{2p}\,\dd u
		\OO\si^p\si^{2p+1}<<\si^{2p}, \\
E_{211}&:=\sum\int_{\si_j}^\infty u^{2p}\,2\vpi(u)\,\dd u \sim-\frac{2\vpi(0)\si^{2p}}{2p+1},
\end{align*}
because $0<\vp<<\vp_1<<1$ implies 
\begin{align*}
		\int_\vp^\infty u^{2p}\,\vpi(u)\,\dd u&=\int_\vp^{\vp_1} + \int_{\vp_1}^1 + \int_1^\infty, \\
		\int_\vp^{\vp_1}&\sim\int_\vp^{\vp_1}u^{2p}\,\vpi(0)\,\dd u \sim-\frac{\vpi(0)\vp^{2p+1}}{2p+1},\\
		\int_{\vp_1}^1 + \int_1^\infty&\OO\vp_1^{2p+1}+1<<\vp^{2p+1}. 
\end{align*}
Also, 
\begin{align*}
	E_3&:=\sum\E^2\xi_{dj}\ii{|\xi_{dj}|\le1}=\sum\E^2\xi_{dj}\ii{|\xi_{dj}|>1}\OO\si^{-2p}(E_{31}+E_{33}), \\
		E_{31}&:=\sum\E^2|Z_j+s_j|^p \ii{|Z_j+s_j|<\si_j} 
		\OO d\,\Big(\int_0^{\si_j} u^p\,\dd u\Big)^2
		\OO d\,(\si^{p+1})^2<<\si^{2p}, \\
		E_{32}&:=\sum\mu_j^2\PP(|Z_j+s_j|<\si_j)\OO d\,\si<<\si^{2p}.
\end{align*}

The rest of the proof of Proposition~\ref{prop:AS shift iff} in the case $p\in(-1,-\frac12)$ is similar to that in  
the case $p\in(-\infty,-1)$. Instead of \eqref{eq:assum,p in(-infty,-1)}, here one has to deal with condition \eqref{eq:assum,p in(-1,-frac12)} -- which is implied, in view of Lemma~\ref{lem:h}, by \eqref{eq:as iff} \big(now for $p\in(-1,-\frac12)$\big). 
In this case, instead of \eqref{eq:al,p in(-infty,-1)} and \eqref{eq:be,p in(-infty,-1)} one should write 
\begin{align*}
	\al&\approx\PP\Big(\si^{-p}\sum\big(|Z_j|^p-\la_p(0)\big)<c\Big)\approx\PP(\zeta_{p,b_p}<c), 
	\\ 
	\be&\approx\PP\Big(\si^{-p}\sum\big(|Z_j+s_j|^p-\la_p(0)\big)<c\Big)
	\\
	&=\PP\Big(\si^{-p}\sum\big(|Z_j+s_j|^p-\la_p(s_j)\big)<c+\si^{-p}\sum\big(\la_p(0)-\la_p(s_j)\big)\Big) \\
	&\approx\PP\Big(\zeta_{p,b_p}<c+\si^{-p}\sum f_p(s_j)\Big), 
\end{align*}
whence $c\to\Phi_{p,b_p}^{-1}(\al)$ and 
$\si^{-p}\sum f_p(s_j)\to\Phi_{p,b_p}^{-1}(\be)-\Phi_{p,b_p}^{-1}(\al)$.   
In this case, when $p\in(-1,-\frac12)$, one should also note that, by Lemma~\ref{lem:lla}(i),  
the sum $\sum f_p(ts_j)$ continuously increases (rather than decreases) in $t\in[0,1]$, from $0$ to $\sum f_p(s_j)$, which, by Lemma~\ref{lem:h}, is w.l.o.g.\ $>>d^{|p|}$ if condition \eqref{eq:assum,p in(-1,-frac12)} fails to hold.  
Also, in this case one should use $K_{\al,\tbe;p}\in(0,\infty)$ instead of $K_{\al,\tbe;p}\in(0,1)$. 

\subsection{Case: $p=-1$}\label{p=-1}
Take any varying vector $\s$. 
As in previous cases, we begin by proving a limit theorem for $\sum|Z_j+s_j|^p$ (here with $p=-1$) under an additional assumption, which in this case will be the following:  
\begin{equation}\label{eq:assum,p=-1}
	\sum[\vpi(0)-\vpi(s_j)]\OO d/\ln d.
\end{equation}
Let again
\begin{equation*}
	\si:=\tfrac1d, 
\end{equation*}
and use Lemma~\ref{lem:nu}, now with 
\begin{equation*}
	\xi_{dj}:=\frac{|Z_j+s_j|^{-1}-\tmu_d(s_j)}{\si^{-1}}.  
\end{equation*}
As in the case with $p\in(-1,-\frac12)$, here one shows that 
conditions \eqref{eq:null} and (I) in Lemma~\ref{lem:nu} hold for $\nu=\sqrt{\frac2\pi}\,\nu_p$, as in Definition~\ref{def:zeta}; the only difference is that here, in view of Lemma~\ref{lem:ttmu}(ii), one writes $\si\tmu_d(s_j)\OO d^{-1}\ln d\to0$ instead of relations $\si^{-p}=1/d^{|p|}\to0$ and $\la_p(s)\le\la_p(0)\OO1$ used for $p\in(-1,-\frac12)$. 

Let us now verify condition (III) in Lemma~\ref{lem:nu}. As in the case $p\in(-1,-\frac12)$, here 
\eqref{eq:xi,iff} holds, but now with  
\begin{equation*}
	\mu_j:=\tmu_d(s_j)\OO\ln d; 
\end{equation*}
note also that $\si_j\le\si$ for all $j$. 

Another difference is that here the expectation $\E\xi_{dj}$ is not $0$; in fact, it does not exist (or, one may prefer to say, is infinite). Recall \eqref{eq:tmu} and write   
\begin{align*}
	E_1&:=\sum\E\xi_{dj}\ii{|\xi_{dj}|\le1} \\
	&=\tfrac1d\,\sum\big[\E|Z_j+s_j|^{-1} \ii{|Z_j+s_j|\ge\si_j}-\tmu_d(s_j)\PP(|Z_j+s_j|\ge\si_j)\big] \\
	&=\tfrac1d\,(E_{11}+E_{12}-E_{13}),
\end{align*}
where
\begin{align*}
		E_{11}&:=\sum\E|Z_j+s_j|^{-1} \ii{\si>|Z_j+s_j|\ge\si_j}, \\
		&\OO d\,\int_{\si_j}^\si|u|^{-1}\,\dd u=2d\ln\tfrac\si{\si_j}<<d, \\
		E_{12}&:=\sum\E|Z_j+s_j|^{-1} \ii{|Z_j+s_j|\ge\si} \PP(|Z_j+s_j|<\si_j) \\
		&\le\sum\tmu_d(s_j) \PP(|Z_j+s_j|<\si_j)\OO d\,(\ln d)\si<<d, \\
		E_{13}&:=d\,\sum\PP(|Z_j+s_j|<\si)\PP(|Z_j+s_j|\ge\si_j) \\
		&\sim d\,\sum\PP(|Z_j+s_j|<\si)=d\,2\si\,\sum\vpi(s_j+v_j)\sim2d\,\vpi(0), 
\end{align*}
for some $v_j$'s with $|v_j|<\si$, in view of Lemma~\ref{lem:vpi} and condition \eqref{eq:assum,p=-1}.  
On collecting these estimates, one has 
\begin{equation*}
	E_1\to-2\,\vpi(0). 
\end{equation*}

As for the term $E_2:=\sum\E\xi_{dj}^2\ii{|\xi_{dj}|\le1}$, it is treated (now for $p=-1$) in an almost literally the same way as in the case $p\in(-1,-\frac12)$, with the only difference that the estimate $\tmu_d(s_j)\OO\ln d$ is used (instead of $\mu_j\OO1$), which is still enough to adequately bound $E_{22}$ and $E_{23}$. 
The result is that $E_2\to2\vpi(0)$. 

The treatment of the term $E_3:=\sum\E^2\xi_{dj}\ii{|\xi_{dj}|\le1}$ here is similar to that of $E_1$ (for this case $p=-1$), but a bit simpler: 
\begin{align*}
	E_3&\OO\tfrac1{d^2}(E_{31}+E_{32}+E_{33}), \\
		E_{31}&:=\sum\E^2|Z_j+s_j|^{-1} \ii{\si>|Z_j+s_j|\ge\si_j} 
		\OO d\,\ln^2\tfrac\si{\si_j}<<d^2, \\
		E_{32}&:=\sum\E^2|Z_j+s_j|^{-1} \ii{|Z_j+s_j|\ge\si} \PP^2(|Z_j+s_j|<\si_j) \\
		&\OO d\,(\ln d)^2\si^2<<d^2, \\
		E_{33}&:=d^2\,\sum\PP^2(|Z_j+s_j|<\si)\PP^2(|Z_j+s_j|\ge\si_j) 
		\OO d^2\,d\,\si^2<<d^2.
\end{align*}
Thus, $E_3\to0$. 

The rest of the proof of Proposition~\ref{prop:AS shift iff} in this case $p=-1$ is similar to that for 
the previously considered cases $p\in(-\infty,-1)$ and $p\in(-1,-\frac12)$. Instead of \eqref{eq:assum,p in(-infty,-1)} or \eqref{eq:assum,p in(-1,-frac12)}, here one has to deal with condition \eqref{eq:assum,p=-1} -- which is implied, in view of Lemma~\ref{lem:h}, by \eqref{eq:as iff} (now for $p=-1$). 
In this case, by Lemma~\ref{lem:ttmu}(i), 
the sum $\sum f_p(ts_j)$ continuously increases in $t\in[0,1]$ from $0$ to $\sum f_p(s_j)$, which, 
by Lemma~\ref{lem:h}, is w.l.o.g.\ $>>d$ if condition \eqref{eq:assum,p=-1} fails to hold. 

\subsection{Case: $p=-\frac12$}\label{p=-frac12}
This case is somewhat similar to the case $p\in(-1,-\frac12)$. However, here the limit distribution is normal, rather than stable with index less than $2$. 

Take any varying vector $\s\in\R^d$, with $d\to\infty$. 
To begin, assume that 
\begin{equation}\label{eq:assum,p=-frac12}
	\sum[\vpi(0)-\vpi(s_j)]\OO \si^{-1/2}=\sqrt{d\ln d},
\end{equation}
where
\begin{equation*}
	\si:=\tfrac1{d\ln d}, 
\end{equation*}
and use Lemma~\ref{lem:nu} with 
\begin{equation*}
	\xi_{dj}:=\frac{|Z_j+s_j|^{-1/2}-\mu_j}{\si^{-1/2}},\quad\text{where}\quad\mu_j:=\la_{-1/2}(s_j).  
\end{equation*}
Then for each fixed $x\in(0,\infty)$ the first two lines of display \eqref{eq:->nu} will hold here as well; however, now instead of $\si d=1$ one has $\si d=\tfrac1{\ln d}\to0$, with the effect that $\sum\PP(\xi_{dj}>x)\to0$. 
Therefore, condition (I) in Lemma~\ref{lem:nu} holds with $\nu=0$. 

To verify condition (III) in Lemma~\ref{lem:nu}, let us employ the same $E_1$, $E_{11}$, and $E_{12}$ as in the case $p\in(-1,-\frac12)$ -- with the only difference that now $p=-\frac12$, and these terms are easier to estimate here: 
$E_{12}\OO\si d<<1<<\si^{-1/2}$ and 
\begin{align*}
		E_{11}\OO\sum\int_\R|x|^{-1/2} \ii{|x|<\si_j}\,\dd x 
\OO d\,\si^{1/2}<<\si^{-1/2}. 
\end{align*}
We conclude that
\begin{equation*}
	E_1\to0. 
\end{equation*}

Estimation of $E_2$ is more involved. Here, let us employ the same $E_2$, $E_{21}$, $E_{22}$, $E_{23}$, $E_{211}$, and $E_{212}$ as in the case $p\in(-1,-\frac12)$ -- with the only difference that now $p=-\frac12$. 
Then, just as easily as in the case $p\in(-1,-\frac12)$, here one has 
\begin{equation*}
	E_{22}+E_{23}<<\si^{-1}. 
\end{equation*}
To estimate $E_{21}$, begin with 
\begin{align*}
		E_{211}&=E_{2111}+E_{2112}+E_{2113}, \\
		E_{2111}&:=\sum\int_{\si_j}^{1/\ln d} u^{-1}\,2\vpi(u)\,\dd u
		\sim d\,2\vpi(0)\,\ln\tfrac1{\si\ln d}=2\vpi(0)\si^{-1}, \\
		E_{2112}&:=\sum\int_{1/\ln d}^1 u^{-1}\,2\vpi(u)\,\dd u
		\OO d\,\ln\ln d<<\si^{-1}, \\
		E_{2113}&:=\sum\int_1^\infty u^{-1}\,2\vpi(u)\,\dd u
		\OO d<<\si^{-1}.  
\end{align*}
Next, on recalling \eqref{eq:2nd diff} and \eqref{eq:assum,p=-frac12}, 
\begin{align*}
		E_{212}&=E_{2121}+E_{2122}, \\
		E_{2121}&:=\sum\int_{\si_j}^1 u^{-1}\,[\vpi(u-s_j)+\vpi(u+s_j)-2\vpi(u)]\,\dd u \\
		&\OO\sum[\vpi(0)-\vpi(s_j)]\,\ln\frac1\si 
		\sim\sum[\vpi(0)-\vpi(s_j)]\,\ln d
		\OO\sqrt{d\ln^3 d}<<\si^{-1},\\
		E_{2122}&:=\sum\int_1^\infty u^{-1}\,[\vpi(u-s_j)+\vpi(u+s_j)-2\vpi(u)]\,\dd u \\
		&\le \sum\int_1^\infty [\vpi(u-s_j)+\vpi(u+s_j)-2\vpi(u)]\,\dd u
		\OO d<<\si^{-1}.  
\end{align*}
It follows that 
\begin{equation*}
	E_2\to2\vpi(0)=(\tfrac2\pi)^{1/2}. 
\end{equation*}
Just as for $p\in(-1,-\frac12)$, one sees that $E_3\to0$ for $=-\frac12$ as well. Thus, by Lemma~\ref{lem:nu},
\begin{equation*}
	\sum\xi_{dj}\overset{\mathrm{D}}{\longrightarrow}(\tfrac2\pi)^{1/4}\,Z\sim\No\big(0,(\tfrac2\pi)^{1/2}\big). 
\end{equation*}
The rest of the proof of Proposition~\ref{prop:AS shift iff} for this case, $p=-\frac12$, is similar to that for 
the cases when $p\in(-\infty,-1)$ or $p\in(-1,-\frac12)$. 

\subsection{Case: $p\in(-\frac12,0)$}\label{p in(-frac12,0)}
This case is somewhat similar to the case $p=-\frac12$. 
Take any varying vector $\s\in\R^d$, $d\to\infty$. 
To begin, assume that 
\begin{equation}\label{eq:assum,p in(-frac12,0)}
	\sum[\vpi(0)-\vpi(s_j)]\OO \si^p,
\end{equation}
where
\begin{equation*}
	\si:=\big(d\,\la_{p,2}(0)\big)^{1/(2p)}, 
\end{equation*}
and use Lemma~\ref{lem:nu} with 
\begin{equation*}
	\xi_{dj}:=\frac{|Z_j+s_j|^p-\mu_j}{\si^p},\quad\text{where}\quad\mu_j:=\la_p(s_j).  
\end{equation*}
Then for each fixed $x\in(0,\infty)$ the first two lines of display \eqref{eq:->nu} will hold here as well; however, now instead of $\si d=1$ one has $\si d\asymp d^{(2p+1)/(2p)}\to0$, with the effect that $\sum\PP(\xi_{dj}>x)\to0$. 
Therefore, condition (I) in Lemma~\ref{lem:nu} holds with $\nu=0$. 

To verify condition (III) in Lemma~\ref{lem:nu}, let us employ the same $E_1$, $E_{11}$, and $E_{12}$ as in the case $p\in(-1,-\frac12)$, and these terms are easier to estimate here: 
$E_{12}\OO\si d<<\si^p$ and 
\begin{align*}
		E_{11}\OO\sum\int_\R|x|^p \ii{|x|<\si_j}\,\dd x 
\OO d\,\si^{p+1}<<\si^p, 
\end{align*}
so that
\begin{equation*}
	E_1\to0. 
\end{equation*}

Next, write 
\begin{equation*}
		E_2=\sum\E\xi_{dj}^2\ii{|\xi_{dj}|\le1}=E_{21}-E_{22}
\end{equation*}
in this case with
\begin{align*}
E_{22}&:=\sum\E\xi_{dj}^2\ii{|\xi_{dj}|>1}\OO E_{221}+E_{222},\\
E_{221}&:=\sum\tfrac{\mu_j^2}d\PP(|\xi_{dj}|>1)\to0 \\
\intertext{\big(since $\mu_j\le\la_p(0)<\infty$ and, by what was proved above, $\sum\PP(|\xi_{dj}|>1)\to0$\big), }
E_{222}&:=\sum\tfrac1d\E|Z_j+s_j|^{2p}\ii{|Z_j+s_j|<\si_j} \\
&\OO\tfrac1d\,\sum\int_\R|x|^{2p} \ii{|x|<\si_j}\,\dd x 
\OO\tfrac1d\,d\,\si^{2p+1}\to0, \\
	E_{21}&:=\sum\E\xi_{dj}^2=\sum\frac{\la_{p,2}(s_j)}{d\,\la_{p,2}(0)}=1+O(d^{-1/2})\to1,
\end{align*}
using \eqref{eq:la_2p(s)-la_2p} to get $|\la_{p,2}(s)-\la_{p,2}(0)|\OO s^2\ii{|s|\le1}+\ii{|s|>1}=s^2\wedge1\asymp\vpi(0)-\vpi(s)$ and then recalling condition \eqref{eq:assum,p in(-frac12,0)}. 
It follows that 
\begin{equation*}
	E_2\to1. 
\end{equation*}

Next, similarly to the case $p\in(-1,-\frac12)$, one shows that $E_3\to0$. 

The rest of the proof of Proposition~\ref{prop:AS shift iff} for this case, $p\in(-\frac12,0)$, is similar to that for $p\in(-1,-\frac12)$. 

\subsection{Case: $p\in(0,2)$}\label{p in(0,2)}
This case is somewhat similar to the case $p\in(-\frac12,0)$. However, the proof here is a bit simpler, as it relies on a Berry-Esseen bound on convergence to normality, rather than on the general conditions of convergence to a infinitely divisible distribution, stated in Lemma~\ref{lem:nu}. 

Take any varying vector $\s\in\R^d$, $d\to\infty$. Let 
\begin{equation}\label{eq:xi,p in(0,2)}
	\xi_{dj}:=\frac{|Z_j+s_j|^p-\mu_j}{\sqrt{\sum\la_{p,2}(s_j)}},\quad\text{where}\quad\mu_j:=\la_p(s_j).  
\end{equation} 
To proceed, assume that 
\begin{equation}\label{eq:assum,p in(0,2)}
	\sum g_p(s_j) \OO d^{1/2},
\end{equation}
where $g_p$ is as in in Definition~\ref{def:orl}. 
Note that $s^2\ii{|s|\le1}+(1+|s|^{2p-2})\ii{|s|>1}\le2g_p(s)$ for $p\in(0,2)$ and $|s|>1$, whence, 
by \eqref{eq:la_2p(s)-la_2p} and \eqref{eq:assum,p in(0,2)}, 
\begin{align}
	\Big|\sum\la_{p,2}(s_j)-d\,\la_{p,2}(0)\Big|&\OO\sum g_p(s_j) \OO d^{1/2}<<d, \label{eq:la_2p(s)-la_2p,p in(0,2)}
\end{align}
so that 
\begin{equation}\label{eq:si,p in(0,2)}
	\sum\la_{p,2}(s_j)\sim d\,\la_{p,2}(0). 
\end{equation}
Next, by \eqref{eq:la_3p(s)} and \eqref{eq:assum,p in(0,2)}, 
\begin{align}
	\sum\la_{p,3}(s_j)&\OO d+\sum|s_j|^{3p-1}\ii{|s_j|>1}
	\le d+\Big(\sum|s_j|^p\ii{|s_j|>1}\Big)^{1\vee\frac{3p-1}p} \notag\\ 
	&\OO d+\Big(\sum g_p(s_j)\Big)^{1\vee\frac{3p-1}p}
	\OO d+d^{\frac{3p-1}{2p}}<<d^{3/2}. \label{eq:sum la_3p(s),p in(0,2)}
\end{align}
Therefore and by \eqref{eq:si,p in(0,2)}, the Berry-Esseen bound on the convergence of the distribution of $\sum\xi_{dj}$ to $\No(0,1)$ is 
\begin{equation}\label{eq:BE OO}
	\OO\frac{\sum\la_{p,3}(s_j)}{\big(\sum\la_{p,2}(s_j)\big)^{3/2}}\longrightarrow0. 
\end{equation}
So, still in view of \eqref{eq:si,p in(0,2)}, 
\begin{equation}\label{eq:converg}	\frac{\sum|Z_j+s_j|^p-\sum\la_p(s_j)}{\sqrt{d\,\la_{p,2}(0)}}\overset{\mathrm{D}}{\longrightarrow}Z\sim\No(0,1).
\end{equation} 

The rest of the proof of Proposition~\ref{prop:AS shift iff} for this case, $p\in(0,2)$, is similar to that for $p\in(-1,-\frac12)$. 
Instead of \eqref{eq:assum,p in(-frac12,0)}, here one has to deal with condition \eqref{eq:assum,p in(0,2)} -- which is implied, in view of Lemma~\ref{lem:h}, by \eqref{eq:as iff} \big(now for $p\in(0,2)$\big). 
In this case, the sum $\sum f_p(ts_j)$ continuously increases in $t\in[0,1]$ from $0$ to 
to $\sum f_p(s_j)$, which, 
by Lemma~\ref{lem:h}, is w.l.o.g.\ 
$>>d^{1/2}$ if condition \eqref{eq:assum,p in(0,2)} fails to hold.  

\subsection{Case: $p=0$}\label{p=0}
This case is similar to the case $p\in(0,2)$. However, here we have to deal with the moments of $\ln|Z+s|$ rather than $|Z+s|$ and, correspondingly, with Lemma~\ref{lem:ttla} rather than Lemma~\ref{lem:lla}. 
Let us describe the main differences between the two cases. Here, let 
\begin{equation*}
	\xi_{dj}:=\frac{\ln|Z_j+s_j|-\mu_j}{\sqrt{\sum\tla_2(s_j)}},\quad\text{where}\quad\mu_j:=\tla(s_j).  
\end{equation*} 
An analogue of \eqref{eq:la_2p(s)-la_2p,p in(0,2)} here follows immediately from Lemma~\ref{lem:ttla}(iv). 
An analogue of \eqref{eq:sum la_3p(s),p in(0,2)} follows by Lemma~\ref{lem:ttla}(vi): 
\begin{align*}\label{eq:sum la_3p(s)}
	\sum\tla_3(s_j)&\OO d+\sum\ln^2|s_j|\ii{|s_j|>e}
	\le d+\Big(\sum\ln|s_j|\ii{|s_j|>e}\Big)^2\\ 
	&\le d+\Big(\sum g_0(s_j)\Big)^2\OO d, 
\end{align*}
under the assumption \eqref{eq:assum,p in(0,2)}, here with $p=0$. 

\subsection{Case: $p\in[2,\infty)$}\label{p in[2,infty)}
This case is somewhat similar to the case $p\in(0,2)$. 
Take any varying vector $\s$. Let the $\xi_{dj}$'s be defined here as in \eqref{eq:xi,p in(0,2)}. 
Assume \eqref{eq:assum,p in(0,2)}, but now with  
\begin{equation*}
	g_p(s):=s^2+|s|^p. 
\end{equation*}
Then, introducing the notation 
\begin{equation}\label{eq:S}
	V_p:=V_p(\s):=\sum|s_j|^p, 
\end{equation}
one has $V_2+V_p\OO d^{1/2}<<d$, whence 
$V_{2p-2}\le V_p^{(2p-2)/p}\OO d^{(p-1)/p}<<d$ 
and $V_{3p-1}\le V_p^{(3p-1)/p}\OO d^{(3p-1)/(2p)}<<d^{3/2}$. 
So,  
by parts (v) and (vi) of Lemma~\ref{lem:lla},  
\begin{align*}
\sum\la_{p,2}(s_j)&=d\la_{p,2}(0)+O(V_2+V_{2p-2})
\sim d\la_{p,2}(0), \\ 
\sum\la_{p,3}(s_j)&\OO d+V_{3p-1}<<d^{3/2}. 	
\end{align*}
Hence, the convergence to $0$ in \eqref{eq:BE OO} for the Berry-Esseen bound holds in the case $p\in[2,\infty)$ as well. 
The rest of the proof of Proposition~\ref{prop:AS shift iff} for this case is similar to that for $p\in(0,2)$.   

\subsection{Case: $p=\infty$}\label{p=infty}
In this case, a varying vector $\s$ is a $p$-AS shift if and only if 
\begin{align}
	1-\al&\approx\PP\Big(\max_1^d|Z_j|\le c\Big), \label{eq:al,infty}\\
	1-\be&\approx\PP\Big(\max_1^d|Z_j+s_j|\le c\Big) \label{eq:be,infty}
\end{align}
for some varying $c$. 
Rewrite \eqref{eq:al,infty} as $d\cdot\ln\PP(|Z|\le c)\to\ln(1-\al)$, which implies that $\ln\PP(|Z|\le c)\to0$ and hence $c\to\infty$ and $\ln\PP(|Z|\le c)\sim\break
\PP(|Z|\le c)-1=-\PP(|Z|>c)\sim-\frac2c\vpi(c)$, so that \eqref{eq:al,infty} can be rewritten as $\frac2c\,\vpi(c)\sim-\frac1d\,\ln(1-\al)$, which implies $c\sim\sqrt{2\ln d}$, so that \eqref{eq:al,infty} can be further rewritten as $\vpi(c)\sim-\frac{\sqrt{\ln d}}{d\sqrt2}\,\ln(1-\al)$ or, equivalently, as 
\begin{equation*}
	2\,\ln\Big(-\frac d{\sqrt{\pi\ln d}}\,\frac1{\ln(1-\al)}\Big)=c^2+o(1).  
\end{equation*}
Also, $c\sim\sqrt{2\ln d}$ implies $c^2+o(1)=c^2(1+o(1/c^2))=[c(1+o(1/c^2))]^2=[c+o(1/c)]^2=[c+o(1/\sqrt{\ln d})]^2$. 
So, recalling \eqref{eq:c(d,al)}, one finally rewrites \eqref{eq:al,infty} as  
\begin{equation}\label{eq:c}
	c=c_{d,\al}+o(1/\sqrt{\ln d}). 
\end{equation}

Take now indeed any $c$ as in \eqref{eq:al,infty} or, equivalently, as in \eqref{eq:c}. 
Rewrite \eqref{eq:be,infty} as 
\begin{equation}\label{eq:be,infty,rewr}
	\sum\ln\PP(|Z+s_j|\le c)\sim\ln(1-\be).
\end{equation}

To complete the consideration of the case $p=\infty$ and thus the entire proof of Proposition~\ref{prop:AS shift iff}, it remains to show that relation \eqref{eq:be,infty,rewr} is equivalent to the same but with $c_{d,\al}$ in place of $c$. 
To that end, it is enough to show that 
\begin{equation}\label{eq:c equiv c(d,al)}
	\ln\PP(|Z+s|\le c)\sim\ln\PP(|Z+s|\le c_{d,\al})
\end{equation}
uniformly over all $s\in\R$. 

Consider first the case when $s$ varies so that $\PP(|Z+s|\le c)\to0$, which is equivalent to $|s|-c\to\infty$, whence $\PP(|Z+s|\le c)\sim\PP(Z>|s|-c)$ and 
\begin{equation*}
	\ln\PP(|Z+s|\le c)\sim-\tfrac{(|s|-c)^2}2\sim-\tfrac{(|s|-c_{d,\al})^2}2\sim\ln\PP(|Z+s|\le c_{d,\al});
\end{equation*}
the second $\sim$ here takes place because $|s|-c\to\infty$ and $c_{d,\al}=c+o(1)$. 
So, \eqref{eq:c equiv c(d,al)} holds when $\PP(|Z+s|\le c)\to0$.  

It remains to consider the case when $\PP(|Z+s|\le c)\OOG1$, which implies $|s|\OO c$ and hence 
$|(c\pm s)(c-c_{d,\al})|<<\sqrt{\ln d}\,\frac1{\sqrt{\ln d}}=1$, which in turn yields  
\begin{align*}
	\PP(|Z+s|>c)&=\PP(Z>c-s)+\PP(Z>c+s)\\
	&\sim\PP(Z>c_{d,\al}-s)+\PP(Z>c_{d,\al}+s)=\PP(|Z+s|>c_{d,\al})  
\end{align*}
and hence \eqref{eq:c equiv c(d,al)}; the relation $\sim$ here follows because for $x>1$ and $h>0$ such that $hx<<1$ one has $0<\PP(Z>x)-\PP(Z>x+h)<h\vpi(x)\OO hx\PP(Z>x)<<\PP(Z>x)$, while for $x\le1$ and $h>0$ such that $h<<1$ one has $0<\PP(Z>x)-\PP(Z>x+h)\OO h<<1\OO\PP(Z>x)$.   

The proof of Proposition~\ref{prop:AS shift iff} is now complete. 

\section{Remaining proofs of the propositions of Section~\ref{proofs}
}\label{proofs of props p}

\begin{proof}[Proof of Proposition~\ref{prop:below}]\ 

\framebox{Case: $p\in[-\infty,-1)$} 
Take any $p$-AS shift $\s$ in the direction of the ``equalized'' $\no{\cdot}_2$-unit vector $\one$, so that  $\s=s\one=(s,\dots,s)$ for some $s\in\R$. 
Then, by Proposition~\ref{prop:AS shift iff} \big(for $p\in[-\infty,-1)$\big), one has $e^{-s^2/2}\asymp1$ and hence $s^2\asymp1$, whence   
\begin{equation}\label{eq:-infty,OOG}
	\|\s\|^2\OOG d. 
\end{equation}
Consider the convex function $\psi$ on $(0,\infty)$ given by the formula $\psi(v):=2\ln\frac1v$. Then, by Lemma~\ref{lem:hardy}, $\sum\psi(v_j)$ is Schur-convex in $\vv$. Hence, the minimum of 
$\|\s\|^2=\sum s_j^2=\sum\psi\big(e^{-s_j^2/2}\big)$ given a fixed value of $\sum f_{-\infty}(s_j)=\sum e^{-s_j^2/2}$ is attained when $\s$ is of the form $s\one$ for some $s\in\R$. 
So, \eqref{eq:-infty,OOG} holds for any $p$-AS shift $\s$, in any direction, which completes the verification of line~1 in \eqref{eq:si_p(d)}. 

\framebox{Case: $p=-1$} 
The proof of Proposition~\ref{prop:sARE} for $p=-1$ is somewhat similar to that in the previously considered case  $p\in[-\infty,-1)$. 
For any $(-1)$-AS shift $\s$ of the form $s\one$ one has 
$f_{-1}(s)\asymp1$, whence, by Lemma~\ref{lem:h}, $\vpi(0)-\vpi(s)\asymp\frac1{\ln d}$,
$s\to0$, $s^2\asymp\frac1{\ln d}$, 
and 
\begin{equation*}
	\|\s\|^2=d\,s^2\asymp\tfrac d{\ln d}. 
\end{equation*}
Concerning $(-1)$-AS shifts $\s$ in directions other than in that of the vector $\one$, \eqref{eq:as iff} and Lemma~\ref{lem:h} will imply $\sum\big(1-e^{-s_j^2/2}\big)\asymp\frac d{\ln d}$. Now the proof in this case is finished as was done for $p\in[-\infty,-1)$, except that here the convex function $\psi$ is given by the formula $\psi(v):=2\ln\frac1{1-v}$, so that $\|\s\|^2=\sum s_j^2=\sum\psi\big(1-e^{-s_j^2/2}\big)$. 

\framebox{Case: $p\in(-1,-\frac12)$}
By Lemma~\ref{lem:lla}(iii), $f_p(\sqrt u)=\la_p(0)-\la_p(\sqrt u)$ is concave in $u\ge0$. 
So, by Lemma~\ref{lem:hardy}, $\sum f_p(s_j)$ is Schur${}^2$-concave in $\s$. So, for any $p$-AS shift $\s$, 
\begin{equation*}
	d\,f_p(s)\ge\sum f_p(s_j)\asymp d^{|p|} 
\end{equation*}
by \eqref{eq:as iff}, where $s:=\no\s_2$. It follows by Lemma~\ref{lem:h} that $\vpi(0)-\vpi(s)\OOG d^{|p|-1}$, whence $s^2\OOG d^{|p|-1}$ and $\|\s\|^2=ds^2\OOG d^{|p|}$. 

\framebox{Case: $p\in(-\frac12,2)\setminus\{0\}$}
This case is quite similar to the case $p\in(-1,-\frac12)$. 

\framebox{Case: $p=0$}
This case too is similar to the case $p\in(-\frac12,0)$. 
Here, instead of Lemma~\ref{lem:lla}(iii), use Lemma~\ref{lem:ttla}(iii).  
 
\framebox{Case: $p=2$}
This case follows immediately by \eqref{eq:as iff}, since $f_2(s)=s^2$. 
 
\framebox{Case: $p\in(2,\infty)$} 
This case as well is much similar to the case $p\in(-\frac12,0)$. However, here $\sum f_p(s_j)$ is Schur${}^2$-\emph{convex} in $\s$, which reduces the consideration to 
$p$-AS shifts of the form $s\ee_1=(s,0,\dots,0)$ with $s>0$. Then \eqref{eq:as iff} implies $s\to\infty$. Hence, by Lemma~\ref{lem:lla}(ii), $\la_p(s)\sim s^p$, so that 
\begin{equation*}
	\|\s\|^p=s^p\sim\la_p(s)\sim f_p(s)=\sum f_p(s_j)\asymp d^{1/2}, 
\end{equation*}
which proves Proposition~\ref{prop:below} in this case.
 
\framebox{Case: $p=\infty$}
Here too, it is enough to consider $p$-AS shifts of the form $s\ee_1$, with $s>0$. 
If $s<<\sqrt{\ln d}$, then (cf.\ \eqref{eq:la infty sim}) $\la_{\infty;d}(s)$ and $\la_{\infty;d}(0)$ are each of the form $d^{-(1+o(1))}$, whence $f_{\infty;d}(s)=\la_{\infty;d}(s)-\la_{\infty;d}(0)\to0$, which contradicts \eqref{eq:as iff}. So, $s\OOG\sqrt{\ln d}$, which 
proves Proposition~\ref{prop:below} in this last case as well.  
\end{proof}

\begin{proof}[Proof of Proposition~\ref{prop:t->1}]\ 

\framebox{Case: $p\in[-\infty,-1)$}
Take any $p$-AS shift $\s$ and any varying $t$ such that $t\to1$, whence $v:=|t-1|\to0$. 
Then $(ts)^2-s^2\to0$ for any varying $s$ such that $|s|<(5/v)^{1/4}$. 
Write 
\begin{equation*}
	\sum e^{-(ts_j)^2/2}=S_1(t)+S_2(t),
\end{equation*}
where
\begin{align*}
	S_2(t)&:=\sum e^{-(ts_j)^2/2}\ii{|s_j|\ge(5/v)^{1/4}} 
	\OO\sum e^{-(ts_j)^2/2}\ii{|ts_j|\ge(4/v)^{1/4}} \\
	&\OO d\,e^{-1/\sqrt v}<<d,\\
	S_1(t)&:=\sum e^{-(ts_j)^2/2}\ii{|s_j|<(5/v)^{1/4}} 
	\sim\sum e^{-s_j^2/2}\ii{|s_j|<(5/v)^{1/4}} \\
		&=\sum e^{-s_j^2/2}-S_2(1)
	=\sum e^{-s_j^2/2}+o(d). 
\end{align*}
Therefore and because $\sum e^{-s_j^2/2}\ii{|s_j|<(5/v)^{1/4}}\le d$, one has 
$\sum e^{-(ts_j)^2/2}=\sum e^{-s_j^2/2}+o(d)$. In view of the condition \eqref{eq:as iff} (for $p\in[-\infty,-1)$), it follows that $t\s$ is a $p$-AS shift. 

\framebox{Case: $p\in(-1,-\frac12)$}
Take any $p$-AS shift $\s$ and any varying $t$ such that $t\to1$. 
By Lemma~\ref{lem:lla}(iii,ii) and l'Hospital's rule for limits, $f_p'(s)/s$ decreases in $s\in(0,\infty)$ from $f_p''(0)=|p|\,\la_p(0)$ and remains nonnegative for such $s$. So, $|sf_p'(s)|\OO s^2$ and hence, in view of \eqref{eq:la_p(s)-la_p, s to0}, 
\begin{equation}\label{eq:sf'(s),f(s)}
	|sf_p'(s)|\OO f_p(s)
\end{equation}
for all $s$ in a neighborhood of $0$; the same holds for $|s|\OO1$, since the even function $f_p$ is strictly increasing and positive on $(0,\infty)$. It follows by the mean value theorem that for any varying $|s|\OO1$ there exists some varying $\xi$ between $t$ and $1$ such that 
\begin{equation}\label{eq:f(ts)/f(s)}
	\frac{f_p(ts)}{f_p(s)}=\exp[\ln f_p(ts)-\ln f_p(s)]=\exp\big(\tfrac{t-1}\xi\,\tfrac{\xi sf_p'(\xi s)}{f_p(\xi s)}\big)\to1.
\end{equation}
If now $|s|\to\infty$ then, by Lemma~\ref{lem:lla}(i), both $f_p(ts)$ and $f_p(s)$ converge to $\la_p(0)-\la_p(\infty-)\in(0,\infty)$. 
In view of the condition \eqref{eq:as iff} (for $p\in(-1,-\frac12)$), it follows that $t\s$ is a $p$-AS shift. 

\framebox{Case: $p=-1$}
This case is similar to the case $p\in(-1,-\frac12)$; here, the result follows by \eqref{eq:f(ts)/f(s)} and Lemma~\ref{lem:ttmu}(iv). 

\framebox{Case: $p=-\frac12$}
This case is quite similar to the case $p\in(-1,-\frac12)$.

\framebox{Case: $p\in(-\frac12,0)$}
This case too is quite similar to the case $p\in(-1,-\frac12)$.

\framebox{Case: $p\in(0,2)$}
This case is somewhat similar to the case $p\in(-1,-\frac12)$. Indeed, in view of \eqref{eq:f(ts)/f(s)}, it suffices to show that \eqref{eq:sf'(s),f(s)} holds 
over all $s\in\R$. For $|s|\OO1$, this follows as in the case for $p\in(-1,-\frac12)$. 
As for $|s|\to\infty$, \eqref{eq:sf'(s),f(s)} follows by Lemma~\ref{lem:lla}(ii), which yields $f_p(s)\sim\la_p(s)\sim|s|^p$ and $|f_p'(s)|=|\la_p'(s)|=p|\E|Z+s|^{p-1}\sign(Z+s)|\le p\la_{p-1}(s)\sim p|s|^{p-1}$.  

\framebox{Case: $p=0$}
This case is somewhat similar to the cases $p\in(-1,-\frac12)$ and $p\in(0,2)$. Here, instead of Lemma~\ref{lem:lla}(iii,ii,i), use Lemma~\ref{lem:ttla}(iii,ii) for $|s|\OO1$ and Lemma~\ref{lem:ttla}(ii,vii) for $|s|\to\infty$. 

\framebox{Case: $p\in[2,\infty)$}
This case is quite similar to the case $p\in(0,2)$. 

\framebox{Case: $p=\infty$}
Here we need to require that $|t-1|<<\frac1{\ln d}$. In view of \eqref{eq:f(ts)/f(s)} and the symmetry in $s$, it suffices to show that $|r|\OO\ln d$ over all $s\ge0$, where
\begin{equation*}
	r:=\frac{sf_\infty'(s)}{f_\infty(s)}
	=\frac{s\De_\vpi(s)}{\De_\Phi(s)\ln\De_\Phi(s)},
\end{equation*}
where 
$c:=c_{d,\al}$, $\De_\vpi(s):=\vpi(s+c)-\vpi(s-c)$, and $\De_\Phi(s):=\Phi(s+c)-\Phi(s-c)$. Consider the three subcases, depending on whether $s$ is to the left of $c$, to the right of $c+1$, or between $c$ and $c+1$. 

If $0\le s\le c$, then $\De_\Phi(s)\ge\De_\Phi(c)=\Phi(2c)-\Phi(0)\OOG1$, $-\ln\De_\Phi(s)\asymp1-\De_\Phi(s)\ge\PP(Z>c-s)\asymp\frac{\vpi(c-s)}{c-s+1}$, while $|\De_\vpi(s)|\le\vpi(c-s)$; hence and by \eqref{eq:c(d,al)}, in this subcase $|r|\OO(c-s+1)s\OO c^2\asymp\ln d$. 

If $s\ge c+1$, then $\De_\Phi(s)=\PP(Z>s-c)-\PP(Z>s+c)\asymp\PP(Z>s-c)\asymp\frac{\vpi(s-c)}{s-c}$,  $-\ln\De_\Phi(s)\asymp(s-c)^2$, while $|\De_\vpi(s)|\le\vpi(c-s)$; hence, in this subcase $|r|\OO\frac s{s-c}\le c+1\asymp\sqrt{\ln d}\OO\ln d$. 

Finally, if $c<s<c+1$, then $\De_\Phi(s)=\PP(Z>s-c)-\PP(Z>s+c)\break
\asymp\PP(Z>s-c)\in[\PP(Z>0),\PP(Z>1)]\subset(0,1)$,  $-\ln\De_\Phi(s)\asymp1$, while $|\De_\vpi(s)|\le\vpi(c-s)\OO1$; hence, in this subcase $|r|\OO s<c+1\OO\ln d$. 
\end{proof}

\begin{proof}[Proof of Proposition~\ref{prop:s->n,th}]
Let $\s_p$ be any $p$-AS shift in the direction of $\th_1$. 
By Proposition~\ref{prop:below}, for each $p\in[-\infty,\infty]$ there is a function $\N\ni d\mapsto\si_p(d)\in(0,\infty)$ such that eventually $\|\s_p\|\ge\si_p(d)$. 
Let $n_p(d)$ be as in Proposition~\ref{prop:NA}. 
W.l.o.g., the function $d\mapsto\tth_p(d)$ in Proposition~\ref{prop:NA} is such that $\tth_p(d)\le\si_p(d)/\sqrt{n_p(d)}$ for all $d$, whence \eqref{eq:n} will imply $n_p\ge n_p(d)$ provided that
$\|\th_1\|\le\tth_p(d)$, and so, by Proposition~\ref{prop:NA}, the pair $(n_p,\th_1)$ will be $p$-NA. 
Moreover, for such $\th_1$ and $t:=\sqrt{n_p}\,\frac{\|\th_1\|}{\|\s_p\|}$, \eqref{eq:n} implies  
\begin{equation*}
	0\le t^2-1\le\frac{\|\th_1\|^2}{\|\s_p\|^2}\le\frac{\tth_p(d)^2}{\si_p(d)^2},
\end{equation*}
so that $t$ goes to $1$ fast enough if $\tth_p(d)$ is sufficiently small. 
Therefore, by Proposition~\ref{prop:t->1}, $\sqrt{n_p}\,\th_1=t\s_p$ is a $p$-AS shift in the direction of $\th_1$. 
It follows now by Propositions~\ref{prop:n,th->s} and \ref{prop:AS shift} that 
the $p$-NA varying pair $(n_p,\th_1)$ is 
$p$-AS. 
Finally, the relations $\sqrt{n_p}\,\th_1=t\s_p$ and $t\to1$ yield $\|\s_p\|\sim\sqrt{n_p}\,\|\th_1\|$.   
\end{proof}

\begin{proof}[Proof of Proposition~\ref{prop:AS shift exist}]\ 

\framebox{(I)} Part (I) of the proposition follows because, as it is easy to see, the d.f.\ of the r.v.\ $\no{\Zd}_p$ is continuously increasing on $[0,\infty)$ from $0$ to $1$. 

\framebox{(II)} In view of Lemma~\ref{lem:mono}, part (II) of the proposition follows because 
for each $p\in[0,\infty]$, each $d\in\N$, each $\no{\cdot}_2$-unit vector $\uu\in\R^d$, and each $\zz\in(\R\setminus\{0\})^d$, one has $\no{\zz+t\uu}_p\to\infty$ and hence 
$\PP(\no{\Zd+t\uu}_p>c)\to1$ as $t\to\infty$. 

\framebox{(III)} Implication 
(III)(a)$\implies$(III)(c) follows immediately by Definition~\ref{def:AS shift} and Proposition~\ref{prop:AS shift}. Implication (III)(c)$\implies$(III)(b) follows by part (I) of Proposition~\ref{prop:AS shift exist}. 

To prove (III)(b)$\implies$(III)(a), assume that (III)(b) holds, that is, for some varying $c$ such that $\PP(\no{\Zd}_p>c)\to\al$ and some varying vector $\s$ in the direction of $\uu$ one has $\tbe:=\lim\PP(\no{\Zd+\s}_p>c)\ge\be$. 
If $\tbe=\be$, then, by definition, $\s$ is a $p$-AS shift. Otherwise, if $\tbe>\be$, then eventually  $\PP(\no{\Zd+\s}_p>c)>\be$. So, by Lemma~\ref{lem:mono}, for some varying $t\in(0,1)$ the vector $t\s$ is a $p$-AS shift. That is, in any case (III)(a) holds. 

Thus, (III)(a)$\iff$(III)(b)$\iff$(III)(c).  

To establish the equivalences (III)(a)$\iff$(III)(d)$\iff$(III)(e), we shall use Proposition~\ref{prop:AS shift iff}. In the case $p\in[-\infty,-1)$, observe that $\sum f_p(tu_j)=\sum e^{-t^2u_j^2/2}$ continuously decreases in $t\in[0,\infty)$ from $d$ to $d_0(\uu)$; so, one has (III)(a)$\iff$(III)(d)$\iff$(III)(e)  
because in this case $K_p\in(0,1)$. 
In the case $p=-1$, by Lemma~\ref{lem:ttmu}(i,ii), $\sum f_p(tu_j)$ continuously increases in $t\in[0,\infty)$ from $0$ to $[d-d_0(\uu)]\tmu_d(0)\sim[d-d_0(\uu)]\sqrt{\tfrac2\pi}\ln d$, whence in this case  the equivalences (III)(a)$\iff$(III)(d)$\iff$(III)(e) follow. The remaining two cases are similar to the case $p=-1$.   
\end{proof}

\begin{proof}[Proof of Proposition~\ref{prop:are=a_p2}]
\ 

\textbf{Case 1:} \emph{there is no $(p,2)$-NAAS triple $(n_p,n_2,\th_1)$ with $\th_1$ in the direction of $\uu$}.\quad 
By virtue of parts (II) of Propositions~\ref{prop:AS exist} and \ref{prop:AS shift exist} and the equivalences (III)(a)$\iff$(III)(d) in these propositions, the condition defining Case~1 can be restated as ``there is no $p$-AS shift $\s$ in the direction of $\uu$''. Therefore and by Definitions~\ref{def:are} and \ref{def:a_p2}, in Case~1 both $\are_{p,2,\uu}$ and $a_{p,2,\uu}$ exist and equal $0$. 

\textbf{Case 2:} \emph{there is a $(p,2)$-NAAS triple $(n_p,n_2,\th_1)$ with $\th_1$ in the direction of $\uu$.}\quad  
This is equivalent to ``there is a $p$-AS shift $\s$ in the direction of $\uu$'' as well as to ``there are $p$-AS and $2$-AS shifts $\s_p$  and $\s_2$ in the direction of $\uu$''. 

Suppose now that $\are_{p,2,\uu}$ exists. By Definition~\ref{def:are}, in Case~2 this means that $\lim\frac{n_2}{n_p}$ exists in $[0,\infty]$ for any $(p,2)$-NAAS triple $(n_p,n_2,\th_1)$ with $\th_1$ in the direction of $\uu$. 
Take now any $p$-AS and $2$-AS shifts $\s_p$  and $\s_2$ in the direction of $\uu$. 
Then, by Proposition~\ref{prop:s->n,th}, there is a $(p,2)$-NAAS triple $(n_p,n_2,\th_1)$ with $\th_1$ in the direction of $\uu$ such that $\|\s_p\|\sim\sqrt{n_p}\,\|\th_1\|$ and $\|\s_2\|\sim\sqrt{n_2}\,\|\th_1\|$, whence $\frac{\|\s_2\|^2}{\|\s_p\|^2}\sim\frac{n_2}{n_p}$. 
It follows, by Definitions~\ref{def:a_p2} and \ref{def:are}, that $a_{p,2,\uu}$ exists and equals $\are_{p,2,\uu}$. 

Vice versa, suppose that 
$a_{p,2,\uu}$ exists. Then, by Definition~\ref{def:a_p2} and because there are $p$-AS and $2$-AS shifts $\s_p$  and $\s_2$ in the direction of $\uu$, one concludes that $\lim\frac{\|\s_2\|^2}{\|\s_p\|^2}$ exists in $[0,\infty]$, and is the same, for any such shifts $\s_p$  and $\s_2$. 
Take now any $(p,2)$-NAAS triple $(n_p,n_2,\th_1)$ with $\th_1$ in the direction of $\uu$. 
Then, by Proposition~\ref{prop:n,th->s}, the vectors $\s_p:=\sqrt{n_p}\,\th_1$ and $\s_2:=\sqrt{n_2}\,\th_1$ will, respectively, be $p$-AS and $2$-AS shifts in the direction of $\uu$; at that, obviously, one will have  $\frac{n_2}{n_p}=\frac{\|\s_2\|^2}{\|\s_p\|^2}$. 
So, by Definitions~\ref{def:are} and \ref{def:a_p2}, $\are_{p,2,\uu}$ exists and equals $a_{p,2,\uu}$. 
\end{proof}

\begin{proof}[Proof of Proposition~\ref{prop:a_p2 schur2}] \ 

Consider the case $p\in[-\infty,2]$. Assume, to the contrary, that $a_{p,2,\uu}<a_{p,2,\vv}$. In particular, this implies that $a_{p,2,\vv}>0$. So, by the last sentence of Definition~\ref{def:a_p2}, there is a $p$-AS shift in the direction of $\vv$. 

Consider first the subcase when there is a $p$-AS shift in the direction of $\uu$ as well. Then for each $r\in\{2,p\}$ and each $\ww\in\{\uu,\vv\}$ there is an $r$-AS shift $\s_{r,\ww}=t_{r,\ww}\ww$, with some varying $t_{r,\ww}\in(0,\infty)$; that is (recall Proposition~\ref{prop:AS shift}), for some (or, equivalently, any) varying $c_r$ such that $\PP(\no{\Zd}_r>c_r)\to\al$ one has 
$\PP(\no{\Zd+t_{r,\ww}\ww}_r>c_r)\to\be$. 
By \eqref{eq:sim K p=2} -- which, quite independently, will be proved later, one has $\|\s_{2,\uu}\|\sim\|\s_{2,\vv}\|$ or, equivalently, $t_{2,\uu}\sim t_{2,\vv}$. So, the assumption $a_{p,2,\uu}<a_{p,2,\vv}$ implies that eventually 
$\|\s_{p,\vv}\|<\|\s_{p,\uu}\|$ or, equivalently, $t_{p,\vv}<t_{p,\uu}$. 
Therefore, by Lemmas~\ref{lem:schur2} and \ref{lem:mono}, eventually 
\begin{equation}\label{eq:schur2,be}
\begin{aligned}
\be\approx\PP(\no{\Zd+t_{p,\uu}\uu}_p>c_p)
\ge\PP(\no{\Zd+t_{p,\uu}\vv}_p>c_p)
\ge\PP(\no{\Zd+t_{p,\vv}\vv}_p>c_p)\approx\be, 	
\end{aligned}	
\end{equation}
whence $\PP(\no{\Zd+t_{p,\uu}\vv}_p>c_p)\approx\be$, so that $\tilde\s_{p,\vv}:=t_{p,\uu}\vv$ is a $p$-AS shift. 
It follows that 
\begin{equation*}
	\lim\frac{t_{2,\vv}^2}{t_{p,\uu}^2}
	=\lim\frac{\|s_{2,\vv}\|^2}{\|\tilde\s_{p,\vv}\|^2}
	=a_{p,2,\vv}>a_{p,2,\uu}
	=\lim\frac{\|s_{2,\uu}\|^2}{\|\s_{p,\uu}\|^2}
	=\lim\frac{t_{2,\uu}^2}{t_{p,\uu}^2},
\end{equation*}
which contradicts the relation $t_{2,\uu}\sim t_{2,\vv}$. 

Consider now the other subcase of the case $p\in[-\infty,2]$, when there is no $p$-AS shift in the direction of $\uu$. Then, by the equivalence (III)(a)$\iff$(III)(b) in 
Proposition~\ref{prop:AS shift exist}, 
$\limsup\PP(\no{\Zd+t_{p,\vv}\uu}_p>c_p)<\be,$ 
so that eventually $\PP(\no{\Zd+t_{p,\vv}\uu}_p>c_p)<\tbe$ for some constant $\tbe\in(0,\be)$. Hence (cf.\ \eqref{eq:schur2,be})
\begin{equation*}
\begin{aligned}
\tbe>\PP(\no{\Zd+t_{p,\vv}\uu}_p>c_p)
\ge\PP(\no{\Zd+t_{p,\vv}\vv}_p>c_p)\approx\be, 	
\end{aligned}	
\end{equation*}
which is a contradiction. 

The case $p\in[2,\infty]$ is similar to the case $p\in[-\infty,2]$, and even simpler, because for any $p\in[2,\infty]$ and any varying direction, there is a $p$-AS shift in that direction, by Proposition~\ref{prop:AS shift exist}(II). 
\end{proof}

\begin{proof}[Proof of Proposition~\ref{prop:sARE}]\ 

\framebox{Case: $p\in[-\infty,-\frac12]$} 
This case follows by relation \eqref{eq:sim K p=2}, to be proved later, and Proposition~\ref{prop:below}. 

\framebox{Case: $p\in(-\frac12,0)$}
Let $\s$ be any $p$-AS shift in the direction of some $\no{\cdot}_2$-unit vector $\uu\in\R^d$. 

Consider first the subcase $\no{\uu}_{p,2}<<d^{1/4}$ \big(of the current case $p\in(-\frac12,0)$\big). 

In the subsubcase $0\ne\no{\uu}_{p,2}<<d^{1/4}$, introduce the vector $\vv:=\uu/\no\uu_{p,2}$. Then $\sum f_p(v_j)=K_p d^{1/2}$, by Definition~\ref{def:orl} and because $f_p(s)$ is continuously increasing in $|s|$ (by Lemma~\ref{lem:lla}(i)). 
On the other hand, $\sum f_p(s_j)\sim K_p d^{1/2}$, by Proposition~\ref{prop:AS shift iff}. 
Also, $\sum v_j^2=\|\vv\|^2=\|\uu\|^2/\no\uu_{p,2}^2=d/\no\uu_{p,2}^2>>d^{1/2}$, by the subcase assumption $\no{\uu}_{p,2}<<d^{1/4}$. Therefore, by Lemma~\ref{lem:s,v} (with $S:=K_p d^{1/2}$), one has $\|\s\|^2=\sum s_j^2>>d^{1/2}$, whence, by \eqref{eq:sim K p=2}, $a_{p,2,\uu}=0$. 

Otherwise, one has the subsubcase $\no{\uu}_{p,2}=0$; that is, $\sum f_p(u_j/u)\le K_p d^{1/2}$ for all $u\in(0,\infty)$. At that, one still has $\sum f_p(s_j)\sim K_p d^{1/2}$, while $s_j=tu_j$ for some varying $t\in(0,\infty)$ and all $j$. One can find a varying $u\in(0,\infty)$ so small as $tu\le1$ and $\sum v_j^2>>d^{1/2}$, with $v_j:=u_j/u$. 
Then $|s_j|=t|u_j|=tu|v_j|\le|v_j|$ for all $j$. Therefore and because $f_p(s)$ is increasing in $|s|$, one has 
$K_p d^{1/2}\sim\sum f_p(s_j)\le\sum f_p(v_j)=\sum f_p(u_j/u)\le K_p d^{1/2}$, which yields $\sum f_p(v_j)\sim K_p d^{1/2}$. Now, using Lemma~\ref{lem:s,v} again, one concludes that $a_{p,2,\uu}=0$ -- in this subsubcase of the subcase $\no{\uu}_{p,2}<<d^{1/4}$ as well. 

It remains to consider the subcase $\no{\uu}_{p,2}\OOG d^{1/4}$. Of course, this assumption excludes the possibility $\no{\uu}_{p,2}=0$. So, reasoning quite similarly to the subsubcase $0\ne\no{\uu}_{p,2}<<d^{1/4}$, one concludes that, in the current subcase $\no{\uu}_{p,2}\OOG d^{1/4}$, the value of $a_{p,2,\uu}$ is strictly positive -- whenever it exists. 

Thus and in view of Proposition~\ref{prop:a_p2 schur2}, to complete the the proof of Proposition~\ref{prop:sARE} for $p\in(-\frac12,0)$, it suffices to show that the limit $a_{p,2}$ exists and equals $a_p$ for any $p$- and $2$-sufficient shifts $\s_p$ and $\s_2$ in the direction of vector $\one_d=(1,\dots,1)$. 
Let indeed $\s:=\s_p$ be any $p$-sufficient shift of the form $(s,\dots,s)$, with $s\ge0$. Then, by Proposition~\ref{prop:AS shift iff}, 
$\la_p(0)-\la_p(s)\sim K_p\, d^{-1/2}$, whence, by Lemma~\ref{lem:lla}(i,ii), $s\to0$ and 
\begin{equation}\label{eq:OOG,p in(-frac12,0)}
	\|\s\|^2\sim\tfrac2{|p|\la_p(0)}\,K_p\, d^{1/2}. 
\end{equation}
Proposition~\ref{prop:sARE} \big(for $p\in(-\frac12,0)$\big) now follows by \eqref{eq:sim K p=2} and \eqref{eq:la_p}, since 
\begin{equation}\label{eq:la_p(0)}
	\la_p(0)=\frac{2^{p/2}\Ga(\frac{p+1}2)}{\sqrt\pi}
\end{equation}
for all $p\in(-1,\infty)$, and so, by \eqref{eq:a(p)}, 
\begin{equation}\label{eq:=a_p}
	\tfrac{K_2}{K_p}\,\tfrac{|p|\la_p(0)}2=a_p.
\end{equation}
 
\framebox{Case: $p\in(0,2)$}
Here we first note that, by Lemma~\ref{lem:h}, $g_p(s)\asymp f_p(s)$ over all $s\in\R$.  
Therefore, the current case $p\in(0,2)$ is quite similar to the case $p\in(-\frac12,0)$, and even a bit simpler -- since $\no{\uu}_{p,2}=0$ is impossible for $p\in(0,2)$, because then $g_p(s)\to\infty$ as $|s|\to\infty$. 
Here use the relations $\sum g_p(v_j)=K_p d^{1/2}$ and $\sum g_p(s_j)\asymp K_p d^{1/2}$  instead of $\sum f_p(v_j)=K_p d^{1/2}$ and $\sum f_p(s_j)\sim K_p d^{1/2}$, and then use Lemma~\ref{lem:s,v,ge0} instead of Lemma~\ref{lem:s,v}.  

\framebox{Case: $p=0$}
This case is quite similar to the case $p\in(0,2)$. 

\framebox{Case: $p=2$} 
This follows immediately 
from \eqref{eq:as iff} 
-- because $\la_2(s)-\la_2(0)=s^2$, and so,  
\begin{equation}\label{eq:sim K p=2}
	\|\s\|^2\sim K_2\,d^{1/2} \text{ for any $2$-AS shift $\s$.}
\end{equation}
 
\framebox{Case: $p\in(2,\infty)$} By Lemma~\ref{lem:h}, the condition \eqref{eq:as iff} implies 
\begin{equation}\label{eq:S2+Sp asymp}
	V_2+V_p\asymp\sqrt d. 
\end{equation}
Consider first the subcase  
$\no\uu_p>>d^{(p-2)/(4p)}$.   
Let us show that then \eqref{eq:as iff} implies 
$V_2<<V_p$, where $V_p$ is defined in \eqref{eq:S}. Indeed, otherwise w.l.o.g.\ for some varying vector $\s$ in the direction of $\uu$ with $\no\uu_p^p>>d^{(p-2)/4}$ 
one would have $V_p\OO V_2$, and then \eqref{eq:S2+Sp asymp} would imply $V_2\asymp\sqrt d$, that is, $d\,\no\s_2^2\asymp\sqrt d$, or $\no\s_2\asymp d^{-1/4}$, and then $V_p/V_2=\no\s_2^{p-2}\no\uu_p^p>>1$. This contradiction shows that $V_2<<V_p$. Now \eqref{eq:S2+Sp asymp} yields 
\begin{equation*}
	\|\s\|^2=V_2<<V_p\le V_2+V_p\asymp\sqrt d. 
\end{equation*}
Thus, in view of \eqref{eq:sim K p=2}, $a_{p,2,\uu}=\infty$. 
 
Next, consider 
the subcase $\no\uu_p<<d^{(p-2)/(4p)}$.  
Note that \eqref{eq:S2+Sp asymp} implies $V_2\OO d^{1/2}$, that is, $\no\s_2\OO d^{-1/4}$. 
So, $V_p/V_2=\no\s_2^{p-2}\no\uu_p^p<<1$, whence $V_p<<V_2\le V_2+V_p\asymp\sqrt d$, by \eqref{eq:S2+Sp asymp}. 
Next, in view of \eqref{eq:la_p(s)-la_p, s to infty}, 
\begin{equation*}
	\sum\big(\la_p(s_j)-\la_p(0)\big) \ii{|s_j|>\vp} 
	\OO\sum|s_j|^p=V_p<<\sqrt d
\end{equation*}
for any fixed $\vp>0$, so that \eqref{eq:as iff} implies 
\begin{equation}\label{eq:sim C}
	\sum\big(\la_p(s_j)-\la_p(0)\big) \ii{|s_j|\le\vp} 
	\sim K_p\,\sqrt d. 
\end{equation} 
On the other hand, by \eqref{eq:la_p(s)-la_p, s to0}, \eqref{eq:la_p(s)-la_p>}, and \eqref{eq:as iff}, 
\begin{multline*}
	\sum\big(\la_p(s_j)-\la_p(0)\big) \ii{|s_j|\le\vp} 
	=C_1(\vp)\sum s_j^2 \ii{|s_j|\le\vp} \\
	\le C_1(\vp)V_2
	\le\tfrac{C_1(\vp)}{C_1(0)}\,\sum\big(\la_p(s_j)-\la_p(0)\big)
	\sim K_p\,\sqrt d,  
\end{multline*}
for some varying $C_1(\vp)$ such that $C_1(\vp)\to C_1(0):=\tfrac p2\,\la_p(0)$ as $\vp\downarrow0$. 
Comparing this with \eqref{eq:sim C}, one concludes that 
$C_1(0)V_2\sim K_p\,\sqrt d$, that is, \eqref{eq:OOG,p in(-frac12,0)} holds. 
So, by \eqref{eq:sim K p=2}, 
$a_{p,2,\uu}=a_p$ 
for all $p\in(2,\infty)$ -- under the condition $\no\uu_p^p<<d^{(p-2)/4}$. 	

\framebox{Case: $p=\infty$} 
Let $\s$ be an $\infty$-AS shift in the direction of a $\no\cdot$-unit vector $\uu$.   
First here, consider the subcase when $\no\uu_\infty<<d^{1/4}\,\sqrt{\ln d}$. 
Then 
\begin{align}
	d\,f(0)&=-\ln(1-\al)+o(1), \notag
	\\
	\sum f(s_j)&=-\ln(1-\be)+o(1), \label{eq:be}
\end{align}
where 
\begin{equation*}
	f(s):=-\ln\PP(|Z+s|\le c)  
\end{equation*}
for some varying $c$. 
As shown in the proof of Proposition~\ref{prop:AS shift iff} for $p=\infty$, one must have 
$
	c\sim\sqrt{2\ln d}. 
$ 
Introduce now $v_j:=s_j^2$, so that $\sum v_j=\|\s\|^2$ and $f(s_j)=g(v_j)$, where $g(v):=f(\sqrt v)$. At that, the function $g$ is 
convex on $[0,\infty)$, by Lemma~\ref{lem:lla}(iii).  

Now, to obtain a contradiction, assume that the conclusion that $a_{\infty,2,\uu}=0$ does not hold in this subcase. Then, in view of \eqref{eq:sim K p=2}, w.l.o.g.\ $\|\s\|^2\OO d^{1/2}$, which can be rewritten in each of the following two forms: $\no\s^2\OO d^{-1/2}$ and $\sum v_j\OO d^{1/2}$. 
So,  
the condition $\no\uu_\infty<<d^{1/4}\,\sqrt{\ln d}$ implies 
$\|\s\|_\infty=\no\s_\infty=\no\s_2\no\uu_\infty<<\sqrt{\ln d}$. 
Thus, $\|\s\|_\infty\le B$ for some variable $B$ such that $1\le B<<\sqrt{\ln d}$. 
Recalling now \eqref{eq:be} and the fact that the function $g$ is convex and nonnegative, one has 

\begin{multline*}
	-\ln(1-\be)\approx\sum g(v_j) 
	\le\sum \big[g(0)+\tfrac{v_j}{B^2}\,\big(g(B^2)-g(0)\big)\big] \\
	\le d\,g(0)+O\big(d^{1/2}\,g(B^2)\big)\approx-\ln(1-\al)+O\big(d^{1/2}\,g(B^2)\big), 
\end{multline*}
whence $g(B^2)\OOG d^{-1/2}$. 
On the other hand, recalling that $c\sim\sqrt{2\ln d}$ and $B<<\sqrt{\ln d}$, one obtains the sought contradiction: 
\begin{multline*}
	g(B^2)=-\ln\PP(|Z+B|\le c)
	\sim\PP(|Z+B|>c)\\
	=e^{-c^2/2(1+o(1))}=d^{-(1+o(1))}<<d^{-1/2}. 
\end{multline*}

Finally, consider the subcase when $\no\uu_\infty>>d^{1/4}\,\sqrt{\ln d}$. 
To obtain a contradiction, assume that the conclusion that $a_{\infty,2,\uu}=\infty$ does not hold in this subcase. Then, similarly to the previous subcase, in view of \eqref{eq:sim K p=2}, w.l.o.g.\ $\|\s\|^2\OOG\sqrt d$, whence $\|\s\|_\infty>>\sqrt{\ln d}$, so that $\|\s\|_\infty\ge2\sqrt{\ln d}$ eventually. By \eqref{eq:be} and the condition $c\sim\sqrt{2\ln d}$, 
\begin{multline*}
	-\ln(1-\be)\approx\sum f(s_j)\ge f(\|\s\|_\infty)\ge f(2\sqrt{\ln d}) \\
	=-\ln\PP\big(\big|Z-2\sqrt{\ln d}\,\big|\le\sqrt{(2+o(1))\ln d}\,\big)\to\infty,
\end{multline*}
a contradiction. 
\end{proof}

\begin{proof}[Proof of Proposition~\ref{prop:a_p2 range}]\ 

\framebox{(I)} The equality $a_{p,2,\one}=a_p$ for all $p\in[-\infty,\infty]\setminus\{2\}$ follows by Proposition~\ref{prop:sARE}, taking also into account the extended definition of $a_p$ for $p\in[-\infty,\infty]\setminus(-\frac12,\infty)$ as given in Proposition~\ref{prop:a_p}. 
The equality $a_{p,2,\sqrt d\ee_1}=\infty$ for all $p\in(2,\infty]$ also follows by Proposition~\ref{prop:sARE}. 
The equality $a_{p,2,\sqrt d\ee_1}=0$ for all $p\in[-\infty,2)$ follows by Proposition~\ref{prop:sARE} and \eqref{eq:min}, since $\frac{p-1}{2p}<\frac14$ for $p\in(0,2)$. 
The equality $a_{p,2,\uu}=1$ for $p=2$ and all $\uu$ follows by Proposition~\ref{prop:sARE} as well. 
As for the inequalities in part (I) of Proposition~\ref{prop:a_p2 range}, they follow by Proposition~\ref{prop:a_p2 schur2}.  

\framebox{(II)} The proof of part (II) of Proposition~\ref{prop:a_p2 range} will be done depending on a set of values of $p$. 

\framebox{Case: $p\in[-\infty,-\frac12]$} This case follows by Proposition~\ref{prop:sARE}.  

\framebox{Case: $p\in(-\frac12,0)$}
Consider vectors $\s$ of the form 
\begin{equation}\label{eq:sk}
(\underbrace{s,\dots,s}_k,\underbrace{0,\dots,0}_{d-k}) 	
\end{equation}
with $s\in(0,\infty)$ and $k\in\{1,\dots,d\}$. 
To begin, let $s$ be fixed and then let $k$ be varying so that 
\begin{equation}\label{eq:k}
k\sim K_p\,d^{1/2}/f_p(s),	
\end{equation}
whence  
\begin{equation*}
	\|\s\|^2=ks^2\sim K_p\,d^{1/2}\,\frac{s^2}{f_p(s)} 
\end{equation*}
and, by Proposition~\ref{prop:AS shift iff}, the vector $\s$ is a $p$-AS shift. 
Observe that the ratio $\frac{f_p(s)}{s^2}$ is continuous in $s\in(0,\infty)$; also, in view of Lemma~\ref{lem:lla}(ii), this ratio tends to $0$ as $s\to\infty$ and to $\frac{|p|\la_p(0)}2$ as $s\to0$. 
On the other hand, if $t\s$ is a $2$-AS shift in the direction of the vector $\s$, then $kt^2s^2\sim K_2d^{1/2}$, whence $\frac{\|t\s\|^2}{\|\s\|^2}\sim\frac{K_2}{K_p}\frac{f_p(s)}{s^2}$. 
By \eqref{eq:=a_p}, 
it follows that for each $p\in(-\frac12,0)$ the values of $a_{p,2,\cdot}$ fill the interval $(0,a_p)=\big(a_{p,2,\sqrt{d}\ee_1},a_{p,2,\one}\big)$. 

Now -- instead taking a fixed value of $s\in(0,\infty)$ -- let $s$ tend to $0$ slowly enough so that 
there still be a varying integer $k\in[1,d]$ such that \eqref{eq:k} holds. 
Then one obtains the limit $a_{p,2}$ equal to $a_p$. Finally, if one lets $s$ tend to $\infty$, then one has $a_{p,2}=0$.  
Thus, the values of $a_{p,2,\cdot}$ fill the entire interval $[0,a_p]$. 

\framebox{Case: $p\in(0,2)$}
This case is quite similar to the case $p\in(-\frac12,0)$.

\framebox{Case: $p=0$}
This case too is quite similar to the case $p\in(-\frac12,0)$. 
Here, instead of Lemma~\ref{lem:lla}(ii), use Lemma~\ref{lem:ttla}(ii). 

\framebox{Case: $p=2$} This case follows by part (I) of Proposition~\ref{prop:a_p2 range}. 

\framebox{Case: $p\in(2,\infty)$}
This case as well is similar to the case $p\in(-\frac12,0)$, but with the different range of values of $a_{p,2}$; also, in this case $\frac{f_p(s)}{s^2}$ tends to $\infty$, rather than to $0$, as $|s|\to\infty$.

\framebox{Case: $p=\infty$} 
Consider again vectors of the form \eqref{eq:sk}, but now with 
\begin{equation}\label{eq:s,k,infty}
	s=\la\sqrt{\ln d}\quad\text{and}\quad k\sim Ld^{1/2}/s^2,
\end{equation}
where $L$ is an arbitrary positive real constant, while $\la$ takes values in $(0,\sqrt2)$, is possibly varying, but bounded away from $0$ and $\sqrt2$ \big(cf.\ \eqref{eq:c(d,al)}, which implies $c_{d,\al}\sim\sqrt{2\ln d}$\,\big).  
Then $k\asymp d^{1/2}/\ln d$, and so, $k\in[1,d]$ eventually, for large enough $d$. 
Recalling also \eqref{eq:la infty} and letting for brevity $c:=c_{d,\al}$, one has  
\begin{equation}\label{eq:la infty sim}
\begin{aligned}
	\la_{\infty;d}(s)
	&=-\ln\PP(|Z+s|\le c)\sim\PP(|Z+s|>c)\sim\PP(Z>c-s)\sim\frac{\vpi(c-s)}{c-s} \\
	&\sim\frac1{\sqrt{2\pi}(\sqrt2-\la)\sqrt{\ln d}}\,d^{-(\sqrt2-\la)^2/(2+o(1))}=d^{-(\sqrt2-\la)^2/(2+o(1))}.  
\end{aligned}	
\end{equation}
Similarly, $\la_{\infty;d}(0)=d^{-(1+o(1))}$, so that
\begin{align*}
	f_{\infty;d}(s):=\la_{\infty;d}(s)-\la_{\infty;d}(0)=d^{-(\sqrt2-\la)^2/(2+o(1))}, 
\end{align*}
whence, for a fixed value of $\la$, 
\begin{align*}
	\frac{f_{\infty;d}(s)}{s^2}\,d^{1/2}\longrightarrow
	\left\{
	\begin{aligned} 
0 &\text{ if } \la\in(0,\sqrt2-1), \\
\infty &\text{ if } \la\in(\sqrt2-1,\sqrt2). 
	\end{aligned} 
	\right.
\end{align*}
Hence, by the continuity of $f_{\infty;d}(s)$ in $s$, there exists some varying $\la$ (necessarily converging to $\sqrt2-1$) such that for $s$ as in \eqref{eq:s,k,infty} one has 
\begin{equation*}
	\frac{f_{\infty;d}(s)}{s^2}\,d^{1/2}=\frac{K_\infty}L, 
\end{equation*}
so that \eqref{eq:s,k,infty} yields 
\begin{equation*}
	\sum f_{\infty;d}(s_j)=kf_{\infty;d}(s)\sim K_\infty. 
\end{equation*}
So, by Proposition~\ref{prop:AS shift iff}, $\s$ is an $\infty$-AS shift; at that, 
\begin{equation*}
	\|\s\|^2=ks^2\sim Ld^{1/2}. 
\end{equation*}
Comparing this with relation \eqref{eq:sim K p=2} for $2$-AS shifts and recalling that $L$ is an arbitrary positive real constant, one concludes that the range of the values of $a_{\infty,2,\cdot}$ contains the interval $(0,\infty)$. To complete the proof of Proposition~\ref{prop:a_p2 range}, it remains to refer to the last two lines of formula \eqref{eq:are,d->infty} \big(with $a_{\infty,2,\uu}$ in place of $\are_{\infty,2,\uu}$, as in the already proved Propositions~\ref{prop:sARE} and \ref{prop:u range}\big).    
\end{proof}

\section{Proofs of the lemmas}\label{proofs of lemmas}

\begin{proof}[Proof of Lemma~\ref{lem:vpi}]
Suppose that indeed $v>\max_{j=1}^d|v_j|$ and $v\to0$.  
Write
\begin{equation*}
	\sum\vpi(s_j+v_j)=S_1(\vv)+S_2(\vv),
\end{equation*}
where
\begin{align*}
	S_2(\vv)&:=\sum\vpi(s_j+v_j)\ii{|s_j|\ge\sqrt{3/v}} \\
	&\OO\sum\vpi(s_j+v_j)\ii{|s_j+v_j|\ge\sqrt{2/v}}
	\OO d\,e^{-1/v},\\
	S_1(\vv)&:=\sum\vpi(s_j+v_j)\ii{|s_j|<\sqrt{3/v}} \\
	&=\sum\vpi(s_j)e^{-s_jv_j}e^{-v_j^2/2}\ii{|s_j|<\sqrt{3/v}} \\
	&\sim\sum\vpi(s_j)\ii{|s_j|<\sqrt{3/v}} \\
	&=\sum\vpi(s_j)-S_2(\0)
	=\sum\vpi(s_j)+O(d\,e^{-1/v}). 
\end{align*}
Now the lemma follows.  
\end{proof}

\begin{proof}[Proof of Lemma~\ref{lem:lla}] By the symmetry, w.l.o.g.\ $s>0$.  \\
\framebox{(i)} 
Part (i) of the lemma follows because 
$\la_p(s)=\int_0^\infty\PP(|Z-s|^p>c)\,\dd c$ and, as it is easy to see, $\PP(|Z-s|\le c)$ is strictly decreasing in $s$ for each $c\in(0,\infty)$. 

\framebox{(ii)}
Note that 
$s^{-p}\,\la_p(s)=\La_1+\La_2+\La_3$, 
where, over all $s>1$,  
\begin{align*}
	\La_1&:=\E|1+\tfrac Zs|^p\ii{|Z|>2s}
	\OO s^{-p}\E|Z|^p\ii{|Z|>2s}\OO s^{-2}, \\
	\La_2&:=\E|1+\tfrac Zs|^p\ii{2s\ge|Z|>\tfrac s2}
	\le\vpi(\tfrac s2)\int|1+\tfrac zs|^p\ii{2s\ge|z|>\tfrac s2}\dd z\OO s^{-2}, \\
%
	\La_3&:=\E(1+\tfrac Zs)^p\ii{|Z|\le \tfrac s2}
	=\E\Big(1+p\tfrac Zs+O\big(\tfrac{Z^2}{s^2}\big)\Big)\ii{|Z|\le \tfrac s2} \\
	&=\E\big(1+p\tfrac Zs)-\E\big(1+p\tfrac Zs)\ii{|Z|>\tfrac s2}+O\big(\E \tfrac{Z^2}{s^2}\big)
	=1+O(s^{-2}). 
\end{align*}
So, \eqref{eq:la_p(s)-la_p, s to infty} follows. 

Next, 
\begin{equation}\label{eq:la_p(s)}
\la_p(s)=\int_0^\infty x^p\,[\vpi(s+x)+\vpi(s-x)]\,\dd x. 	
\end{equation}
Differentiating here in $s$
, one easily finds that $\la'_p(0)=0$ and $\la''_p(0)=\la_{p+2}(0)-\la_p(0)=p\la_p(0)$, by \eqref{eq:la_p(0)}. 
This implies \eqref{eq:la_p(s)-la_p, s to0}. 


\framebox{(iii)} 
Consider first the case $p\in(-1,\infty)\setminus\{0,2\}$. 
Differentiating in $s$ under the integral in \eqref{eq:la_p(s)} and then integrating by parts, one has
\begin{equation}\label{eq:la'}
\begin{aligned}
	\la'_p(s)&=\int_0^\infty x^p\,[\vpi'(s+x)+\vpi'(s-x)]\,\dd x \\
	&=-p\,\int_0^\infty x^{p-1}\,[\vpi(s+x)-\vpi(s-x)]\,\dd x, 
\end{aligned}
\end{equation}
whence
\begin{align*}
	\la''_p(s)&=-p\,\int_0^\infty x^{p-1}\,[\vpi'(s+x)-\vpi'(s-x)]\,\dd x, \\
	s^2\,\frac{\dd}{\dd s}\frac{\la'_p(s)}s
	&=s\la''_p(s)-\la'_p(s) \\
	&=p\,\int_0^\infty x^{p-1}\,[\vpi(s+x)-\vpi(s-x)-s\vpi'(s+x)+s\vpi'(s-x)]\,\dd x. 
\end{align*}	
Using now the identities $\vpi'(u)=-u\vpi(u)$ and $\vpi(s\pm x)=e^{-s^2/2}\vpi(x)e^{\mp sx}$, and then expanding $e^{\mp sx}$ into powers of $s$, one obtains 
\begin{align}
e^{s^2/2}\frac{s^2}2\frac{\dd}{\dd s}\frac{\la'_p(s)}s
&=
p\,\int_0^\infty x^{p-1}\,\sum _{n=1}^{\infty } \frac{2n s^{2n+1}}{(2n+1)!}\,[x^{2n+1}-(2n+1)x^{2n-1}]\,
\vpi(x)\,\dd x \notag\\ 
&=
p\,\sum _{n=1}^{\infty } \frac{n s^{2n+1}}{(2n+1)!}\,[\la_{p+2n}(0)-(2n+1)\la_{p+2n-2}(0)] \notag\\ 
&=
p(p-2)\,\sum _{n=1}^{\infty } \frac{n s^{2n+1}}{(2n+1)!}\,\la_{p+2n-2}(0), \label{eq:Dla'(s)/s}
\end{align}
since $\la_{p+2n}(0)=(p + 2 n - 1)\la_{p+2n-2}(0)$. 
So, for any $s\in(0,\infty)$, the sign of $\frac{\dd}{\dd s}\frac{\la'_p(s)}s$ is the same as that of $p(p-2)$. 
This implies the stated monotonicity patterns for $\frac{\la'_p(s)}s$, depending on whether $p\in(0,2)$ or not. 
The same monotonicity patterns hold for $\frac{\la_p(s)-\la_p(0)}{s^2}$, 
by the special l'Hospital-type rule for monotonicity -- see e.g.\ \cite[Proposition~4.1]{pin06}. 
To complete the consideration of this case, it remains to note that 
$\frac\partial{\partial u}\la_p(\sqrt u)=\frac{\la_p'(\sqrt u)}{2\sqrt u}$ for $u>0$. 

In the remaining case $p=\infty$, one can deduce part (iii) of Lemma~\ref{lem:lla} from Lemma~\ref{lem:schur2}. 
Alternatively, one can do this directly by calculus, as follows. Recalling \eqref{eq:la infty}, one has $\frac{\la_\infty'(s)}s=\frac{F(s)}{G(s)}$ for $s>0$, where $F(s):=\frac1s\,[\vpi(s-c)-\vpi(s+c)]$, $G(s):=\PP(|Z+s|\le c)$, and $c:=c_{d,\al}$. Next, for $\rho(s):=\frac{F'(s)}{G'(s)}$, one has $\rho'(s)s^3(e^{2cs}-1)^2=H(cs)$, where $H(u):=e^{4 u} (u-2)+4 e^{2 u} \left(u^2+1\right)-u-2$. 
Note also that $H(0)=H'(0)=0$, $H''(u)=8e^{4 u}H_2(u)$, $H_2(u):=e^{-2 u} \left(2 u^2+4 u+3\right)+2 u-3$, $H_2(0)=H_2'(0)=0$, and $H_2''(u)=8e^{-2 u} u^2$. It follows that $H>0$ and hence $\rho'>0$ on $[0,\infty)$. Now it remains to again refer to the special l'Hospital-type rule for monotonicity.

\framebox{(iv)} Note that inequality \eqref{eq:la_p(s)-la_p>} turns into an equality if $p=2$ or $s=0$. 
Next \big(cf.\ \eqref{eq:la'}\big) 
\begin{equation*}
\frac{\la'''_p(s)}{p(p-1)(p-2)}
=\int_0^\infty x^{p-3}\,[\vpi(x-s)-\vpi(x+s)]\,\dd x>0	
\end{equation*}
for all $p>2$ and $s>0$, since $\vpi(x-s)>\vpi(x+s)$ for all $x>0$ and $s>0$. 
Now part (iv) of the lemma follows, because $\la_p'(0)=0$ and $\la_p''(0)=p\la_p(0)$. 

\framebox{(v)} This part follows by part (ii) of the lemma, since $\la_{p,2}(s)=\la_{2p}(s)-\la_p(s)^2$ and $\la_{p,2}(s)$ is continuous and hence bounded in $|s|\le1$. 

\framebox{(vi)} By part (ii) of the lemma, $\la_p(s)
=s^p\big(1+O(s^{-2})\big)$ over $s\in(1,\infty)$. So,  
\begin{equation*}
	\la_{p,4}(s)=\la_{4p}(s)-4\la_p(s)\la_{3p}(s)+6\la_p(s)^2\la_{2p}(s)-3\la_p(s)^4\OO s^{4p-2}
\end{equation*}
over $s>1$. 
By part (v) of the lemma, 
$\la_{p,2}(s)\OO 1+s^{2p-2}$ over all $s>1$. So, 
$\la_{p,3}(s)\le\sqrt{\la_{p,2}(s)\,\la_{p,4}(s)}\OO s^{3p-1}$ over $s>1$. 
It is also clear that $\la_{p,3}(s)\OO1$ for $|s|\le1$. 
\end{proof}

\begin{proof}[Proof of Lemma~\ref{lem:ttmu}]\ 

\framebox{(i)} Part (i) of the lemma follows because  
\begin{equation*}
	\tmu_d(s)=\int_0^\infty\PP(|Z+s|^{-1}\wedge d>x)\,\dd x
	=\int_0^d\PP(|Z+s|<1/x)\,\dd x. 
\end{equation*}

\framebox{(ii)} Note that 
\begin{equation}\label{eq:tmu=}
	\tmu_d(s)=\int_0^\infty\frac{\dd u}{u\vee\si}\,[\vpi(s-u)+\vpi(s+u)],
\end{equation}
where
\begin{equation*}
	\si:=\tfrac1d.
\end{equation*}
So, for $\eta:=\frac1{\ln d}$ and large enough $d$, 
\begin{equation*}
	\tfrac12\tmu_d(0)=\int_0^\infty\frac{\dd u}{u\vee\si}\,\vpi(u)
	=\int_0^\si+\int_\si^\eta+\int_\eta^1+\int_1^\infty, 
\end{equation*}
\begin{align*}
	\int_0^\si&=\frac1\si \int_0^\si\dd u\,\vpi(u)\sim\vpi(0)<<\ln d,\\
	\int_\si^\eta&=\int_\si^\eta\frac{\dd u}u\,\vpi(u)
	\sim(\ln\eta-\ln\si)\vpi(0)\sim\vpi(0)\ln d,\\
	\int_\eta^1&\OO\int_\eta^1\frac{\dd u}u<<\ln d,\\
	\int_1^\infty&\OO 1<<\ln d.
\end{align*}
So,
\begin{equation}\label{eq:tmu(0)}
	\tmu_d(0)\sim2\vpi(0)\ln d. 
\end{equation}
In view of part (i) of the lemma, this completes the proof of part (ii). 

\framebox{(iii)} 
Here, consider first the case $s\in(0,\frac12]$. Then $\De(s,u):=2\vpi(u)-\vpi(u-s)-\vpi(u+s)\asymp -s^2\vpi''(\tu)\asymp s^2(1-\tu^2)\vpi(\tu)$ for all $u>0$ and some $\tu\in[u-s,u+s]$. 
In particular, $\De(s,u)\asymp s^2$ for all $u\in[0,\frac16]$, $|\De(s,u)|\OO s^2$ for all $u>0$, and $|\De(s,u)|\OO s^2(u^2+1)\vpi(u-s)\OO s^2\vpi(u/3)$ for all $u>1$. 
So,  
\begin{equation*}
	\tmu_d(0)-\tmu_d(s)=\int_0^\infty\frac{\dd u}{u\vee\si}\,\De(s,u)
	=\int_0^\si+\int_\si^\eta+\int_\eta^1+\int_1^\infty, 
\end{equation*}
\begin{align*}
	\Big|\int_0^\si\Big|&=\Big|\frac1\si \int_0^\si\dd u\,\De(s,u)\Big|\OO s^2<<s^2\ln d,\\
	\int_\si^\eta&=\int_\si^\eta\frac{\dd u}u\,\De(s,u)
	\asymp s^2(\ln\eta-\ln\si)\sim s^2\ln d,\\
	\Big|\int_\eta^1\Big|&\OO\int_\eta^1\frac{\dd u}u\, s^2<<s^2\ln d,\\
	\Big|\int_1^\infty\Big|&\OO \int_1^\infty s^2\vpi(u/3)\,\dd u\OO s^2<<s^2\ln d.
\end{align*}
Thus,
\begin{equation}\label{eq:s<1/2}
	\tmu_d(0)-\tmu_d(s)\asymp s^2\ln d\quad\text{over}\, s\in(0,\tfrac12]. 
\end{equation}

Similarly but simpler, for all $s\in(3,\infty)$ and large $d$ 
\begin{equation*}
	\tmu_d(s)
	=\Big(\int_0^\si+\int_\si^1+\int_1^\infty\Big)\frac{\dd u}{u\vee\si}\,[\vpi(s-u)+\vpi(s+u)], 
\end{equation*}
\begin{align*}
	\int_0^\si&\OO\frac1\si \int_0^\si\dd u=1<<\ln d\asymp\tmu_d(0),\\
	\int_\si^1&\le\int_\si^1\frac{\dd u}u\,2\vpi(s-1)
	\le2\vpi(2)\ln d<\tfrac13\tmu_d(0), \\
	\int_1^\infty&\le\int_1^\infty[\vpi(s-u)+\vpi(s+u)]\,\dd u\le2<<\tmu_d(0);
\end{align*}
here, to estimate $\int_\si^1$ we used \eqref{eq:tmu(0)}. 
So, for all $s\in(3,\infty)$ and large enough $d$, one has $\tmu_d(s)<\frac12\tmu_d(0)$ and hence $\tmu_d(0)-\tmu_d(s)\asymp\tmu_d(0)\asymp\ln d$. 
It remains to recall \eqref{eq:s<1/2} and the fact that $\tmu_d(s)$ is decreasing in $|s|$.  

\framebox{(iv)} 
W.l.o.g.\ $s\ge0$. 
By \eqref{eq:tmu=},  
 \begin{equation*}
	-s\,f'(s)=s\,\tmu'_d(s)=\int_0^\infty\frac{s\,\dd u}{u\vee\si}\,[\vpi'(s-u)+\vpi'(s+u)]
	=\int_0^\si+\int_\si^1+\int_1^\infty.  
\end{equation*}
Observe that $\vpi'(s-u)+\vpi'(s+u)=\vpi'(u+s)-\vpi'(u-s)\OO s|\vpi''(\tu)|\OO s\,\vpi(\frac{u-s}2)\OO s$ for all $u>0$ and $s\ge0$, and for some $\tu\in[u-s,u+s]$. So,
\begin{align*}
	\Big|\int_0^\si\Big|&\OO\frac1\si\int_0^\si s\,\dd u\, s=s^2<<s^2\ln d, \\
	\Big|\int_\si^1\Big|&\OO\int_\si^1\frac{s\,\dd u}u s\OO s^2\ln d, \\
	\Big|\int_1^\infty\Big|&\OO\int_1^\infty s^2\,\dd u\,\vpi(\tfrac{u-s}2)\OO s^2<<s^2\ln d, \\
\end{align*}
whence 
\begin{equation}\label{eq:f'=}
	|sf'(s)|\OO s^2\ln d\quad\text{over all $s\ge0$.} 
\end{equation}
Also, for $s>1$ write 
\begin{align*}
	-s\,f'(s)&=\int_0^\infty\frac{s\,\dd u}{u\vee\si}\,[\vpi'(s-u)+\vpi'(s+u)]
	=\int_0^\si+\int_\si^{s/2}+\int_{s/2}^\infty, \\
	\int_0^\si&\OO\frac1\si\int_0^\si \dd u\, s^2\vpi(s/4)\OO1<<\ln d, \\
	\int_\si^{s/2}&\OO\int_\si^{s/2}\frac{s\,\dd u}u s\,\vpi(s/4)\OO\ln d, \\
	\int_{s/2}^\infty&=\int_{s/2}^\infty \frac{s\,\dd u}u\,[\vpi'(u+s)-\vpi'(u-s)] \\
	 &=\frac su\,[\vpi(u+s)-\vpi(u-s)]\Big|_{s/2}^\infty 
	+\int_{s/2}^\infty \frac{s\,\dd u}{u^2}\,[\vpi(u+s)-\vpi(u-s)] \\
	&\OO1<<\ln d, 
\end{align*}
so that $|sf'(s)|\OO\ln d=(s^2\wedge1)\ln d$ for $s>1$. Recalling now \eqref{eq:f'=} and \eqref{eq:tmu(0)-tmu(s)}, one has $|sf'(s)|\OO(s^2\wedge1)\ln d\asymp f(s)$ over all $s\in\R$. 
\end{proof}

\begin{proof}[Proof of Lemma~\ref{lem:ttla}] \ 

\framebox{(i)} Part (i) of the lemma follows because 
$\tla(s)=\int_0^\infty\PP(\ln|Z+s|>c)\,\dd c=\int_0^\infty\PP(|Z+s|>e^c)\,\dd c$.  

\framebox{(ii)} Note that 
\begin{equation}\label{eq:tla(s)}
\tla(s)=\int_0^\infty (\ln x)\,[\vpi(s-x)+\vpi(s+x)]\,\dd x.	
\end{equation}
Differentiating here in $s$, one easily finds that $\tla'(0)=0$. Also, using integration by parts, one has  $\tla''(0)=2\int_0^\infty\ln x\,\vpi''(x)\,\dd x=-2\int_0^\infty\frac1x\,\vpi'(x)\,\dd x=1$. 
Now \eqref{eq:tla(s)-tla, s to0} follows. 

Fix now any $m\in\N$ and let $s$ vary arbitrarily in $[e,\infty)$. 
Since the density of $|Z+s|$ is bounded uniformly in $s\in\R$, 
\begin{equation*}
	\E\big|\ln|Z+s|\big|^m\ii{|Z+s|\le1}\OO\int_0^1|\ln x|^m\,\dd x\OO1. 
\end{equation*}
Also,
\begin{gather*}
	\E\big|\ln|Z+s|\big|^m\ii{|Z+s|>1,|Z|>\tfrac s2}
	\OO\E(|Z|+|s|)\ii{|Z|>s/2}\OO1; \\
\frac1{\ln^m s}\,\E\ln^m|Z+s|\ii{|Z|\le\tfrac s2}
=\E\Big(1+\frac{\ln(1+Z/s)}{\ln s}\Big)^m\ii{|Z|\le\tfrac s2} \\
=\E\Big[1+m\,\frac{Z/s+O(Z^2/s^2)}{\ln s}+O\Big(\frac{Z^2}{s^2\ln^2s}\Big)\Big]\ii{|Z|\le\tfrac s2} \\
=\E\Big[\dots\Big]-\E\Big[\dots\Big]\ii{|Z|>\tfrac s2} 
=1+O\Big(\frac1{s^2\ln s}\Big)
=1+O\Big(\frac1{\ln^m s}\Big).
\end{gather*}
Now \eqref{eq:tla(s), s to infty} follows, which completes the proof of part (ii). 

\framebox{(iii)} The derivatives $\tla^{(i)}(s)$ of $\tla(s)$ are related to those of $\la_p(s)$; namely, 
$\tla^{(i)}(s)=\frac{\partial}{\partial p}\la_p^{(i)}(s)\big|_{p=0}$ for $i=0,1,\dots$; this follows because, in view of \eqref{eq:la_p(s)} and \eqref{eq:tla(s)}, 
$\la_p^{(i)}(s)=\int_0^\infty x^p\,[\vpi^{(i)}(s-x)+\vpi^{(i)}(s+x)]\,\dd x$ 
and $\tla^{(i)}(s)=\int_0^\infty \ln x\,[\vpi^{(i)}(s-x)+\vpi^{(i)}(s+x)]\,\dd x$. 
Therefore, by \eqref{eq:Dla'(s)/s}, 
\begin{equation*}
\frac{\dd}{\dd s}\frac{\tla'(s)}s
=\frac{\partial}{\partial p}\Big[\frac{\dd}{\dd s}\frac{\la'_p(s)}s\Big]\Big|_{p=0}
=
	-\frac4{s^2 e^{s^2/2}}\, \sum _{n=1}^{\infty } \frac{n s^{2 n+1} \la_{2n-2}(0)}{(2n+1)!}<0
\end{equation*}
for $s>0$, which implies the stated monotonicity pattern for $\frac{\tla'(s)}s$. 
The rest of the proof of part (iii) of the lemma is quite similar to that of part (iii) of Lemma~\ref{lem:lla}. 

\framebox{(iv)}  
By \eqref{eq:tla(s)-tla, s to0}, w.l.o.g.\ $s\in[e,\infty)$. Then, by \eqref{eq:tla(s), s to infty},  
\begin{align*}
	\tla_2(s)&=\E\ln^2|Z+s|-\E^2\ln|Z+s| \\
	&=\ln^2 s\,[1+O(\ln^{-2}s)]-\ln^2 s\,[1+O(\ln^{-1}s)]\OO\ln s=g_0(s). 
\end{align*}

\framebox{(v)} 
Using the change of variables $u:=x^2/2$ and well-known identities for the Gamma function (see e.g.\ \cite[(1.1.22), (1.2.14), and (1.2.9)]{andrews}, one has 
\begin{equation}\label{eq:Ga,pi}
\begin{aligned}
	\tla_2(0)=\Var\ln|Z|&=\Var\ln|\tfrac Z{\sqrt2}| \\
	&=2\int_0^\infty\ln^2\tfrac x{\sqrt2}\,\vpi(x)\,\dd x
	-4\Big(\int_0^\infty\ln\tfrac x{\sqrt2}\,\vpi(x)\,\dd x\Big)^2 \\
	&=\tfrac1{4\pi}\,\big(\Ga''(\tfrac12)\Ga(\tfrac12)-\Ga'(\tfrac12)^2\big) \\
	&=\tfrac14\,\tfrac{\dd^2}{\dd z^2}\ln\Ga(z)|_{z=\frac12} \\
	&=\frac14\,\sum_{k=0}^\infty\frac1{(\frac12+k)^2}=\frac{\pi^2}8, 
\end{aligned}	
\end{equation}
so that part (v) of the lemma is verified as well. 

\framebox{(vi)} 
The proof of this part of the lemma is similar to that of part (vi) of Lemma~\ref{lem:lla}, using now parts (ii) and (iv) of Lemma~\ref{lem:ttla} instead of parts (ii) and (v) of Lemma~\ref{lem:lla}. 

\framebox{(vii)} 
By the symmetry, w.l.o.g.\ $s>0$. 
By \eqref{eq:tla(s)} and in view of the estimate $|\vpi'(u)|\OO\vpi(u/2)$ over all $u\in\R$, for $s\to\infty$ one has 
\begin{align*}
\tla'(s)&=\int_0^\infty (\ln x)\,[\vpi'(s-x)+\vpi'(s+x)]\,\dd x	
=\int_0^{s/2}+\int_{s/2}^{3s/2}+\int_{3s/2}^\infty, \\
\int_0^{s/2}&\OO\int_0^{s/2}\ln x\,\dd x\,\vpi(s/4)<<s^{-1},\\
\int_{3s/2}^\infty&\OO\int_{3s/2}^\infty (\ln x)\,\vpi(\tfrac{x-s}2)\,\dd x\,
\le\int_{3s/2}^\infty (\ln x)\,\vpi(\tfrac x6)\,\dd x\,<<s^{-1},\\
\int_{s/2}^{3s/2}&=
(\ln x)[\vpi(s+x)-\vpi(s-x)]\Big|_{s/2}^{3s/2}
-\int_{s/2}^{3s/2}x^{-1}[\vpi(s+x)-\vpi(s-x)] \,\dd x\\
&=o(s^{-1})+O\Big(s^{-1}\int_{s/2}^{3s/2}[\vpi(s+x)-\vpi(s-x)] \,\dd x\Big)
=O(s^{-1}). 
\end{align*}
This completes the proof of part (vii) and thus the entire proof of Lemma~\ref{lem:ttla}. 
\end{proof}

\begin{proof}[Proof of Lemma~\ref{lem:h}] 
The cases $p=-1$, $p\in(-1,\infty)\setminus\{0\}$, and $p=0$ 
follow immediately by Lemma~\ref{lem:ttmu}(iii), Lemma~\ref{lem:lla}(ii,i), and Lemma~\ref{lem:ttla}(ii,i), respectively.  
\end{proof}

\begin{proof}[Proof of Lemma~\ref{lem:s,v}]
Suppose that the lemma is false. That is, suppose that $\s$, $\vv$, $S$ satisfy the conditions of the lemma, while the conclusion \eqref{eq:s,v,S} is false. Then w.l.o.g.\ $\sum v_j^2>>S\OOG\sum s_j^2$, while $\vv=t\s$ for some varying $t\in(0,\infty)$. At that, necessarily $t=\|\vv\|/\|\s\|\to\infty$. So, w.l.o.g.\ $t\ge2$. 

Take any $\si\in(0,\infty)$. Then there is some $\g(\si)\in(0,1)$ such that $\frac{f_p(s)}{f_p(ts)}\le \frac{f_p(s)}{f_p(2s)}\le1-\g(\si)$ for all $s\in(0,\si]$ \big(since $f_p(s)$ is continuous and strictly increasing in $s\in(0,\infty)$ and hence the ratio $\frac{f_p(s)}{f_p(2s)}$ is continuous and strictly less than $1$ for all $s\in(0,\infty)$, while, by Lemma~\ref{lem:lla}(ii),  
this ratio tends to $\frac1{2^2}<1$ as $s\downarrow0$.\big) 
So,
\begin{align*}
	\tfrac1{1-\g(\si)}\sum f_p(s_j)\ii{|s_j|\le\si}&\le A:=\sum f_p(v_j)\ii{|s_j|\le\si}, \\
	\sum f_p(s_j)\ii{|s_j|>\si}&\le B:=\sum f_p(v_j)\ii{|s_j|>\si}, 
\end{align*}
since $v_j=ts_j$, $t\ge2\ge1$, and $f_p(s)$ is increasing in $|s|$. 
Also, $A+B\sim S$. 
So, if $A\ge S/2$, then 
\begin{equation*}
	S\sim\sum f_p(s_j)\le(1-\g(\si))A+B
	\le A+B-\tfrac{\g(\si)}2\,S\sim\big(1-\tfrac{\g(\si)}2\big)\,S,
\end{equation*}
a contradiction. 
Otherwise, $A<S/2$, whence 
\begin{equation*}
\frac{f_p(\infty-)}{\si^2}\,\sum s_j^2\ge f_p(\infty-)\sum\ii{|s_j|>\si}\ge B\ge\big(\tfrac12+o(1)\big)\,S,
\end{equation*}
which contradicts the assumption $S\OOG\sum s_j^2$, because $f_p(\infty-)<\infty$ while  
$\si\in(0,\infty)$ was chosen arbitrarily. 
\end{proof}

\begin{proof}[Proof of Lemma~\ref{lem:s,v,ge0}]
Suppose that the lemma is false, so that $\s$, $\vv$, $S$ satisfy the conditions of the lemma, while the conclusion \eqref{eq:s,v,S,ge0} is false. Then w.l.o.g.\ $\sum v_j^2>>S\OOG\sum s_j^2$, while $\vv=t\s$ for some varying $t\in(0,\infty)$. At that, necessarily $t=\|\vv\|/\|\s\|\to\infty$. 

Note that 
\begin{equation*}
	g_p(s)=\left\{
	\begin{aligned}
		\tfrac{s^2}{e^2}\ii{|s|\le e}+\ln|s|\ii{|s|>e}&\text{ if }p=0, \\
		s^2\ii{|s|\le1}+|s|^p\ii{|s|>1}&\text{ if }p\in(0,2),  
	\end{aligned}
	\right.
\end{equation*}
$g_p(s)\asymp s^2$ for $|s|\OO1$, 
$g_p(s)\to\infty$ and $\frac{g_p(s)}{s^2}\to0$ as $|s|\to\infty$, and $\frac{g_p(s)}{s^2}$ is non-increasing in $|s|$. 
It follows that 
\begin{multline*}
	\sum g_p(s_j)\ii{|s_j|>\si}\le\tfrac{g_p(\si)}{\si^2}\,\sum s_j^2\ii{|s_j|>\si} \\
	\le\tfrac{g_p(\si)}{\si^2}\,\sum s_j^2 
\OO\tfrac{g_p(\si)}{\si^2}\,S 
\asymp\tfrac{g_p(\si)}{\si^2}\,\sum g_p(s_j), 
\end{multline*}
whence 
$\sum g_p(s_j)\ii{|s_j|>\si}\le\tfrac12\sum g_p(s_j)$ 
for large enough $\si\in(0,\infty)$. 
Fixing any such $\si$ and recalling that $g_p(s)\asymp s^2$ for $|s|\OO1$, one has 
\begin{equation*}
	\sum s_j^2\ii{|s_j|\le\si}\asymp\sum g_p(s_j)\ii{|s_j|\le\si}\ge\tfrac12\sum g_p(s_j)\asymp S
\end{equation*}
and 
\begin{align*}
	S&\asymp\sum g_p(ts_j)\ge\sum g_p(ts_j)\ii{|s_j|\le\si}
	\ge\tfrac{g_p(t\si)}{\si^2}\,\sum s_j^2\ii{|s_j|\le\si}
	\OOG\tfrac{g_p(t\si)}{\si^2}\,S \\
	&>>S,
\end{align*}
since $t\to\infty$ and $\si>0$ is fixed. 
This contradiction completes the proof.    
\end{proof}



\bibliographystyle{acm}
\bibliography{C:/Users/Iosif/Documents/mtu_home01-30-10/bib_files/citations}

\end{document}